\documentclass[reqno,a4paper,12pt]{amsart}
\usepackage{enumerate,amsmath,amssymb,stmaryrd} 
\usepackage{pstricks,pst-node,pst-coil,pst-plot} 
\usepackage{hyperref}
\usepackage{esint}
\usepackage{xfrac}
\usepackage{accents}
\setlength{\textwidth}{6in}
\setlength{\oddsidemargin}{.2in}
\setlength{\evensidemargin}{.2in}
\setlength{\textheight}{9.25in}
\setlength{\topmargin}{-.05in}

% !TEX root = main.tex

\theoremstyle{plain}

\newtheorem{thm}{Theorem}[section]
\newtheorem{theorem}[thm]{Theorem}

\newtheorem{corollary}[thm]{Corollary}

\newtheorem{lemma}[thm]{Lemma}

\newtheorem{proposition}[thm]{Proposition}
\newtheorem{conjecture}[thm]{Conjecture}

\theoremstyle{remark}
\newtheorem{example}[thm]{Example}
\newtheorem{remark}[thm]{Remark}

\theoremstyle{definition}

\newtheorem{definition}[thm]{Definition}

\newcounter{mnotecount}[section]

\newcommand{\definedas}{\mathrel{\raise.095ex\hbox{\rm :}\mkern-5.2mu=}}
 \newcommand{\asdefined}{\mathrel{=\mkern-5.2mu\raise.095ex\hbox{\rm :}}}

\def\epsilon{{\varepsilon}}
\def\phi{{\varphi}}

\let\<\langle 
\let\>\rangle 

\newcommand{\IDS}{(M^3,g,K,\mu,J)}
\newcommand{\IDStau}{(M^3,g,\tau K,\mu_\tau,\tau J)}

\newcommand{\R}{\mathbb{R}}

\newcommand{\bR}{\mathbb{R}}

\renewcommand{\hbar}{\overline{h}}

% Kulkarni-Nomizu product

% Operateurs mathematiques

\DeclareMathOperator{\graph}{graph}
\DeclareMathOperator{\tr}{tr}
\DeclareMathOperator{\divg}{div}

\DeclareMathOperator{\id}{Id}

\DeclareMathOperator{\Span}{Span}

% Derivation

% Tenseur m�trique

% 1-forme de connexion

% Tenseur de Riemann global

%\newcommand{\riem}{Rm}

% Tenseur de Riemann global compactifi�e

%\newcommand{\riem}{Rm}

% Tenseur de Riemann tangentiel

% Tenseur de Weyl global

%\newcommand{\weyl}{W}

% Tenseur de Schouten

% Tenseur de Cotton-York

% Tenseur de Weyl tangentiel

% Tenseur de Ricci
\newcommand{\ric}{\mathrm{Ric}}
\newcommand{\mric}{\mathfrak{{Ric}}}

% Tenseur de Ricci tangentiel

%\newcommand{\tricbardd}[2]{\tricbar_{\phantom{\Sigma}#1 #2}}

%\newcommand{\tricbarldd}[3]{\tricbarl{#1}_{\phantom{\Sigma}#2 #3}}

% Scalaire de courbure
\newcommand{\scal}{\mathrm{Scal}}
\newcommand{\mscal}{\mathfrak{Scal}}

% Tenseurs quelconques

% Connexion de Levi-Civita

\DeclareMathOperator{\hess}{Hess}

%\newcommand{\hess}{\operatorname{Hess}}

%\newcommand{\hessdd}[2]{\grad{#1, #2}}

% Seconde forme fondamentale
%\newcommand{\sff}{\mathbf{II}}

% Seconde forme fondamentale renormalisee

% Partie sans trace de la seconde forme fondamentale

% Seconde forme fondamentale compactifi�e

% Symbole de Kronecker

% rescaled Seconde forme fondamentale

%\makeindex

\begin{document} 

%%%%%%%%%%%%%%%%%%%%%%%%%%%%%%%%%%%%%%%%%%%%%%%%%%%%%%%%%%%%%%%%%%%%%%%%%%%%%%%%%%%%%%%%%%%%%%%

\title[Center of mass and STCMC-foliations]
{On center of mass and foliations\\ by constant spacetime mean curvature surfaces\\ for isolated systems in General Relativity}

%%%%%%%%%%%%%%%%%%%%%%%%%%%%%%%%%%%%%%%%%%%%%%%%%%%%%%%%%%%%%%%%%%%%%%%%%%%%%%%%%%%%%%%%%%%%%%%
 
\author{Carla Cederbaum}
\address{Carla Cederbaum: Mathematics Department, University of T\"ubingen, Germany}
\email{cederbaum@math.uni-tuebingen.de}

\author{Anna Sakovich}
\address{Anna Sakovich: Department of Mathematics, Uppsala University, Sweden}
\email{anna.sakovich@math.uu.se}
\begin{abstract}
We propose a new foliation of asymptotically Euclidean initial data sets by $2$-spheres of constant spacetime mean curvature (STCMC). The leaves of the foliation have the STCMC-property regardless of the initial data set in which the foliation is constructed which asserts that there is a plethora of STCMC $2$-spheres in a neighborhood of spatial infinity of any asymptotically flat spacetime. The STCMC-foliation can be understood as a covariant relativistic generalization of the CMC-foliation suggested by Huisken and Yau \cite{HY}.

We show that a unique STCMC-foliation exists near infinity of any asymptotically Euclidean initial data set with non-vanishing energy which allows for the definition of a new notion of total center of mass for isolated systems. This STCMC-center of mass transforms equivariantly under the asymptotic Poincar\'e group of the ambient spacetime and in particular evolves under the Einstein evolution equations like a point particle in Special Relativity. The new definition also remedies subtle deficiencies in the CMC-approach to defining the total center of mass suggested by Huisken and Yau \cite{HY} which were described by Cederbaum and Nerz~\cite{CN}.
\end{abstract}

\subjclass[2000]{53C21, (83C05, 83C30)}
% 53C21 Methods of Riemannian geometry, including PDE methods; 
% curvature restrictions
%
% 83C05 Einstein's equations (general structure, canonical formalism, 
% Cauchy problems) 
%
% 83C30 Asymptotic procedures (radiation, news functions, H-spaces, etc.) 

\date{\today}

\maketitle

%%%%%%%%%%%%%%%%%%%%%%%%%%%%%%%%%%%%%%%%%%%%%%%%%%%%%%%%%%%%%%%%%%%%%%%%
\section{Introduction and goals}
In General Relativity, isolated (gravitating) systems are individual or clusters of stars, black holes, or galaxies that do not interact with any matter or gravitational radiation outside the system under consideration. Intuitively, they should have a total center of mass which should in a suitable sense behave as a point particle in Special Relativity. In this paper, we suggest a definition of total center of mass for suitably isolated systems and argue that this center of mass notion indeed behaves as a point particle in Special Relativity in a suitable sense (meaning it transforms equivariantly under the asymptotic Poincar\'e group of the ambient spacetime). In particular, we will show that the center of mass notion we suggest evolves in time under the Einstein evolution equations like a point particle in Special Relativity.

The main idea of our approach is to modify the definition of center of mass given by Huisken and Yau \cite{HY} for asymptotically Euclidean Riemannian manifolds --- using an asymptotic foliation by $2$-spheres of constant mean curvature (CMC), see Section~\ref{secPrelim}~--- by staging it in a Lorentzian (spacetime) setting or in other words by staging it in asymptotically Euclidean initial data sets. More specifically, we will prove existence and uniqueness of an asymptotic foliation by $2$-spheres of constant spacetime mean curvature under optimal asymptotic decay assumptions. Here, ``spacetime constant mean curvature (STCMC)'' means that the co-dimen\-sion $2$ mean curvature vector $\vec{\mathcal{H}}$ of each $2$-sphere has constant Lorentzian length $\mathcal{H}$.

It is well known that this STCMC-condition can be reformulated in terms of initial data sets, namely as the product of the inner and outer ``expansions'' (or ``null mean curvatures'', see Remark \ref{PseudoStab}) with respect to any given null frame along a $2$-surface. On the other hand, the STCMC-condition is naturally independent of the initial data set in which the foliation is constructed. Our result thus asserts that there is a plethora of STCMC-surfaces in a neighborhood of spatial infinity of any asymptotically flat spacetime.

Furthermore, the new construction of a center of mass will be shown to remedy the subtle deficiencies of the Huisken and Yau approach \cite{HY} described by Cederbaum and Nerz \cite{CN}. Last but not least, we will provide an asymptotic flux integral formula for the center of mass extending that of Beig and \'O Murchadha~\cite{BeigOMurchadha}. The analytic techniques in our proofs rely on those developed by Metzger~\cite{Metzgerthesis} and Nerz~\cite{NerzCMC,NerzCE}.\\

Concluding this introduction, we would like to point out that the notion of spacetime mean curvature of $2$-surfaces in initial data sets has independently been considered in other contexts, both before and after the results of this paper had been announced. For example, the inverse spacetime mean curvature flow has been studied by Bray, Hayward, Mars, and Simon in \cite{BrayHaywardMarsSimon} and by Hangjun Xu in his thesis \cite{Xu}.

The STCMC-condition is (trivially) satisfied by marginally outer/inner trapped surfaces (MOTS/MITS), extremal surfaces (see e.g. \cite{EngelhardtWall}), and generalized apparent horizons (see e.g. \cite{Mars}, \cite{BrayKhuri}), with spacetime mean curvature $\mathcal{H}=0$ in all those cases. More generally, 2-surfaces with constant spacetime mean curvature are critical points for the area functional inside the future-directed null-cone, with mean curvature vector pointing in the direction in which the expansion of the surface is extremal. The aforementioned generalized apparent horizons have outer area minimizing property which is appealing in the view of spacetime Penrose Inequality. We would like to point the reader to the interesting work by Carrasco and Mars \cite{CarrascoMars} giving insights into the (over-)generality of $\mathcal{H}=0$ as a condition for a horizon. In a recent paper of Cha and Khuri \cite{ChaKhuri}, the area $A$ of the outermost STCMC-surface with $\mathcal{H}=2$ appears in the conjectured Penrose Inequality $m\geq \sqrt{\sfrac{A}{16\pi}}$ expected to hold for an asymptotically anti-de Sitter initial data set of mass $m$ satisfying the dominant energy condition. 

Because of the spacetime geometry nature of the STCMC-condition, we expect that STCMC-surfaces and STCMC-foliations will have a number of applications beyond the definition of a center of mass of an isolated system as well as beyond the setting of asymptotically Euclidean initial data sets. For example, a special subfamily of STCMC-surfaces foliating a null hypersurface implicitly appears in recent work by Klainerman and Szeftel \cite{KlainermanSzeftel}, where they arise as surfaces with both constant outer and constant inner expansion.\\

\paragraph*{\emph{Structure of the paper.}} In Section \ref{secPrelim}, we will summarize the necessary definitions and notations as well as more details on the background and existing work on the total center of mass of isolated systems. In Section \ref{secMain}, we will state our main results and very briefly explain the strategy of our proofs. The following sections will be dedicated to the more technical components of the proof with Section \ref{secHypersurfaces} focusing on a priori estimates for STCMC-surfaces, Section \ref{secLinearization} discussing the linearization of spacetime mean curvature, Section \ref{secExistence} asserting existence of the STCMC-foliation, Section~\ref{secADMstyleexpr} introducing the coordinate expression of the center of mass associated with the STCMC-foliation, and Section \ref{secEvolution} proving the claimed law of time evolution under the Einstein evolution equations. Appendix \ref{sesApp} collects results such as Sobolev Inequalities on $2$-surfaces, while Appendix \ref{apFermi} studies STCMC-surfaces in normal geodesic coordinates. Finally, in Section~\ref{secHYCN}, we will discuss an exemplary initial data set highlighting the differences between the newly suggested notion of center of mass and the existing one suggested by Huisken and Yau.\\

\paragraph*{\emph{Acknowledgements}}
We would like to thank Julien Cortier who was involved in this project at early stages and contributed with many insightful discussions.

The authors would like to extend thanks to the Erwin Schr\"odinger Institute, the Institut Henri Poincar\'e, the Mathematisches Forschungsinstitut Oberwolfach, and the Max Planck Institute for Mathematics for allowing us to collaborate in stimulating environments.

CC is indebted to the Baden-W\"urttemberg Stiftung for the financial support of this research project by the Eliteprogramme for Postdocs. The work of CC is supported by the Institutional Strategy of the University of T\"ubingen (Deutsche Forschungsgemeinschaft, ZUK 63). CC also thanks the Fondation des Sciences Math\'ematiques de Paris for generous support. The work of AS was supported by Knut and Alice Wallenberg Foundation and Swedish Research Council (Vetenskapsr{\aa}det). We also thank the Crafoord Foundation for generous support.

\section{Preliminaries}\label{secPrelim}
% !TEX root = main.tex
Recall that an \emph{initial data set} for the Einstein equations is a tuple $\IDS$ where $(M^{3},g)$ is a smooth Riemannian manifold and $K$ is a smooth symmetric $(0,2)$-tensor field on $M^3$ playing the role of the second fundamental form of $M^{3}$ in an ambient Lorentzian spacetime. The (scalar) \emph{local energy density} $\mu$ and the ($1$-form) \emph{local momentum density} $J$ defined on $M^{3}$ can be read off from the \emph{constraint equations}\\[-2ex]
\begin{subequations}\label{CE}
\begin{align}
 \scal - \vert K\vert ^2 + (\tr K)^2 \label{eqHamiltonian}&=2\mu\\
\divg (K - (\tr K) g)&=J. \label{eqMomentum}
\end{align}
\end{subequations}
Here, $\tr$, $\divg$, and $\vert\cdot\vert$ denote the trace, the divergence, and the tensor norm with respect to $g$, respectively, and $\scal$ denotes its scalar curvature. Sometimes we will find it convenient to use the \emph{conjugate momentum tensor} $\pi \definedas (\tr K) g -K$. 

The constraint equations \eqref{CE} arise as a consequence of the Gauss--Codazzi--Mainardi equations from the Einstein equations $\mric-\tfrac{1}{2}\,\mscal\,\mathfrak{g}=\mathfrak{T}$ satisfied by a given spacetime $(\mathfrak{M}^{1,3},\mathfrak{g})$ with \emph{energy-momentum tensor} $\mathfrak{T}$, where $g$ is the Riemannian metric induced by the Lorentzian metric $\mathfrak{g}$ on the spacelike hypersurface $M^{3}$ and $K$ is the induced second fundamental form. Letting $\eta$ denote the timelike future unit normal to the initial data set $\IDS$, the energy and momentum density are derived from $\mathfrak{T}$ via $\mu=\mathfrak{T}(\eta,\eta)$, and $J=\mathfrak{T}(\eta,\cdot)$, and the \emph{stress tensor $S$} on $M^3$ is defined by $S=\mathfrak{T}(\cdot,\cdot)$\label{textmuJ}. The constraint equations \eqref{CE} thus necessarily hold on any spacelike hypersurface (or "initial data set") $\IDS$ in the spacetime $(\mathfrak{M}^{1,3},\mathfrak{g})$.

In order to model an "isolated system", we will assume that the ambient spacetime $(\mathfrak{M}^{1,3},\mathfrak{g})$ with its energy-momentum tensor $\mathfrak{T}$ and the choice of initial data set $\IDS$ are such that the initial data set is "asymptotically Euclidean", a notion made precise in the following standard definition.
\begin{definition}[Asymptotically Euclidean initial data sets]\label{defAEdata}\label{defAE}
Let $\varepsilon \in (0,\tfrac{1}{2}]$
 and let $\IDS$ be a smooth initial data set. Assume there is a smooth coordinate chart $\vec{x}\colon M^{3} \setminus \mathcal{B} \to \mathbb{R}^3 \setminus \overline{B_R(0)}$ defined in the region exterior to a compact set $\mathcal{B} \subset M^{3}$. We say that $\mathcal{I}\definedas\IDS$ is a \emph{$C^2_{\sfrac{1}{2}+\varepsilon}$-asymptotically Euclidean initial data set (with respect to $\vec{x}$)} if there is a constant $C=C(\mathcal{I},{\vec{x}})$ such that, in the coordinates $\vec{x}=(x^1, x^2, x^3)\in\R^3\setminus\overline{B_R(0)}$, we have the pointwise estimates
\begin{subequations}\label{AE}
\begin{align}
\vert g_{ij}-\delta_{ij}\vert  + \vert\vec{x}\,\vert \vert \partial_k g_{ij}\vert  + \vert\vec{x}\,\vert ^2 \vert \partial_k\partial_l g_{ij}\vert  & \leq   C \vert\vec{x}\,\vert ^{-\frac{1}{2}-\varepsilon} \label{eqOptDecay1}\\
\vert K_{ij}\vert  +  \vert\vec{x}\,\vert \vert \partial_k K_{ij}\vert   & \leq  C \vert\vec{x}\,\vert ^{-\frac{3}{2}-\varepsilon} \label{eqOptDecay2} \\
\vert \mu\vert  + \vert J_i\vert  & \leq C \vert\vec{x}\,\vert ^{-3-\varepsilon}  \label{eqOptDecay3}
\end{align}
\end{subequations}
for all $\vec{x}\in\mathbb{R}^{3}\setminus \overline{B_R(0)}$ and for all $i,j,k,l \in \{1,2,3\}$. Here, slightly abusing notation, we have silently pushed forward all tensor fields on $M^{3}$ (including scalars) and written $g_{ij}\definedas(\vec{x}_{*}g)_{ij}$ as well as $K_{ij}\definedas(\vec{x}_{*}K)_{ij}$, etc. The Kronecker delta $\delta_{ij}$ denotes the components of the Euclidean metric with respect to the coordinates $\vec{x}$. By another slight abuse of notation, we will refer to the above constant $C$ as $C_{\mathcal{I}}$, suppressing the dependence on the chart $\vec{x}$.
\end{definition}

Asymptotically Euclidean initial data sets are well-known to possess well-defined total energy, linear momentum, and mass. More precisely, if $\mathcal{I}=(M,g,K,\mu,J)$ is a $C^2_{\sfrac{1}{2}+\varepsilon}$-asymptotically Euclidean initial data set for any $\varepsilon>0$ (naturally extending the definition to $\varepsilon>\tfrac{1}{2}$), its \emph{(ADM-)energy} $E$ and its \emph{(ADM-)linear momentum} $\vec{P}=(P^{1},P^{2},P^{3})$ are given by
\begin{align}\label{eqADME}
E & \definedas  \frac{1}{16\pi} \lim_{r\to\infty} \int\limits_{\vert\vec{x}\,\vert =r} \sum_{i,j} (\partial_i g_{ij} - \partial_j g_{ii}) \frac{x^j}{r} \, d\mu^\delta, \\\label{eqADMP}
P^{j} & \definedas  \frac{1}{8\pi} \lim_{r\to\infty} \int\limits_{\vert\vec{x}\,\vert =r} \sum_{i} \pi_{ij} \frac{x^i}{r} \, d\mu^\delta,
\end{align}
respectively, where $d\mu^{\delta}$ denotes the area measure induced on the coordinate sphere $\lbrace{\vert\vec{x}\,\vert=r\rbrace}$ by the Euclidean metric $\delta$ and ADM stands for Arnowitt--Deser--Misner~\cite{ADM}. The quantities $E$ and $\vec{P}$ are well-defined under the asymptotic conditions imposed here for arbitrary $\varepsilon>0$~\cite{Bartnik,Chrusciel} --- meaning the expressions converge and $E$ is asymptotically independent of the chart $\vec{x}$ while $\vec{P}$ is asymptotically covariant under chart deformations in a suitable way. From them, one defines the \emph{(ADM-)mass}~by
\begin{align}
m&\definedas\sqrt{E^{2}-\vert\vec{P}\vert^{2}}
\end{align}
whenever this expression makes sense, that is whenever the \emph{energy-momentum $4$-vector $(E,\vec{P})$} is causal with respect to the Minkowski metric of Special Relativity.

\begin{remark}[Bounds on $\varepsilon$]
For $\varepsilon\leq0$ in the above definition, one can find an asymptotic chart $\vec{x}$ (meaning a coordinate transformation outside a compact set) on the canonical Euclidean initial data set $\mathcal{I}_{\text{Eucl.}}=(\R^{3},\delta,K\equiv0,\mu\equiv0,J\equiv0)$ with respect to which $\mathcal{I}_{\text{Eucl.}}$ is $C^{2}_{\sfrac{1}{2}+\varepsilon}$-asymptotically Euclidean but the expression $E$ does not vanish as it should for Euclidean space, see Denisov and Soloviev \cite{DS}. This explains the suggestive notation of the decay order as $\sfrac{1}{2}+\varepsilon$. 

On the other hand, if $\varepsilon>\tfrac{1}{2}$ for an initial data set $\mathcal{I}$, a simple computation shows that it has $E=\vec{P}=0$ which is non-desirable in the context of discussing the center of mass and asymptotic foliations by constant mean curvature. This explains why we exclude this case in Definition \ref{defAEdata}.
\end{remark}

\begin{remark}[Asymptotically Euclidean Riemannian manifolds]\label{remAERiem}
If a Riemannian manifold $(M^{3},g)$ (with asymptotic chart $\vec{x}$) satisfies \eqref{eqOptDecay1} and its scalar curvature satisfies $\vert\scal\vert \leq C \vert\vec{x}\,\vert ^{-3-\varepsilon}$ for all $\vec{x}\in\mathbb{R}^{3}\setminus \overline{B_R(0)}$,  we say that $(M^{3},g)$ is a \emph{$C^2_{\sfrac{1}{2}+\varepsilon}$-asymptotically Euclidean manifold}. This is called the ``Riemannian case'', the reason being that one can reinterpret this as saying that the ``trivially extended initial data set'' $(M^{3},g,K\equiv0,\mu=\tfrac{1}{2}\,\scal,J\equiv0)$ satisfies \eqref{eqOptDecay1}-\eqref{eqOptDecay3}. In the Riemannian case, the notions ``mass" $m$ and ``energy" $E$ can be and are used interchangeably. 
\end{remark}

\subsection{Center of mass} We now proceed to discussing the total center of mass of an asymptotically Euclidean initial data set $\mathcal{I}=\IDS$ with energy $E\neq0$. The assumption $E\neq0$ is both technical (as many definitions of center of mass explicitly divide by $E$) and physically reasonable when considering the center of mass.

First, let us remark that our field knows many definitions of center of mass for isolated systems. The first definitions were given in terms of asymptotic flux integral expressions in coordinates, similar to those of energy and linear momentum above, see \eqref{BOMRT} below and the text surrounding it. In 1996, Huisken and Yau~\cite{HY} proved existence and uniqueness of a foliation by constant mean curvature $2$-spheres near infinity of an asymptotically Euclidean Riemannian manifold of positive energy $E>0$ and related it to a definition of center of mass in a way described below and in more detail in Section \ref{secADMstyleexpr}. More recently, Chen, Wang, and Yau~\cite{CWY} suggested a new definition of center of mass for isolated systems which is constructed from optimal isometric embeddings into the flat Minkowski spacetime of Special Relativity. For a brief, non-complete summary of other definitions of center of mass, please see \cite{CN}.\\

\emph{Flux integral definition.} The most prominent flux integral notion of center of mass $\vec{C}_\text{B\'OM}=(C^{1}_\text{B\'OM},C^{2}_\text{B\'OM},C^{3}_\text{B\'OM})$ for asymptotically Euclidean initial data sets was introduced by Beig and \'O Murch\-adha~\cite{BeigOMurchadha} as the asymptotic flux integral
\begin{align}\label{BOMRT}
C^{\,l}_\text{B\'OM} &\definedas \frac{1}{16\pi E} \lim_{r\to\infty} \int\limits_{\vert\vec{x}\,\vert =r} \left[x^l \sum_{i,j} (\partial_i g_{ij} - \partial_j g_{ii})\frac{x^j}{r} - \sum_i \left(g_{il}\frac{x^i}{r}-g_{ii}\frac{x^l}{r}\right)\right]\, d\mu^\delta,
\end{align}
a definition going back in parts to Regge and Teitelboim~\cite{RT}. See Szabados~\cite{Szabados} for valuable critical comments on this definition, and see Section \ref{secADMstyleexpr} for a covariant generalization of this formula following from our work.

The center of mass integral $\vec{C}_{\text{B\'OM}}$ will in general \emph{not} converge for initial data sets $\mathcal{I}=\IDS$ which are merely $C^2_{\sfrac{1}{2}+\varepsilon}$-asymptotically Euclidean with respect to some chart $\vec{x}$ and have $E\neq0$.  It will however converge once one assumes that the initial data set satisfies certain asymptotic symmetry conditions in the given chart $\vec{x}$, as for example the Regge--Teitelboim conditions introduced in~\cite{RT}, see \cite{BeigOMurchadha,ChruscielDelay,Huang} and Definition \ref{defRT} below. We also point out that the expression for $\vec{C}_\text{B\'OM}$ does not explicitly depend on the second fundamental form $K$ of the initial data set.\\

\emph{Definitions via foliations.} Several authors define the center of mass of an initial data set $\mathcal{I}=\IDS$ via a foliation by $2$-spheres near infinity. Following Cederbaum and Nerz~\cite{CN}, we will call such definitions ``abstract'' in contrast to the more explicit ``coordinate definitions'' of center of mass, see below.

The first abstract definition of center of mass was given in 1996 by Huisken and Yau~\cite{HY}, who proved existence and uniqueness of a foliation near the asymptotic end of an asymptotically Euclidean Riemannian manifold by closed, stable $2$-spheres of constant mean curvature, the \emph{CMC-foliation}. This goes back to an idea of Christo\-doulou and Yau~\cite{christodoulou}. In 2006, Metzger~\cite{Metzgerthesis} considered a foliation by $2$-spheres of constant null mean curvature (also called constant expansion) and concluded that this foliation is not fully suitable for defining a center of mass. For a more detailed review of foliations suggested to study in this context and of recent progress in terms of necessary and sufficient asymptotic decay conditions, please see \cite{CN}. 

Huisken and Yau~\cite{HY} also assign a coordinate center to their foliation. It is constructed from the abstract CMC-center as a ``Euclidean center'' of the CMC-foliation as follows: First, any closed, oriented $2$-surface $\Sigma\hookrightarrow\mathbb{R}^3$ has a \emph{Euclidean coordinate center} $\vec{c}\,(\Sigma)$ defined by
\begin{align}
\vec{c}\,(\Sigma)&\definedas \fint\limits_{\Sigma} \vec{x}\, d\mu^\delta \definedas \frac1{\vert\Sigma\vert_{\delta}} \int\limits_{\Sigma} \vec{x}\, d\mu^\delta.
\end{align}
Picking a fixed asymptotically flat coordinate chart $\vec{x}\colon M^3\setminus\mathcal{B}\to\mathbb{R}^3\setminus \overline{B_R(0)}$, this definition can naturally be extended to closed, oriented $2$-surfaces $\Sigma\hookrightarrow M^3\setminus \mathcal{B}$ by pushing $\Sigma$ forward to $\R^3$ and identifying $\vec{c}\,(\Sigma)\definedas\vec{c}\,(\vec{x}(\Sigma))$, slightly abusing notation. We will also call this center \emph{Euclidean center of $\Sigma$ (with respect to $\vec{x}$)}. This naturally extends to asymptotic foliations:

\begin{definition}[Coordinate center of a foliation]\label{folicoordcenter of mass}
Let $\mathcal{I}=\IDS$ be a $C^2_{\sfrac{1}{2}+\epsilon}$-asymptotically Euclidean initial data set for a chart $\vec{x}\colon M^3\setminus\mathcal{B}\to\R^3\setminus \overline{B_R(0)}$. Let $\{\Sigma^\sigma\}_{\sigma>\sigma_0}$ be a foliation of the asymptotic end $M^3\setminus\mathcal{B}$ of $M^3$ with area radius $r(\Sigma^\sigma)=\sqrt{\sfrac{\vert\Sigma^{\sigma}\vert}{4\pi}}$ of $\Sigma^\sigma$ diverging to $\infty$ as $\sigma\to\infty$. Denote by $\vec{c}\,(\Sigma^\sigma)$ the Euclidean coordinate center of the leaf $\Sigma^\sigma$ with respect to~$\vec{x}$. Then the \emph{(Euclidean) coordinate center $\vec{C}=(C^1,C^2,C^3)$ of the foliation $\{\Sigma^\sigma\}_{\sigma>\sigma_0}$ (with respect to the asymp\-totic chart $\vec{x}$)} is given by 
\begin{align}
\vec{C}\definedas\lim_{\sigma\to\infty}\vec{c}\,(\Sigma^\sigma),
\end{align}
in case the limit exists. Otherwise, we say that the coordinate center of the foliation $\{\Sigma^\sigma\}_{\sigma>\sigma_0}$ \emph{diverges (with respect to the asymptotic chart $\vec{x}$)}. 
\end{definition}

The vector $\vec{C}$ can be pictured to describe a point in the target $\R^3$ of the asymptotically flat coordinate chart $\vec{x}\colon M^3\setminus\mathcal{B}\to\R^3\setminus\overline{B_R(0)}$, but it need not lie in the image of $\vec{x}$, and indeed will often lie inside $\overline{B_R(0)}$. This means it cannot necessarily be pulled back into $M^3$. Moreover, $\vec{C}$ depends on the choice of asymptotic chart $\vec{x}$ --- at least a priori.\\

Coming back to the CMC-foliation constructed by Huisken and Yau \cite{HY}, it is well-known that the coordinate center $\vec{C}_\text{HY}$ of the CMC-foliation of a suitably asymptotically flat Riemannian manifold $(M^{3},g)$ or initial data set $\IDS$ of non-vanishing energy $E$ with respect to a given asymptotic chart $\vec{x}$ coincides with the Beig--\'O Murchadha center of mass vector $\vec{C}_\text{B\'OM}$ defined by \eqref{BOMRT}, provided that some additional symmetry assumptions are satisfied, see Huang~\cite{HuangstableCMC}, Eichmair and Metz\-ger~\cite{Metzger-Eichmair}, and Nerz~\cite{Nerzevo}. The most optimal result to date \cite[Theorem 6.3]{NerzCMC} states that for $C^2_{\sfrac{1}{2}+\varepsilon}$-asymptotically Euclidean Riemannian manifolds with $E\neq0$ satisfying the $C^2_{1 + \varepsilon}$-Regge--Teitelboim condition (see Definition \ref{defRT} below),  we have $\vec{C}_\text{HY}=\vec{C}_\text{B\'OM}$ whenever both definitions converge, and that divergence of one implies divergence of the other. Again, let us point out that the construction of $\vec{C}_{HY}$ does not explicitly depend on the second fundamental form $K$ of the initial data set under consideration. We furthermore note that the product $E\,\vec{C}_\text{B\'OM}$ is sometimes referred to as the ``center of mass charge" in the literature, even when $E=0$. We will not follow this usage here.\\

In this paper, we construct a novel geometric foliation $\{\Sigma^\sigma\}_{\sigma>\sigma_0}$ of the asymptotically flat end $M^3\setminus\mathcal{B}$ of a given $C^2_{\sfrac{1}{2}+\varepsilon}$-asymptotically Euclidean initial data set $\IDS$ with non-vanishing energy $E\neq0$, namely a foliation with ``constant spacetime mean curvature (STCMC)"-leaves, see Section \ref{secMain}. The general approach to define the coordinate center of a foliation $\{\Sigma^\sigma\}_{\sigma>\sigma_0}$ described above will then be applied to this new foliation to obtain a new definition of the coordinate center of mass of an initial data set as well as a coordinate expression analogous to and extending~\eqref{BOMRT}, see Section \ref{secADMstyleexpr}.

\subsection{Miscellannea} Here we collect some other definitions for future reference.\\[-1ex]

\emph{Regge--Teitelboim condition for initial data sets.} With the exception of the later part of Section \ref{secADMstyleexpr}, we will \emph{not} assume that the initial data sets under consideration satisfy any asymptotic symmetry assumptions, in particular the Regge--Teitelboim conditions. However, it will be useful in our discussion to refer to those conditions which is why we define them here.

\begin{definition}[Regge--Teitelboim conditions for initial data sets]\label{defRT}
We say that a $C^2_{\sfrac{1}{2}+\varepsilon}$-asymp\-tot\-ic\-ally Euclidean initial data set $\mathcal{I}=\IDS$ satisfies the \emph{$C^2_{\gamma + \varepsilon}$-Regge--Teitelboim conditions} for $\gamma>\tfrac{1}{2}$ (with respect to the given chart $\vec{x}$ with respect to which it is $C^2_{\sfrac{1}{2}+\varepsilon}$-asymptotically Euclidean) if there is a constant $C=C(\mathcal{I},{\vec{x}},\gamma)$ such that
\begin{subequations}
\begin{align}
\left\vert g_{ij}^{\mathrm{odd}}\right\vert  + \vert\vec{x}\,\vert \left\vert  \partial_k (g_{ij}^{\mathrm{odd}})\right\vert  + \vert\vec{x}\,\vert ^2 \left\vert \partial_k\partial_l (g_{ij}^{\mathrm{odd}})\right\vert  & \leq    C \vert\vec{x}\,\vert ^{-\gamma-\varepsilon} \label{eqRT1} \\
\vert \pi_{ij}^{\mathrm{even}}\vert  +  \vert\vec{x}\,\vert \vert \partial_k (\pi_{ij}^{\mathrm{even}})\vert  &\leq  C \vert\vec{x}\,\vert ^{-1-\gamma-\varepsilon} \label{eqRT2}\\
\vert \mu^{\mathrm{odd}}\vert  + \vert (J^j)^{\mathrm{odd}}\vert  & \leq C \vert\vec{x}\,\vert ^{-\frac{5}{2}-\gamma-\varepsilon}  \label{eqRT3}
\end{align}
\end{subequations}
holds for all $\vec{x}\in\mathbb{R}^{3}\setminus \overline{B_R(0)}$ and for all $i,j,k,l\in\{1,2,3\}$. Here, as usual, we have denoted the even and odd parts of any continuous function $f\colon \mathbb{R}^{3}\setminus \overline{B_R(0)}\to\mathbb{R}$ by
\begin{align}
f^{\mathrm{odd}}(\vec{x})\definedas \tfrac{1}{2}(f(\vec{x})-f(-x)), \qquad f^{\mathrm{even}}(\vec{x}) \definedas \tfrac{1}{2}(f(\vec{x})+f(-\vec{x})).
\end{align}
\end{definition}

\begin{remark}[Regge--Teitelboim conditions for Riemannian manifolds]\label{remRTRiemannian}
We say that a $C^2_{\sfrac{1}{2}+\varepsilon}$-asympt\-ot\-ic\-ally Euclidean Riemannian manifold $(M^3,g)$ satisfies the \emph{$C^2_{\gamma + \varepsilon}$-(Riemannian) Regge--Teitelboim conditions} on $\mathbb{R}^{3}\setminus \overline{B_R(0)}$ for $\gamma>\tfrac{1}{2}$ if the above inequalities are satisfied for $\pi\equiv K\equiv0$, i.e.~if \eqref{eqRT1} holds and if
$\vert \scal^{\mathrm{odd}}\vert  \leq C \vert\vec{x}\,\vert ^{-\frac{5}{2}-\gamma-\varepsilon}$ for all $\vec{x}\in\mathbb{R}^{3}\setminus \overline{B_R(0)}$.
\end{remark}

\emph{Weighted Sobolev spaces.} In this paper, we use the following definition of Sobolev spaces, which is well-suited for keeping track of fall-off rates of different quantities associated with our foliation. Suppose that $(\Sigma, g^\Sigma)$ is a closed (compact without boundary), oriented $2$-surface in an asymptotically Euclidean $3$-manifold $(M^{3},g)$ of suitable regularity.
For $p\in [1,\infty)$, the Lebesgue space $L^p(\Sigma)$ is defined as the set of all measurable functions $f\colon\Sigma\to\R$ such that their $L^p$-norm 
\begin{align*}
\Vert f\Vert _{L^p(\Sigma)}\definedas \left(\int_\Sigma \vert f\vert ^p \, d\mu\right)^{\frac{1}{p}}
\end{align*}
is finite. Recall also that the $L^\infty$-norm of a measurable $f\colon\Sigma\to\R$ is defined by $\Vert f\Vert _{L^\infty(\Sigma)} \definedas \mathrm{ess}\sup_\Sigma \vert f\vert $. Then for $p\in [1,\infty]$, the Sobolev norms are defined as follows:
\begin{align*}
\Vert f\Vert _{W^{0,p}(\Sigma)} \definedas \Vert f\Vert _{L^p(\Sigma)}, \qquad \Vert f\Vert _{W^{k+1,p}(\Sigma)} \definedas \Vert f\Vert _{L^p(\Sigma)}+ r \left\Vert \nabla^\Sigma f\right\Vert _{W^{k,p}(\Sigma)}, \quad k=0,1, \ldots
\end{align*}
where $r\definedas\sqrt{\sfrac{\vert \Sigma\vert}{4\pi}}$ is the area radius of $\Sigma$. The Sobolev space $W^{k,p}(\Sigma)$ is the set of all functions with finite $W^{k,p}$-norm. This definition naturally extends to the case of tensor fields on $\Sigma$. Appendix \ref{sesApp} in particular collects some Sobolev Inequalities for functions on $2$-surfaces $(\Sigma,g^\Sigma)$ embedded in Euclidean space.

%%%%%%%%%%%%%%%%%%%%%%%%%%%%%%%%%%%%%%%%%%%%%%%%%%%%%%%%%%%%%%%%%%%%%%%%

\section{Main results, motivation, and the strategy of the proof}\label{secMain}
% !TEX root = main.tex
Given a $2$-dimensional surface $\Sigma$ in an initial data set $\IDS$, we denote its mean curvature inside the Riemannian manifold $(M^3,g)$ with respect to the outward pointing unit normal\footnote{Please note that we use the convention for the sign of the second fundamental form ensuring that $H=2$ with respect to the outward pointing unit normal for the unit round sphere in $\R^3$.} by $H$ and set $P \definedas \tr_\Sigma K$, as usual. The \emph{spacetime mean curvature (STMC)} of $\Sigma$ is defined by the length of the \emph{spacetime mean curvature vector} $\vec{\mathcal{H}}$
\begin{align}
\mathcal{H} = \sqrt{H^2 - P^2}.
\end{align} 
We will suggestively write $\mathcal{H}^\sigma_\tau$ to denote the spacetime mean curvature of a surface called $\Sigma^\sigma_\tau$ etc, $\widetilde{\mathcal{H}}$ to denote the spacetime mean curvature of a surface called $\widetilde{\Sigma}$ etc., whenever the initial data set inducing the intrinsic and extrinsic geometry on the surface is clear from context.

In this paper we prove the following theorems.
\setcounter{section}{6}
\setcounter{thm}{1}
\begin{theorem}[Existence of STCMC-foliation]
Let $\mathcal{I}=\IDS$ be a $C^2_{\sfrac{1}{2} + \varepsilon}$-asymptotically Euclidean initial data set with non-vanishing energy $E \neq 0$. Then there is a constant $\sigma_{\mathcal{I}}>0$ depending only on $\varepsilon$, $C_{\mathcal{I}}$, and $E$, a compact set $\mathcal{K} \subset M^{3}$, and a bijective $C^{1}$-map $\Psi\colon (\sigma_\mathcal{I}, \infty) \times \mathbb{S}^2 \to M^{3} \setminus \mathcal{K}$ such that each of the surfaces $\Sigma^\sigma \definedas \Psi(\sigma, \mathbb{S}^2)$ has constant spacetime mean curvature $\mathcal{H}( \Sigma^\sigma) \equiv\sfrac{2}{\sigma}$ provided that $\sigma>\sigma_{\mathcal{I}}$. 
\end{theorem}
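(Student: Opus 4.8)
The plan is to construct the STCMC-foliation as a deformation of the Huisken--Yau type CMC-foliation, following the continuity/implicit-function-theorem philosophy of Metzger \cite{Metzgerthesis} and Nerz \cite{NerzCMC}, but driven by the spacetime mean curvature functional $\mathcal{H}=\sqrt{H^2-P^2}$ rather than $H$ alone. First I would set up the standard background: for each large $\sigma$, work with surfaces $\Sigma$ that are graphs over coordinate spheres $\{|\vec{x}|=\sigma\}$, parametrized by a function $\varphi\in C^{2,\alpha}(\mathbb{S}^2)$ (or in the weighted Sobolev spaces introduced in Section~\ref{secPrelim}), with the center $\vec{c}$ allowed to vary. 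The a priori estimates of Section~\ref{secHypersurfaces} guarantee that any STCMC-surface with $\mathcal{H}\equiv 2/\sigma$ in the relevant regime is indeed such a graph with controlled $\varphi$, so that the search can be confined to this class.

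The core analytic step is the linearization computed in Section~\ref{secLinearization}. I would write the map $\mathcal{F}(\sigma,\varphi)\definedas \mathcal{H}(\Sigma^\sigma_\varphi)-\tfrac{2}{\sigma}$ (suitably rescaled by a power of $\sigma$ to make it a well-behaved map between $\sigma$-independent Banach spaces) and show that its linearization at the approximate solution has the form $L_\sigma = -\tfrac{1}{2}(\Delta^{\Sigma}+\tfrac{2}{\sigma^2}) + (\text{lower order})$, i.e.\ a small perturbation of the stability operator of the round sphere, exactly as for the CMC case since the $P^2$ term contributes only at lower order under the decay \eqref{eqOptDecay2}. The key point is invertibility on the $L^2$-orthogonal complement of the translational kernel (the first spherical harmonics), with operator norm of the inverse bounded by $C\sigma^2$ uniformly in $\sigma$; the three-dimensional approximate kernel is then handled by simultaneously solving for the center $\vec{c}$, using that $E\neq 0$ makes the associated $3\times 3$ reduced system non-degenerate — this is the mechanism through which $E\neq0$ enters, paralleling the positive-energy hypothesis of \cite{HY}.

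With uniform invertibility in hand, I would run a fixed-point/Newton iteration (or the quantitative implicit function theorem) at each fixed large $\sigma$ to produce a unique $\varphi^\sigma$ and center $\vec{c}^{\,\sigma}$ solving $\mathcal{F}(\sigma,\varphi^\sigma)=0$, with estimates showing $\varphi^\sigma\to 0$ in the appropriate norm as $\sigma\to\infty$. Regularity of $\sigma\mapsto(\varphi^\sigma,\vec c^{\,\sigma})$ in $C^1$ follows from the implicit function theorem, and one checks $\partial_\sigma$ of the position vector is transverse so that $\Psi(\sigma,\cdot)\definedas$ the graph of $\varphi^\sigma$ over the sphere of radius $\sigma$ centered at $\vec c^{\,\sigma}$ is an embedding for $\sigma>\sigma_{\mathcal{I}}$; the leaves are mutually disjoint and exhaust $M^3\setminus\mathcal{K}$ by the a priori estimates, giving the bijective $C^1$-map. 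Tracking the dependence of all constants shows $\sigma_{\mathcal{I}}$ depends only on $\varepsilon$, $C_{\mathcal{I}}$, and $E$.

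\textbf{Main obstacle.} The hardest part is the uniform (in $\sigma$) control of the linearized operator together with the coupled center-correction: one must show that under the borderline decay $\sfrac{1}{2}+\varepsilon$ the error terms distinguishing $L_\sigma$ from the round-sphere stability operator (and the $P$-dependent terms peculiar to $\mathcal{H}$ versus $H$) are genuinely lower order and do not spoil the $O(\sigma^2)$ inverse bound, and that the reduced finite-dimensional equation for $\vec c^{\,\sigma}$ is solvable precisely because $E\neq 0$. This requires the delicate weighted estimates of Sections~\ref{secHypersurfaces}--\ref{secLinearization}, and is where the optimality of the decay assumptions is really used.
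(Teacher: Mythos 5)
Your overall architecture (linearize $\mathcal{H}=\sqrt{H^2-P^2}$, note the $P$-terms are lower order, use $E\neq0$ to handle the three lowest eigenvalues, then an implicit-function-theorem argument) is in the right spirit, but the concrete scheme you propose — at each fixed large $\sigma$, run a Newton/IFT iteration starting from a (re-centered) coordinate sphere and solve a reduced $3\times 3$ equation for the center — has a genuine gap at the optimal decay rate, and it is exactly at the point you flag as the ``main obstacle''. Under mere $C^2_{\sfrac{1}{2}+\varepsilon}$-decay \eqref{eqOptDecay1}--\eqref{eqOptDecay2}, with no Regge--Teitelboim condition, a coordinate sphere is not a good enough approximate solution: the error $\mathcal{H}-\sfrac{2}{\sigma}$ is pointwise only $O(\sigma^{-\frac{3}{2}-\varepsilon})$, hence of size $O(\sigma^{-\frac{1}{2}-\varepsilon})$ in $L^2(\Sigma)$, and without parity assumptions there is no cancellation in its projection onto the translational modes $f_1,f_2,f_3$. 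Since the translational eigenvalues of the linearization are only of size $\sfrac{6|E|}{\sigma^3}$ (this is where $E\neq0$ enters), the first correction demanded by your reduced equation is a recentering of order $\sigma^{\frac{3}{2}-\varepsilon}/|E|$, which for $\varepsilon\leq\tfrac12$ is \emph{not} $o(\sigma)$; equivalently, applying the inverse (which costs $\sigma^3$ on the translational part, not $\sigma^2$) to the error produces a graph function whose sup-norm is comparable to, or larger than, the radius itself. The iteration therefore leaves the regime in which the graphical representation and your uniform linear estimates are valid, and the fixed-point argument does not close. This is not a technicality: it is the reason the CMC case at this decay (Nerz) could not be handled by the classical Huisken--Yau perturbation of coordinate spheres either.

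The paper avoids this by never needing an explicit approximate solution at fixed $\sigma$. Instead it runs a method of continuity in the parameter $\tau\in[0,1]$ scaling the second fundamental form, $\mathcal{I}_\tau=(M^3,g,\tau K,\mu_\tau,\tau J)$, starting at $\tau=0$ from Nerz's CMC-foliation of $(M^3,g)$ (Theorem \ref{thExistenceCMC}): openness is the IFT applied to the graphical map over the \emph{current} STCMC-leaf (Lemma \ref{lemStatement}), using the invertibility of $\mathcal{L}$ on all of $W^{2,2}(\Sigma)$ from Proposition \ref{propInvertibility} (no Lyapunov--Schmidt reduction over the center is needed, precisely because the translational eigenvalues are nonzero when $E\neq0$); closedness is a compactness/Schauder argument (Lemma \ref{lemClosedness}). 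The quantity being inverted along the deformation is only $\partial_\tau\mathcal{H}$, i.e.\ the right-hand side $\tau(\tr_{\Sigma}K)^2/H=O(\sigma^{-2-2\varepsilon})$ of \eqref{eqPDEu}, which is small enough that the lapse is $O(\sigma^{1-2\varepsilon})$ and the coordinate centers drift only by $o(\sigma)$ over the whole deformation (Lemmas \ref{lemLapse}, \ref{lemEstimates}, \ref{lemConstEvolution}), keeping the leaves in the a priori class $\mathcal{A}(a,b,\eta)$ where Proposition \ref{propRegularity} and Proposition \ref{propInvertibility} apply; the foliation property is then a separate argument showing the $\sigma$-lapse equals $1+O(\sigma^{-\varepsilon})>0$ (Lemma \ref{lemFoliation}). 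To repair your proposal one would either have to import Regge--Teitelboim-type parity to make coordinate spheres adequate approximate solutions (thereby losing the optimal hypotheses of the theorem), or replace the per-$\sigma$ iteration by such a continuity argument anchored at the known CMC-foliation.
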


\setcounter{thm}{11}
\begin{theorem}[Uniqueness of STCMC-foliation] 
Let $a \in [0,1)$, $b \geq 0$, and $\eta\in (0,1]$ be constants and let $\mathcal{I}=\IDS$ be a
$C^2_{\sfrac{1}{2} + \varepsilon}$-asymptotically Euclidean initial data set with
non-vani\-shing energy $E \neq 0$. Then there is a constant $\sigma_{\mathcal{I}}$  depending only on $\varepsilon$, $a$, $b$, $\eta$, $C_\mathcal{I}$, and $E$, such that for all $\sigma > \sigma_{\mathcal{I}}$, there is a unique surface $\Sigma^{\sigma} \in \mathcal{A}(a,b,\eta)$
with constant spacetime mean curvature $\mathcal{H}(\Sigma^{\sigma}) \equiv\sfrac{2}{\sigma}$ with respect to~$\mathcal{I}$.
\end{theorem}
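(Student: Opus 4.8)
The plan is to prove uniqueness by a direct comparison argument; the existence of at least one leaf $\Sigma^\sigma\in\A$ with $\mathcal{H}(\Sigma^\sigma)\equiv\sfrac{2}{\sigma}$ for large $\sigma$ follows from the construction in Section~\ref{secExistence} (enlarging $\sigma_{\mathcal{I}}$ so that the constructed leaf lies in $\A$), so the substance is to show that two such leaves coincide. The main input is the a priori analysis of Section~\ref{secHypersurfaces}: for $\sigma$ above a threshold depending only on $\varepsilon$, $a$, $b$, $\eta$, $C_{\mathcal{I}}$, and $E$, every $\Sigma\in\A$ with $\mathcal{H}(\Sigma)\equiv\sfrac{2}{\sigma}$ is a normal graph, over a coordinate sphere $S_{r(\Sigma)}(\vec z(\Sigma))\subset\R^3\setminus\overline{B_R(0)}$, of a function $u$ that together with its first and second derivatives is small relative to the area radius $r(\Sigma)$ (which is comparable to $\sigma$), and the Euclidean center $\vec z(\Sigma)$ is quantitatively controlled. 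In particular $\Sigma$ is uniformly round. Consequently, for $\sigma$ large, two leaves $\Sigma_1,\Sigma_2\in\A$ with the common value $\mathcal{H}(\Sigma_1)=\mathcal{H}(\Sigma_2)=\sfrac{2}{\sigma}$ can both be written as normal graphs of functions $u_1$, $u_2$ over one fixed coordinate sphere $S_{\bar\rho}(\bar{\vec z})$, say with $\bar{\vec z}$ and $\bar\rho$ the averages of the two centers and area radii.

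Setting $w\definedas u_1-u_2$ and subtracting $\mathcal{H}[u_1]=\sfrac{2}{\sigma}=\mathcal{H}[u_2]$, the fundamental theorem of calculus gives $\mathcal{L}w=0$, where $\mathcal{L}\definedas\int_0^1 D\mathcal{H}[u_2+tw]\,dt$ is a second-order linear elliptic operator on $\mathbb{S}^2$. By the linearization formulas of Section~\ref{secLinearization} together with the a priori bounds, $\bar\rho^2\mathcal{L}$ is a perturbation of the round-sphere stability model $L_0\definedas-\Delta_{\mathbb{S}^2}-2$ whose operator norm $W^{2,p}(\mathbb{S}^2)\to L^p(\mathbb{S}^2)$ tends to $0$ as $\sigma\to\infty$, with smallness governed by $C_{\mathcal{I}}$, $\sigma^{-\varepsilon}$, the roundness of the leaves, and the size of $K$ (which enters only through $P=\tr_\Sigma K$). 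I would then split $w=w_0\oplus w_1\oplus w_{\ge2}$ into its $\ell=0$, $\ell=1$, and $\ell\ge2$ spherical-harmonic parts. On $\ell\ge2$ the model $L_0$ is coercive, so inverting it and using $\mathcal{L}w=0$ bounds $w_{\ge2}$ in $W^{2,p}$ by $(\|w_0\|+\|w_1\|)$ times a factor tending to $0$. Pairing $\mathcal{L}w=0$ with the constant function and using that the $\ell=0$ eigenvalue of $L_0$ equals $-2\neq0$ --- heuristically, the ``scaling'' direction is pinned because both leaves carry the same prescribed value $\sfrac{2}{\sigma}$ --- bounds $w_0$ by $\|w\|$ times a factor tending to $0$.

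The remaining mode $\ell=1$ is the main obstacle: $L_0$ annihilates the $\ell=1$ harmonics, which are the infinitesimal Euclidean translations of the reference sphere, so one must extract the next-order behavior of $\mathcal{L}$ on this subspace. The crux is to show that the $\ell=1\!\to\!\ell=1$ block of $\mathcal{L}$ is, up to lower-order corrections, a sign-definite multiple of $|E|\,\bar\rho^{-3}$ --- an honest ``restoring force'' generated by the ADM energy. This is where the hypothesis $E\neq0$ is indispensable, and also where the full strength of the asymptotic hypotheses must be used: a naive count would make the contributions of the slowly decaying, a priori unstructured parts of $g$ (and of $K$) to this block \emph{larger} than the $|E|\,\bar\rho^{-3}$ term, so one must invoke, beyond the a priori roundness estimates, additional structure --- ultimately the constraint equations, via the fast decay of $\mu$, $J$, and hence of $\scal$ --- which guarantees that the geometrically relevant $\ell=1$ data of large coordinate spheres is governed by the energy. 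Granting this, pairing $\mathcal{L}w=0$ with $w_1$ and feeding in the already-established control of $w_0$ and $w_{\ge2}$ yields $\|w_1\|\le\delta(\sigma)\,\|w\|$ with $\delta(\sigma)\to0$; unwinding the estimates, $\delta(\sigma)$ is of order $\sigma^{-2\varepsilon}/|E|$, which makes transparent that both $E\neq0$ and $\varepsilon>0$ are needed.

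Combining the three estimates gives $\|w\|\le C\,\delta(\sigma)\,\|w\|$ for a universal constant $C$, so choosing $\sigma_{\mathcal{I}}$ large enough --- depending on $\varepsilon$, $a$, $b$, $\eta$, $C_{\mathcal{I}}$, and $E$, as claimed --- that $C\,\delta(\sigma)<1$ for $\sigma>\sigma_{\mathcal{I}}$ forces $w\equiv0$, hence $u_1=u_2$ and $\Sigma_1=\Sigma_2$. I expect the only genuine difficulty to be the $\ell=1$ step described above: identifying the energy-generated restoring force on the translational modes and proving it controls every error term --- in particular the slowly decaying parts of the metric and the contribution of the second fundamental form --- is the technical heart of the argument and the reason the non-vanishing energy assumption cannot be removed; the rest reduces to coercivity of the round-sphere stability operator on the complementary spherical-harmonic modes.
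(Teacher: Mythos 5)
Your overall strategy is genuinely different from the paper's: the paper proves uniqueness by a method of continuity in the data, deforming the given STCMC surface along $\mathcal{I}_\tau=(M^3,g,\tau K,\mu_\tau,\tau J)$ from $\tau=1$ back to $\tau=0$ via the Implicit Function Theorem (every intermediate surface being an honest STCMC surface for $\mathcal{I}_\tau$ lying in the a priori class, by Lemma \ref{lemConstEvolution}), and then invoking Nerz's Riemannian CMC uniqueness theorem at $\tau=0$ together with local uniqueness in the IFT to pin down the whole path. You instead attempt a direct comparison/contraction between two STCMC surfaces in $\mathcal{A}(a,b,\eta)$ with the same value $2/\sigma$, via $\mathcal{L}w=0$ with $\mathcal{L}=\int_0^1 D\mathcal{H}[u_2+tw]\,dt$ and a spherical-harmonic splitting.

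The gap is exactly at the step you flag as the heart, and it is not merely technical. The spectral input you need — coercivity on $\ell\ge2$, invertibility on $\ell=0$, and above all the restoring force $\approx 6m_{\mathcal{H}}/\sigma^3\approx 6E/\sigma^3$ on the translational modes (Proposition \ref{propInvertibility}, Lemmas \ref{lemEigenLaplace2}, \ref{lemLaplace}) — is established in the paper only for the linearization \emph{at an STCMC surface in the a priori class}, because its proof runs through the roundness estimates of Proposition \ref{propRegularity} ($\Vert\mathring{A}\Vert_{L^\infty}=O(\sigma^{-\frac32-\varepsilon})$, eigenfunctions close to the coordinate functions, Gauss equation, the Hawking-mass identity). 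Your averaged operator is built from linearizations at the interpolating graphs $u_2+tw$, which are not STCMC; worse, since $a\in[0,1)$ is arbitrary, the two admissible coordinate centers may differ by as much as $\sim 2ar$, so $w$ is of order $r$, the interpolants are ellipsoid-like surfaces with $\vert\mathring A\vert\sim a/r$ rather than $O(r^{-\frac32-\varepsilon})$, and "both leaves are graphs over one fixed sphere with small graph functions'' simply fails quantitatively. Consequently $\bar\rho^2\mathcal{L}$ is not a small perturbation of $-\Delta_{\mathbb{S}^2}-2$, and the delicate $\ell=1$ block estimate (a term of size $E/\sigma^3$ that must dominate error terms which are generically of size $\sigma^{-\frac52-\varepsilon}$ before cancellations) does not transfer; nothing in the paper, nor in your sketch beyond the hope that "the constraint equations" save the day, supplies it for these non-round, non-STCMC interpolants. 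This is precisely why the paper interpolates in $\tau$ (so that every intermediate object satisfies the a priori and spectral estimates) and then leans on Theorem \ref{thUniquenessCMC} rather than proving uniqueness from scratch. To rescue your approach you would at minimum need first to improve the center estimate for an arbitrary STCMC surface in $\mathcal{A}(a,b,\eta)$ (effectively reducing to $a\approx 0$ with $w=o(\sigma^{\frac12})$-type smallness) and then to prove the $\ell=1$ restoring-force estimate uniformly along the graphical interpolation — neither of which is contained in your argument as written.
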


Here, $\mathcal{A}(a,b,\eta)$ is an a priori class of ``asymptotically centered'' spheres introduced in Section \ref{secHypersurfaces}. It has been shown in particular by Brendle and Eichmair \cite{BrendleEichmair} that such an a priori condition is necessary to obtain uniqueness of CMC-surfaces in general, see the discussion in Section~\ref{subsecUnique}. As STCMC-surfaces generalize CMC-surfaces, their observation applies here, too.

\setcounter{section}{7}
\setcounter{thm}{4}
We also obtain a coordinate expression $\vec{C}_{\text{STCMC}}$ for the STCMC-center of mass, see below. It differs from the Beig--\'O Murchadha formula $\vec{C}_{\text{B\'OM}}$ given in \eqref{BOMRT} by a term $\vec{Z}$, as stated in the following theorem.

\begin{theorem}[STCMC-coordinate expression]
Let $\mathcal{I}=\IDS$ be a $C^2_{\sfrac{1}{2}+\varepsilon}$-asymptotically Euclidean initial data set with respect to an asymptotic coordinate chart $\vec{x}\colon M^{3}\setminus \mathcal{B}\to\R^{3}\setminus\overline{B_{R}(0)}$ and decay constant $C_{\mathcal{I}}$, with non-vanishing energy $E \neq 0$. Assume in addition that 
\begin{align*}
\vert K\vert \leq C_{\mathcal{I}} \vert\vec{x}\,\vert ^{-2}
\end{align*}
for all $\vec{x}\in\R^{3}\setminus\overline{B_{R}(0)}$ and that $g$ satisfies the Riemannian $C^2_{\sfrac{3}{2}+\varepsilon}$-Regge--Teitelboim condition. Then the coordinate center $\vec{C}_\text{STCMC}$ of the unique foliation by surfaces of constant spacetime mean curvature is well-defined if and only if the \emph{correction term} 
\begin{align*}
Z^i \definedas \frac{1}{32\pi E} \lim_{r\to\infty}\int_{\mathbb{S}^{2}_{r}} \frac{x^i \left(\sum_{k,l}\pi_{kl}x^k x^l\right)^2}{r^3} \, d\mu^\delta
\end{align*}
limits exist for $i=1,2,3$. In this case, we have 
\begin{align*}
\vec{C}_\text{STCMC} = \vec{C}_\text{B\'OM} +  \vec{Z},
\end{align*}
where $\vec{C}_\text{B\'OM}$ is the Beig--\'O Murchadha center of mass and $\vec{Z}=(Z^1,Z^2,Z^3)$, or equivalently
\begin{align*}
\begin{split}
C^i_\text{STCMC} = \frac{1}{16\pi E}\lim_{r\to \infty} \left[\int_{\mathbb{S}^{2}_{r}}   \left(x^i \sum_{k,l}(\partial_k g_{kl} - \partial_l g_{kk})\frac{x^l}{r} - \sum_k \left(g_{ki}\frac{x^k}{r}-g_{kk}\frac{x^i}{r}\right) \right) d\mu^\delta \right.\\ + \left.\int_{\mathbb{S}^{2}_{r}} \frac{x^i \left(\sum_{k,l}\pi_{kl}x^k x^l\right)^2}{2r^3} \, d\mu^\delta\right], \quad  i=1,2,3.
\end{split}
\end{align*} 
\end{theorem}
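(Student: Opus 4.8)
The plan is to compute the Euclidean coordinate center $\vec{c}\,(\Sigma^\sigma)$ of the STCMC-leaves to sufficient asymptotic order and then pass to the limit $\sigma\to\infty$. First I would recall from the existence/uniqueness theorems and the a priori estimates of Section~\ref{secHypersurfaces} that each leaf $\Sigma^\sigma$ can be written as a graph over a coordinate sphere $\bS^2_\sigma$ of suitable radius, with graph function $f=f(\sigma)$ whose size and derivatives are controlled in the weighted $W^{2,p}$-norms adapted to the decay $\sfrac12+\varepsilon$. Writing $\Sigma^\sigma=\{\,\vec{x}=(\sigma+f)\,\omega:\omega\in\bS^2\,\}$, the Euclidean center is $\vec{c}\,(\Sigma^\sigma)=\fint_{\Sigma^\sigma}\vec{x}\,d\mu^\delta$, and the nontrivial contribution comes from the $\ell=1$ (odd) part of $f$. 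So the first real step is to \emph{identify the $\ell=1$ part of the graph function}: linearize the STCMC-equation $\mathcal{H}(\Sigma^\sigma)\equiv\sfrac2\sigma$ about the coordinate sphere using the linearization of spacetime mean curvature from Section~\ref{secLinearization} (this is where $\mathcal{H}=\sqrt{H^2-P^2}$ and hence the quantity $H\,\delta H - P\,\delta P$, i.e.\ the term quadratic in $P=\tr_\Sigma K$, enters), invert the stability operator on the $\ell=1$ modes, and read off $f^{(1)}$ as a boundary-type integral of the geometric data over $\bS^2_\sigma$.

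The second step is the \textbf{bookkeeping of which terms survive the limit}. Under $C^2_{\sfrac12+\varepsilon}$-decay together with the extra hypotheses $|K|\le C_\mathcal{I}|\vec{x}\,|^{-2}$ and the Riemannian $C^2_{\sfrac32+\varepsilon}$-Regge--Teitelboim condition on $g$, the purely metric contribution to $f^{(1)}$ converges and reproduces exactly the Beig--\'O Murchadha integrand in \eqref{BOMRT}: this is the same computation as in the CMC case (cf.\ Huang, Nerz, Eichmair--Metzger), since to leading order the $H$-part of the STCMC-operator agrees with the CMC-operator. The genuinely new term is the one coming from $P^2$ inside $\mathcal{H}=\sqrt{H^2-P^2}\approx H-\tfrac{P^2}{2H}$: on the coordinate sphere $H\approx\sfrac2\sigma$ and $P=\tr_\Sigma K\approx \frac{1}{\sigma^2}\sum_{k,l}\pi_{kl}x^kx^l$ (using $\pi=(\tr K)g-K$ and the radial projection), so $\tfrac{P^2}{2H}$ contributes a source of size $\sim \sigma(\sum\pi_{kl}x^kx^l)^2/\sigma^4$; projecting onto $\ell=1$ and inserting into $\vec{c}\,(\Sigma^\sigma)$ produces precisely $Z^i=\frac{1}{32\pi E}\lim_{r\to\infty}\int_{\bS^2_r} x^i(\sum_{k,l}\pi_{kl}x^kx^l)^2\,r^{-3}\,d\mu^\delta$. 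I would carry out this projection carefully, keeping track of the normalization $1/(32\pi E)$ (the factor $E$ from the denominator in the coordinate-center formula exactly as in \eqref{BOMRT}, the rest from $\int_{\bS^2}\omega^i\omega^j\,d\mu=\tfrac{4\pi}{3}\delta^{ij}$ and the $\tfrac12$ from the expansion of the square root). The decay assumption $|K|\lesssim|\vec{x}\,|^{-2}$ is exactly what makes $(\sum\pi_{kl}x^kx^l)^2/r^3$ the borderline-integrable quantity whose convergence is equivalent to well-definedness of $\vec{C}_\text{STCMC}$; the Regge--Teitelboim condition of order $\sfrac32+\varepsilon$ is what is needed so that all the other, non-$\ell=1$-symmetric remainder terms (both from $g$ and from cross terms) decay strictly and drop out.

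The third step is to \textbf{control the error terms and take the limit}, showing that everything not accounted for above is $o(1)$ as $\sigma\to\infty$: quadratic-in-$f$ corrections to $\mathcal{H}$, the difference between the Euclidean area measure on $\Sigma^\sigma$ and on $\bS^2_\sigma$, the difference between the intrinsic area radius $r(\Sigma^\sigma)$ used to parametrize and the coordinate radius $\sigma$, and the higher multipole contributions — all of which are estimated using the a priori bounds on $f$ and the decay of $g,K,\mu,J$, together with the Sobolev inequalities of Appendix~\ref{sesApp}. Finally one equates $\lim_{\sigma\to\infty}\vec{c}\,(\Sigma^\sigma)$ with $\vec{C}_\text{B\'OM}+\vec Z$ and notes that, since the metric part is known to converge under the stated Regge--Teitelboim hypothesis, the limit defining $\vec{C}_\text{STCMC}$ exists if and only if each $Z^i$ exists, which is the ``if and only if'' in the statement; the second displayed formula is just $\vec{C}_\text{B\'OM}+\vec Z$ written out with the $\tfrac12$ absorbed. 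I expect the \textbf{main obstacle} to be step one together with the sharp bookkeeping in step two: one must extract the $\ell=1$ part of the STCMC graph function with an error that is genuinely $o(1)$ (not merely $O(1)$) under only $\sfrac12+\varepsilon$ decay plus the two extra hypotheses, which forces careful use of the near-optimal a priori estimates and a precise identification of the leading behavior of $P$ on the leaves; the square-root nonlinearity in $\mathcal{H}$ makes the cross terms between the metric and the $K$-quadratic source the most delicate to show are harmless.
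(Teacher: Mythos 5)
Your strategy is sound and would plausibly yield the theorem, but it is a genuinely different route from the paper's. You propose to work directly on the STCMC leaves: write each leaf as a graph over a coordinate sphere, linearize the full STCMC equation $\mathcal{H}=\sqrt{H^2-P^2}\approx H-\sfrac{P^2}{2H}$ about the sphere, invert the stability operator on the $\ell=1$ modes, and read off the center, so that the metric part reproduces the Beig--\'O~Murchadha flux integral (as in the Huang/Eichmair--Metzger/Nerz CMC computations) and the $P^2$-term produces $\vec{Z}$ directly, the factor $\tfrac12$ coming from the square-root expansion. The paper instead never re-derives the CMC-to-B\'OM identification: it fixes $\sigma$, deforms along the family $\mathcal{I}_\tau=(M^3,g,\tau K,\mu_\tau,\tau J)$ from the CMC leaf ($\tau=0$) to the STCMC leaf ($\tau=1$), uses the lapse equation $\mathcal{L}u_\tau=\tau(\tr_{\Sigma}K)^2/H$ together with the center-variation formula of Proposition \ref{propCenterMotion}, the spectral estimates of Proposition \ref{propInvertibility}, and Lemma \ref{lemLapse} (applied with $\delta=2-\varepsilon$, which is exactly where the extra hypothesis $|K|\leq C_{\mathcal{I}}|\vec{x}\,|^{-2}$ enters) to show $\partial_\tau z^i_\tau\approx\frac{\tau}{16\pi E}\int_{\bS^2_\sigma}x^i(\pi_{kl}x^kx^l)^2\sigma^{-3}d\mu^\delta$; integrating $\tau\,d\tau$ over $[0,1]$ is what produces the $\tfrac{1}{32\pi E}$ (the same quadratic-in-$K$ dependence you attribute to the square root), and then Nerz's theorem that $\vec{C}_\text{CMC}=\vec{C}_\text{B\'OM}$ under the Regge--Teitelboim hypothesis is quoted as a black box (this is Lemma \ref{lemSTCMC-CoM} plus one citation). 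The trade-off: the paper's comparison argument confines all delicate estimates to the $\tau$-derivative of the center, for which the machinery of Sections 4--6 is already in place, whereas your direct approach must re-establish, for STCMC graphs under the optimal $\sfrac12+\varepsilon$ decay, the identification of the metric contribution with $\vec{C}_\text{B\'OM}$ (it cannot simply be cited, since the leaves differ from the CMC ones), and must also deal with the fact that the leaf centers are a priori only $O(\sigma^{1-\eta})$, so the graph over an origin-centered sphere carries a large $\ell=1$ part whose nonlinear couplings with the metric decay need the same kind of careful cancellation arguments (e.g.\ $\int_\Sigma\ric(\nu,\nu)x^i\,d\mu=O(\sigma^{-\varepsilon})$) that Nerz uses; on the other hand, your route is self-contained and makes the origin of $\vec{Z}$ (the $-\sfrac{P^2}{2H}$ correction to the prescribed mean curvature) completely explicit without invoking the interpolation family $\mathcal{I}_\tau$.
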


An example of an initial data set with $\vec{C}_{\text{STCMC}}\neq\vec{C}_{\text{B\'OM}}$ or in other words with $\vec{Z}\neq0$ will be discussed in Section \ref{secHYCN}. The above formula for $\vec{C}_\text{STCMC}$ allows to compute the STCMC-center of mass of an initial data set explicitly, once an asymptotic chart $\vec{x}$ has been picked. However, as the assumptions of Theorem \ref{thCoordinateExpression} suggest, this formula cannot be expected to always converge. See Conjecture \ref{conj:convergence} and the text above of it for a discussion of when the coordinate expression for $\vec{C}_{\text{STCMC}}$ should converge, without reference to $\vec{C}_{\text{B\'OM}}$ and $\vec{Z}$ and without any Regge--Teitelboim conditions nor additional decay assumptions on $K$.\\

\setcounter{section}{8}
\setcounter{thm}{0}
We get the following theorem on the time evolution of the STCMC-foliation and center of mass. The full covariance of the STCMC-foliation under the Poincar\'e group is discussed in Section \ref{secPoincare}.
\begin{theorem}[Time evolution of STCMC-foliation]
Let $(\R\times M^{3},\mathfrak{g})$ be a smooth, globally hyperbolic Lorentzian spacetime satisfying the Einstein equations with energy momentum tensor $\mathfrak{T}$. Suppose that, outside a set of the form $\R\times \mathcal{K}$, $\mathcal{K}\subset M^{3}$ compact, there is a diffeomorphism $\id_{\R}\times\,\vec{x}\colon \R\times(M^{3}\setminus\mathcal{K})\to \R\times(\R^{3}\setminus\overline{B_{R}(0)})$ which gives rise to asymptotic coordinates $(t,\vec{x}\,)$ on $\R\times(M^{3}\setminus\mathcal{K})$. 

Assume that $\mathcal{I}_{0}=(\{0\}\times M^{3},g,K,\mu,J)\hookrightarrow(\R\times M^{3},\mathfrak{g})$ is a $C^2_{\sfrac{1}{2}+\varepsilon}$-asymptotically Euclidean initial data set with respect to the coordinate chart $\vec{x}$ and with $E\neq0$, and suppose additionally that $K=O_{1}(\vert\vec{x}\,\vert^{-2})$ with constant $C_{\mathcal{I}}$ as $\vert\vec{x}\,\vert\to\infty$. Now consider the $C^{1}$-parametrized family of $C^2_{\sfrac{1}{2}+\varepsilon}$-asymptotically Euclidean initial data sets 
\begin{align*}
\mathcal{I}(t)=(\{t\}\times M^{3},g(t),K(t),\mu(t),J(t))\hookrightarrow(\R\times M^{3},\mathfrak{g})
\end{align*}
with respect to $\vec{x}$ which starts from $\mathcal{I}(0) = \mathcal{I}_{0}$, and which exists for all $t\in(-t_{*},t_{*})$ for some $t_{*}>0$. Assume furthermore that the constants $C_{\mathcal{I}(t)}$ are uniformly bounded on $(-t_{*},t_{*})$, without loss of generality such that $C_{\mathcal{I}(t)}\leq C_{\mathcal{I}_{0}}$.

Assume the foliation $\mathcal{I}(t)$ has initial lapse $N=1+O_2(\vert\vec{x}\,\vert^{-\frac{1}{2}-\varepsilon})$ as $\vert\vec{x}\,\vert\to\infty$ with decay measuring constant denoted by $C_{N}$ and initial shift $X=0$, and suppose furthermore that the initial stress tensor $S$ of $\mathcal{I}_{0}$ satisfies $S = O(\vert\vec{x}\,\vert^{-\frac{5}{2}-\varepsilon})$ as $\vert\vec{x}\,\vert\to\infty$. There is a constant $\overline{t}>0$, depending only on $\varepsilon$, $C_{\mathcal{I}_{0}}$, $C_{N}$, and $E(0)$ such that the following holds: If the initial data set $\mathcal{I}_{0}$ has well-defined STCMC-center of mass $\vec{C}_{\text{STCMC}}\,(0)$ then the STCMC-center of mass $\vec{C}_{\text{STCMC}}\,(t)$ of $\mathcal{I}(t)$ is also well-defined for $\vert t\vert<\overline{t}$. Furthermore, the initial velocity at $t=0$ is given by
\begin{align*}
\left.\frac{d}{dt}\right\vert_{t=0} \vec{C}_{STCMC} = \frac{\vec{P}}{E}.
\end{align*}
Moreover, we have that $\left.\frac{d}{dt}\right\vert_{t=0} E=0$ and $\left.\frac{d}{dt}\right\vert_{t=0} \vec{P}=\vec{0}$.
\end{theorem}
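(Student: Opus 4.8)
The plan is to differentiate the STCMC-foliation along the Einstein flow and reduce the assertion to (i) the conservation of $E$ and $\vec{P}$, and (ii) control of the ``translational'' ($\ell=1$) mode of the linearization of spacetime mean curvature, following the template of Nerz's CMC time-evolution results~\cite{Nerzevo} and of the coordinate analysis of Section~\ref{secADMstyleexpr}. With vanishing shift $X\equiv 0$ and lapse $N=1+O_{2}(\vert\vec{x}\,\vert^{-\frac{1}{2}-\varepsilon})$, the Einstein evolution equations give $\partial_{t}g_{ij}=-2NK_{ij}$ and express $\partial_{t}K_{ij}$ as a linear combination of $\nabla_{i}\nabla_{j}N$, $N\,\ric_{ij}$, $N$ times quadratic terms in $K$, and $N$ times the matter variables $\mu$ and $S$; using the hypotheses together with $\hess N,\,\ric,\,S=O(\vert\vec{x}\,\vert^{-\frac{5}{2}-\varepsilon})$ one gets $\partial_{t}g=O_{1}(\vert\vec{x}\,\vert^{-\frac{3}{2}-\varepsilon})$, $\partial_{t}K=O(\vert\vec{x}\,\vert^{-\frac{5}{2}-\varepsilon})$, and hence $\partial_{t}\pi=O(\vert\vec{x}\,\vert^{-\frac{5}{2}-\varepsilon})$. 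Differentiating the ADM integrals \eqref{eqADME} and \eqref{eqADMP} in $t$ and inserting these formulas (the Euclidean measure $d\mu^{\delta}$ being $t$-independent), the boundary integrands over $\bS^{2}_{r}$ are $O(\vert\vec{x}\,\vert^{-\frac{5}{2}-\varepsilon})$ --- for $E$ because $\partial(NK)$ is, for $\vec{P}$ because $\partial_{t}\pi$ is --- so both flux integrals vanish as $r\to\infty$; hence $\frac{d}{dt}|_{0}E=0$ and $\frac{d}{dt}|_{0}\vec{P}=\vec{0}$.

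For $\vert t\vert$ small, the existence and uniqueness theorems produce a unique STCMC-foliation $\{\Sigma^{\sigma}_{t}\}_{\sigma>\overline{\sigma}}$ of $\mathcal{I}(t)$ with $\mathcal{H}(\Sigma^{\sigma}_{t})\equiv 2/\sigma$ and with $\overline{\sigma}$ uniform in $t$. Writing the leaves as Euclidean normal graphs over $\Sigma^{\sigma}_{0}$ with height functions $w^{\sigma}_{t}$ (so $w^{\sigma}_{0}\equiv 0$), the invertibility of the linearized spacetime mean curvature operator from Section~\ref{secLinearization}, the a priori estimates of Section~\ref{secHypersurfaces}, and the $C^{1}$-dependence of the data give, via the implicit function theorem and with all estimates uniform in $\sigma$ and $t$, that $t\mapsto w^{\sigma}_{t}$ is $C^{1}$ and that $\dot w^{\sigma}\definedas\partial_{t}|_{0}w^{\sigma}_{t}$ solves $L_{\Sigma^{\sigma}}[\dot w^{\sigma}]=-\dot{\mathcal{H}}^{\sigma}$, where $L_{\Sigma^{\sigma}}$ is the STCMC-linearization and $\dot{\mathcal{H}}^{\sigma}\definedas\frac{d}{dt}|_{0}\mathcal{H}_{\mathcal{I}(t)}(\Sigma^{\sigma}_{0})$ is the explicit time-derivative of spacetime mean curvature with the surface held fixed. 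Since $\mathcal{H}^{2}=H^{2}-P^{2}$, one has $\dot{\mathcal{H}}^{\sigma}=\mathcal{H}^{-1}(H\dot H-P\dot P)$ with $\dot H,\dot P$ computed from $\partial_{t}g=-2NK$ and $\partial_{t}K$ via the standard first-variation formulas for $H$ and for $\tr_{\Sigma}K$; using $H=2/\sigma+O(\sigma^{-3})$ and $P=O(\sigma^{-2})$ --- here the hypothesis $\vert K\vert\leq C_{\mathcal{I}}\vert\vec{x}\,\vert^{-2}$ enters, exactly as in the STCMC-coordinate-expression theorem above --- one isolates the $\ell=1$ projection of $\dot{\mathcal{H}}^{\sigma}$ and recognizes its leading part as a momentum-type density whose suitably weighted $\ell=1$ integral over the nearly round sphere $\Sigma^{\sigma}$ reproduces $\vec{P}$ up to $o(1)$.

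It remains to convert this into the velocity of the coordinate center. Differentiating $\vec{c}(\Sigma^{\sigma}_{t})=\fint_{\Sigma^{\sigma}_{t}}\vec{x}\, d\mu^{\delta}$ at $t=0$ yields the first variation of the Euclidean barycenter under the normal deformation with speed $\dot w^{\sigma}$, which on $\Sigma^{\sigma}$ equals, to leading order, a weighted $\ell=1$ average of $\dot w^{\sigma}$, the $\ell=0$ and $\ell\geq 2$ contributions being $o(1)$ as $\sigma\to\infty$. After rescaling, $L_{\Sigma^{\sigma}}$ is a small perturbation of the CMC-stability operator $\Delta_{\Sigma^{\sigma}}+\vert\sff\vert^{2}+\ric(\nu,\nu)$ (with $\sff$ the second fundamental form of $\Sigma^{\sigma}$) on the nearly round sphere of area radius $\approx\sigma$; its $\ell\geq 2$ modes are uniformly invertible, so the high-mode part of $\dot w^{\sigma}$ is controlled directly by $\dot{\mathcal{H}}^{\sigma}$, whereas on the $\ell=1$ mode it is almost degenerate: pairing $L_{\Sigma^{\sigma}}[\dot w^{\sigma}]=-\dot{\mathcal{H}}^{\sigma}$ with the restrictions of the coordinate functions $x^{i}$ and using self-adjointness, the small quantity $L_{\Sigma^{\sigma}}[x^{i}]$ --- the classical obstruction governing the Euclidean center of CMC-type spheres, whose size is set by $E$ --- relates the $\ell=1$ component of $\dot w^{\sigma}$ to the weighted $\ell=1$ integral of $\dot{\mathcal{H}}^{\sigma}$. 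Combining this with the identification of that integral from the previous paragraph, the $E$ in the denominator and the $\vec{P}$ in the numerator conspire so that $\frac{d}{dt}|_{0}\vec{c}(\Sigma^{\sigma})=\vec{P}/E+o(1)$ as $\sigma\to\infty$, uniformly for $\vert t\vert$ small; since $\vec{c}(\Sigma^{\sigma}_{0})\to\vec{C}_{\text{STCMC}}(0)$ by hypothesis and the $t$-derivative converges uniformly in $\sigma$, the limit $\vec{c}(\Sigma^{\sigma}_{t})\to\vec{C}_{\text{STCMC}}(t)$ exists for $\vert t\vert<\overline{t}$, so $\vec{C}_{\text{STCMC}}(t)$ is well-defined there, and interchanging $\lim_{\sigma\to\infty}$ with $\frac{d}{dt}|_{0}$ gives $\frac{d}{dt}|_{0}\vec{C}_{\text{STCMC}}=\vec{P}/E$; combined with the first paragraph this is the assertion.

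The main obstacle is the translational-mode analysis of the third paragraph: because $L_{\Sigma^{\sigma}}$ is nearly degenerate precisely on $\ell=1$, one must control \emph{simultaneously} the small obstruction $L_{\Sigma^{\sigma}}[x^{i}]$, whose size is governed by $E$, and the small $\ell=1$ part of $\dot{\mathcal{H}}^{\sigma}$, and show that their quotient converges to exactly $\vec{P}/E$ with all errors $o(1)$ and uniform in $t$, so that the exchange of $\lim_{\sigma\to\infty}$ and $\frac{d}{dt}$ is legitimate. In particular one must verify that the $-P^{2}$ term in $\mathcal{H}^{2}$ and the first-order terms appearing in the linearization of $\tr_{\Sigma}K$ do not survive in the limit, which is exactly where the extra decay $\vert K\vert\leq C_{\mathcal{I}}\vert\vec{x}\,\vert^{-2}$ and the stress decay $S=O(\vert\vec{x}\,\vert^{-\frac{5}{2}-\varepsilon})$ are needed; and one must propagate the a priori estimates of Sections~\ref{secHypersurfaces}--\ref{secExistence} uniformly in $t$, so that the centering class $\mathcal{A}(a,b,\eta)$ traps each leaf $\Sigma^{\sigma}_{t}$ along the flow and the construction applies with $t$-independent thresholds.
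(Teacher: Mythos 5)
Your proposal follows essentially the same route as the paper's proof: conservation of $E$ and $\vec{P}$ from the decay of the Einstein evolution data at $t=0$, the lapse equation $\mathcal{L}u=-\dot H+\tfrac{P}{H}\dot P$ obtained by differentiating the STCMC condition along the flow (with $\dot H$, $\dot P$ computed from $\partial_t g=2NK$, $\partial_t K$ and the stress decay), and the translational-mode pairing with $m_{\mathcal{H}}\to E$ converting this into $\dot{\vec z}^{\,\sigma}\to\vec P/E$ as $\sigma\to\infty$. The one caveat is that $\mathcal{L}$ is not self-adjoint; the paper replaces your "self-adjointness" step by integration by parts plus an estimate on the small first-order term $\tfrac{P}{H}K(\nabla^{\Sigma}\cdot,\nu)$, which is harmless at the orders involved, so this does not affect the validity of the approach.
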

\vspace{2ex}
\setcounter{section}{3}
\subsection{Strategy of the proofs of Theorems \ref{thExistence} and \ref{thUniqueness}}\label{ssecStrategy}
The underlying structure of the proofs of Theorems \ref{thExistence} and \ref{thUniqueness} presented in Section \ref{secExistence} and several of the lemmas proved in the same section is a method of continuity inspired by Metzger \cite{Metzgerthesis,MetzgerCMC}. Given an initial data set $\mathcal{I}=\IDS$, we will consider the one-parameter family of initial data sets $\mathcal{I}_\tau=\IDStau$, $\tau\in [0,1]$, with $\mu_\tau$ given through the constraint equations \eqref{CE} as
\begin{align}\label{CEtau}
2\mu_\tau&\definedas  \scal - |\tau K|^2 + (\tr (\tau K))^2.
\end{align}
For $\tau=0$, we thus consider the Riemannian manifold $(M^3,g)$ with $2\mu=2\mu_0=\scal$ while for $\tau=1$, we study the original initial data set $\mathcal{I}=\IDS$ with $\mu=\mu_1$. It is straightforward to see that if the original initial data set $\mathcal{I}$ is $C^2_{\sfrac{1}{2} + \varepsilon}$-asymptotically Euclidean with respect to an asymptotical chart $\vec{x}\colon M^3\setminus\mathcal{B}\to\R^3\setminus\overline{B_R(0)}$ then all initial data sets $\mathcal{I}_\tau$ are also $C^2_{\sfrac{1}{2} + \varepsilon}$-asymptotically Euclidean with respect to the same chart and comparable constants. In particular, the Riemannian manifold $(M^3,g)$ is $C^2_{\sfrac{1}{2} + \varepsilon}$-asymptotically Euclidean in this chart. This is what will allows us to drop the explicit mention of the chart in the proofs. Moreover, we note that the energy $E_\tau$ computed for the initial data set $\mathcal{I}_\tau=\IDStau$ does in fact not depend on $\tau$ and can and will thus be called $E$. We globally assume in this paper that $E\neq0$ and we will fix the background Riemannian manifold $(M^3,g)$ once and for all.

For second fundamental form $K=0$, the desired STCMC-foliation coincides with the classical CMC-foliation. From Nerz' work \cite{NerzCMC}, we thus know that the theorems and lemmas we will prove for initial data sets hold in the Riemannian setting under the Riemannian version of our assumptions, see also Remark \ref{remAERiem}. In other words, we know that our claims hold for $\tau=0$ in the method of continuity approach described above. In Section \ref{secExistence}, we will recall Nerz' corresponding theorems in our notation.

As usual, we will appeal to the Implicit Function Theorem in order to show openness of the interval in the method of continuity. Closedness follows from a standard convergence argument.

%%%%%%%%%%%%%%%%%%%%%%%%%%%%%%%%%%%%%%%%%%%%%%%%%%%%%%%%%%%%%%%%%%%%%%%%

\section{A priori estimates on STCMC-surfaces}\label{secHypersurfaces}
% !TEX root = main.tex
When deforming the foliation by $2$-surfaces of constant mean curvature to the foliation by $2$-surfaces of constant spacetime mean curvature, we need to keep track of how the geometry of the leaves changes. For this, following \cite{MetzgerCMC} and \cite{NerzCMC}, we will now introduce an a priori class of closed, oriented $2$-surfaces having the properties that their ``area radius'', ``coordinate radius'', and ``mean curvature radius'' as defined below are comparable in a certain sense.

In this section, we will not make explicit reference to the asymptotic coordinate chart $\vec{x}\colon M^{3}\setminus \mathcal{B}\to \R^{3}\setminus \overline{B_{R}(0)}$ in most estimates, however the asymptotic coordinates $\vec{x}$ will be used in order to compute the coordinate radius and the center of mass of a given $2$-surface~$\Sigma\hookrightarrow M^{3}$ (or ``$\,\Sigma\hookrightarrow\mathcal{I}\,$''). We will always and mostly tacitly assume that $\Sigma\hookrightarrow M^{3}\setminus \mathcal{B}$ so that it lies in the domain of the asymptotic coordinate chart.
												
\begin{definition}\label{defConcentric}
Let $(M^{3},g)$ be a $C^2_{\sfrac{1}{2}+\varepsilon}$-asymptotically flat manifold with asymptotic coordinate chart $\vec{x}\colon M^{3}\setminus \mathcal{B}\to \R^{3}\setminus \overline{B_{R}(0)}$. Given any closed, oriented $2$-surface $\Sigma \hookrightarrow M^{3}\setminus\mathcal{B}$, we define its \emph{area radius} $r=r(\Sigma)$ and \emph{(Euclidean) coordinate center} $\vec{z} = \vec{z}\,(\Sigma)$, $\vec{z}=(z^1,z^2,z^3)$, by
\begin{align}
r \definedas \sqrt{\frac{\vert \Sigma\vert_{g}}{4\pi}}, \quad \text{ and } \quad z^i\definedas \frac{1}{\vert \Sigma\vert_{\delta}} \int_\Sigma x^i \, d\mu^\delta, \qquad i=1,2,3,
\end{align}
respectively, where $d\mu^{\delta}$ denotes the area element on $\Sigma$ induced by the Euclidean metric~$\delta$. Given constants $a\in [0,1)$, $b \geq 0$, and $\eta\in (0,1]$, we say that $\Sigma$ belongs to the a priori class of $(M^{3},g)$-\emph{asymptotically centered} surfaces,
\begin{align}
\Sigma\in{\mathcal{A}(a,b,\eta)},
\end{align}
if its area radius $r$, coordinate center $\vec{z}$, \emph{coordinate radius} $\vert\vec{x}\,\vert$, and mean curvature $H$ satisfy the following estimates 
\begin{align}\label{eqDefConcentric}
\vert \vec{z}\,\vert  \leq a r + b r^{1-\eta}, \quad r^{2+\eta} \leq \min_\Sigma \vert \vec{x}\,\vert ^{\frac{5}{2} + \varepsilon}, \quad \int_\Sigma H^2 \, d\mu - 16 \pi (1-\gamma) \leq \frac{b}{r^\eta},
\end{align}
where $\gamma$ denotes the genus of $\Sigma$. 
\end{definition}

\begin{remark}
We will use the same a priori classes in the context of asymptotically Euclidean initial data sets $\mathcal{I}=\IDS$, where the definition of $\mathcal{A}(a,b,\eta)$ only depends on the Riemannian manifold part $(M^{3},g)$. This will later be important when we consider families of initial data sets of the form $\mathcal{I}_{\tau}=(M^{3},g,\tau K, \mu_{\tau}, \tau J)$, see Section \ref{secExistence} and \eqref{CEtau}.
\end{remark}
\medskip

\begin{remark}
Note that for $r>1$, $0\leq a \leq \overline{a}<1$, $0 \leq b \leq \overline{b}$, and $0< \overline{\eta} \leq \eta \leq 1$, we have  $\mathcal{A}(a,b,\eta) \subseteq \mathcal{A}(\overline{a},\overline{b},\overline{\eta})$.
\end{remark}

\begin{example}\label{exCMC}
Let $(M^{3},g)$ be a $C^2_{\sfrac{1}{2} + \varepsilon}$-asymptotically Euclidean manifold with non-vanishing energy~$E\neq0$. Then the unique leaves of the constant mean curvature foliation $\{\Sigma^\sigma \}_{\sigma > \sigma_0}$ constructed in \cite{NerzCMC} are asymptotically centered in this sense. More specifically, there are constants $b > 0$ and $\sigma_0>0$ depending only on $C_{\mathcal{I}}$ such that $\Sigma^\sigma \in \mathcal{A}(a=0,b,\eta=\varepsilon)$ for $\sigma > \sigma_0$. See \cite[Section 5]{NerzCMC} for details.
\end{example}

\begin{proposition}\label{propRegularity}
Suppose that $\overline{a} \in [0,1)$, $\overline{b} \geq 0$, $\overline{\eta} \in (0,1]$, and assume that $0\leq a \leq \overline{a}$, $0 \leq b \leq \overline{b}$, and $\overline{\eta} \leq \eta \leq 1$. Let $\mathcal{I}=\IDS$ be a $C^2_{\sfrac{1}{2}+\varepsilon}$-asymptotically Euclidean initial data set. Then there exist constants $\overline{\sigma}$ and $C$ depending only on $\varepsilon$, $\overline{a}$, $\overline{b}$, $\overline{\eta}$,  and $C_{\mathcal{I}}$ such that the following a priori conclusions hold for any closed, oriented $2$-surface $\Sigma\hookrightarrow\mathcal{I}$ with $\Sigma\in \mathcal{A}(a,b,\eta)$: Suppose that $\Sigma$ has constant spacetime mean curvature $\mathcal{H} \equiv \sfrac{2}{\sigma}$ in $\mathcal{I}$ for some $\sigma > \overline{\sigma}$. Then $\Sigma$ is a topological sphere and the tracefree part $\mathring{A}$ of its second fundamental form satisfies
\begin{align}\label{eq2FFtraceless}
r^{-1} \Vert \mathring{A}\Vert _{W^{1,2}(\Sigma)} + \Vert \mathring{A}\Vert _{L^\infty(\Sigma)} \leq C r^{-\frac{3}{2}-\varepsilon}.
\end{align}  
Furthermore, there exists a function $f\colon \mathbb{S}^2_r (\vec{z}\,) \to \mathbb{R} $ such that  $\Sigma$ is the graph of $f$ and
\begin{align}\label{eqGraphFunction}
\Vert f\Vert _{W^{2,\infty}(\mathbb{S}^2_r (\vec{z}\,))}\leq C r^{\frac{1}{2}-\varepsilon},
\end{align}
as well as a conformal parametrization $\psi\colon \mathbb{S}^2_r (\vec{z}\,) \to \Sigma$ which satisfies
\begin{align}\label{eqConfPar}
\Vert \psi-\id\Vert _{W^{2,2}(\mathbb{S}^2_r (\vec{z}\,))}\leq C r^{\frac{3}{2}-\varepsilon},
\end{align}
where $\id$ denotes the trivial embedding $(\mathbb{S}^2 _r (\vec{z}\,),g^{\mathbb{S}^2_r(\vec{z}\,)} )\hookrightarrow (\mathbb{R}^3,\delta)$. The conformal factor $u\colon \mathbb{S}^{2}_{r}(\vec{z}\,)\to\R^{+}$ such that $\psi^* g^\Sigma = u^2 g^{\mathbb{S}^2_r(\vec{z}\,)}$ satisfies
\begin{align}\label{eqConfFact}
\Vert u^2-1\Vert _{W^{2,2}(\mathbb{S}^2_r (\vec{z}\,))}\leq C r^{\frac{1}{2}-\varepsilon}.
\end{align}
Finally, the \emph{Euclidean distance to the coordinate origin $\vert\vec{x}\,\vert $ (on $\Sigma$)}, the area radius $r$, and the \emph{spacetime mean curvature radius $\sigma$} are comparable in the following sense:
\begin{align}\label{eqDistArea}
(1 - \overline{a}) r - C r^{\max\{\frac{1}{2}-\varepsilon, 1-\overline{\eta}\}} \leq \vert\vec{x}\,\vert &\leq (1 + \overline{a}) r + C r^{\max\{\frac{1}{2}-\varepsilon, 1-\overline{\eta}\}},\\\label{eqMCArea}
\vert r - \sigma \vert  &\leq C r^{\frac{1}{2}-\varepsilon}.
\end{align}
\end{proposition}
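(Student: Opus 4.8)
The plan is to reduce the STCMC-condition to an \emph{almost}-CMC-condition and then run, with only cosmetic changes, the a priori estimates of Nerz~\cite{NerzCMC} for CMC-surfaces (which in turn build on Metzger~\cite{MetzgerCMC} and Huisken--Yau~\cite{HY}). Since $\mathcal A(a,b,\eta)\subseteq\mathcal A(\overline a,\overline b,\overline\eta)$ for $r>1$ whenever $a\le\overline a$, $b\le\overline b$, $\overline\eta\le\eta$, one may work throughout with $\overline a,\overline b,\overline\eta$ in place of $a,b,\eta$, so that all constants depend only on $\varepsilon,\overline a,\overline b,\overline\eta,C_{\mathcal I}$. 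The key observation is that $\mathcal H\equiv\sfrac2\sigma$ forces $H^2=\sfrac4{\sigma^2}+P^2$ with $P=\tr_\Sigma K$, hence $H=\sfrac2\sigma+h$ with $h=\tfrac\sigma4 P^2+O(\sigma^3P^4)$. Using $|K|+|\vec x\,|\,|\nabla K|\le C|\vec x\,|^{-3/2-\varepsilon}$, the a priori bound $|\vec x\,|\ge r^{(2+\eta)/(5/2+\varepsilon)}$ on $\Sigma$, and the fact that $\nabla^\Sigma P$ reduces to a trace of $\nabla K$ plus a term $\mathring A\ast K$, the error $h$ and its first two tangential derivatives are of lower order than $\sfrac2\sigma$ and its (vanishing) derivatives --- quantitatively $|h|+r\,|\nabla^\Sigma h|+r^2\,|(\nabla^\Sigma)^2h|\le C\sigma|\vec x\,|^{-3-2\varepsilon}$ once one knows $\sigma\sim r$ (established below). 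Consequently $\Sigma$ has \emph{almost constant} mean curvature, and in the Gauss, Codazzi, and Simons identities every occurrence of $\nabla^\Sigma H$ or of the deviation of $H^2$ from $\sfrac4{\sigma^2}$ may be absorbed into a lower-order inhomogeneity.

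Next I would settle the topology and a crude bound on $\mathring A$. In dimension two $|A|^2=|\mathring A|^2+\tfrac12H^2$, so the Gauss equation reads $\scal^\Sigma=\scal-2\ric(\nu,\nu)+\tfrac12H^2-|\mathring A|^2$; integrating and applying Gauss--Bonnet gives $\tfrac12\int_\Sigma H^2\,d\mu+\int_\Sigma|\mathring A|^2\,d\mu=8\pi(1-\gamma)+O(r^{-\eta})$, where the $O(r^{-\eta})$ records the ambient curvature integral, bounded using $|\scal|+|\ric|\le C|\vec x\,|^{-5/2-\varepsilon}$ (a consequence of~\eqref{eqOptDecay1}--\eqref{eqOptDecay3} and the Hamiltonian constraint~\eqref{eqHamiltonian}) together with $r^{2+\eta}\le\min_\Sigma|\vec x\,|^{5/2+\varepsilon}$. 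Combining this with the third defining property $\int_\Sigma H^2\,d\mu-16\pi(1-\gamma)\le br^{-\eta}$, with the pointwise inequality $H\ge\sfrac2\sigma$ (whence $\int_\Sigma H^2\,d\mu\ge16\pi r^2/\sigma^2$), and with a curved Willmore-type lower bound $\int_\Sigma H^2\,d\mu\ge16\pi-Cr^{-\eta}$ valid since $\Sigma$ lies deep in the near-Euclidean region, forces $\gamma=0$ for $\sigma>\overline\sigma$ (as in~\cite{NerzCMC}) and yields $\int_\Sigma|\mathring A|^2\,d\mu\le Cr^{-\eta}$. Feeding this into the standard ``nearly umbilic implies nearly round'' estimates and using the centering bound $|\vec z\,|\le ar+br^{1-\eta}$ identifies $\Sigma$ as a rough graph over $\mathbb S^2_r(\vec z\,)$ and gives the crude comparability $|\vec x\,|\asymp r$ on $\Sigma$ (using $\overline a<1$), together with a crude $\sigma\asymp r$.

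With $|\vec x\,|\asymp r$ in hand, ambient curvature on $\Sigma$ obeys the sharper bound $|\ric|\le Cr^{-5/2-\varepsilon}$, so the Codazzi identity $\divg^\Sigma\mathring A=\tfrac12\nabla^\Sigma H-\ric(\nu,\cdot)^{\top}$ has right-hand side bounded in $L^2(\Sigma)$ by $C(r^{-3/2-\varepsilon}+\|\nabla^\Sigma h\|_{L^2(\Sigma)})\le Cr^{-3/2-\varepsilon}$; since a trace-free symmetric $2$-tensor on a surface $W^{2,2}$-close to the round sphere is controlled by its divergence (there are no transverse-traceless tensors on $\mathbb S^2$), this gives $\|\mathring A\|_{W^{1,2}(\Sigma)}\le Cr^{-1/2-\varepsilon}$, and the Simons-type elliptic equation for $\mathring A$ together with the Sobolev inequality on $\Sigma$ from Appendix~\ref{sesApp} (applicable because $\Sigma$ is nearly round, with controlled isoperimetric constant) promotes it to $\|\mathring A\|_{L^\infty(\Sigma)}\le Cr^{-3/2-\varepsilon}$, which is~\eqref{eq2FFtraceless}. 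From there: integrating $H=\sfrac2\sigma+h$ over $\Sigma$ and comparing $\int_\Sigma H\,d\mu$ with its near-Euclidean value $8\pi r$ (the deviation being controlled by~\eqref{eq2FFtraceless} and the asymptotics of $g$) yields $|r-\sigma|\le Cr^{1/2-\varepsilon}$, i.e.~\eqref{eqMCArea}, and combining with $|\vec z\,|\le ar+br^{1-\eta}$ upgrades the crude comparability to~\eqref{eqDistArea}. Writing $\Sigma$ as a radial graph of $f$ over $\mathbb S^2_r(\vec z\,)$, the STCMC-equation $\mathcal H\equiv\sfrac2\sigma$ for $\Sigma=\graph f$ becomes a quasilinear elliptic equation for $f$ with coefficients close to those of the round model and right-hand side $\sfrac2\sigma+h$; elliptic regularity gives~\eqref{eqGraphFunction}. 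Finally, since $\Sigma$ is a topological sphere it carries, by uniformization, a conformal parametrization $\psi\colon\mathbb S^2_r(\vec z\,)\to\Sigma$ with $\psi^\ast g^\Sigma=u^2g^{\mathbb S^2_r(\vec z\,)}$; the Gauss curvature equation $-\Delta\log u+r^{-2}=\tfrac12\scal^\Sigma u^2$ with $\tfrac12\scal^\Sigma$ close to $r^{-2}$ (by the Gauss equation and~\eqref{eq2FFtraceless}) gives~\eqref{eqConfFact}, and inserting this into the position-vector equation $\Delta\psi=u^2\vec{\mathcal H}$ with $\vec{\mathcal H}$ close to $-\tfrac2{r^2}(\psi-\vec z\,)$ gives~\eqref{eqConfPar}; both of these are the De~Lellis--M\"uller / Metzger / Nerz arguments unchanged.

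The hard part is the sharp $\mathring A$-estimate and the bootstrap surrounding it: one must carry the $P$-dependent inhomogeneities --- in particular $\nabla^\Sigma P$, which mixes $\nabla K$ with $\mathring A\ast K$, and the extra $P^2$ in $H^2$ --- through the Codazzi and Simons steps and verify they stay of lower order at every stage, and one must organize the iteration so that the a priori circular dependence among ``$|\vec x\,|\asymp r$'', ``$\mathring A$ is small'', ``$\Sigma$ is a graph near $\mathbb S^2_r(\vec z\,)$'', and ``$\sigma\asymp r$'' closes, with the threshold $\overline\sigma$ and the constant $C$ depending only on $\varepsilon,\overline a,\overline b,\overline\eta,C_{\mathcal I}$.
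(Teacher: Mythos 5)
Your overall strategy coincides with the paper's: both arguments first use the Gauss equation, the Gauss--Bonnet Theorem, and the third inequality in \eqref{eqDefConcentric} to obtain $\Vert \mathring{A}\Vert_{L^2(\Sigma)}=O(r^{-\eta/2})$, then invoke De Lellis--M\"uller \cite{DeLellisMueller} to get the spherical topology, a rough conformal parametrization, and the crude comparability $\vert\vec{x}\,\vert \asymp r \asymp \sigma$, and only then exploit that the STCMC-condition gives the pointwise almost-CMC estimate $\vert H-\sfrac{2}{\sigma}\vert\leq\vert P\vert\leq C r^{-\frac{3}{2}-\varepsilon}$ to bootstrap to the sharp conclusions. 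The main difference is one of economy: at the bootstrap stage the paper simply cites \cite[Proposition 4.1]{NerzCMC}, whose assumptions it verifies from pointwise bounds on $H-\sfrac{2}{\sigma}$ and the ambient geometry, and it obtains the graph estimate \eqref{eqGraphFunction} from \cite[Corollary E.1]{NerzCMC}, whereas you propose to re-derive that machinery (Codazzi plus the fact that the divergence controls trace-free symmetric $2$-tensors on nearly round spheres, then a Simons-type argument, then quasilinear elliptic regularity for the graph function). That route is legitimate but longer, and it is exactly where you need to be careful, and where you also prove genus zero via a curved Willmore-type inequality instead of reading it off from De Lellis--M\"uller.

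Concretely, your displayed claim $\vert h\vert + r\vert\nabla^\Sigma h\vert + r^2\vert(\nabla^\Sigma)^2 h\vert \leq C\sigma\vert\vec{x}\,\vert^{-3-2\varepsilon}$ is not available under Definition \ref{defAEdata}: since $h=\sqrt{\sfrac{4}{\sigma^2}+P^2}-\sfrac{2}{\sigma}$ with $P=\tr_\Sigma K$, the estimate on $(\nabla^\Sigma)^2 h$ would require two derivatives of $K$, while \eqref{eqOptDecay2} controls only $K$ and $\partial K$. Fortunately you do not actually need it: the Codazzi step uses only $\nabla^\Sigma H=\nabla^\Sigma h$ (one derivative of $K$), and the $L^\infty$-bound coming from the Simons identity must be run in integrated form, moving the derivative off the $\nabla^2 H$-term by parts, or else be outsourced as in the paper to \cite[Proposition 4.1]{NerzCMC}. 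A second, harmless imprecision: $\nabla^\Sigma P$ produces $A\ast K$ rather than only $\mathring{A}\ast K$ terms, but the trace contribution is of size $O(\sigma^{-1}\vert\vec{x}\,\vert^{-\frac{3}{2}-\varepsilon})$ and stays of lower order. With these corrections your outline closes in the same order as the paper's proof, with constants depending only on $\varepsilon$, $\overline{a}$, $\overline{b}$, $\overline{\eta}$, and $C_{\mathcal{I}}$ as required.
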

\vspace{1ex}
\begin{remark}\label{remInsuff}
The conclusions of this theorem are mostly the same as those in \cite[Proposition 4.4]{NerzCMC}, only for STCMC- rather than CMC-surfaces. However, we cannot directly refer to this result because, roughly speaking, it assumes that the mean curvature $H$ of $\Sigma$ falls off like
$H -  \tfrac{2}{\sigma}= O(r^{-\frac{3}{2}-\varepsilon})$, whereas  the relation
\begin{align*}
H^2 = \mathcal{H}^2 + P^2 =\left(\tfrac{2}{\sigma}\right)^2  + P^2,
\end{align*}
recalling $P=\tr_{\Sigma}K$, and the definition of the a priori class $\mathcal{A}(a,b,\eta)$ --- which coincides with that in~\cite{NerzCMC} ---, only ensure via the second inequality in \eqref{eqDefConcentric} that
\begin{align}\label{eqCompMC2}
\vert H - \tfrac{2}{\sigma}\vert  = \left\vert \sqrt{\left(\tfrac{2}{\sigma}\right)^2  + P^2} - \tfrac{2}{\sigma} \right\vert  \leq \vert P\vert  \leq C (\min_\Sigma \vert\vec{x}\,\vert )^{-\frac{3}{2}-\varepsilon}  \leq C r^{-1 - \frac{\eta}{2}}
\end{align}
which does not a priori give us $H -  \tfrac{2}{\sigma}= O(r^{-\frac{3}{2}-\varepsilon})$. We will thus need to extend the result and its proof to our setting.
\end{remark}

\begin{proof}
Within this proof, $C$ will always be a generic constant depending only on $\overline{\sigma}$, $\overline{a}$, $\overline{b}$, $\overline{\eta}$,  and $C_{\mathcal{I}}$. With Remark \ref{remInsuff} in mind, we need to improve the estimate in \eqref{eqCompMC2}. For this purpose, we first note that by the definition of $(M^{3},g)$ being $C^{2}_{\sfrac{1}{2}+\varepsilon}$-asymptotically Euclidean and by the second inequality in \eqref{eqDefConcentric}, we have
\begin{align*}
\vert \scal\vert  \leq C\vert\vec{x}\,\vert ^{-\frac{5}{2}-\varepsilon} \leq C r^{-2-\eta},
\end{align*}
which implies $\Vert \scal\Vert _{L^1(\Sigma)}\leq Cr^{-\eta}$. Similarly, with $\nu$ denoting the unit normal of $\Sigma$ in $(M^{3},g)$, we get $\Vert \ric(\nu,\nu)\Vert _{L^1(\Sigma)} \leq C r^{-\eta}$. Combining this with the last inequality of \eqref{eqDefConcentric}, we conclude by the Gauss equation and the Gauss--Bonnet Theorem that 
\begin{align*}
\begin{split}
\int_{\Sigma}\vert \mathring{A}\vert ^2 \, d\mu 
& = \int_{\Sigma} \left(\scal - \scal_{\Sigma} - 2 \ric(\nu,\nu) + \tfrac{1}{2} H^2\right) \, d\mu \\
& = \tfrac{1}{2}\int_{\Sigma}  H^2 \, d\mu - 4\pi (2-2\gamma) + O(r^{-\eta}) \\
& = O(r^{-\eta}),
\end{split}
\end{align*}
hence $\Vert \mathring{A}\Vert _{L^2(\Sigma)}\leq C r^{-\frac{\eta}{2}}$. Then by Lemma \ref{lemComparison} we also have $\Vert \mathring{A}^\delta\Vert _{L^2(\Sigma,\delta^\Sigma)}\leq C r^{-\frac{\eta}{2}}$, where $\delta^\Sigma$ is the induced metric of the embedding $(\Sigma,\delta^{\Sigma}) \hookrightarrow (\mathbb{R}^3,\delta)$. We are now in a position to apply the result of De Lellis and M\"uller \cite[Theorem 1.1]{DeLellisMueller} (see also \cite[Section 2.3]{MetzgerCMC} where this result is reformulated in a scale invariant form) to conclude that $\Sigma$ is a topological sphere, with a conformal parametrization $\psi\colon \mathbb{S}^2_r(\vec{z}\,) \to \Sigma$ and the conformal factor $u\colon\mathbb{S}^2_r(\vec{z}\,) \to\R^{+}$ such that $\psi^* \delta^\Sigma = u^2 g^{\mathbb{S}^2_r(\vec{z}\,)}$ satisfying
  \begin{subequations}\label{eqConformal}
\begin{align}
\Vert \psi-\id\Vert _{W^{2,2}(\mathbb{S}^2_r (\vec{z}\,))} &\leq C r^2 \Vert \mathring{A}^\delta\Vert _{L^2(\Sigma,\delta^\Sigma)} \leq C r^{2-\frac{\eta}{2}}, \label{eqConfPar1}\\
\Vert u^2-1\Vert _{W^{2,2}(\mathbb{S}^2_r(\vec{z}\,))} & \leq C r \Vert \mathring{A}^\delta\Vert _{L^2(\Sigma,\delta^\Sigma)} \leq Cr^{1-\frac{\eta}{2}}. \label{eqConfFact1}
\end{align}
\end{subequations}

In order to prove that $\sigma$ and $r$ are comparable, we estimate
\begin{align*}
\begin{split}
2 \sqrt{\pi} r \vert \tfrac{1}{r} - \tfrac{1}{\sigma}\vert 
&=\tfrac{1}{\sqrt{2}} \Vert (\tfrac{1}{r}-\tfrac{1}{\sigma})g^\Sigma \Vert _{L^2(\Sigma)}\\
&\leq \tfrac{1}{\sqrt{2}}\left(\Vert  \tfrac{1}{r} \delta^\Sigma -\tfrac{1}{\sigma}g^\Sigma \Vert _{L^2(\Sigma)}+\Vert \tfrac{1}{r}(\delta^\Sigma-g^\Sigma)\Vert _{L^2(\Sigma)}\right)\\
& \leq\tfrac{1}{\sqrt{2}}\left(\Vert \tfrac{1}{r}\delta^\Sigma-A^\delta\Vert _{L^2(\Sigma)}+\Vert A^\delta-A\Vert _{L^2(\Sigma)}+\Vert A-\tfrac{1}{\sigma}g^\Sigma\Vert _{L^2(\Sigma)}\right) 
+ O(r^{-\frac{\eta}{2}}),  
\end{split}
\end{align*}
where we have used \eqref{AE} and the second inequality in \eqref{eqDefConcentric} in the last line. Here, we have 
\begin{align*}
\Vert \tfrac{1}{r} \delta^\Sigma - A^\delta \Vert _{L^2(\Sigma)}\leq C \Vert \mathring{A}^\delta\Vert _{L^2(\Sigma,\delta^\Sigma)} = O(r^{-\frac{\eta}{2}})
\end{align*}  
by \cite[Theorem 1.1]{DeLellisMueller} (see also (2.4) in \cite{MetzgerCMC}), and 
\begin{align*}
\begin{split}
\Vert A^\delta - A\Vert _{L^2(\Sigma)} 
&\leq C r^{-\frac{\eta}{2}}(1+ \Vert A\Vert _{L^2(\Sigma)}) \\
& = C r^{-\frac{\eta}{2}} \left(1 + \sqrt{\Vert \mathring{A}\Vert ^2_{L^2(\Sigma)} + 
\tfrac{1}{2} \Vert  H\Vert ^2_{L^2(\Sigma)} \,} \right)\\
& \leq C r^{-\frac{\eta}{2}},
\end{split}
\end{align*}
by Lemma \ref{lemComparison} combined with \eqref{eqDefConcentric}, 
and
\begin{align*}
\begin{split}
\Vert  A - \tfrac{1}{\sigma} g^\Sigma \Vert _{L^2(\Sigma)} 
& \leq \Vert A - \tfrac{H}{2} g^\Sigma\Vert _{L^2(\Sigma)} + \Vert (\tfrac{1}{\sigma} - \tfrac{H}{2}) g^\Sigma\Vert _{L^2(\Sigma)} \\
& \leq \Vert \mathring{A}\Vert _{L^2(\Sigma)} + \Vert H - \tfrac{2}{\sigma}\Vert _{L^2(\Sigma)} \\
& = O(r^{-\frac{\eta}{2}})
\end{split}
\end{align*}
by \eqref{eqCompMC2}.

Summing up, we conclude that 
\begin{align}\label{eqMCArea1}
\vert \tfrac{r}{\sigma}-1\vert  \leq C r^{-\frac{\eta}{2}}.
\end{align}  

To prove that $\vert\vec{x}\,\vert $ and $r$ are comparable, note that by the first inequality in \eqref{eqDefConcentric} and because $0<1-\overline{a}\leq 1-a$, the coordinate origin $\vec{0}$ lies inside $\mathbb{S}^2_r (\vec{z}\,)$ for $r>\left(\frac{\overline{b}}{1-\overline{a}}\right)^{\frac{1}{\overline{\eta}}}$. For such large radii, we thus elementarily find
\begin{eqnarray*}
\min_{\mathbb{S}^{2}_r(\vec{z}\,)}\vert\vec{x}\,\vert   & \geq & (1 - \overline{a}) r - \overline{b} r^{1-\overline{\eta}}, \\
\max_{\mathbb{S}^{2}_r(\vec{z}\,)}\vert\vec{x}\,\vert   & \leq & (1 + \overline{a}) r + \overline{b} r^{1-\overline{\eta}},
\end{eqnarray*}
with the help of the first inequality in \eqref{eqDefConcentric}.
By \eqref{eqConfPar1} and the Sobolev Inequality in the form of Lemma~\ref{lemSobolevEmbed}, it follows that $\vert \psi-\id\vert \leq C \vert\vec{x}\,\vert ^{1-\frac{\eta}{2}}$. Combining this with the above inequalities, we conclude that, on $\Sigma$, we have
\begin{align}\label{eqDistArea1}
(1 - \overline{a}) r - C r^{1-\frac{\overline{\eta}}{2}} \leq \vert\vec{x}\,\vert  \leq (1 + \overline{a}) r + C r^{1-\frac{\overline{\eta}}{2}},
\end{align}
provided that the area radius $r$ of $\Sigma$ is sufficiently large. Via \eqref{eqMCArea1}, we can alternatively state that \eqref{eqDistArea1} holds if the spacetime mean curvature radius $\sigma$ satsifies $\sigma>\overline{\sigma}$ for a suitably large $\overline{\sigma}$ only depending on $\varepsilon$, $\overline{a}$, $\overline{b}$, $\overline{\eta}$, and $C_{\mathcal{I}}$.\\

\paragraph*{\emph{Bootstrapping}.} With these new bounds \eqref{eqMCArea1} and \eqref{eqDistArea1} at hand, we can apply \cite[Proposition 4.1]{NerzCMC} with $\kappa$ chosen as $\tfrac{3}{2}+\varepsilon>1$, $\eta$ chosen as our $\tfrac{\eta}{2}>0$, and $c_{1},c_{2}$ chosen as our generic constant $C$. As all the estimates going into verifying the assumptions from \cite[Proposition 4.1]{NerzCMC} hold pointwise in our case, the assumptions are indeed satisfied for any $p>2$. Note that the existence of the uniform Sobolev Inequality assumed in \cite[Proposition 4.1]{NerzCMC} is well-established in our setting, and goes back to \cite[Proposition 5.4]{HY} which holds for surfaces in asymptotically Euclidean manifolds with general asymptotics as described in Section \ref{secPrelim}. Again via \eqref{eqMCArea1}, this gives us \eqref{eq2FFtraceless} for $\sigma>\overline{\sigma}$, with suitably enlarged $\overline{\sigma}$ only depending on $\varepsilon$, $\overline{a}$, $\overline{b}$, $\overline{\eta}$, and $C_{\mathcal{I}}$.

As a consequence of \eqref{eq2FFtraceless}, the estimates \eqref{eqConfPar1}-\eqref{eqConfFact1} improve, and we get \eqref{eqConfPar} and \eqref{eqConfFact}. Similarly, repeating the above argument that we used to derive \eqref{eqMCArea1} and \eqref{eqDistArea1}, we obtain the improved radius comparison \eqref{eqDistArea} and \eqref{eqMCArea}.  

Finally, now that we have a pointwise bound on the tracefree part of the second fundamental form $\mathring{A}$ accompanying the pointwise estimate \eqref{eqCompMC2} for the mean curvature~$H$, it follows that $\Sigma$ is the graph of a function $f\in W^{2,\infty}(\mathbb{S}^2_r (\vec{z}\,))$ such that \eqref{eqGraphFunction} holds for $\sigma>\overline{\sigma}$, for again suitably enlarged $\overline{\sigma}$ only depending on $\varepsilon$, $\overline{a}$, $\overline{b}$, $\overline{\eta}$, and $C_{\mathcal{I}}$, see e.g. \cite[Corollary E.1]{NerzCMC}, which adapts \cite[Theorem 1.1]{DeLellisMueller} to our setting. To be more precise, \cite[Corollary E.1]{NerzCMC} is only not stated invariantly under scaling but with $\vert\Sigma\vert=4\pi$, but it is straightforward to adapt it to include the area radius for our purposes. This finishes the proof of Proposition \ref{propRegularity}.
\end{proof}  

%%%%%%%%%%%%%%%%%%%%%%%%%%%%%%%%%%%%%%%%%%%%%%%%%%%%%%%%%%%%%%%%%%%%%%%%%%
\newpage
\section{The linearization of spacetime mean curvature}\label{secLinearization}
% !TEX root = main.tex
In this section, we will introduce the spacetime mean curvature map $\mathcal{H}$ in a given initial data set $\mathcal{I}=(M^3,g,K,\mu,J)$. We will analyze its properties in a neighborhood of a given $2$-surface $\Sigma$ having constant spacetime mean curvature. We will show that the linearization of the map $\mathcal{H}$ is invertible when the linearization is computed with respect to normal variations within the given initial data set $\mathcal{I}$. This will later be used to ensure that the CMC-foliation of $(M^3,g)$ constructed in \cite{NerzCMC} can be pushed via a method of continuity to an STCMC-foliation of $\mathcal{I}$.

Throughout this section, we will assume that $\mathcal{I}=(M^3,g,K,\mu, J)$ is a $C^{2}_{\sfrac{1}{2}+\varepsilon}$-asympto\-tically Euclidean initial data set with non-vanishing energy $E\neq 0$ and with fixed asymptotic coordinates~$\vec{x}$. Furthermore, it will be assumed that  $\Sigma$ is a fixed $2$-surface of constant spacetime mean curvature $\mathcal{H}(\Sigma) \equiv \sfrac{2}{\sigma}$ which has sufficiently large mean curvature radius $\sigma$ and which for some fixed $a \in [0,1)$, $b \geq 0$, and $\eta \in (0,1]$ belongs to the a priori class $\mathcal{A}(a,b,\eta)$, see Definition \ref{defConcentric}. In this setting, we know from Proposition \ref{propRegularity} that $\Sigma$ is a topological sphere, and that its coordinate radius, area radius, and mean curvature radius are comparable as stated in \eqref{eqDistArea}, \eqref{eqMCArea}. This in particular implies that
\begin{align}\label{eqFallOff}
P = \tr_\Sigma K = O_{1}(\sigma^{-\frac{3}{2} - \varepsilon}), \; H = \sqrt{\mathcal{H}^2 + P^2} = \tfrac{2}{\sigma} + O_{1}(\sigma^{-2-2\varepsilon}), \;\tfrac{P}{H}=O_{1}(\sigma^{-\frac{1}{2}-\varepsilon}),
\end{align} 
for $\sigma>\overline{\sigma}$, where $\overline{\sigma}$ and the constants hidden in the $O$-notation only depend on $\varepsilon$, $a$, $b$, $\eta$, $C_{\mathcal{I}}$.

\subsection{Stability operators associated with prescribed (spacetime) mean curvature surfaces}\label{subsecStab}
In a neighborhood of $\Sigma$, we introduce normal geodesic coordinates $y\colon \Sigma \times (-\xi,\xi) \to M^{3}$ for some $\xi>0$, such that $y(\cdot, 0) = \id_\Sigma$, and  $\tfrac{\partial y}{\partial t} = \nu^{\Sigma_t}$, with $\nu^{\Sigma_{t}}$ being the outward unit normal to $\Sigma_t \definedas y(\Sigma, t)$. For a function $f\in C^\infty(\Sigma)$ with $\vert f\vert  < \xi$, we define the \emph{graph of $f$ over $\Sigma$} as 
\begin{align}
\graph f =  \{y(q,f(q)) : q \in \Sigma\}.
\end{align}
Then, slightly abusing notation, let $\mathcal{H}\colon  C^\infty(\Sigma) \to  C^\infty(\Sigma)$ be the operator which assigns to a function $f$ the spacetime mean curvature $\mathcal{H} (f)$ of $\graph f$ (with respect to the fixed initial data set~$\mathcal{I}$). The linearization of this map $\mathcal{H}$ is computed in the following lemma.

\begin{lemma}\label{lemLinearizationCMC}
 Let $\Sigma \hookrightarrow M^{3}$ be a closed, oriented $2$-surface. Let $\mathcal{V}\colon\Sigma \times (-\xi,\xi) \to M^{3}$ be the normal variation with $\mathcal{V}(\cdot,0)=\id_\Sigma$ and $\left.\frac{\partial \mathcal{V}}{\partial t}\right\vert _{t=0} = f\nu$ for $f\in C^\infty (\Sigma)$. Then the linearization $L^{\mathcal{H}}$ of the spacetime mean curvature map at $\Sigma$ is given by
 \begin{align*}
 & L^\mathcal{H} f \definedas \left.\frac{\partial \mathcal{H}(\mathcal{V}(\cdot, t))}{\partial t} \right\vert _{t=0} \\[0.5ex]
 &\;\; = \frac{H \left( -\Delta^\Sigma f - (\vert A\vert ^2 + \ric (\nu, \nu)) f \right)- P \left((\nabla_\nu \tr K - \nabla_\nu K(\nu, \nu)) f - 2 K (\nabla^\Sigma f,\nu)\right)}{\sqrt{H^2 - P^2}},
 \end{align*}
 where $\triangle^{\Sigma}$, $\nabla^{\Sigma}$ denote the Laplacian and covariant gradient on $(\Sigma,g^\Sigma)$, respectively.
\end{lemma}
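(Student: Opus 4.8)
The spacetime mean curvature is $\mathcal{H}(f)=\sqrt{H(f)^2-P(f)^2}$, where $H(f)$ is the mean curvature of $\graph f$ in $(M^3,g)$ and $P(f)=\tr_{\graph f}K$, so by the chain rule
\begin{align*}
L^{\mathcal{H}}f=\left.\frac{\partial\mathcal{H}(\mathcal{V}(\cdot,t))}{\partial t}\right|_{t=0}=\frac{H\,(L^H f)-P\,(L^P f)}{\sqrt{H^2-P^2}},
\end{align*}
where $L^H f\definedas\left.\frac{\partial}{\partial t}\right|_{t=0}H(\mathcal{V}(\cdot,t))$ and $L^P f\definedas\left.\frac{\partial}{\partial t}\right|_{t=0}P(\mathcal{V}(\cdot,t))$. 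Thus the whole computation reduces to two classical first-variation formulas under a normal variation with speed $f\nu$. I would first recall (or derive in one line from the Jacobi-field/tube computation) the standard Riemannian first variation of mean curvature,
\begin{align*}
L^H f=-\Delta^\Sigma f-\bigl(\vert A\vert^2+\ric(\nu,\nu)\bigr)f,
\end{align*}
which is exactly the CMC stability operator and needs no reference to $K$. Substituting this into the formula above already produces the first term $H(-\Delta^\Sigma f-(\vert A\vert^2+\ric(\nu,\nu))f)$ in the numerator, so the entire content of the lemma beyond the classical case is the identification of $L^P f$.

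\textbf{Computing $L^P f$.} Write $P(t)=\tr_{\Sigma_t}K=g^{ij}_{\Sigma_t}K_{ij}$, i.e. the trace of the pull-back of $K$ (viewed as an ambient $(0,2)$-tensor) to $\Sigma_t=\mathcal{V}(\Sigma,t)$. I would differentiate this in $t$ at $t=0$, keeping careful track of three contributions: (i) the variation of the induced inverse metric $g^{ij}_{\Sigma_t}$, which by the first variation of the induced metric ($\partial_t g^{\Sigma}_{ij}=2fA_{ij}$) contributes $-2f\langle A,K^\Sigma\rangle$ where $K^\Sigma$ is the restriction of $K$ to $\Sigma$; (ii) the normal derivative of $K$ itself, giving a term $f\,\tr_\Sigma(\nabla_\nu K)$; and (iii) the tangential part of the variation vector field, which arises because $\mathcal{V}$ is only assumed to have normal speed $f\nu$ at $t=0$ but the flow lines bend, producing a term involving $\nabla^\Sigma f$ contracted with $K$. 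Collecting these and using $\tr K=\tr_\Sigma K+K(\nu,\nu)$ together with $\tr_\Sigma(\nabla_\nu K)=\nabla_\nu\tr K-\nabla_\nu K(\nu,\nu)-2f\langle A,K^\Sigma\rangle$-type rearrangements (the $\nabla_\nu(g^{\Sigma})$ versus $\nabla_\nu(g-\nu^\flat\otimes\nu^\flat)$ bookkeeping), the $\langle A,K^\Sigma\rangle$ terms from (i) and from splitting the trace cancel, and one is left with
\begin{align*}
L^P f=\bigl(\nabla_\nu\tr K-\nabla_\nu K(\nu,\nu)\bigr)f-2K(\nabla^\Sigma f,\nu),
\end{align*}
which is precisely the bracket multiplying $-P$ in the statement. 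Plugging $L^H f$ and $L^P f$ into the chain-rule formula and writing $\sqrt{H^2-P^2}$ in the denominator gives the claimed expression verbatim.

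\textbf{Main obstacle.} The routine part is the Riemannian variation of $H$; the delicate part is the term $-2K(\nabla^\Sigma f,\nu)$ in $L^P f$. It does \emph{not} come from varying $K$ or the metric directly — it is a consequence of the fact that in normal geodesic coordinates the graph $\graph f$ is \emph{not} the image of $\Sigma$ under a genuinely normal flow of speed $f$; the variation vector field along $\graph f$ acquires a tangential component of order $f\cdot\nabla^\Sigma f$, and it is exactly this tangential drift, contracted against the off-diagonal block $K(\cdot,\nu)$ of the ambient second fundamental form, that produces the gradient term. I would therefore set up the computation with the explicit parametrization $y(q,f(q))$ and compute $\partial_t$ of the induced objects at a point where $\nabla^\Sigma f$ may be taken nonzero, being scrupulous about which terms are evaluated along the moving surface versus along $\Sigma$ itself; alternatively one can argue invariantly by decomposing the variation field of the $f$-graph into normal and tangential parts and using that $P$ is diffeomorphism-invariant, so the tangential part acts by the Lie derivative $\mathcal{L}_{X^\top}P=X^\top(P)+\ldots$, whose leading contribution against $\tr K$ is killed by $P$ itself being (to leading order) constant while its contribution through the normal–tangential block of $K$ survives as $-2K(\nabla^\Sigma f,\nu)$. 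Either route works; the bookkeeping of signs and of the $\nu^\flat\otimes\nu^\flat$ projector is the only real care needed, and no term here requires the asymptotic hypotheses — the lemma is a pointwise identity valid for any closed oriented $\Sigma\hookrightarrow M^3$.
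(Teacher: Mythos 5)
Your skeleton is exactly the paper's proof: since $\mathcal H=\sqrt{H^2-P^2}$, the chain rule reduces the lemma to the two first-variation formulas $L^Hf$ and $L^Pf$, and for these the paper simply refers to Metzger's variation formulas. So the only substantive content of your proposal beyond the paper's two-line argument is your derivation of $L^Pf$, and that derivation has a genuine gap.

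The term $K(\nabla^\Sigma f,\nu)$ does \emph{not} arise from a tangential component of the variation field. By hypothesis $\left.\partial_t\mathcal V\right|_{t=0}=f\nu$ is purely normal, and a first variation at $t=0$ only sees this vector; the ``bending of flow lines'' you invoke is second order in $t$, and a tangential drift of size $f\,\nabla^\Sigma f$ is quadratic in $f$, so neither can contribute to $\left.\partial_t P\right|_{t=0}$. Your ``invariant'' variant has the same problem: the tangential part of a variation acts on the scalar $P$ only as $X^\top(P)$, and there is no separate contribution ``through the normal--tangential block of $K$''. What actually produces the term is the tilting of the unit normal: writing $P=\tr K-K(\nu,\nu)$ and using $\left.\partial_t\nu\right|_{t=0}=-\nabla^\Sigma f$, the derivative of $-K(\nu,\nu)$ contributes $-2K(\partial_t\nu,\nu)=+2K(\nabla^\Sigma f,\nu)$; equivalently, in the frame computation the normal component $X(f)\,\nu$ of $\nabla_X(f\nu)$ pairs with $K(\cdot,\nu)$, while the Weingarten component $f\,\nabla_X\nu$ is what cancels against the variation of the inverse induced metric --- that, not ``splitting the trace'', is the cancellation partner of your item (i). Carried out this way one finds $\left.\partial_tP\right|_{t=0}=f\left(\nabla_\nu\tr K-\nabla_\nu K(\nu,\nu)\right)+2K(\nabla^\Sigma f,\nu)$, so the sign of the gradient term is governed precisely by $\partial_t\nu=-\nabla^\Sigma f$, a point your sketch never addresses (you quote the sign from the statement rather than deriving it). Since you yourself identify this term as the entire content of the lemma beyond the classical CMC case, the proof as sketched does not close: the mechanism you propose produces nothing, and when you redo the computation correctly you must also pin down and reconcile this sign with the conventions of the statement and of Metzger's formula, which is all the paper itself invokes at this point.
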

\begin{proof}
 This follows from the definition of spacetime mean curvature $\mathcal{H} = \sqrt{H^2-P^2}$ and the well-known formulas for $\left.\frac{\partial H (\mathcal{V}(\cdot, t))}{\partial t}\right\vert _{t=0}$ and $\left.\frac{\partial P  (\mathcal{V}(\cdot, t))}{\partial t}\right\vert _{t=0}$,
see Metzger \cite[Lemma 5.1]{MetzgerCMC}.
\end{proof}

The map $L^\mathcal{H}$ naturally extends to a bounded mapping $L^\mathcal{H}\colon  W^{2,2}(\Sigma) \to L^{2} (\Sigma)$. In Section \ref{ssecInvertibility}, we will prove that this mapping has a bounded inverse, for which it is convenient to rewrite the above expression for $L^\mathcal{H}$ in the form
\begin{align*}
L^\mathcal{H} f = \frac{\mathcal{L} f}{\sqrt{1 - \left(\frac{P}{H}\right)^2}} ,
\end{align*}
where 
\begin{align}\label{eqLinearization}
\begin{split}
\mathcal{L} f\definedas & - \Delta^\Sigma f - (\vert A\vert ^2 + \ric (\nu, \nu)) f \\
&- \tfrac{P}{H} \left((\nabla_\nu \tr K - \nabla_\nu K(\nu, \nu)) f - 2 K (\nabla^\Sigma f,\nu) \right).
\end{split}
\end{align}
Since the denominator is clearly bounded and bounded away from zero by our assumptions on $\Sigma$, the (bounded) invertibility of $L^\mathcal{H}\colon W^{2,2}(\Sigma) \to L^{2} (\Sigma)$ will follow once we show that $\mathcal{L}\colon  W^{2,2}(\Sigma) \to L^{2} (\Sigma)$ is invertible with bounded inverse. 

\begin{remark}\label{PseudoStab}
Recall that the $H\pm P$-stability operator $L^{H\pm P} $ of the map $H\pm P$ (surfaces of constant expansion or null mean curvature) is given by
\begin{align*}
L^{H\pm P} f & = \left.\frac{\partial (H \pm P)(\mathcal{V}(\cdot, t))}{\partial t}\right\vert _{t=0} \\
 & = - \Delta^\Sigma f - (\vert A\vert ^2 + \ric (\nu, \nu)) f \pm \left((\nabla_\nu \tr K - \nabla_\nu K(\nu, \nu)) f - 2 K (\nabla^\Sigma f,\nu) \right),
\end{align*}
see \cite{MetzgerCMC}. As it turns out, the analytic properties of $L^{H\pm P}$ imply that constant expansion foliations do not provide an adequate notion of center of mass, in contrast to the STCMC-foliation studied here. The main difference is that the contribution of the second fundamental form $K$ in the $H\pm P$-stability operator is large, while it is rescaled by a factor $\sfrac{P}{H}$ in the STCMC-stability operator. The largeness of the contribution of $K$ in the $H\pm P$-stability operator will cause the geometric centers of the surfaces of the foliation to drift away in the direction of the linear momentum $\vec{P}$  in general, see Metzger \cite[Section 7]{MetzgerCMC}. This can only be avoided by imposing very fast fall-off conditions on $K$ to ensure that $\vec{P}=0$. Furthermore, a certain smallness assumption on $K$ is also directly required to ensure the invertibility of $L^{H\pm P}$, and hence the existence of the constant expansion foliation, see~\cite[Theorem 3.1]{NerzCE}. 

As a consequence of the factor $\sfrac{P}{H}$ in the STCMC-stability operator, no smallness assumption on $K$ will be needed to ensure the existence of the foliation by surfaces of constant spacetime mean curvature. Furthermore, we will see that the leaves of this foliation do not translate as their spacetime mean curvature approaches zero, provided that the standard asymptotic symmetry conditions are imposed.
\end{remark}

As we will see, the operator $\mathcal{L}$, and consequently the operator $L^{\mathcal{H}}$, is in many respects similar to the standard (CMC-)stability operator of $\Sigma$, namely to
\begin{align*}
L^H f = \left.\frac{\partial H (\mathcal{V}(\cdot, t))}{\partial t}\right\vert _{t=0} = - \Delta^\Sigma f - (\vert A\vert ^2 + \ric (\nu, \nu)) f. 
\end{align*} 
This operator has been intensively studied, see e.g.~\cite{DoCarmoBarbosa}.

\subsection{Eigenvalues and eigenfunctions of $-\Delta^\Sigma$}\label{secEigenLaplace}
In preparation for proving the invertibility of the operator $\mathcal{L}$, we summarize the spectral properties of the operator $-\Delta^\Sigma$, the Laplacian calculated with respect to the metric $g^\Sigma$ induced by $(\Sigma,g^\Sigma) \hookrightarrow (M^3,g)$. For this, let us first consider the operator $-\Delta^{\mathbb{S}^2_r}$, the Laplacian calculated with respect to the standard round metric $\delta^{\mathbb{S}^2_r}$ on $\mathbb{S}^2_r = \mathbb{S}^2_r(\vec{0}\,) \hookrightarrow (\bR^3,\delta)$. The eigenvalues of $-\Delta^{\mathbb{S}^2_r}$ are $\sfrac{l(l+1)}{r^2}$ for $l\geq0$, and the eigenspace corresponding to $\sfrac{l(l+1)}{r^2}$ is given by the space of homogeneous harmonic polynomials of degree $l$ restricted to $\mathbb{S}^2_r$, see e.g.~\cite[Chapter II.4]{ChavelEigenvalues}. In particular, the first non-zero eigenvalue of $-\Delta^{\mathbb{S}^2_r}$ is $\sfrac{2}{r^2}$, the corresponding eigenspace is spanned by the restrictions to $\mathbb{S}^2_r$ of the coordinate functions $x^1$, $x^2$, $x^3$ on $\mathbb{R}^{3}$. In the following, we enumerate the eigenvalues of $-\Delta^{\mathbb{S}^2_r}$ counting their multiplicity by
\begin{align}
 0=\lambda_{0}^{\delta}<\lambda_{i}^{\delta}\leq\lambda_{i+1}^{\delta}, \qquad i=1,2,\ldots,
\end{align}
and we denote the associated complete $L^2(\mathbb{S}^{2}_{r})$-orthonormal system of eigenfunctions by $\lbrace f_{i}^{\delta}\rbrace_{i=0}^{\infty}$. Without loss of generality, we may assume that the chosen enumeration is such that
\begin{align}
f_{i}^{\delta}=\sqrt{\frac{3}{4\pi r^{4}}}\,x^{i} \quad\text{ for }i=1,2,3.
\end{align}
Note that the tracefree part of the Hessian of each of these functions vanishes, and that we have 
\begin{align*}
\langle \nabla^{\mathbb{S}^2_r} f^\delta_i, \nabla^{\mathbb{S}^2_r} f^\delta_j \rangle - \frac{3\delta_{ij}}{4\pi r^4} + \frac{f^\delta_i f^\delta_j}{r^2}=0\quad\text{ for }i=1,2,3,
\end{align*}
where $\nabla^{\mathbb{S}^{2}_{r}}$ denotes the gradient with respect to $\delta^{\mathbb{S}^2_r}$. 

In order to describe the eigenvalues and eigenfunctions of the operator $-\Delta^\Sigma$, note that by Proposition  \ref{propRegularity} there is a vector $\vec{z}\in\R^{3}$ and a conformal parametrization $\psi\colon \mathbb{S}^2_r(\vec{z}\,) \to \Sigma$ such that
\begin{align*}
\Vert \psi^* g^\Sigma - \delta^{\mathbb{S}^2_r(\vec{z}\,)}\Vert _{W^{2,2}(\mathbb{S}^2_r(\vec{z}\,))} \leq C r^{\frac{1}{2}-\varepsilon},
\end{align*}
where $r$ is the area radius of $(\Sigma,g^\Sigma)$. As all spheres of radius $r$ in Euclidean space are isometric, we can easily ``translate'' $\psi$ to a conformal parametrization $\overline{\psi}\colon  \mathbb{S}^2_r \to \Sigma$
 such that
\begin{align}\label{eqConfPar2}
\Vert \overline{\psi}^{\,*} g^\Sigma - \delta^{\mathbb{S}^2_r}\Vert _{W^{2,2}(\mathbb{S}^2_r)} \leq C r^{\frac{1}{2}-\varepsilon},
\end{align}
where $r$ still denotes the area radius of $(\Sigma,g^\Sigma)$.

We will now describe a complete orthonormal system in $L^2(\Sigma)$ consisting of the eigenfunctions $\{f_i\}_{i=0}^\infty$ such that $-\Delta^\Sigma f_i = \lambda_i f_i$, with $0=\lambda_{0}< \lambda_i \leq \lambda_{i+1}$, $i=1,2,\ldots$, again counted with multiplicity. The eigenfunctions $f_i$ will be chosen so that $\overline{\psi}^{\,*}f_i$ is asymptotic to $f_i^\delta$ for each $i=1,2,\dots$. For simplicity of notation, in what follows we will identify $f_i\colon\Sigma \to \mathbb{R}$ with its pullback $\overline{\psi}^{\,*} f_i$ without further ado. This enumeration and identification will also allow us to prove useful  estimates for the eigenvalues and eigenfunctions of $-\Delta^\Sigma$. 

\begin{lemma}\label{lemEigenLaplace2}
Let $\mathcal{I} = \IDS$ be a $C^{2}_{\sfrac{1}{2}+\varepsilon}$-asymptotically Euclidean initial data set. Let $a \in [0,1)$, $b \geq 0$, $\eta \in (0,1]$, and consider a $2$-surface $\Sigma\hookrightarrow\mathcal{I}$ such that $\Sigma\in\mathcal{A}(a,b,\eta)$ with respect to $\mathcal{I}$. Then there exist constants $C>0$ and $\overline{\sigma}>0$ depending only on $\varepsilon$, $a$, $b$, $\eta$, and $C_{\mathcal{I}}$ such that if $\Sigma$ has constant spacetime mean curvature $\mathcal{H}\equiv \sfrac{2}{\sigma}$ for $\sigma > \overline{\sigma}$, then there is a complete orthonormal system in $L^2(\Sigma)$ consisting of the eigenfunctions $\{f_i\}_{i=0}^\infty$ such that 
\begin{align*}
-\Delta^\Sigma f_i = \lambda_i f_i
\end{align*}
 with $0=\lambda_{0}< \lambda_i \leq \lambda_{i+1}$, $i=1,2,\ldots$, counted with multiplicity,
 
  and such that for $i=1, 2, 3$ the following estimates hold
\begin{align}\label{eqEigenEstimates1a}                                
\vert \lambda_i - \lambda_i^\delta\vert  &\leq C\sigma^{-\frac{5}{2}-\varepsilon},\\[0.5ex]\label{eqEigenEstimates1b} 
\Vert f_i - f_i^\delta\Vert_{W^{2,2}(\Sigma)} &\leq C \sigma^{-\frac{1}{2}-\varepsilon},\\[0.5ex]\label{eqTracelessHessian}
\Vert \accentset{\circ}{\hess}^{\Sigma} f_i\Vert _{L^2(\Sigma)}& \leq C \sigma^{-\frac{5}{2}-\varepsilon},\\[0.5ex]\label{eqEigenEstimates2}
\left\Vert \langle \nabla^\Sigma f_i, \nabla^\Sigma f_j \rangle - \frac{3\delta_{ij}}{\sigma^2\vert \Sigma\vert } + \frac{f_i f_j}{\sigma^2}\right\Vert _{L^{1}(\Sigma)} &\leq \frac{C}{\sigma^{\frac{5}{2}+\varepsilon}}.
\end{align}
Furthermore, $\lambda_{0}=0$ and
\begin{align}\label{eqBigEigenvalues}
\lambda_i > \frac{5}{\sigma^2} \; \text{ for }\; i = 4,5,\ldots. 
\end{align}
\end{lemma}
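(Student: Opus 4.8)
The plan is to compare the eigenvalues and eigenfunctions of $-\Delta^\Sigma$ with those of $-\Delta^{\mathbb{S}^2_r}$ via the conformal parametrization $\overline{\psi}\colon\mathbb{S}^2_r\to\Sigma$ furnished by Proposition~\ref{propRegularity}, then convert the area-radius estimates into mean-curvature-radius estimates using \eqref{eqMCArea}. First I would set up the pull-back problem: writing $\overline{\psi}^{\,*}g^\Sigma = h$, equation \eqref{eqConfPar2} gives $\|h-\delta^{\mathbb{S}^2_r}\|_{W^{2,2}(\mathbb{S}^2_r)}\leq Cr^{\frac12-\varepsilon}$, and one can regard $-\Delta^\Sigma$ on $L^2(\Sigma)$ as a perturbation of $-\Delta^{\mathbb{S}^2_r}$ on $L^2(\mathbb{S}^2_r)$ (after accounting for the conformal factor in the volume form). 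The standard strategy is then a Rellich--Kato type perturbation argument: since the first few eigenvalues $\lambda_i^\delta$ of $-\Delta^{\mathbb{S}^2_r}$ are simple up to the multiplicity-3 value $\tfrac{2}{r^2}$ and are separated from the rest of the spectrum by a gap of order $r^{-2}$, a perturbation of size $Cr^{\frac12-\varepsilon}\cdot r^{-2}=Cr^{-\frac32-\varepsilon}$ in the relevant operator norm cannot move the eigenvalues by more than $Cr^{-\frac32-\varepsilon}$ and produces eigenfunctions $W^{2,2}$-close to $f_i^\delta$; combining with $|r-\sigma|\leq Cr^{\frac12-\varepsilon}$ from \eqref{eqMCArea} (so that $r^{-\frac32-\varepsilon}\sim\sigma^{-\frac32-\varepsilon}$ up to harmless lower-order corrections, although one should double-check whether the quoted rate $\sigma^{-\frac52-\varepsilon}$ in \eqref{eqEigenEstimates1a} requires exploiting the extra decay $|\mathring A|\leq Cr^{-\frac32-\varepsilon}$ from \eqref{eq2FFtraceless} rather than merely \eqref{eqConfPar2}) yields \eqref{eqEigenEstimates1a} and \eqref{eqEigenEstimates1b}.

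For the Hessian estimate \eqref{eqTracelessHessian}, I would differentiate the eigenfunction equation $-\Delta^\Sigma f_i=\lambda_i f_i$ and use the Bochner/Ricci identity on $(\Sigma,g^\Sigma)$: the tracefree Hessian $\accentset{\circ}{\hess}^\Sigma f_i$ is controlled by $\|\nabla^\Sigma f_i\|$ times the tracefree part of the ambient/intrinsic curvature of $\Sigma$, which by the Gauss equation and \eqref{eq2FFtraceless} together with the asymptotically Euclidean decay of $\ric$ is $O(r^{-\frac52-\varepsilon})$ in $L^2$; one also needs $\|f_i - f_i^\delta\|_{W^{2,2}}$ to upgrade the fact that $\accentset{\circ}{\hess}^{\mathbb{S}^2_r}f_i^\delta=0$ to a smallness statement for $\accentset{\circ}{\hess}^\Sigma f_i$. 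Estimate \eqref{eqEigenEstimates2} should then follow by combining \eqref{eqEigenEstimates1b}, \eqref{eqTracelessHessian}, and the exact identity $\langle\nabla^{\mathbb{S}^2_r}f_i^\delta,\nabla^{\mathbb{S}^2_r}f_j^\delta\rangle - \tfrac{3\delta_{ij}}{4\pi r^4}+\tfrac{f_i^\delta f_j^\delta}{r^2}=0$ recorded in the text: expand the corresponding expression on $\Sigma$, replace $g^\Sigma$, $\nabla^\Sigma f_i$, and $f_i$ by their round counterparts with errors measured in the norms above, and use $|\Sigma|\sigma^{-2}= 4\pi r^2\sigma^{-2}=4\pi + O(\sigma^{-\frac12-\varepsilon})$ via \eqref{eqMCArea} to replace $\tfrac{3\delta_{ij}}{4\pi r^4}$ by $\tfrac{3\delta_{ij}}{\sigma^2|\Sigma|}$; an $L^1$-bound suffices here, which is why one can afford products of two $L^2$-small quantities.

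Finally, for \eqref{eqBigEigenvalues} I would argue by spectral stability from below: the fourth eigenvalue $\lambda_4^\delta = \tfrac{6}{r^2}$ (the first eigenvalue of $-\Delta^{\mathbb{S}^2_r}$ above the coordinate-function eigenvalue $\tfrac{2}{r^2}$), and the min--max characterization of $\lambda_4$ together with the $W^{2,2}$-closeness of $g^\Sigma$ to $\delta^{\mathbb{S}^2_r}$ forces $\lambda_4 \geq \tfrac{6}{r^2} - Cr^{-\frac52-\varepsilon}$; converting radii via \eqref{eqMCArea} gives $\lambda_4 \geq \tfrac{6}{\sigma^2}-C\sigma^{-\frac52-\varepsilon} > \tfrac{5}{\sigma^2}$ for $\sigma$ large, hence $\lambda_i > \tfrac{5}{\sigma^2}$ for all $i\geq 4$ by monotonicity. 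I expect the main obstacle to be bookkeeping the perturbation argument cleanly enough to get the sharp exponent $-\tfrac52-\varepsilon$ in \eqref{eqEigenEstimates1a} and \eqref{eqTracelessHessian} (as opposed to the cruder $-\tfrac32-\varepsilon$ one reads off naively from \eqref{eqConfPar2}): this requires being careful that the \emph{leading} conformal-factor perturbation of $-\Delta^\Sigma$ is orthogonal, in the appropriate sense, to the $f_i^\delta$ eigenspaces so that it contributes only at second order, and this is precisely where the improved tracefree-second-fundamental-form bound \eqref{eq2FFtraceless} and the identity for $\accentset{\circ}{\hess}f_i^\delta$ enter. An alternative, which may be cleaner, is to cite the corresponding statement in \cite[Section~4]{NerzCMC} or \cite{MetzgerCMC} verbatim, since the operator $-\Delta^\Sigma$ here depends only on the Riemannian data $(M^3,g)$ and the a priori class $\mathcal{A}(a,b,\eta)$ coincides with theirs; the only genuinely new point is the translation from area radius $r$ to spacetime mean curvature radius $\sigma$, which is \eqref{eqMCArea}.
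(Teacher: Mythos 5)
Your proposal follows the paper's proof in all essentials: compare $-\Delta^\Sigma$ with $-\Delta^{\mathbb{S}^2_r}$ through the conformal parametrization of Proposition \ref{propRegularity}, use the Rayleigh (min--max) characterization for \eqref{eqEigenEstimates1a} and \eqref{eqBigEigenvalues}, deduce \eqref{eqTracelessHessian} and \eqref{eqEigenEstimates2} from the $W^{2,2}$-closeness of $f_i$ to $f_i^\delta$ together with the exact identities satisfied by the restricted coordinate functions, and convert $r$ to $\sigma$ via \eqref{eqMCArea}. Two clarifications. First, the ``main obstacle'' you flag about the exponent in \eqref{eqEigenEstimates1a} is not there, and no orthogonality-to-the-eigenspace or second-order cancellation is needed: the norms in \eqref{eqConfPar2} are the \emph{weighted} ones, so Lemma \ref{lemSobolevEmbed} gives $\Vert\overline{\psi}^{\,*}g^\Sigma-\delta^{\mathbb{S}^2_r}\Vert_{L^\infty(\mathbb{S}^2_r)}\leq Cr^{-\frac12-\varepsilon}$, whence the Rayleigh quotients are perturbed \emph{relatively} by $O(r^{-\frac12-\varepsilon})$; since $\lambda_i^\delta=\sfrac{2}{r^{2}}$, the absolute shift is $O(r^{-\frac52-\varepsilon})$. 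Your figure $Cr^{\frac12-\varepsilon}\cdot r^{-2}=Cr^{-\frac32-\varepsilon}$ conflates the weighted norm (which carries an area factor) with the relative size of the perturbation; had you really only obtained $r^{-\frac32-\varepsilon}$, you would miss \eqref{eqEigenEstimates1a} by a full power of $r$, so this bookkeeping is the one step you must get right. Second, for \eqref{eqEigenEstimates1b} the paper does not invoke an abstract Kato-type spectral perturbation: it constructs $f_i-f_i^\delta$ explicitly by a Fredholm-alternative argument for $-\Delta^{\mathbb{S}^2_r}-\lambda_i^\delta$ applied to the right-hand side $(\Delta^\Sigma-\Delta^{\mathbb{S}^2_r})f_i+(\lambda_i-\lambda_i^\delta)f_i$, with the a priori bound $\Vert f_i\Vert_{W^{2,2}(\Sigma)}=O(1)$ supplied by Lemma \ref{lemCalderon} and the conclusion upgraded by elliptic regularity (plus Gram--Schmidt for multiple eigenvalues); your spectral-gap argument would give the same conclusion, but the explicit construction is what makes \eqref{eqTracelessHessian} and \eqref{eqEigenEstimates2} immediate, without the Bochner detour you sketch (Bochner is used only in the subsequent Lemma \ref{lemLaplace}). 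Your fallback of quoting the Riemannian CMC literature verbatim is legitimate only once the a priori regularity has been extended to STCMC surfaces, which is precisely the content of Proposition \ref{propRegularity}; with that in hand, the passage from $r$ to $\sigma$ via \eqref{eqMCArea} is, as you say, the only genuinely new ingredient.
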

\begin{proof}
By \eqref{eqConfPar2} and Lemma \ref{lemSobolevEmbed} we have
\begin{align*}
\Vert \overline{\psi}^{\,*} g^\Sigma - \delta^{\mathbb{S}^2_r}\Vert _{L^{\infty}(\mathbb{S}^2_r)} \leq C r^{-\frac{1}{2}-\varepsilon}.
\end{align*}
Applying the Rayleigh Theorem (see e.g.~\cite[Chapter II.5]{ChavelEigenvalues}), we see from the above estimate and \eqref{eqMCArea} that 
\begin{align*}
\lambda_i = \inf_{f} \int_\Sigma \vert \nabla^\Sigma f\vert ^2 \, d\mu = \lambda^\delta_i (1 + O(r^{-\frac{1}{2}-\varepsilon})),  \qquad i=1,2,3,
\end{align*}
where the infimum is taken over all $f\in W^{1,2}(\Sigma)$ with $\int_\Sigma f \, d\mu = 0$ and $\Vert f \Vert_{L^2(\Sigma)}=1$. Of course, the $O$-term constant and the lower bound on $\sigma$ coming from this calculation only depend on $\varepsilon$, $a$, $b$, $\mu$, and $C_{\mathcal{I}}$. We will now construct the respective eigenfunctions $f_i$, for which we will use the fact that these functions are solutions to the equation 
\begin{align}\label{eq:neattrick}
- \Delta^{\mathbb{S}^2_r} (f_i - f_i^\delta) - \lambda^\delta_i (f_i - f_i^\delta) = (\Delta^\Sigma - \Delta^{\mathbb{S}^2_r}) f_i + (\lambda_i-\lambda_i^\delta) f_i,  \qquad i=1,2,3,
\end{align}
where $\lambda^\delta_i = \sfrac{2}{r^2}$ for $i=1,2,3$. Noting that the right hand side of the equation equals $- \Delta^{\mathbb{S}^2_r} f_i -\lambda_i^\delta f_i$, and using integration by parts it is straightforward to check that it is orthogonal in $L^2(\mathbb{S}^2_r)$ to any element in the kernel of the self-adjoint differential operator in the left hand side.
Thus, by the Fredholm Alternative \cite[Appendix~I, Theorem 31]{Besse}, for every $i=1,2,3$ there is a unique solution $f_i - f_i^\delta \in W^{2,2}(\mathbb{S}^2_r)$ orthogonal in $L^2(\mathbb{S}^2_r)$ to the linear space spanned by $ f_i^\delta$, $i=1,2,3$. Note that we may without loss of generality assume that $\Vert f_i\Vert_{L^2(\Sigma)} =1$ so that $\Vert\Delta^\Sigma f_i\Vert_{L^2(\Sigma)} = \lambda_i = O(r^{-2})$. Since $\scal^\Sigma = \tfrac{2}{\sigma^2}+O(\sigma^{-\tfrac{5}{2}-\varepsilon})$ as a consequence of the Gauss equation (see e.g.~\eqref{eqGaussConsequences} below), in view of \eqref{eqMCArea} and Lemma \ref{lemCalderon} we have $\Vert f_i \Vert_{W^{2,2}(\Sigma)} = O(1)$ as $\overline{f}_i = 0$. With the above estimates at hand, it is now straightforward to check that
\begin{align*}
\Vert (\Delta^\Sigma - \Delta^{\mathbb{S}^2_r}) f_i + (\lambda_i-\lambda_i^\delta) f_i\Vert _{L^2(\mathbb{S}^2_r)} \leq C r^{-\frac{5}{2}-\varepsilon},
\end{align*}
so by standard elliptic regularity (see e.g.~\cite[Appendix H, Theorem 27]{Besse}\footnote{We cite this result for the unit sphere and apply rescaling to extend it to spheres of radius $r>0$.}) applied to the operator on the left hand side of \eqref{eq:neattrick}, we have 
\begin{align}\label{eqBasisFallOff}
\Vert f_i - f_i^\delta\Vert _{W^{2,2}(\mathbb{S}^2_r)} \leq C  r^{-\frac{1}{2}-\varepsilon},   \qquad i=1,2,3,
\end{align}
whenever $\sigma>\overline{\sigma}$, for suitably large $C>0$ and $\overline{\sigma}>0$ depending only on $\varepsilon$, $a$, $b$, $\nu$, and $C_{\mathcal{I}}$. This defines the eigenfunctions $f_i$, $i=1,2,3$, up to applying the Gram-Schmidt process in the case of multiple eigenvalues. Note that \eqref{eqBasisFallOff} with \eqref{eqMCArea}, \eqref{eqConfPar2}, and the properties of the functions $f_i^\delta$ implies \eqref{eqTracelessHessian} and \eqref{eqEigenEstimates2}. 

We have $\lambda_0 = 0$ by definition. Using again the Rayleigh Theorem and  \eqref{eqMCArea}, it is also straightforward to check that $\lambda_i > \sfrac{5}{\sigma^2}$ for $i=4,5,\ldots$\,, since the respective eigenvalues of $-\Delta^{\mathbb{S}^2_r}$ satisfy $\lambda_i^\delta \geq \sfrac{6}{r^2}$, whenever $\sigma>\overline{\sigma}$ for suitably large $\overline{\sigma}$ only depending on $\varepsilon$, $a$, $b$, $\mu$, and $C_{\mathcal{I}}$. This concludes the proof.
\end{proof}

We can now give a more detailed characterization of the lowest eigenvalues $\lambda_i$, $i=1,2,3$. More specifically, in the following lemma we show that these eigenvalues are computed in terms of the \emph{Hawking mass} 
\begin{align}\label{eqHawkingMass}
m_\mathcal{H}(\Sigma) \definedas \sqrt{\frac{\vert \Sigma\vert }{16\pi}}\left(1-\frac{1}{16\pi}\int_\Sigma \mathcal{H}^2 \, d\mu\right)
\end{align}
of $\Sigma$ in the initial data set $\mathcal{I}$. We will drop the explicit reference to $\Sigma$ later and will write $m_{\mathcal{H}}$ instead of $m_{\mathcal{H}}(\Sigma)$. This lemma and its proof are very similar to \cite[Lemma 4.5]{NerzCMC}, but rephrased in the spacetime setting. 

\begin{lemma}\label{lemLaplace}                                    
Let $\mathcal{I}=\IDS$ be a $C^{2}_{\sfrac{1}{2}+\varepsilon}$-asymptotically Euclidean initial data set with the energy $E$. Suppose that $a \in [0,1)$, $b \geq 0$, $\eta \in (0,1]$, and that $\Sigma\in\mathcal{A}(a,b,\eta)$ with respect to $\mathcal{I}$ is a surface with Hawking mass $m_{\mathcal{H}}(\Sigma)$. Then there exist constants $C>0$ and $\overline{\sigma}>0$, depending only on $\varepsilon$, $a$, $b$, $\eta$, $\vert E\vert $, and $C_{\mathcal{I}}$ such that if $\Sigma$ has constant spacetime mean curvature $\mathcal{H} \equiv \sfrac{2}{\sigma}$ for $\sigma >  \overline{\sigma}$ then the following estimates hold
\begin{align}\label{eqIntRicci1}
\left\vert \lambda_i - \left( \frac{2}{\sigma^2} + \frac{6m_\mathcal{H}(\Sigma)}{\sigma^3} + \int_\Sigma \ric(\nu, \nu) f_i^2 d\mu\right) \right\vert  \leq \frac{C}{\sigma^{3+\varepsilon}} \quad \text{for} \quad i=1,2,3,
\end{align}
and 
\begin{align}\label{eqIntRicci2}
\left\vert \int_\Sigma \ric(\nu, \nu) f_i f_j d\mu \right\vert  \leq \frac{C}{\sigma^{3+\varepsilon}}\quad\quad \text{for} \quad i\neq j, \quad\text{ with }\quad i,j =1,2,3.
\end{align}
\end{lemma}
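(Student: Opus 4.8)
The plan is to compute $\lambda_i$ by testing the eigenvalue equation $-\Delta^\Sigma f_i = \lambda_i f_i$ against $f_i$ itself and extracting the leading-order behavior using the Gauss equation together with the decay estimates of Proposition~\ref{propRegularity} and Lemma~\ref{lemEigenLaplace2}. Concretely, since $\int_\Sigma f_i\,d\mu = 0$ and $\Vert f_i\Vert_{L^2(\Sigma)} = 1$, we have $\lambda_i = \int_\Sigma \vert\nabla^\Sigma f_i\vert^2\,d\mu$. The key identity will be a Bochner-type manipulation: integrating $\vert\hess^\Sigma f_i\vert^2$ against suitable test functions and using that $\accentset{\circ}{\hess}^\Sigma f_i$ is small in $L^2$ by \eqref{eqTracelessHessian}, one replaces $\hess^\Sigma f_i$ by $-\tfrac12 (\Delta^\Sigma f_i) g^\Sigma = \tfrac{\lambda_i}{2} f_i g^\Sigma$ up to controlled errors. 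Combined with the Gauss equation $\scal^\Sigma = \vert A\vert^2 - \vert\accentset{\circ}{A}\vert^2 \cdot(\text{coeff}) \ldots$ — more precisely $\scal^\Sigma = H^2/2 - \vert\accentset{\circ}{A}\vert^2 + 2\,\mathrm{something}$, better written as $2\mathrm{Scal}^\Sigma$ in terms of $H$, $\accentset{\circ}{A}$, and the ambient curvature via the contracted Gauss equation $\scal^\Sigma = \scal - 2\ric(\nu,\nu) + H^2 - \vert A\vert^2$ — and the STCMC-condition $H^2 = \mathcal{H}^2 + P^2 = 4/\sigma^2 + P^2$ with $P = O(\sigma^{-3/2-\varepsilon})$, one obtains $\scal^\Sigma = \tfrac{2}{\sigma^2} + O(\sigma^{-5/2-\varepsilon})$ plus the ambient Ricci term.

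The heart of the argument is the following classical computation, which I would carry out as in \cite[Lemma 4.5]{NerzCMC}: multiply the Bochner formula $\tfrac12\Delta^\Sigma\vert\nabla^\Sigma f_i\vert^2 = \vert\hess^\Sigma f_i\vert^2 + \langle\nabla^\Sigma f_i, \nabla^\Sigma \Delta^\Sigma f_i\rangle + \scal^\Sigma\,\vert\nabla^\Sigma f_i\vert^2$ (in two dimensions, where the Ricci curvature of $\Sigma$ equals $\tfrac12\scal^\Sigma g^\Sigma$), integrate over $\Sigma$, and use $\Delta^\Sigma f_i = -\lambda_i f_i$. This yields
\begin{align*}
\lambda_i^2 = \int_\Sigma \vert\hess^\Sigma f_i\vert^2\,d\mu + \int_\Sigma \scal^\Sigma\,\vert\nabla^\Sigma f_i\vert^2\,d\mu.
\end{align*}
Splitting $\vert\hess^\Sigma f_i\vert^2 = \vert\accentset{\circ}{\hess}^\Sigma f_i\vert^2 + \tfrac12(\Delta^\Sigma f_i)^2 = \vert\accentset{\circ}{\hess}^\Sigma f_i\vert^2 + \tfrac12\lambda_i^2$ and using \eqref{eqTracelessHessian} to bound $\int_\Sigma\vert\accentset{\circ}{\hess}^\Sigma f_i\vert^2 \leq C\sigma^{-5-2\varepsilon}$, we get $\tfrac12\lambda_i^2 = \int_\Sigma\scal^\Sigma\vert\nabla^\Sigma f_i\vert^2 + O(\sigma^{-5-2\varepsilon})$. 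Now insert the expansion of $\scal^\Sigma$ via the contracted Gauss equation: $\scal^\Sigma = \tfrac12 H^2 - \vert\accentset{\circ}{A}\vert^2 + 2\mu - 2\ric(\nu,\nu)$ (using $\scal - \vert A\vert^2 + \ldots$ and Remark~\ref{remAERiem}-type identities, or directly $\scal^\Sigma = \scal + H^2 - \vert A\vert^2 - 2\ric(\nu,\nu)$ and $\vert A\vert^2 = \vert\accentset{\circ}{A}\vert^2 + \tfrac12 H^2$). The term $\tfrac12 H^2 = \tfrac{2}{\sigma^2} + O(\sigma^{-3-2\varepsilon})$ by \eqref{eqFallOff}; the $\vert\accentset{\circ}{A}\vert^2$, $\mu$, $\scal$ terms are $O(\sigma^{-3-\varepsilon})$ after integration against $\vert\nabla^\Sigma f_i\vert^2 = O(\sigma^{-2})$ and using \eqref{eq2FFtraceless}. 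The only surviving non-trivial contribution besides the $2/\sigma^2$ leading term is $-2\int_\Sigma\ric(\nu,\nu)\vert\nabla^\Sigma f_i\vert^2$. One then uses \eqref{eqEigenEstimates2} to replace $\vert\nabla^\Sigma f_i\vert^2$ by $\tfrac{3}{\sigma^2\vert\Sigma\vert} - \tfrac{f_i^2}{\sigma^2}$ up to an $L^1$-error of size $C\sigma^{-5/2-\varepsilon}$; combined with $\vert\ric(\nu,\nu)\vert \leq C\vert\vec{x}\,\vert^{-5/2-\varepsilon}$ this converts the gradient integral into $\tfrac{3}{\sigma^2\vert\Sigma\vert}\int_\Sigma\ric(\nu,\nu) - \tfrac{1}{\sigma^2}\int_\Sigma\ric(\nu,\nu)f_i^2$, and the first of these — up to relating $\int_\Sigma\ric(\nu,\nu)$ to the Hawking mass via $\int_\Sigma(\scal^\Sigma - \ldots)$ — produces the $6m_\mathcal{H}/\sigma^3$ term. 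Solving the resulting quadratic relation $\tfrac12\lambda_i^2 = \tfrac{2}{\sigma^2}\lambda_i + (\text{lower order})\cdot\lambda_i + \ldots$ — or rather dividing through and writing $\lambda_i = \tfrac{4}{\sigma^2}\cdot\tfrac{\lambda_i}{2\cdot 2/\sigma^2}$ — and using $\lambda_i = \tfrac{2}{\sigma^2}(1 + O(\sigma^{-1/2-\varepsilon}))$ from Lemma~\ref{lemEigenLaplace2} to bootstrap, one arrives at \eqref{eqIntRicci1}.

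For \eqref{eqIntRicci2}, the off-diagonal estimate, I would run the analogous computation with the polarized Bochner identity, testing $-\Delta^\Sigma f_i = \lambda_i f_i$ against $f_j$ and symmetrizing: $0 = (\lambda_i - \lambda_j)\int_\Sigma f_i f_j\,d\mu$ trivially, but the relevant object is $\int_\Sigma\langle\nabla^\Sigma f_i,\nabla^\Sigma f_j\rangle\,d\mu = \lambda_i\int_\Sigma f_i f_j\,d\mu = 0$ by orthogonality, so instead one studies $\int_\Sigma\scal^\Sigma\langle\nabla^\Sigma f_i,\nabla^\Sigma f_j\rangle$ via the polarized Bochner formula $\int_\Sigma\langle\hess^\Sigma f_i,\hess^\Sigma f_j\rangle + \int_\Sigma\scal^\Sigma\langle\nabla^\Sigma f_i,\nabla^\Sigma f_j\rangle = \lambda_i\lambda_j\int_\Sigma f_i f_j\,d\mu = 0$. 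Splitting off the trace parts and using \eqref{eqTracelessHessian}, \eqref{eqEigenEstimates2}, and the decay of $\ric(\nu,\nu)$ exactly as above, all the isotropic pieces cancel or are absorbed and one is left with $\vert\int_\Sigma\ric(\nu,\nu)f_i f_j\,d\mu\vert \leq C\sigma^{-3-\varepsilon}$.

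I expect the main obstacle to be the careful bookkeeping in the contracted Gauss equation step — correctly isolating which combination of the ambient terms ($\scal$, $\mu$, $\vert\accentset{\circ}{A}\vert^2$, $\ric(\nu,\nu)$) produces the Hawking mass term versus which are genuinely negligible — and in particular verifying that the $P^2$ contribution coming from $H^2 = \mathcal{H}^2 + P^2$ (which distinguishes this from the CMC case of \cite[Lemma 4.5]{NerzCMC}) is only of order $O(\sigma^{-3-2\varepsilon})$ after integration, hence falls within the claimed error. This last point is precisely where \eqref{eqFallOff} — itself a consequence of Proposition~\ref{propRegularity} — is essential, and it is the reason this lemma needs to be restated and re-proved rather than merely cited from \cite{NerzCMC}.
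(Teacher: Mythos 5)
Your outline follows essentially the same route as the paper's proof: a (polarized) Bochner identity applied to the eigenfunctions, insertion of the contracted Gauss equation together with the STCMC-relation $H^2=\sfrac{4}{\sigma^2}+P^2$ so that $\scal^\Sigma=\scal-2\ric(\nu,\nu)+\sfrac{2}{\sigma^2}+O(\sigma^{-3-2\varepsilon})$, the smallness of $\accentset{\circ}{\hess}^{\Sigma}f_i$ from \eqref{eqTracelessHessian}, the replacement of $\langle\nabla^\Sigma f_i,\nabla^\Sigma f_j\rangle$ via \eqref{eqEigenEstimates2}, and finally the identification of the mean-value contribution of $\scal-2\ric(\nu,\nu)$ with the Hawking mass (the paper's \eqref{eqIntermediateEV2}, via Gauss--Bonnet). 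Your off-diagonal argument is exactly the paper's, which in fact runs the polarized identity for all $i,j$ simultaneously and reads off both \eqref{eqIntRicci1} and \eqref{eqIntRicci2} from the single estimate \eqref{eqIntermediateEV}.

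One slip should be fixed: in two dimensions $\ric^\Sigma=\tfrac12\scal^\Sigma g^\Sigma$ (as you note parenthetically), so the Bochner formula reads $\tfrac12\Delta^\Sigma\vert\nabla^\Sigma f_i\vert^2=\vert\hess^\Sigma f_i\vert^2+\langle\nabla^\Sigma f_i,\nabla^\Sigma\Delta^\Sigma f_i\rangle+\tfrac12\scal^\Sigma\vert\nabla^\Sigma f_i\vert^2$, not with a full $\scal^\Sigma$ coefficient as in your display. Integrating the correct formula gives $\lambda_i^2=\int_\Sigma\vert\hess^\Sigma f_i\vert^2\,d\mu+\tfrac12\int_\Sigma\scal^\Sigma\vert\nabla^\Sigma f_i\vert^2\,d\mu$, hence after splitting off the trace part, $\lambda_i^2=\int_\Sigma\scal^\Sigma\vert\nabla^\Sigma f_i\vert^2\,d\mu+O(\sigma^{-5-2\varepsilon})$, which is the paper's \eqref{eqLambdaScal}; your displayed relation $\tfrac12\lambda_i^2=\int_\Sigma\scal^\Sigma\vert\nabla^\Sigma f_i\vert^2\,d\mu+O(\sigma^{-5-2\varepsilon})$ carries a spurious factor $\tfrac12$ which, if pushed through, would force $\lambda_i\approx\sfrac{4}{\sigma^2}$, contradicting Lemma \ref{lemEigenLaplace2} and spoiling the coefficients $\sfrac{2}{\sigma^2}$ and $\sfrac{6m_{\mathcal{H}}}{\sigma^3}$ in \eqref{eqIntRicci1}. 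With that factor corrected, your bookkeeping --- including the crucial observation that the $P^2$-contribution to $H^2$ and the $\vert\mathring{A}\vert^2$-, $\scal$-terms only enter at the admissible error level --- coincides with the paper's computation.
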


\begin{proof}
A polarized version of the standard Bochner formula (see e.g.~Proposition~33(3) in \cite[Chapter 3]{Petersen}) in dimension 2 applied to the eigenfunctions $f_i$ and $f_j$ for $i,j=1,2,3$ reads 
\begin{align*}
\tfrac{1}{2}\Delta^\Sigma &\left\langle \nabla^{\Sigma} f_i, \nabla^\Sigma f_j \right\rangle\\
 &= \,\left\langle \hess^\Sigma f_i,  \hess^\Sigma f_j \right\rangle \\ &  \quad + \tfrac{1}{2} \left( \left\langle\nabla^\Sigma f_i , \nabla^\Sigma \Delta^\Sigma f_j\right\rangle + \left\langle\nabla^\Sigma \Delta^\Sigma f_i, \nabla^\Sigma f_j \right\rangle \right) + \tfrac{1}{2}\scal^\Sigma \left\langle\nabla^\Sigma f_i, \nabla^\Sigma f_j \right\rangle \\
&=\, \left\langle \accentset{\circ}{\hess}^{\Sigma} f_i,  \accentset{\circ}{\hess}^{\Sigma} f_j \right\rangle + \tfrac{1}{2} \lambda_i\lambda_j f_i f_j - \tfrac{1}{2}\left(\lambda_i+\lambda_j-\scal^\Sigma\right) \left\langle\nabla^\Sigma f_i, \nabla^\Sigma f_j \right\rangle.
\end{align*}
\smallskip
Integrating this identity, using the Divergence Theorem on the closed surface $\Sigma$, integrating by parts, and recalling \eqref{eqTracelessHessian}, we obtain
\begin{align}\label{eqLambdaScal}
\left\vert \lambda_i^2 \delta_{ij} - \int_{\Sigma} \scal^\Sigma \left\langle\nabla^\Sigma f_i, \nabla^\Sigma f_j \right\rangle d\mu \right\vert \leq C\sigma^{-5-2\varepsilon}
\end{align}
for some constant $C>0$ and all $\sigma>\overline{\sigma}>0$, with $C$ and $\overline{\sigma}$ only depending on $\varepsilon$, $a$, $b$, $\eta$, and $C_{\mathcal{I}}$. Next, the Gauss equation combined with \eqref{eq2FFtraceless} and \eqref{eqFallOff} gives us
\begin{align}\label{eqGaussConsequences}
\begin{split}
\scal^\Sigma & = \scal - 2\ric(\nu,\nu) - \vert \mathring{A}\vert ^2 + \tfrac{H^2}{2}\\
             & = \scal - 2\ric(\nu,\nu) +\tfrac{2}{\sigma^2} + O(\sigma^{-3-2\varepsilon}),
\end{split}
\end{align}
possibly enlarging $C>0$ and $\overline{\sigma}>0$ without introducing new dependencies.

Substituting this into \eqref{eqLambdaScal} and using \eqref{eqEigenEstimates2}, \eqref{eqEigenEstimates1a} together with the fact that our initial data set is $C^2_{\sfrac{1}{2}+\varepsilon}$-asymptotically Euclidean we conclude that, by partial integration, we get
\begin{align}\label{eqIntermediateEV}
\left\vert \left(\lambda_i^2 - \frac{2}{\sigma^2}\lambda_i\right)\delta_{ij} - \int_\Sigma (\scal - 2 \ric(\nu,\nu))\left(\frac{3\delta_{ij}}{\sigma^2\vert \Sigma\vert }-\frac{f_i f_j}{\sigma^2}\right)\, d\mu\right\vert  \leq \frac{C}{\sigma^{5+2\varepsilon}}.
\end{align}

When $i\neq j$, $i,j =1,2,3$, this gives us \eqref{eqIntRicci2} once we recall that $\scal=O(\sigma^{-3-\varepsilon})$ as a consequence of Definition \ref{defAE} with possibly enlarged $C>0$ and $\overline{\sigma}>0$. In the case $i=j$, $i,j=1,2,3$, one arrives at \eqref{eqIntRicci1} by combining \eqref{eqIntermediateEV}, \eqref{eqMCArea}, \eqref{eqEigenEstimates1a}, and the fact that our initial data set is $C^2_{\sfrac{1}{2}+\varepsilon}$-asymptotically Euclidean, as well as using
\begin{align}\label{eqIntermediateEV2}\nonumber
& \left\vert \frac{r}{16\pi}\int_\Sigma (\scal-2\ric(\nu,\nu))\,d\mu - m_{\mathcal{H}}(\Sigma)\right\vert  \\\nonumber
& \quad = \left\vert \frac{r}{16\pi}\int_\Sigma \left(\scal^\Sigma - \frac{H^2}{2} + \Vert \mathring{A}\Vert ^2\right)\,d\mu -\sqrt{\frac{\vert \Sigma\vert }{16\pi}}\left(1-\frac{1}{16\pi}\int_\Sigma (H^2-P^2) \, d\mu\right)\right\vert \\
& \quad \leq \frac{C}{\sigma^{2 \varepsilon}}.
\end{align}
This last inequality follows from \eqref{eqGaussConsequences}, the Gauss-Bonnet Theorem, and the definition of $r$, $\sigma$, and $m_{\mathcal{H}}(\Sigma)$ with possibly enlarged $C>0$ and $\overline{\sigma}>0$. This proves the claims of the lemma.
\end{proof}

\begin{remark}
Since $P^2=O(\sigma^{-3-2\varepsilon})$ in \eqref{eqIntermediateEV2}, this lemma remains valid if we replace the Hawking mass $m_{\mathcal{H}}(\Sigma)$ by the Geroch mass $m_{H}(\Sigma)$ (also sometimes referred to as ``(Riemannian) Hawking mass'') given by
 \begin{align}\label{eqHawkingMassRiemannian}
 m_{H} (\Sigma) = \sqrt{\frac{\vert \Sigma\vert }{16\pi}}\left(1-\frac{1}{16\pi}\int_\Sigma H^2 \, d\mu\right).
\end{align}
The same remark will hold true for the subsequent results. However, we choose to use $m_{\mathcal{H}} (\Sigma)$, and not $m_H (\Sigma)$, throughout to emphasize the spacetime nature of our result.
\end{remark}

\subsection{Invertibility of the operator $\mathcal{L}$}\label{ssecInvertibility}
Section \ref{secEigenLaplace} above provides the following description of the eigenvalues of the Laplacian $-\Delta^\Sigma$:
\begin{itemize}
\item $\lambda_0 = 0$,\\[-1.5ex]
\item for $i=1,2,3$ the eigenvalues $\lambda_i$ are characterized by formula \eqref{eqIntRicci1},\\[-1.5ex]
\item for $i=4,5,\ldots$ we have $\lambda_i > \sfrac{5}{\sigma^2}$.
\end{itemize}
It turns out to be useful to decompose functions $h\in L^{2}(\Sigma)$ with respect to the $L^{2}(\Sigma)$-complete orthonormal system $\{f_{0},f_{1},f_{2},f_{3},\dots\}$ of eigenfunctions corresponding to $-\Delta^{\Sigma}$ when analyzing $\mathcal{L}h$ and the $L^{2}(\Sigma)$-adjoint $\mathcal{L}^{*}h$ (the latter being of interest as we are aiming for a Fredholm Alternative argument). More specifically, it is useful to split any given function $h \in L^2(\Sigma)$ into its \emph{mean value}
\begin{align}
h^{0}\definedas \fint_{\Sigma} h\,d\mu=f_{0}\int_\Sigma h f_0 \, d\mu,
\end{align}
its \emph{translational part}
\begin{align}
h^t \definedas \sum_{i=1}^3 f_i \int_\Sigma h f_i \, d\mu,
\end{align}
and the \emph{difference part}\footnote{We do not call $h^{d}$ the ``deformational part'' as some other authors do, because $h^{d}$ also contains the mean value information. In other words, we primarily use this splitting to distinguish between the eigenfunctions corresponding to  eigenvalues of different magnitude.}
\begin{align}
h^d \definedas h-h^t.
\end{align}
 
\begin{proposition}\label{propInvertibility} Let $\mathcal{I} = \IDS$ be a $C^{2}_{\sfrac{1}{2}+\varepsilon}$-asymptotically Euclidean initial data set with energy $E$. Suppose that $a \in [0,1)$, $b \geq 0$, $\eta \in (0,1]$, and that $\Sigma \in\mathcal{A}(a,b,\eta)$ with respect to $\mathcal{I}$ is a surface with non-vanishing Hawking mass $m_{\mathcal{H}}(\Sigma) \neq 0$. Then there exist constants $C>0$ and $\overline{\sigma}>0$, with $C$ depending only on $\varepsilon$, $a$, $b$, $\eta$, and $C_{\mathcal{I}}$ and $\overline{\sigma}$ in addition depending on $\vert E\vert $ in \eqref{eqInvertible}, such that if $\Sigma$ has constant spacetime mean curvature $\mathcal{H} \equiv\sfrac{2}{\sigma}$ for $\sigma >\overline{\sigma}$,  the following estimates
\begin{align}\label{eqTranslational0}
\Vert \mathcal{L}h^t\Vert _{L^2(\Sigma)} &\leq \frac{C}{\sigma^{\frac{5}{2}+\varepsilon}}\Vert h^t\Vert _{L^2(\Sigma)}\\\label{eqTranslational} 
\left\vert  \int_\Sigma (\mathcal{L} h_1^t) h_2^t \,d\mu - \frac{6m_\mathcal{H}(\Sigma)}{\sigma^3} \int_\Sigma h_1^t h_2^t \,d\mu \right\vert  & \leq  \frac{C}{\sigma^{3+\varepsilon}} \Vert h_1^t\Vert _{L^2(\Sigma)}\Vert h_2^t\Vert _{L^2(\Sigma)} \\\label{eqforLemmaLapse}
\left\vert \int_{\Sigma} h^d \mathcal{L} f_i \, d\mu \right\vert  &\leq \frac{C}{\sigma^{\frac{5}{2}+\varepsilon}}\Vert h^{d}\Vert_{L^{2}(\Sigma)}\\ \label{eqDeformational} 
\frac{3}{2\sigma^2}\Vert h^d\Vert _{L^2(\Sigma)} & \leq  \Vert \mathcal{L}h^d\Vert _{L^2(\Sigma)}\\\label{eqInvertible}
\frac{3\vert m_\mathcal{H}(\Sigma)\vert }{\sigma^3} \Vert h\Vert _{L^2(\Sigma)} & \leq  \Vert \mathcal{L} h\Vert _{L^2(\Sigma)}
\end{align}
hold for any $h, h_1, h_2 \in W^{2,2}(\Sigma)$. The same estimates apply to the $L^{2}(\Sigma)$-adjoint $\mathcal{L}^{*}$. Moreover, the Hawking mass $m_{\mathcal{H}}(\Sigma)$ and the energy $E$ are related by
\begin{align}\label{eqEmH}
\left\vert E - m_{\mathcal{H}}(\Sigma)\right\vert  &\leq  C\sigma^{-\varepsilon}.
\end{align}
In particular, the operator $\mathcal{L}\colon W^{2,2}(\Sigma) \to L^2(\Sigma)$ is invertible as long as the energy $E$ of the initial data set does not vanish and $\overline{\sigma}$ is sufficiently large, depending only on $\varepsilon$, $a$, $b$, $\eta$, and $C_{\mathcal{I}}$. 
\end{proposition}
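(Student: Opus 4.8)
The plan is to analyze $\mathcal{L}$ through the $L^2(\Sigma)$-orthonormal eigenbasis $\{f_i\}_{i\ge0}$ of $-\Delta^\Sigma$ furnished by Lemma~\ref{lemEigenLaplace2}, exploiting that $\lambda_0=0$, that $\lambda_1,\lambda_2,\lambda_3=\sfrac2{\sigma^2}+O(\sigma^{-\frac52-\varepsilon})$ by \eqref{eqEigenEstimates1a} and \eqref{eqMCArea}, and that $\lambda_i>\sfrac5{\sigma^2}$ for $i\ge4$ by \eqref{eqBigEigenvalues}. Write $\mathcal{L}f=-\Delta^\Sigma f-Vf-\langle W,\nabla^\Sigma f\rangle$, with $V\definedas\vert A\vert^2+\ric(\nu,\nu)+\tfrac PH\big(\nabla_\nu\tr K-\nabla_\nu K(\nu,\nu)\big)$ and $W$ the tangential vector field satisfying $\langle W,\nabla^\Sigma f\rangle=\tfrac{2P}H K(\nabla^\Sigma f,\nu)$. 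Proposition~\ref{propRegularity} together with \eqref{eqFallOff} gives, on $\Sigma$: $\vert A\vert^2=\tfrac{H^2}2+\vert\mathring{A}\vert^2=\sfrac2{\sigma^2}+O(\sigma^{-3-2\varepsilon})$, $\vert\ric\vert\le C\sigma^{-\frac52-\varepsilon}$, $\tfrac PH\big(\nabla_\nu\tr K-\nabla_\nu K(\nu,\nu)\big)=O(\sigma^{-3-2\varepsilon})$, and $\vert W\vert\le C\sigma^{-2-2\varepsilon}$, so that $V=\sfrac2{\sigma^2}+V_1$ with $\Vert V_1\Vert_{L^\infty(\Sigma)}\le C\sigma^{-\frac52-\varepsilon}$. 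In particular $\mathcal{L}$ has leading part $-\Delta^\Sigma-\sfrac2{\sigma^2}$, which is nearly degenerate precisely on $\Span\{f_1,f_2,f_3\}$. Integrating by parts shows $\mathcal{L}^*$ is of the same type, with $V$ replaced by $V+\divg^\Sigma W$ and $W$ by $-W$; since differentiating $\tfrac PH$, $K$ or $\nu$ each costs a factor $\sigma^{-1}$, one has $\divg^\Sigma W=O(\sigma^{-3-2\varepsilon})$, so $\mathcal{L}^*$ again has leading part $-\Delta^\Sigma-\sfrac2{\sigma^2}$ with corrections of the same orders, and every estimate below holds for $\mathcal{L}^*$ verbatim. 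Throughout, I split $h\in W^{2,2}(\Sigma)$ as $h=h^t+h^d$ with $h^t=\sum_{i=1}^3 f_i\int_\Sigma hf_i\,d\mu$ and $h^d=h-h^t$, noting $h^d\perp\Span\{f_1,f_2,f_3\}$.

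For \eqref{eqTranslational0}--\eqref{eqforLemmaLapse} I expand $\mathcal{L}f_i=(\lambda_i-\sfrac2{\sigma^2})f_i-V_1f_i-\langle W,\nabla^\Sigma f_i\rangle$, using $-\Delta^\Sigma f_i=\lambda_if_i$ and $\Vert\nabla^\Sigma f_i\Vert_{L^2(\Sigma)}=\sqrt{\lambda_i}=O(\sigma^{-1})$. Since $\vert\lambda_i-\sfrac2{\sigma^2}\vert\le C\sigma^{-\frac52-\varepsilon}$, each of the three terms has $L^2(\Sigma)$-norm $\le C\sigma^{-\frac52-\varepsilon}$; hence $\Vert\mathcal{L}f_i\Vert_{L^2(\Sigma)}\le C\sigma^{-\frac52-\varepsilon}$, and the triangle inequality gives \eqref{eqTranslational0}. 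Pairing $\mathcal{L}f_i$ with $f_j$ and using $\int_\Sigma Vf_if_j\,d\mu=\sfrac2{\sigma^2}\delta_{ij}+\int_\Sigma\ric(\nu,\nu)f_if_j\,d\mu+O(\sigma^{-3-2\varepsilon})$ together with $\big\vert\int_\Sigma\langle W,\nabla^\Sigma f_i\rangle f_j\,d\mu\big\vert\le C\sigma^{-3-2\varepsilon}$, and then invoking Lemma~\ref{lemLaplace}---\eqref{eqIntRicci1} when $i=j$, \eqref{eqIntRicci2} when $i\ne j$---yields $\int_\Sigma(\mathcal{L}f_i)f_j\,d\mu=\tfrac{6m_\mathcal{H}(\Sigma)}{\sigma^3}\delta_{ij}+O(\sigma^{-3-\varepsilon})$, whence \eqref{eqTranslational} by bilinearity. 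For \eqref{eqforLemmaLapse} the same expansion is used, but now $\int_\Sigma h^d\lambda_if_i\,d\mu=0$ and $\int_\Sigma h^d\sfrac2{\sigma^2}f_i\,d\mu=0$ because $h^d\perp f_i$, so only the genuinely small terms $\int_\Sigma h^dV_1f_i\,d\mu$ and $\int_\Sigma h^d\langle W,\nabla^\Sigma f_i\rangle\,d\mu$ remain, each bounded by $C\sigma^{-\frac52-\varepsilon}\Vert h^d\Vert_{L^2(\Sigma)}$.

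For the coercivity estimate \eqref{eqDeformational}, note that $h^d$ lies in the span of $f_0$ and the $f_i$ with $i\ge4$, so $\vert\lambda_i-\sfrac2{\sigma^2}\vert\ge\sfrac2{\sigma^2}$ for each such index; thus $\Vert(-\Delta^\Sigma-\sfrac2{\sigma^2})h^d\Vert_{L^2(\Sigma)}\ge\sfrac2{\sigma^2}\Vert h^d\Vert_{L^2(\Sigma)}$, and the elementary bound $\lambda_i\le\sigma^2(\lambda_i-\sfrac2{\sigma^2})^2$---valid for $i\notin\{1,2,3\}$ since then $\sigma^2\lambda_i\in\{0\}\cup(5,\infty)$ by \eqref{eqBigEigenvalues}---gives $\Vert\nabla^\Sigma h^d\Vert_{L^2(\Sigma)}\le\sigma\Vert(-\Delta^\Sigma-\sfrac2{\sigma^2})h^d\Vert_{L^2(\Sigma)}$. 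Therefore $\Vert\mathcal{L}h^d\Vert_{L^2(\Sigma)}\ge\big(1-C\sigma^{-\frac12-\varepsilon}-C\sigma^{-1-2\varepsilon}\big)\Vert(-\Delta^\Sigma-\sfrac2{\sigma^2})h^d\Vert_{L^2(\Sigma)}\ge\tfrac3{2\sigma^2}\Vert h^d\Vert_{L^2(\Sigma)}$ for $\sigma>\overline\sigma$. For \eqref{eqEmH}, I start from $m_\mathcal{H}(\Sigma)=\tfrac r2\big(1-\tfrac1{16\pi}\int_\Sigma\mathcal{H}^2\,d\mu\big)$ and, using $\mathcal{H}^2=H^2-P^2$, the Gauss equation with Proposition~\ref{propRegularity} and \eqref{eqFallOff}, and the Gauss--Bonnet theorem, reduce it to $m_\mathcal{H}(\Sigma)=\tfrac r{16\pi}\int_\Sigma(\scal-2\ric(\nu,\nu))\,d\mu+O(\sigma^{-2\varepsilon})$; the Hamiltonian constraint \eqref{eqHamiltonian} forces $\vert\scal\vert\le C\sigma^{-3-\varepsilon}$ on $\Sigma$, so $\tfrac r{16\pi}\int_\Sigma\scal\,d\mu=O(\sigma^{-\varepsilon})$, while the standard asymptotic expansion of the linearized Ricci curvature, combined with the closeness of $\Sigma$ to the coordinate sphere $\mathbb{S}^2_r(\vec z\,)$ from Proposition~\ref{propRegularity}, gives $\int_\Sigma\ric(\nu,\nu)\,d\mu=-\tfrac{8\pi E}{r}+O(\sigma^{-1-\varepsilon})$; combining these yields \eqref{eqEmH}.

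Finally, for \eqref{eqInvertible} I decompose $h=h^t+h^d$ and combine the above. Pairing $\mathcal{L}h$ with $h^t$ and using \eqref{eqTranslational} and \eqref{eqforLemmaLapse} for $\mathcal{L}^*$ gives $\Vert\mathcal{L}h\Vert_{L^2(\Sigma)}\ge\big(\tfrac{6\vert m_\mathcal{H}(\Sigma)\vert}{\sigma^3}-C\sigma^{-3-\varepsilon}\big)\Vert h^t\Vert_{L^2(\Sigma)}-C\sigma^{-\frac52-\varepsilon}\Vert h^d\Vert_{L^2(\Sigma)}$; projecting $\mathcal{L}h$ onto the orthogonal complement of $\Span\{f_1,f_2,f_3\}$ and using \eqref{eqDeformational}, \eqref{eqforLemmaLapse} for $\mathcal{L}^*$, and \eqref{eqTranslational0} gives $\Vert\mathcal{L}h\Vert_{L^2(\Sigma)}\ge\tfrac1{\sigma^2}\Vert h^d\Vert_{L^2(\Sigma)}-C\sigma^{-\frac52-\varepsilon}\Vert h^t\Vert_{L^2(\Sigma)}$, both for $\sigma>\overline\sigma$. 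Since $\varepsilon\le\tfrac12$, the coupling coefficient $\sigma^{-\frac52-\varepsilon}$ is not subordinate to $\sigma^{-3}$, so one cannot simply add these two inequalities; instead, one eliminates $\Vert h^d\Vert_{L^2(\Sigma)}$ from the first inequality by means of the second (using $\sigma^{-\frac52-\varepsilon}\sigma^2\to0$ and $\sigma^{-5-2\varepsilon}\sigma^2\ll\sigma^{-3}$) to obtain $\Vert\mathcal{L}h\Vert_{L^2(\Sigma)}\ge c_1\tfrac{\vert m_\mathcal{H}(\Sigma)\vert}{\sigma^3}\Vert h^t\Vert_{L^2(\Sigma)}$ with $c_1\to6$, where $\vert m_\mathcal{H}(\Sigma)\vert\in[\tfrac{\vert E\vert}2,2\vert E\vert]$ for $\sigma$ large by \eqref{eqEmH}; feeding this bound back into the second inequality eliminates $\Vert h^t\Vert_{L^2(\Sigma)}$ there and gives $\Vert\mathcal{L}h\Vert_{L^2(\Sigma)}\ge c_2\tfrac{\vert m_\mathcal{H}(\Sigma)\vert}{\sigma^3}\Vert h^d\Vert_{L^2(\Sigma)}$ with $c_2\to\infty$; since $\Vert h\Vert_{L^2(\Sigma)}^2=\Vert h^t\Vert_{L^2(\Sigma)}^2+\Vert h^d\Vert_{L^2(\Sigma)}^2$, these recombine to $\Vert\mathcal{L}h\Vert_{L^2(\Sigma)}\ge\tfrac{3\vert m_\mathcal{H}(\Sigma)\vert}{\sigma^3}\Vert h\Vert_{L^2(\Sigma)}$ for $\sigma>\overline\sigma$ (with $\overline\sigma$ now also depending on $\vert E\vert$), i.e.~\eqref{eqInvertible}; the same estimate for $\mathcal{L}^*$ follows from the first paragraph. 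Consequently, \eqref{eqInvertible} makes $\mathcal{L}\colon W^{2,2}(\Sigma)\to L^2(\Sigma)$ injective with closed range whenever $m_\mathcal{H}(\Sigma)\ne0$---which holds for $\sigma>\overline\sigma$ as soon as $E\ne0$ by \eqref{eqEmH}---while the corresponding estimate for $\mathcal{L}^*$ forces $\ker\mathcal{L}^*=\{0\}$, so $\mathcal{L}$ has dense and hence full range; thus $\mathcal{L}$ is bijective, and \eqref{eqInvertible} together with the standard elliptic estimate $\Vert h\Vert_{W^{2,2}(\Sigma)}\le C\big(\Vert\mathcal{L}h\Vert_{L^2(\Sigma)}+\Vert h\Vert_{L^2(\Sigma)}\big)$ on the compact surface $\Sigma$ shows that $\mathcal{L}^{-1}\colon L^2(\Sigma)\to W^{2,2}(\Sigma)$ is bounded. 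I expect the main obstacle to be this combination step for \eqref{eqInvertible}: the favourable lower bound on the translational component decays only like $\sigma^{-3}$ whereas its coupling to the difference component is controlled merely by $\sigma^{-\frac52-\varepsilon}$, which dominates $\sigma^{-3}$ when $\varepsilon\le\tfrac12$, so the proof must proceed via the asymmetric absorption above rather than by a naive triangle inequality; a secondary technical point is the Hawking-mass/energy comparison \eqref{eqEmH}.
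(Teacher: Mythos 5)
Your proposal is correct in substance and uses the same basic framework as the paper (the eigenbasis of $-\Delta^\Sigma$ from Lemma \ref{lemEigenLaplace2}, the eigenvalue identities of Lemma \ref{lemLaplace}, and the decay inputs from Proposition \ref{propRegularity} and \eqref{eqFallOff}; your proofs of \eqref{eqTranslational0}, \eqref{eqTranslational}, and \eqref{eqforLemmaLapse} are essentially the paper's). Where you genuinely diverge is in \eqref{eqDeformational} and \eqref{eqInvertible}. For \eqref{eqDeformational} the paper splits $h^d$ into its mean value $h^0$ and $h^d-h^0$, bounds each piece via quadratic forms, and then controls the cross term $\int_\Sigma(\mathcal{L}h^0)\,\mathcal{L}(h^d-h^0)\,d\mu$ by integration by parts and Young's inequality; you instead note that $\vert\lambda_i-\sfrac{2}{\sigma^2}\vert\geq\sfrac{2}{\sigma^2}$ and $\lambda_i\leq\sigma^2(\lambda_i-\sfrac{2}{\sigma^2})^2$ on $\Span\{f_0,f_4,f_5,\dots\}$, so that $-\Delta^\Sigma-\sfrac{2}{\sigma^2}$ is directly coercive on $h^d$ and also dominates $\Vert\nabla^\Sigma h^d\Vert_{L^2(\Sigma)}$, after which the zeroth- and first-order perturbations are absorbed; this is cleaner and avoids the cross-term bookkeeping. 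For \eqref{eqInvertible} the paper runs a dichotomy on whether $\Vert h^d\Vert^2_{L^2(\Sigma)}\geq\sigma^{-\frac12-\kappa}\Vert h\Vert^2_{L^2(\Sigma)}$ (with $1-4\varepsilon<2\kappa<1$), treating the two regimes by different quadratic-form estimates; you instead derive two linear inequalities (pairing $\mathcal{L}h$ with $h^t$, and projecting $\mathcal{L}h$ off $\Span\{f_1,f_2,f_3\}$, with \eqref{eqforLemmaLapse} for $\mathcal{L}^*$ controlling the off-diagonal couplings) and eliminate between them. I checked the bookkeeping: the first elimination only costs a factor $1+C\sigma^{-\frac12-\varepsilon}$, and in the back-substitution the loss is a factor $1+C\sigma^{\frac12-\varepsilon}$, which is harmless because the resulting lower bound of order $\sigma^{-\frac52+\varepsilon}\Vert h^d\Vert_{L^2(\Sigma)}$ still dominates $\sigma^{-3}\Vert h^d\Vert_{L^2(\Sigma)}$; recombining then gives the constant $3$ for $\sigma$ large, exactly as claimed. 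Your identification of the genuine obstruction (the coupling $\sigma^{-\frac52-\varepsilon}$ is not subordinate to $\sigma^{-3}$ when $\varepsilon\leq\tfrac12$) is the same difficulty the paper's case split is designed to handle, and both routes work. Your treatment of $\mathcal{L}^*$ and the final injectivity/closed-range/trivial-cokernel conclusion is the paper's Fredholm argument done by hand.

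One caveat: in \eqref{eqEmH} you invoke $\int_\Sigma\ric(\nu,\nu)\,d\mu=-\tfrac{8\pi E}{r}+O(\sigma^{-1-\varepsilon})$ as a ``standard asymptotic expansion''. At $C^2_{\sfrac12+\varepsilon}$ decay this is not a naive linearization statement: pointwise $\ric(\nu,\nu)=O(\sigma^{-\frac52-\varepsilon})$ only gives $O(\sigma^{-\frac12-\varepsilon})$ for the integral, so the refined statement requires the Ricci-integral characterization of the ADM energy (Herzlich, Miao--Tam), packaged for exactly these surfaces as Lemma A.1 of \cite{NerzCMC}, which is what the paper cites. With that citation supplied, your derivation of \eqref{eqEmH} coincides with the paper's via \eqref{eqIntermediateEV2}.
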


\begin{proof} 
In this proof, $C>0$ and $\overline{\sigma}>0$ denote generic constants that may vary from line to line, but depend only on $\varepsilon$, $a$, $b$, $\eta$, and $C_{\mathcal{I}}$, and, in the case of \eqref{eqInvertible}, also on $\vert E\vert $.\\

\paragraph{\emph{Proving \eqref{eqTranslational0}.}} By definition of $\mathcal{L}$ in \eqref{eqLinearization}, we have
\begin{align*}
  \mathcal{L} h^t =& - \Delta ^{\Sigma} h^t - \left( \vert \mathring A \vert  ^2 + \tfrac{H^2}{2} + \ric (\nu,\nu) \right) h^t \\
  &- \tfrac{P}{H} \left(\left(\nabla_\nu \tr K - \nabla_\nu K(\nu, \nu)\right) h^t - 2 K \left(\nabla^\Sigma h^t,\nu\right) \right).
\end{align*}
It follows from Proposition~\ref{propRegularity} that $\vert \mathring A \vert  ^2 =O( \sigma^{-3-\epsilon})$, and we know from  \eqref{eqFallOff} that $H^2 = 4 \sigma^{-2} + O(\sigma^{-3-2\epsilon})$, and that $\frac{P}{H} = O(\sigma^{-\frac{1}{2}-\varepsilon})$. Furthermore, the definition of $C^{2}_{\sfrac{1}{2}+\varepsilon}$-asymptotically Euclidean initial data sets implies that $\ric (\nu,\nu) =O(\sigma^{-\frac{5}{2} - \epsilon})$ and that $\nabla _\nu \tr K - \nabla _\nu K (\nu,\nu) = O(\sigma^{-\frac{5}{2}-\epsilon})$. Hence, 
\begin{align*}
 \mathcal{L} h^t = -\Delta^\Sigma h^t - \tfrac{2}{\sigma^2} h^t  + \tfrac{2P}{H}K(\nabla^{\Sigma} h^t , \nu)  + O(\sigma^{-\frac{5}{2}- \epsilon})h^t.
\end{align*}
By \eqref{eqEigenEstimates1a}, we have
\begin{align*}
\left\Vert - \Delta^\Sigma h^t - \frac{2}{\sigma^2}h^t \right\Vert _{L^2(\Sigma)} \leq \frac{C}{\sigma^{\frac{5}{2} + \epsilon}}\, \Vert h^t \Vert _{L^2(\Sigma)},
\end{align*}
whereas \eqref{eqEigenEstimates2} implies by a Cauchy--Schwarz Inequality that 
\begin{align*}
\left\Vert\frac{2P}{H}K(\nabla^{\Sigma} h^t,\nu) \right\Vert _{L^2(\Sigma)} \leq \frac{C}{\sigma^{3+\epsilon}}\, \Vert h^t \Vert  _{L^2(\Sigma)}
\end{align*}
recalling that $h^{t}\in\Span(f_{1},f_{2},f_{3})$. This proves \eqref{eqTranslational0}.\\

\paragraph{\emph{Proving \eqref{eqTranslational}.}} Arguing as above, by Proposition \ref{propRegularity}, Lemma \ref{lemEigenLaplace2}, and our decay assumptions on the initial data set, we have that
\begin{align*}
\int_\Sigma (\mathcal Lf_i)f_j\, d\mu = \left(\lambda_i - \tfrac{2}{\sigma^2}\right)\delta_{ij}- \int_\Sigma \ric (\nu,\nu)f_i f_j \, d\mu + O(\sigma^{-3-2\varepsilon})
 \end{align*}
for any $i,j=1,2,3$. It then follows by Lemma \ref{lemLaplace} that
\begin{align*}
\begin{split}
\left\vert \int_\Sigma (\mathcal{L}f_i) f_j \, d\mu\right\vert  & \leq \frac{C}{\sigma^{3+\varepsilon}} \quad\;\; \text{ for } i \neq j,\; i,j= 1,2,3,
\end{split}
\end{align*}
and 
\begin{align*}
\left\vert  \int_\Sigma (\mathcal{L} f_i) f_i\, d\mu - \frac{6m_\mathcal{H}(\Sigma)}{\sigma^3} \right\vert  \leq \frac{C}{\sigma^{3+\varepsilon}} \quad \text{ for } i =1,2,3.
\end{align*}
In particular, we see that \eqref{eqTranslational} holds for $h_1^{t},h_2^{t}\in\{f_1,f_2,f_3\}$. The general case follows by bilinearity and by the Cauchy--Schwarz Inequality on $\mathbb{R}^{3}$.\\

\paragraph{\emph{Proving \eqref{eqforLemmaLapse}.}} By definition of $\mathcal{L}$, we have that
\begin{align*}
 \mathcal{L} f_{i} = -\Delta^\Sigma f_{i} -\! (\vert \mathring{A}\vert^{2}+ \tfrac{H^{2}}{2}+\ric(\nu,\nu)+\tfrac{P}{H}[\nabla_{\nu}\tr K-\nabla_{\nu}K(\nu,\nu)]) f_{i}  + \tfrac{2P}{H}K(\nabla^{\Sigma} f_{i}, \nu).
\end{align*}
Next, by \eqref{eqFallOff}, we have
\begin{align*}
-\Delta^{\Sigma}f_{i}-\tfrac{{H}^{2}}{2}f_{i}&=(\lambda_{i}-\tfrac{{H}^{2}}{2})f_{i}=(\lambda_{i}-\tfrac{2}{\sigma^{2}})f_{i} + O(\sigma^{-3-2\varepsilon}) f_{i},
\end{align*}
while the $C^{2}_{\sfrac{1}{2}+\varepsilon}$-asymptotic decay assumptions as well as \eqref{eqFallOff},  \eqref{eqEigenEstimates2}, and the Cauchy--Schwarz Inequality lead to
\begin{align*}
\tfrac{P}{H}\left[\nabla_{\nu}\tr K-\nabla_{\nu}K(\nu,\nu)\right]&=O(\sigma^{-3-2\varepsilon}),\\[0.5ex]
\left\Vert  \tfrac{2P}{H}K(\nabla^{\Sigma} f_{i},\nu) \right\Vert _{L^2(\Sigma)} &\leq C \sigma^{-3-2\epsilon}.
\end{align*}
Proposition \ref{propRegularity} gives us that $\vert\mathring{A}\vert^{2}=O(\sigma^{-3-2\varepsilon})$. Summarizing, we found
\begin{align*}
\left\vert \int_{\Sigma} u^d \mathcal{L} f_i \, d\mu \right\vert  &\leq \frac{C}{\sigma^{3 + 2\varepsilon}}\, \Vert u^d\Vert _{L^2(\Sigma)}+\left\vert \int_{\Sigma}\ric(\nu,\nu)u^{d}f_{i}\,d\mu \right\vert\\
&\leq \frac{C}{\sigma^{\frac{5}{2}+\varepsilon}}\,\Vert u^{d}\Vert_{L^{2}(\Sigma)},
\end{align*}
recalling that $u^{d}$ is $L^{2}(\Sigma)$-orthogonal to $f_{i}$ for $i=1,2,3$.

\paragraph{\emph{Proving \eqref{eqDeformational}.}} We will use the following manifest relation for the linear operator $\mathcal{L}$
\begin{align}\label{eqL2Norm}
\Vert \mathcal{L} h^d \Vert ^2_{L^2(\Sigma)} = \Vert \mathcal{L}(h^d-h^0)\Vert ^2_{L^2(\Sigma)} + \Vert \mathcal{L}h^0\Vert ^2_{L^2(\Sigma)} + 2\int_\Sigma \mathcal{L}(h^d-h^0) \mathcal{L} h^0 \, d\mu.
\end{align}
Arguing similarly to how we argued above, we now integrate by parts and use Proposition~\ref{propRegularity}, \eqref{eqFallOff}, and \eqref{eqBigEigenvalues} giving $\lambda_{i}>\sfrac{5}{\sigma^{2}}$ for $i=4,5,\dots$, and the asymptotic decay conditions on $\mathcal{I}$ to estimate from below the expression
\begin{align*}
\begin{split}
\int_\Sigma (h^d - h^0) &\mathcal{L} (h^d - h^0) \, d\mu\\
& = \int_\Sigma \left[-(h^d-h^0)\Delta^\Sigma (h^d-h^0) - \left(\tfrac{H^2}{2} +\vert \mathring{A}\vert ^2 + \ric(\nu,\nu)\right) (h^d-h^0)^2\right]\,d\mu\\
& \qquad  - \int_\Sigma \left[\tfrac{P}{H} \left(\nabla_\nu \tr K -\nabla_\nu K(\nu,\nu)\right) + \divg^\Sigma \left(\tfrac{P}{H}K(\nu,\cdot)\right)\right] (h^d-h^0)^2\, d\mu \\ 
& = \int_\Sigma \left[ - (h^d-h^0) \Delta^\Sigma (h^d-h^0) - \left(2\sigma^{-2} + O(\sigma^{-\frac{5}{2}-\varepsilon})\right) (h^d-h^0)^2 \right]  \, d\mu \\ & \geq \left(3\sigma^{-2}+ O(\sigma^{-\frac{5}{2}-\varepsilon})\right) \int_\Sigma (h^d -h^0)^2 \, d\mu \\ & \geq {\frac{7}{4\sigma^2}} \int_\Sigma (h^d -h^0)^2 \, d\mu
\end{split}
\end{align*}
as $h^{d}-h^{0}\in\Span(f_{4},f_{5},\dots)$. Here, the factor $\tfrac{7}{4}<3$ is chosen for later convenience. Hence by a Cauchy--Schwarz Inequality on $\int_\Sigma (h^d - h^0) \mathcal{L} (h^d - h^0)\,d\mu$, we obtain
\begin{align}\label{eqL2Norm1}
\Vert \mathcal{L}(h^d-h^0)\Vert _{L^2(\Sigma)} \geq {\frac{7}{4\sigma^2}} \Vert h^d-h^0\Vert _{L^2(\Sigma)}. 
\end{align}
Note also that $h_0$ is a constant, so that
\begin{align}\label{eqLh0}
\begin{split}
 \mathcal{L} h^0 & = - \left(\tfrac{H^2}{2} + \vert \mathring{A}\vert ^2 + \ric(\nu,\nu)+\tfrac{P}{H} \left(\nabla_\nu \tr K -\nabla_\nu K(\nu,\nu)\right)\right) h^0 \\
 & = - \left(2\sigma^{-2} + O(\sigma^{-\frac{5}{2}-\varepsilon})\right) h^0,
 \end{split}
\end{align}
and thus
\begin{align}\label{eqL2Norm2}
\Vert \mathcal{L}h^0\Vert _{L^2(\Sigma)} \geq {\frac{7}{4\sigma^2}} \Vert h^0\Vert _{L^2(\Sigma)},
\end{align}
where again, the factor $\tfrac{7}{4}<2$ is chosen for later convenience. Using \eqref{eqLh0}, integration by parts and finally Young's Inequality, one can also check with the same decay arguments as above that
\begin{align*}
\begin{split}
\int_\Sigma (\mathcal{L} h^0)& \mathcal{L}(h^d-h^0) \, d\mu\\
 & = -2\sigma^{-2} \int_\Sigma h^0  \mathcal{L}(h^d-h^0)\, d\mu + \int_\Sigma O(\sigma^{-\frac{5}{2}-\varepsilon})h^0 \mathcal{L}(h^d-h^0)\, d\mu \\
& = -2 \sigma^{-2}  \int_\Sigma h^0 \left(2 \tfrac{P}{H}K(\nabla^\Sigma(h^d-h^0),\nu)+ O(\sigma^{-\frac{5}{2}-\varepsilon})(h^d-h^0)\right)\, d\mu \\ & \qquad + \int_\Sigma O(\sigma^{-\frac{5}{2}-\varepsilon})h^0 \mathcal{L}(h^d-h^0)\, d\mu
\end{split}
\end{align*}
\begin{align*}
\begin{split}
& =  \int_\Sigma O(\sigma^{-\frac{9}{2}-\varepsilon})h^0 (h^d-h^0)\, d\mu + \int_\Sigma O(\sigma^{-\frac{5}{2}-\varepsilon})h^0 \mathcal{L}(h^d-h^0)\, d\mu \\
& \geq - C \sigma^{-\frac{9}{2}-\varepsilon} \int_\Sigma \left\vert h^d-h^0\right\vert \left\vert  h^0 \right\vert \,d\mu - C \sigma^{-\frac{5}{2}-\varepsilon} \int_\Sigma \left\vert h^0\right\vert  \left\vert \mathcal{L}( h^d-h^0) \right\vert \, d\mu \\
& \geq - C \sigma^{-\frac{9}{2}-\varepsilon} (\Vert h^d-h^0\Vert ^2_{L^2(\Sigma)} + \Vert h^0\Vert ^2_{L^2(\Sigma)}) - C\sigma^{-\frac{1}{2}-\varepsilon}\Vert \mathcal{L}(h^d-h^0)\Vert ^2_{L^2(\Sigma)}.
\end{split}
\end{align*}
Combing this estimate with \eqref{eqL2Norm1} and \eqref{eqL2Norm2}, \eqref{eqDeformational} follows from \eqref{eqL2Norm} once we recall that $h^{d}-h^{0}$ is $L^{2}(\Sigma)$-orthogonal to $h^{0}$.\\

\paragraph{\emph{Proving \eqref{eqEmH}.}} To see that $E$ and $m_{\mathcal{H}}(\Sigma)$ are as close as claimed, we recall the well-known fact that the Geroch mass $m_{H}(\Sigma)$ of sufficiently round large surfaces in a $C^{2}_{\sfrac{1}{2}+\varepsilon}$-asymptotically flat initial data set $\mathcal{I}$ is close to the energy $E$ of $\mathcal{I}$. More specifically, Lemma A.1 in \cite{NerzCMC} (relying on \cite{HerzlichRicci} and \cite{MiaoTam}) and \eqref{eqIntermediateEV2} imply that 
\begin{align*}
\left\vert E - m_{\mathcal{H}}(\Sigma)\right\vert  \leq &\left\vert E - \frac{r}{16\pi}\int_\Sigma (\scal-2\ric(\nu,\nu))\,d\mu \right\vert \\& + \left\vert \frac{r}{16\pi}\int_\Sigma (\scal-2\ric(\nu,\nu))\,d\mu - m_{\mathcal{H}}(\Sigma)\right\vert \\
\leq & \,C\sigma^{-\varepsilon}.
\end{align*}
Thus $m_{\mathcal{H}}(\Sigma) \neq 0$ if $E\neq 0$ as long as $\sigma>\overline{\sigma}$ with $C>0$ and $\overline{\sigma}>0$ sufficiently large, depending only on $\varepsilon$, $a$, $b$, $\eta$, and $C_{\mathcal{I}}$.\\

\paragraph{\emph{Proving \eqref{eqInvertible}.}} We pick a number $\kappa>0$ such that $1-4\varepsilon<2\kappa<1$ and consider two cases. In this part, we will abbreviate $m_{\mathcal{H}}(\Sigma)\asdefined m_{\mathcal{H}}$. \\[-0.75ex]

\paragraph{\emph{Case 1.}} Suppose that $\Vert h^d\Vert ^2_{L^2(\Sigma)} \geq \sigma^{-\frac{1}{2} - \kappa} \Vert h\Vert ^2_{L^2(\Sigma)}$.
As a consequence, using \eqref{eqTranslational0}, \eqref{eqDeformational}, and Young's Inequality, we obtain for any $0<\alpha<1$, e.g.~$\alpha=\tfrac{1}{2}$, that
\begin{align*}
\begin{split}
\Vert \mathcal{L} h\Vert ^2_{L^2(\Sigma)} & = \int_\Sigma \left((\mathcal{L} h^d)^2 + 2(\mathcal{L} h^d)( \mathcal{L} h^t) + (\mathcal{L} h^t)^2 \right)\, d\mu \\
& \geq  (1-\alpha)\left(\Vert \mathcal{L} h^d\Vert _{L^2(\Sigma)}^2 - \alpha^{-1} \Vert \mathcal{L} h^t\Vert _{L^2(\Sigma)}^2\right) \\
& \geq (1-\alpha) \left(\frac{9}{4\sigma^4}\Vert h^d\Vert ^2_{L^2(\Sigma)} - \frac{C}{\sigma^{5+2\varepsilon}}\Vert h^t\Vert ^2_{L^2(\Sigma)}\right) \\ 
& \geq (1-\alpha) \left(\frac{9}{4\sigma^{\frac{9}{2}+\kappa}}\Vert h\Vert ^2_{L^2(\Sigma)} - \frac{C}{\sigma^{5+2\varepsilon}}\Vert h\Vert ^2_{L^2(\Sigma)}\right) \\
& \geq \frac{9\vert m_\mathcal{H}\vert ^2}{\sigma^6} \Vert h\Vert ^2_{L^2(\Sigma)}
\end{split}
\end{align*}
provided that $\sigma>\overline{\sigma}$, where now $\overline{\sigma}$ may actually depend on $E$ as we used \eqref{eqEmH} in the last step. Thus \eqref{eqInvertible} holds in case $\Vert h^d\Vert ^2_{L^2(\Sigma)} \geq \sigma^{-\frac{1}{2} - \kappa} \Vert h\Vert ^2_{L^2(\Sigma)}$.\\[-0.75ex]

\paragraph{\emph{Case 2.}} Now assume that $\Vert h^d\Vert ^2_{L^2(\Sigma)} \leq \sigma^{-\frac{1}{2} - \kappa} \Vert h\Vert ^2_{L^2(\Sigma)}$. In this case,
\begin{align}\label{eqTriangle}
\left\vert  \int_{\Sigma} h \mathcal{L} h \, d\mu \right\vert  \geq \left\vert \int_\Sigma h^t \mathcal{L} h^t \, d \mu\right\vert  - \left\vert \int_\Sigma h^d \mathcal{L} h \, d \mu\right\vert  - \left\vert \int_\Sigma h^t \mathcal{L} h^d  \, d\mu \right\vert ,
\end{align}
where by \eqref{eqTranslational}, \eqref{eqEmH}, and because $h^{t}$ and $h^{d}$ are $L^{2}(\Sigma)$-orthogonal, we see that for $\sigma>\overline{\sigma}$, $\overline{\sigma}$ suitably large depending now in addition on $E$, we have
\begin{align*}
\begin{split}
\left\vert \int_\Sigma h^t \mathcal{L} h^t \, d \mu\right\vert  
& \geq \frac{6\vert m_\mathcal{H}\vert -C\sigma^{-\varepsilon}}{\sigma^3} \Vert h^t\Vert ^2_{L^2(\Sigma)} \\
& = \frac{6\vert m_\mathcal{H}\vert -C\sigma^{-\varepsilon}}{\sigma^3} \left(\Vert h\Vert ^2_{L^2(\Sigma)} - \Vert h^d\Vert ^2_{L^2(\Sigma)}\right) \\
& \geq \frac{6\vert m_\mathcal{H}\vert -C\sigma^{-\varepsilon}}{\sigma^3} \left(1 - \frac{1}{\sigma^{\frac{1}{2} + \kappa}}\right) \Vert h\Vert ^2_{L^2(\Sigma)}  \\
& \geq \frac{6\vert m_\mathcal{H}\vert -C\sigma^{-\varepsilon}}{\sigma^3}\Vert h\Vert ^2_{L^2(\Sigma)},
\end{split}
\end{align*}
where we used that $\varepsilon\leq \tfrac{1}{2}$ by definition. Moreover, using the Cauchy--Schwarz Inequality and the assumption $\Vert h^d\Vert ^2_{L^2(\Sigma)} \leq \sigma^{-\frac{1}{2} - \kappa} \Vert h\Vert ^2_{L^2(\Sigma)}$, we estimate
\begin{align*}
\left\vert \int_\Sigma h^d \mathcal{L} h \, d\mu \right\vert  
\leq C\sigma^{-\frac{1}{4}-\frac{\kappa}{2}} \Vert h\Vert _{L^2(\Sigma)} \Vert \mathcal{L} h\Vert _{L^2(\Sigma)}.
\end{align*}
Further, arguing once more as above with the explicit form of $\mathcal{L}$ in \eqref{eqLinearization}, using the asymptotic decay conditions of $\mathcal{I}$, \eqref{eq2FFtraceless}, \eqref{eqFallOff}, and integration by parts, one confirms that
\begin{align*}
\left\vert \int_\Sigma h^t \mathcal{L} h^d \, d \mu\right\vert  \leq C \sigma^{-\frac{5}{2}-\varepsilon} \Vert h^t\Vert _{L^2(\Sigma)} \Vert h^d\Vert _{L^2(\Sigma)} \leq C \sigma^{-\frac{11}{4}-\varepsilon-\frac{\kappa}{2}} \Vert h\Vert ^2_{L^2(\Sigma)},
\end{align*}
using again the assumption $\Vert h^d\Vert ^2_{L^2(\Sigma)} \leq \sigma^{-\frac{1}{2} - \kappa} \Vert h\Vert ^2_{L^2(\Sigma)}$ as well as $\Vert h^{t}\Vert_{L^{2}(\Sigma)}\leq\Vert h\Vert_{L^{2}(\Sigma)}$ in the last step. It then follows from \eqref{eqTriangle}, the Cauchy--Schwarz Inequality, and the bounds on $\kappa$ that \eqref{eqInvertible} also holds when $\Vert h^d\Vert ^2_{L^2(\Sigma)} \leq \sigma^{-\frac{1}{2} - \kappa} \Vert h\Vert ^2_{L^2(\Sigma)}$. \\[-0.75ex]

Combining Case 1 and Case 2, we have thus shown \eqref{eqInvertible}. To conclude the proof, it only remains to show that $\mathcal{L}^{*}$ also satisfies the estimates \eqref{eqTranslational0}--\eqref{eqInvertible} and that $\mathcal{L}$ is invertible provided that the initial data set $\mathcal{I}$ has non-vanishing energy $E\neq 0$ and the spacetime mean curvature radius $\sigma$ of $\Sigma$ is sufficiently large, $\sigma>\overline{\sigma}$, $\overline{\sigma}$ depending on $E$.\\

\paragraph{\emph{Invertibility of $\mathcal{L}$ and estimates on $\mathcal{L}^{*}$.}} The operator $\mathcal{L}$ is not self-adjoint, but its $L^{2}(\Sigma)$-adjoint $\mathcal{L}^*$ has very similar structure, differing only in the last term. In $\mathcal{L}h$, this term reads $\frac{P}{H}K(\nabla^{\Sigma}h,\nu)$, while in $\mathcal{L}^{*}h$, this term becomes $-\divg^{\Sigma}\left(\frac{P}{H}K(\cdot,\nu)\right)h$. Recall that
\begin{align}\label{eqL*}
\tfrac{P}{H}K(\cdot,\nu)=O_{1}(\sigma^{-2-2\varepsilon}).
\end{align}
Going back to the proofs of \eqref{eqTranslational0}--\eqref{eqInvertible}, we see that all of them work if we replace $\mathcal{L}$ by $\mathcal{L}^*$ modulo exchanging the performance of  partial integration with the decay estimate \eqref{eqL*} and vice versa. This, in particular \eqref{eqInvertible}, implies that the $L^2(\Sigma)$-kernel of $\mathcal{L}^{*}$ is trivial, and hence $\mathcal{L}\colon W^{2,2}(\Sigma) \to L^2(\Sigma)$ is invertible by the Fredholm Alternative \cite[Appendix I, Theorem 31]{Besse}, as long as $m_{\mathcal{H}}(\Sigma) \neq 0$ which is guaranteed from $E\neq0$ and \eqref{eqEmH}. The Fredholm Alternative applies as $\mathcal{L}$ is clearly a linear elliptic operator as its symbol is that of the Laplacian $-\Delta^{\Sigma}$ and because $\Sigma$ is compact.
\end{proof} 

\begin{corollary}\label{corEllipticRegularity}
For every $h \in W^{2,2}(\Sigma)$, we have
\begin{align*}
 \Vert h\Vert _{W^{2,2}(\Sigma)}& \leq C \left(\sigma^2\Vert \mathcal{L} h\Vert _{L^2(\Sigma)} + \Vert h\Vert _{L^2(\Sigma)} \right)\leq  C\sigma^3 \Vert \mathcal{L} h\Vert _{L^2(\Sigma)}, \\
\Vert h^d\Vert _{W^{2,2}(\Sigma)} &\leq   C \sigma^2 \Vert \mathcal{L}h^d\Vert _{L^2(\Sigma)}
\end{align*}
for $\sigma>\overline{\sigma}$ and with $C>0$, $\overline{\sigma}>0$ only depending on $\varepsilon$, $a$, $b$, $\eta$, $C_{\mathcal{I}}$, and $E$.
\end{corollary}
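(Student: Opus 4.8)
The plan is to derive the estimate from a scale-invariant elliptic regularity inequality for $-\Delta^\Sigma$ on $(\Sigma,g^\Sigma)$, treating the zeroth- and first-order part of $\mathcal{L}$ as a perturbation, and then to combine this with the coercivity estimates \eqref{eqInvertible} and \eqref{eqDeformational} of Proposition~\ref{propInvertibility}.

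First I would rewrite the definition \eqref{eqLinearization} of $\mathcal{L}$ as
\begin{align*}
-\Delta^\Sigma h = \mathcal{L}h + \left(\vert A\vert^2 + \ric(\nu,\nu)\right) h + \tfrac{P}{H}\left(\left(\nabla_\nu \tr K - \nabla_\nu K(\nu,\nu)\right) h - 2K(\nabla^\Sigma h,\nu)\right),
\end{align*}
and bound the terms beyond $\mathcal{L}h$ using Proposition~\ref{propRegularity}, \eqref{eqFallOff}, and the $C^2_{\sfrac12+\varepsilon}$-decay assumptions \eqref{eqOptDecay1}--\eqref{eqOptDecay2}: one has $\vert A\vert^2 = \tfrac{H^2}{2}+\vert\mathring A\vert^2 = \tfrac{2}{\sigma^2}+O(\sigma^{-3-2\varepsilon})$, $\ric(\nu,\nu) = O(\sigma^{-\frac52-\varepsilon})$, $\tfrac{P}{H}(\nabla_\nu\tr K - \nabla_\nu K(\nu,\nu)) = O(\sigma^{-3-2\varepsilon})$, and $\tfrac{P}{H}K(\cdot,\nu) = O(\sigma^{-2-2\varepsilon})$, where \eqref{eqMCArea} has been used freely to pass between $r$ and $\sigma$. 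This yields
\begin{align*}
\sigma^2\Vert\Delta^\Sigma h\Vert_{L^2(\Sigma)} \leq \sigma^2\Vert\mathcal{L}h\Vert_{L^2(\Sigma)} + C\Vert h\Vert_{L^2(\Sigma)} + C\sigma^{-2\varepsilon}\Vert\nabla^\Sigma h\Vert_{L^2(\Sigma)}.
\end{align*}

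Next I would invoke the scale-invariant Calderón--Zygmund estimate on $(\Sigma,g^\Sigma)$ in the form $\Vert h\Vert_{W^{2,2}(\Sigma)} \leq C\bigl(\sigma^2\Vert\Delta^\Sigma h\Vert_{L^2(\Sigma)} + \Vert h\Vert_{L^2(\Sigma)}\bigr)$; this is available because, by Proposition~\ref{propRegularity}, the induced metric is $W^{2,2}$-close to the round metric of radius $r$, so Lemma~\ref{lemCalderon} (after rescaling) applies with constants depending only on $\varepsilon$, $a$, $b$, $\eta$, $C_{\mathcal{I}}$. Substituting the previous display and using $\Vert\nabla^\Sigma h\Vert_{L^2(\Sigma)} \leq r^{-1}\Vert h\Vert_{W^{2,2}(\Sigma)} \leq C\sigma^{-1}\Vert h\Vert_{W^{2,2}(\Sigma)}$ — immediate from the definition of the weighted norms and \eqref{eqMCArea} — the gradient contribution is bounded by $C\sigma^{-1-2\varepsilon}\Vert h\Vert_{W^{2,2}(\Sigma)}$ and hence absorbed into the left-hand side once $\sigma>\overline{\sigma}$, producing the first claimed inequality $\Vert h\Vert_{W^{2,2}(\Sigma)} \leq C\bigl(\sigma^2\Vert\mathcal{L}h\Vert_{L^2(\Sigma)}+\Vert h\Vert_{L^2(\Sigma)}\bigr)$. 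For the second inequality I would feed in \eqref{eqInvertible}: since $E\neq0$, \eqref{eqEmH} gives $\vert m_{\mathcal{H}}(\Sigma)\vert \geq \tfrac12\vert E\vert$ for $\sigma$ large, whence $\Vert h\Vert_{L^2(\Sigma)} \leq C\sigma^3\Vert\mathcal{L}h\Vert_{L^2(\Sigma)}$ with $C$ now also depending on $E$, and combining this with $\sigma^2\leq\sigma^3$ completes the first line. For the $h^d$-estimate I would run the identical perturbation-plus-elliptic-regularity argument with $h^d$ in place of $h$ (noting that $h^d\in W^{2,2}(\Sigma)$ whenever $h$ is, since $h^t\in\Span(f_1,f_2,f_3)$), obtaining $\Vert h^d\Vert_{W^{2,2}(\Sigma)} \leq C\bigl(\sigma^2\Vert\mathcal{L}h^d\Vert_{L^2(\Sigma)}+\Vert h^d\Vert_{L^2(\Sigma)}\bigr)$, and then absorb $\Vert h^d\Vert_{L^2(\Sigma)}$ via the deformational estimate \eqref{eqDeformational}, which gives $\Vert h^d\Vert_{L^2(\Sigma)} \leq \tfrac{2\sigma^2}{3}\Vert\mathcal{L}h^d\Vert_{L^2(\Sigma)}$.

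I do not expect a genuine obstacle here: the statement is a perturbation of the Laplacian combined with already-established coercivity. The one point that demands a little care is the scale-invariant form of the elliptic estimate for $-\Delta^\Sigma$ with uniformly controlled constants, which is precisely what Lemma~\ref{lemCalderon} supplies given the geometric control from Proposition~\ref{propRegularity}; the remainder is bookkeeping of decay exponents and an absorption argument valid for $\sigma$ sufficiently large.
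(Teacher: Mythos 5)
Your proposal is correct and follows essentially the same route as the paper: the scale-invariant elliptic estimate of Lemma~\ref{lemCalderon} (justified via the Gaussian curvature control coming from \eqref{eqGaussConsequences} and Proposition~\ref{propRegularity}), treating the zeroth- and first-order terms of $\mathcal{L}$ as decaying perturbations, and then closing with \eqref{eqInvertible} (together with \eqref{eqEmH}) for $h$ and \eqref{eqDeformational} for $h^d$. The only cosmetic difference is how the gradient term is absorbed — you absorb it into the weighted $W^{2,2}$-norm on the left, while the paper absorbs it into $\Vert\Delta^\Sigma h\Vert_{L^2(\Sigma)}$ via integration by parts — which does not change the argument.
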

\begin{proof}

Note by \eqref{eqGaussConsequences} we have $\scal^\Sigma = \tfrac{2}{\sigma^2}+O(\sigma^{-\tfrac{5}{2}-\varepsilon})$, hence, in the view of \eqref{eqMCArea}, Lemma \ref{lemCalderon} applies to $\Sigma$. Combined with the Cauchy-Schwartz Inequality and \eqref{eqMCArea}, this result gives us
\begin{align}\label{eqElRegLaplace}
\Vert h\Vert_{W^{2,2}(\Sigma)} \leq C\sigma^2 \Vert \Delta^\Sigma h \Vert_{L^2 (\Sigma)} + \Vert  h \Vert_{L^2 (\Sigma)}.
\end{align}
Recalling the definition of the operator $\mathcal{L}$ (see \eqref{eqLinearization}) and the fall-off properties of the initial data set, we further find that
\begin{eqnarray*}
\Vert \Delta^{\Sigma} h\Vert_{L^2(\Sigma)} & \leq&  \Vert \mathcal{L} h \Vert_{L^2 (\Sigma)} + \left(\tfrac{2}{\sigma^2} + O(\sigma^{-\frac{5}{2}-\varepsilon}) \right) \Vert  h \Vert_{L^2 (\Sigma)} + O (\sigma^{-2}) \Vert \nabla^\Sigma h \Vert_{L^2 (\Sigma)} \\
& \leq & \Vert \mathcal{L} h \Vert_{L^2 (\Sigma)} + \left(\tfrac{2}{\sigma^2} + O(\sigma^{-\frac{5}{2}-\varepsilon}) \right) \Vert  h \Vert_{L^2 (\Sigma)} + O (\sigma^{-2}) \Vert h \Delta^\Sigma h \Vert_{L^1 (\Sigma)} \\ & \leq & \Vert \mathcal{L} h \Vert_{L^2 (\Sigma)} + O(\sigma^{-2}) \Vert  h \Vert_{L^2 (\Sigma)} + O (\sigma^{-2}) \Vert \Delta^\Sigma h \Vert_{L^2 (\Sigma)}
\end{eqnarray*}
hence 
\begin{align*}
\Vert \Delta^{\Sigma} h\Vert_{L^2(\Sigma)}  \leq  \Vert \mathcal{L} h \Vert_{L^2 (\Sigma)} + C \sigma^{-2} \Vert  h \Vert_{L^2 (\Sigma)}
\end{align*}
Combining \eqref{eqElRegLaplace} with this estimate and \eqref{eqInvertible} 
we thereby obtain
\begin{align*}
\Vert h\Vert _{W^{2,2}(\Sigma)} & \leq C\left(\sigma^2 \Vert \mathcal{L} h\Vert _{L^2(\Sigma)}+ \Vert  h\Vert _{L^2(\Sigma)} \right) \\
& \leq C\left(\sigma^2 \Vert \mathcal{L} h\Vert _{L^2(\Sigma)}+ \frac{\sigma^3} {3\vert m_\mathcal{H}(\Sigma) \vert}\Vert \mathcal{L} h\Vert _{L^2(\Sigma)} \right) \\
& \leq C \sigma^3 \Vert \mathcal{L} h\Vert _{L^2(\Sigma)}.
\end{align*}
This proves the estimate for $h$.
The estimate for $h^d$ is proven similarly, using \eqref{eqDeformational}  instead of \eqref{eqInvertible}.
\end{proof}

%%%%%%%%%%%%%%%%%%%%%%%%%%%%%%%%%%%%%%%%%%%%%%%%%%%%%%%%%%%%%%%%%%%%%%%%%

\section{Existence and uniqueness of the STCMC-foliation}\label{secExistence}
% !TEX root = main.tex
In this section, we will prove that any $C^2_{\sfrac{1}{2} + \varepsilon}$-asymptotically Euclidean initial data set $\mathcal{I}=\IDS$ is foliated (i.e., roughly speaking, covered without gaps or overlaps), outside a compact set, by $2$-surfaces of constant spacetime mean curvature (STCMC). We also prove a uniqueness result for STCMC-surfaces.  

\subsection{Existence of the STCMC-foliation}\label{subsecExistence}

In \cite{NerzCMC}, Nerz proved the following result, rephrased here in our notation. Note that because of time symmetry $K\equiv0$, the CMC-foliation constructed by Nerz can be viewed as a special case of the STCMC-foliation under consideration here.

\begin{theorem}[Nerz 2015]\label{thExistenceCMC}
 Let $(M^{3},g)$ be a $C^2_{\sfrac{1}{2} + \varepsilon}$-asymptotically Euclidean Riemannian manifold viewed as a $C^2_{\sfrac{1}{2} + \varepsilon}$-asymptotically Euclidean  initial data set $\mathcal{I}_0=(M^{3},g,K\equiv 0,\mu=\tfrac{1}{2}\scal,J\equiv 0)$, with non-vanishing energy $E \neq 0$. Then there is a constant $\sigma_{\mathcal{I}_{0}}>0$ depending only on $\varepsilon$, $C_{\mathcal{I}_0}$, and $E$, a compact set $\mathcal{K}_{0} \subset M^{3}$, and a bijective $C^{1}$-map $\Psi_0\colon (\sigma_{\mathcal{I}_{0}}, \infty) \times \mathbb{S}^2 \to M^{3} \setminus \mathcal{K}_0$ such that each of the surfaces $\Sigma_0^\sigma \definedas \Psi_0(\sigma, \mathbb{S}^2)$ has constant mean curvature $H(\Sigma_0^\sigma)\equiv\sfrac{2}{\sigma}$ provided that $\sigma>\sigma_{\mathcal{I}_{0}}$. 
\end{theorem}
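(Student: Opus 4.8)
Since $K\equiv0$ here, this is the Riemannian specialization of the setting of Sections~\ref{secHypersurfaces}--\ref{secLinearization}: Proposition~\ref{propRegularity}, Proposition~\ref{propInvertibility}, and Corollary~\ref{corEllipticRegularity} all apply with $\mathcal{L}=L^{H}=-\Delta^{\Sigma}-(\vert A\vert^{2}+\ric(\nu,\nu))$ and $m_{\mathcal{H}}(\Sigma)=m_{H}(\Sigma)$. The plan is to construct the foliation one leaf at a time by a quantitative implicit function theorem anchored on Euclidean coordinate spheres, along the lines of Metzger~\cite{MetzgerCMC} and Nerz~\cite{NerzCMC}, and then to check that the resulting $C^{1}$-family sweeps out the end transversally and without overlaps. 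For each large $\sigma$ I would parametrize candidate leaves as normal graphs $\graph f$ over coordinate spheres $\mathbb{S}^{2}_{\sigma}(\vec{c}\,)\hookrightarrow M^{3}$, carrying both $f$ (in a scale-invariantly small ball $\Vert f\Vert_{W^{2,p}(\mathbb{S}^{2}_{\sigma}(\vec{c}\,))}\lesssim\sigma^{\sfrac{1}{2}-\varepsilon}$, $p>2$) and the center $\vec{c}$ (with $\vert\vec{c}\,\vert\lesssim\sigma^{1-\varepsilon}$) as unknowns; any CMC-solution found in this ball automatically lies in $\mathcal{A}(0,b,\varepsilon)$ for suitable $b$, so that Proposition~\ref{propRegularity} applies to it a posteriori.

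For fixed $\sigma$, the equation to solve is $H(\graph f)\equiv\sfrac{2}{\sigma}$, viewed as a map between the appropriate $W^{2,2}$- and $L^{2}$-spaces with the three components of $\vec{c}$ adjoined. Its linearization at $f=0$ is, to leading order, $L^{H}$ together with the infinitesimal Euclidean translations; by Proposition~\ref{propInvertibility} we have $\Vert L^{H}h\Vert_{L^{2}}\ge\tfrac{3\vert m_{H}\vert}{\sigma^{3}}\Vert h\Vert_{L^{2}}$ and, via Corollary~\ref{corEllipticRegularity}, $\Vert h\Vert_{W^{2,2}}\le C\sigma^{3}\Vert L^{H}h\Vert_{L^{2}}$, provided $m_{H}(\Sigma)\neq0$ --- which holds for $\sigma$ large since $\vert E-m_{H}(\Sigma)\vert\le C\sigma^{-\varepsilon}$ by \eqref{eqEmH} and $E\neq0$ by hypothesis. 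The $\ell=1$ modes, on which $L^{H}$ is only weakly invertible (inverse of size $\sim\sigma^{3}$, too large to absorb the $O(\sigma^{-\sfrac{3}{2}-\varepsilon})$ mean curvature error of a coordinate sphere directly), are handled by solving the $\ell=1$ part of the equation for $\vec{c}$ --- a Lyapunov--Schmidt reduction in which the hypothesis $m_{H}\neq0$ is exactly what keeps the centers from drifting; on the orthogonal complement $L^{H}$ is invertible with the $\sigma^{3}$-scaled bound above. Since the quadratic remainder of $f\mapsto H(\graph f)$ is genuinely of lower order than the inverted linear part once $\Vert f\Vert\sim\sigma^{\sfrac{1}{2}-\varepsilon}$, a contraction/Newton iteration converges to a pair $(f^{\sigma},\vec{c}^{\,\sigma})$ for every $\sigma>\sigma_{\mathcal{I}_{0}}$, with $\sigma_{\mathcal{I}_{0}}$ and $b$ depending only on $\varepsilon$, $C_{\mathcal{I}_{0}}$, and $E$ (the $E$-dependence entering solely through \eqref{eqEmH}). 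Uniqueness of the CMC-surface with $H\equiv\sfrac{2}{\sigma}$ inside $\mathcal{A}(0,b,\varepsilon)$ follows by the same mechanism: two such surfaces are graphs over a common coordinate sphere, and the difference of the graph functions solves a linear elliptic equation with leading operator close to $L^{H}$, hence vanishes after matching centers.

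Writing $\Sigma^{\sigma}_{0}\definedas\graph f^{\sigma}$, the implicit function theorem in the parameter $\sigma$ shows $\sigma\mapsto(f^{\sigma},\vec{c}^{\,\sigma})$ is $C^{1}$, so the induced map $\Psi_{0}\colon(\sigma_{\mathcal{I}_{0}},\infty)\times\mathbb{S}^{2}\to M^{3}$ is $C^{1}$. Transversality amounts to the normal speed $\phi\definedas\langle\partial_{\sigma}\Psi_{0},\nu\rangle$ being nowhere zero; differentiating $H(\Sigma^{\sigma}_{0})\equiv\sfrac{2}{\sigma}$ in $\sigma$ gives $L^{H}\phi=-\sfrac{2}{\sigma^{2}}$, and since the constant function $-\sfrac{2}{\sigma^{2}}$ has no translational part while $L^{H}$ acts on the mean-value part essentially by multiplication by $-2\sigma^{-2}$ (cf.\ \eqref{eqLh0} with $K\equiv0$) and is coercive on the deformational part (cf.\ \eqref{eqDeformational}), one gets $\phi\approx1$, in particular positive, so $\Psi_{0}$ is a local $C^{1}$-diffeomorphism. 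Injectivity follows from the strong maximum principle: if leaves $\Sigma^{\sigma_{1}}_{0}$ and $\Sigma^{\sigma_{2}}_{0}$ with $\sigma_{1}<\sigma_{2}$ had a contact point, comparison there of their mean curvatures $\sfrac{2}{\sigma_{1}}>\sfrac{2}{\sigma_{2}}$ would be violated. Surjectivity onto the complement of a fixed compact set $\mathcal{K}_{0}$ follows from the radius comparison \eqref{eqDistArea} (for $\mathcal{A}(0,b,\varepsilon)$ this reads $r-Cr^{1-\varepsilon}\le\vert\vec{x}\,\vert\le r+Cr^{1-\varepsilon}$ on $\Sigma^{\sigma}_{0}$), which forces the leaves to exhaust the end as $\sigma\to\infty$ while staying outside $\mathcal{K}_{0}$ for $\sigma>\sigma_{\mathcal{I}_{0}}$; a standard connectedness argument then identifies $\Psi_{0}$ as a bijection onto $M^{3}\setminus\mathcal{K}_{0}$.

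The main obstacle is that everything takes place at the borderline decay $C^{2}_{\sfrac{1}{2}+\varepsilon}$, where a leaf is only \emph{scale-invariantly $C^{1}$-close} to a round sphere and \emph{not} $C^{2}$-close, so a naive $C^{2,\alpha}$-Schauder iteration is unavailable; one must work in $W^{2,p}$, $p>2$, and rely on the De Lellis--M\"uller estimate \cite{DeLellisMueller} to control the conformal geometry of the leaves, exactly as in Proposition~\ref{propRegularity}. The single sharpest point is that the \emph{only} mechanism preventing the centers $\vec{c}^{\,\sigma}$ from drifting at the catastrophic rate $\sim r$ is the lower bound $\vert m_{H}(\Sigma)\vert\gtrsim\vert E\vert$, which at this weak decay has to be extracted from a Miao--Tam/Herzlich-type argument (as in the proof of \eqref{eqEmH}); coordinating this center adjustment (the $\ell=1$ obstruction) with the graph perturbation, and establishing $C^{1}$-dependence and transversality at this low regularity, is where the genuine difficulty lies.
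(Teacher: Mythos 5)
First, a point of comparison: the paper does not prove Theorem~\ref{thExistenceCMC} at all. It is quoted (in the paper's notation) from Nerz~\cite{NerzCMC} and serves only as the $\tau=0$ base case of the method of continuity in Section~\ref{secExistence}. So your proposal is a from-scratch reconstruction of an imported result, and moreover by a different route: Nerz's actual argument (like this paper's spacetime generalization) is a deformation/continuity argument in the spirit of Metzger~\cite{MetzgerCMC}, controlled by lapse estimates along the deformation, not a per-$\sigma$ Lyapunov--Schmidt construction anchored on coordinate spheres. Using Propositions~\ref{propRegularity}, \ref{propInvertibility} and Corollary~\ref{corEllipticRegularity} with $K\equiv0$ is legitimate (no circularity), and your transversality, injectivity and surjectivity discussion is essentially sound and close to Lemma~\ref{lemFoliation}.

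The genuine gap sits exactly at the point you yourself flag as the sharpest, and it is not resolved by invoking $m_{H}\neq0$. Your contraction must absorb not only the quadratic remainder but the zeroth-order error $H(\mathbb{S}^2_\sigma(\vec c\,))-\sfrac{2}{\sigma}$, whose $L^2$-norm at decay $C^2_{\sfrac{1}{2}+\varepsilon}$ is only $O(\sigma^{-\frac12-\varepsilon})$ and which, absent any asymptotic symmetry, has no a priori smallness in its $\ell=1$ component. On the translational modes the inverse of $L^{H}$ costs a factor $\sim\sigma^{3}/\vert m_{H}\vert$ (cf.~\eqref{eqTranslational}, \eqref{eqInvertible}), while recentering by $\vec a$ shifts the translational component of the mean curvature only by $\sim\vert m_{H}\vert\,\vert\vec a\,\vert\,\sigma^{-2}$ in $L^2$. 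If the $\ell=1$ part of the coordinate-sphere error really were of the naive size $\sigma^{-\frac12-\varepsilon}$, your Lyapunov--Schmidt step would demand $\vert\vec c\,\vert\sim\sigma^{\frac32-\varepsilon}/\vert m_{H}\vert$, which destroys the graphical representation and the membership in $\mathcal{A}(0,b,\eta)$ on which all the a priori estimates you quote depend. What actually saves the construction is a cancellation in the translational projection coming from the divergence structure of the constraint/ADM integrands --- identities of the type $\int_\Sigma\ric(\nu,\nu)\,x^i\,d\mu=O(\sigma^{-\varepsilon})$ (Nerz's Lemma~A.3, invoked in this paper in the proof of Lemma~\ref{lemFoliation}) --- and your sketch never establishes any such estimate; the assertion that ``$m_{H}\neq0$ is exactly what keeps the centers from drifting'' addresses the invertibility of the reduced problem, not the size of the $\ell=1$ error it must invert. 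The same omission affects your uniqueness-in-the-class and ``match the centers'' step. The continuity-method route sidesteps this entirely: one never needs the raw error of a coordinate sphere, only the smallness of the inhomogeneity in the lapse equation along the deformation (compare \eqref{eqPDEu} and Lemma~\ref{lemLapse}), which is where the needed structure enters quantitatively.
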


This result is a starting point for proving the following theorem, which is essentially the main result of this paper. For the sake of clarity of exposition, we provide the proof of the following theorem right away, saving the verification of some preliminary lemmas for later. We state Theorem \ref{thExistence} here in a notation convenient for its proof.

\begin{theorem}[Existence of STCMC-foliation]\label{thExistence}
Let $\mathcal{I}_{1}=\IDS$ be a $C^2_{\sfrac{1}{2} + \varepsilon}$-asymptotically Euclidean initial data set with non-vanishing energy $E \neq 0$. Then there is a constant $\sigma_{\mathcal{I}_{1}}>0$ depending only on $\varepsilon$, $C_{\mathcal{I}_{1}}$, and $E$, a compact set $\mathcal{K}_1 \subset M^{3}$, and a bijective $C^{1}$-map $\Psi_1\colon (\sigma_{\mathcal{I}_{1}}, \infty) \times \mathbb{S}^2 \to M^{3} \setminus \mathcal{K}_1$ such that each of the surfaces $\Sigma_1^\sigma \definedas \Psi_1(\sigma, \mathbb{S}^2)$ has constant spacetime mean curvature $\mathcal{H}( \Sigma_1^\sigma) \equiv\sfrac{2}{\sigma_{1}}$ provided that $\sigma>\sigma_{\mathcal{I}_{1}}$. 
\end{theorem}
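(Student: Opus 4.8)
The plan is to argue by a method of continuity in a parameter $\tau\in[0,1]$, interpolating via the family $\mathcal{I}_\tau=(M^3,g,\tau K,\mu_\tau,\tau J)$ of \eqref{CEtau} between the Riemannian manifold $(M^3,g)$ at $\tau=0$ --- for which Nerz's Theorem \ref{thExistenceCMC} supplies a CMC-foliation whose leaves lie in $\mathcal{A}(0,b,\varepsilon)$ for a suitable $b>0$ by Example \ref{exCMC}, and which \emph{is} the desired STCMC-foliation since $K$ is absent there --- and the given data set $\mathcal{I}_1$ at $\tau=1$. The point is that all $\mathcal{I}_\tau$ are $C^2_{\sfrac{1}{2}+\varepsilon}$-asymptotically Euclidean with respect to the same chart and comparable constants, and share the energy $E\neq0$, so the thresholds and constants in Propositions \ref{propRegularity} and \ref{propInvertibility} can be taken uniform in $\tau$; moreover, since only $K$ is rescaled while $g$ stays fixed, for a graph over a fixed surface one has (writing $\mathcal{H}_{\mathcal{I}_\tau}$ for the spacetime mean curvature computed in $\mathcal{I}_\tau$) that $\mathcal{H}_{\mathcal{I}_\tau}(\graph f)=\sqrt{H(\graph f)^2-\tau^2(\tr_{\graph f}K)^2}$, which by \eqref{eqFallOff} is smooth in $(f,\tau)$ and stays bounded away from degeneracy uniformly. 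I would fix a strictly larger a priori class $\mathcal{A}(\bar a,\bar b,\bar\eta)\supseteq\mathcal{A}(0,b,\varepsilon)$, take $\sigma_{\mathcal{I}_1}$ larger than every threshold appearing in Theorem \ref{thExistenceCMC}, Propositions \ref{propRegularity}, \ref{propInvertibility} and Lemmas \ref{lemEigenLaplace2}, \ref{lemLaplace} for the parameters $\bar a,\bar b,\bar\eta$, and for each fixed $\sigma>\sigma_{\mathcal{I}_1}$ consider the set $T_\sigma\subseteq[0,1]$ of those $\tau$ for which there is a surface $\Sigma^\sigma_\tau\in\mathcal{A}(0,b,\varepsilon)$ with $\mathcal{H}_{\mathcal{I}_\tau}(\Sigma^\sigma_\tau)\equiv\tfrac{2}{\sigma}$, depending continuously on $\tau$. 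The goal is $T_\sigma=[0,1]$.

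For \emph{openness}, fix $\tau_0\in T_\sigma$ with leaf $\Sigma^\sigma_{\tau_0}$, parametrize nearby surfaces as graphs $\graph f$ over $\Sigma^\sigma_{\tau_0}$ in the normal geodesic coordinates of Section \ref{subsecStab}, and apply the Implicit Function Theorem to the smooth map $F\colon W^{2,2}(\Sigma^\sigma_{\tau_0})\times[0,1]\to L^2(\Sigma^\sigma_{\tau_0})$, $F(f,\tau)=\mathcal{H}_{\mathcal{I}_\tau}(\graph f)-\tfrac{2}{\sigma}$, which has $F(0,\tau_0)=0$ and, by Lemma \ref{lemLinearizationCMC}, $D_fF(0,\tau_0)=L^{\mathcal{H}}$. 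Proposition \ref{propInvertibility} --- via $L^{\mathcal{H}}f=\mathcal{L}f/\sqrt{1-(P/H)^2}$ and $m_{\mathcal{H}}(\Sigma^\sigma_{\tau_0})\neq0$, which follows from $E\neq0$ by \eqref{eqEmH} --- shows that $L^{\mathcal{H}}\colon W^{2,2}\to L^2$ is a bounded isomorphism, uniformly in $\tau,\sigma$ after the usual scale-invariant renormalization of the Sobolev norms (see Corollary \ref{corEllipticRegularity}). The Implicit Function Theorem then yields a $C^1$ curve $\tau\mapsto f_\tau$ with $f_{\tau_0}=0$ and $F(f_\tau,\tau)=0$ for $|\tau-\tau_0|<\delta$, with $\delta$ independent of $\sigma$ thanks to the uniform control of $(L^{\mathcal{H}})^{-1}$ and of the remainder of $F$ coming from Proposition \ref{propRegularity}; elliptic bootstrapping makes $f_\tau$ smooth. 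Since $f_\tau\to0$ in $W^{2,2}\hookrightarrow C^0$, the surface $\Sigma^\sigma_\tau:=\graph f_\tau$ lies in $\mathcal{A}(\bar a,\bar b,\bar\eta)$ for $\tau$ near $\tau_0$, and then Proposition \ref{propRegularity} (now applied in $\mathcal{I}_\tau$) re-establishes the sharp bounds \eqref{eq2FFtraceless}--\eqref{eqMCArea}, hence $\Sigma^\sigma_\tau\in\mathcal{A}(0,b,\varepsilon)$; so a neighbourhood of $\tau_0$ lies in $T_\sigma$, with continuous dependence.

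For \emph{closedness}, if $\tau_n\to\tau_*$ with $\tau_n\in T_\sigma$, Proposition \ref{propRegularity} gives uniform-in-$n$ bounds on the graph functions of $\Sigma^\sigma_{\tau_n}$ and on their area and coordinate radii, while \eqref{eqFallOff} makes the STCMC equation uniformly elliptic; as $\mathcal{I}_{\tau_n}\to\mathcal{I}_{\tau_*}$ in $C^2$ with the same decay constants, Arzel\`a--Ascoli and elliptic regularity produce a $C^2$-limit $\Sigma^\sigma_{\tau_*}\in\mathcal{A}(0,b,\varepsilon)$ with $\mathcal{H}_{\mathcal{I}_{\tau_*}}(\Sigma^\sigma_{\tau_*})\equiv\tfrac{2}{\sigma}$, so $\tau_*\in T_\sigma$ and continuity persists. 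Since $0\in T_\sigma$, the set $T_\sigma$ is nonempty, open and closed in $[0,1]$, hence equals $[0,1]$; in particular for every $\sigma>\sigma_{\mathcal{I}_1}$ there is $\Sigma^\sigma:=\Sigma^\sigma_1\in\mathcal{A}(0,b,\varepsilon)$ with $\mathcal{H}(\Sigma^\sigma)\equiv\tfrac{2}{\sigma}$ in $\mathcal{I}_1$. To turn this family into a foliation I would: (i) get $C^1$-dependence on $\sigma$ by applying the Implicit Function Theorem to $(f,\sigma)\mapsto\mathcal{H}_{\mathcal{I}_1}(\graph f)-\tfrac{2}{\sigma}$ (invertibility of $L^{\mathcal{H}}$ once more) and patching the local families using local uniqueness in the a priori class (see Theorem \ref{thUniqueness}); (ii) check transversality, by differentiating $\mathcal{H}(\Sigma^\sigma)\equiv\tfrac{2}{\sigma}$ in $\sigma$, which shows the normal speed (lapse) of the family solves $L^{\mathcal{H}}[\,\cdot\,]=-\tfrac{2}{\sigma^2}$ and hence has a definite sign by \eqref{eqInvertible} and the sign of $m_{\mathcal{H}}(\Sigma^\sigma)$; (iii) observe that $\vert\vec{x}\,\vert=\sigma+o(\sigma)$ on $\Sigma^\sigma$ by \eqref{eqDistArea}--\eqref{eqMCArea}, so the leaves are mutually disjoint (the ordering being inherited from Nerz's nested foliation along the continuity path) and exhaust a neighbourhood of infinity, the complement within the end being a bounded, hence compact, coordinate region which we take to be $\mathcal{K}_1$. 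Then, defining $\Psi_1(\sigma,\cdot)$ to be the $C^1$-parametrization of $\Sigma^\sigma$, we obtain the desired bijective $C^1$-map $\Psi_1\colon(\sigma_{\mathcal{I}_1},\infty)\times\mathbb{S}^2\to M^3\setminus\mathcal{K}_1$.

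The \emph{main obstacle} is twofold. First, one must keep every estimate and the Implicit Function Theorem uniform simultaneously in the continuity parameter $\tau$ and the foliation parameter $\sigma$, so that a single $\sigma_{\mathcal{I}_1}$ works for all of them; this is exactly why it is essential that the constants in Propositions \ref{propRegularity} and \ref{propInvertibility} depend only on $\varepsilon$, $a$, $b$, $\eta$, $C_{\mathcal{I}}$ and $E$ --- quantities uniform along the path $\mathcal{I}_\tau$ --- and why the scale-invariant normalization of the Sobolev norms matters. Second, upgrading the mere existence of individual STCMC-surfaces to a genuine foliation --- disjointness, transversality, and covering without gaps --- is the part with the least ``soft'' content; it rests on the radius comparisons \eqref{eqDistArea}, \eqref{eqMCArea}, on the sign of $m_{\mathcal{H}}$ in the invertibility estimate \eqref{eqInvertible}, and on transporting the nestedness of Nerz's leaves through the continuity argument.
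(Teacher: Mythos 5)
Your overall strategy is the same as the paper's (continuity in $\tau$ along $\mathcal{I}_\tau$, openness by the Implicit Function Theorem using the invertibility of $L^{\mathcal{H}}$ from Proposition \ref{propInvertibility}, closedness by graph compactness, then assembling the leaves into a foliation), but two steps as you state them have genuine gaps.

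First, the step ``then Proposition \ref{propRegularity} (now applied in $\mathcal{I}_\tau$) re-establishes the sharp bounds \eqref{eq2FFtraceless}--\eqref{eqMCArea}, hence $\Sigma^\sigma_\tau\in\mathcal{A}(0,b,\varepsilon)$'' does not work. Proposition \ref{propRegularity} takes membership in an a priori class as \emph{input} and yields curvature, graph and radius estimates; it gives no control whatsoever on the coordinate center $\vec{z}\,(\Sigma^\sigma_\tau)$, so it cannot restore the centering condition $\vert\vec z\vert\leq b\,r^{1-\eta}$ (the $a=0$ part of the class) once the surface has drifted inside the larger class. Without that, each application of openness degrades the class constants by a fixed amount, closedness only preserves the degraded class, and the continuity argument cannot be iterated up to $\tau=1$ with one threshold $\sigma_{\mathcal{I}_1}$ depending only on $\varepsilon$, $C_{\mathcal{I}_1}$, $E$. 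What is needed — and what the paper proves in Lemmas \ref{lemLapse}, \ref{lemEstimates} and \ref{lemConstEvolution} — is a quantitative no-drift statement along the path: the $\tau$-lapse solves $\mathcal{L}u=\tau(\tr_{\Sigma}K)^2/H=O(\sigma^{-2-2\varepsilon})$, whence $\vert\partial_\tau\vec z\,\vert=O(\sigma^{1-2\varepsilon})$ and $\partial_\tau\int H^2\,d\mu=O(\sigma^{-2\varepsilon})$, so the class constants only change by $b+O(\sigma^{-\min\{2\varepsilon-\eta,\varepsilon\}})$ and $\eta+O(\sigma^{-\varepsilon})$ (this is also where the restriction $\eta<2\varepsilon$ enters). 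Your proposal contains no substitute for this ingredient, and it is exactly the reason the smallness of the factor $\sfrac{P}{H}$ in $\mathcal{L}$ matters, in contrast to the constant-expansion case.

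Second, your argument for disjointness/transversality of the leaves is not sufficient: from $L^{\mathcal{H}}u=-\tfrac{2}{\sigma^2}+\dots$ you infer a ``definite sign'' of the lapse from \eqref{eqInvertible} and the sign of $m_{\mathcal{H}}$, but \eqref{eqInvertible} is only an $L^2$ lower bound for $\Vert\mathcal{L}h\Vert$, i.e.\ injectivity of $\mathcal{L}$; it carries no sign or maximum-principle information. Indeed, the direct estimate only gives $\mathcal{L}(u-1)=\ric(\nu,\nu)+O(\sigma^{-3-\varepsilon})=O(\sigma^{-\frac52-\varepsilon})$, which via Corollary \ref{corEllipticRegularity} and the Sobolev embedding is \emph{not} enough to conclude $u=1+O(\sigma^{-\varepsilon})>0$; the paper's Lemma \ref{lemFoliation} needs the finer splitting of $u-1$ into translational and difference parts together with the cancellation $\int_{\Sigma}\ric(\nu,\nu)\,x^i\,d\mu=O(\sigma^{-\varepsilon})$ (Lemma A.3 of \cite{NerzCMC}). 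Likewise, ``the ordering being inherited from Nerz's nested foliation along the continuity path'' is not an argument: nestedness at $\tau=0$ does not transport to $\tau=1$; injectivity must be proved at $\tau=1$ (via positivity of the $\sigma$-lapse), and surjectivity by the enclosure/intermediate-value argument of Lemma \ref{lemFoliation}.
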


\begin{remark}
As the proof of Theorem \ref{thExistence} will show, the surfaces $\Sigma_1^\sigma$ are in fact asymptotically centered in the sense of Definition \ref{defConcentric}, more specifically, $\Sigma_1^\sigma \in \mathcal{A}(0, b_{\mathcal{I}_{1}},\eta_{\mathcal{I}_{1}})$ for all $\sigma> \sigma_{\mathcal{I}_{1}}$, with constants $b_{\mathcal{I}_{1}}>0$, $\eta_{\mathcal{I}_{1}}\in(0,1]$, and $\sigma_{\mathcal{I}_{1}}>0$ defined in the proof of Theorem \ref{thExistence}, and depending only on $\varepsilon$, $C_{\mathcal{I}_{1}}$, and $E$. 
\end{remark}
                                                                    
\begin{proof}
The family of closed, oriented $2$-surfaces $\{\Sigma_1^\sigma\}_{\sigma>\sigma_{\mathcal{I}_{1}}}$ will be constructed via a method of continuity, see also Section \ref{secMain}. Roughly speaking, we will deform  the constant (automatically spacetime) mean curvature foliation $\{\Sigma_0^\sigma\}_{\sigma>\sigma_{\mathcal{I}_{0}}}$ of the initial data set $\mathcal{I}_0$ from Theorem \ref{thExistenceCMC} along the curve of initial data sets $\{\mathcal{I}_\tau\}_{\tau \in [0,1]}$, where $\mathcal{I}_\tau \definedas (M^{3},g,\tau K, \mu_\tau, \tau J)$ is as described in Section \ref{ssecStrategy}, arriving at the foliation of the initial data set $\mathcal{I}_1$ by constant spacetime mean curvature surfaces $\{\Sigma^\sigma_1\}_{\sigma>\sigma_{\mathcal{I}_{1}}}$. In order to make this idea more precise, we introduce the following construction.

By Theorem \ref{thExistenceCMC}, we know that for every $\sigma> \sigma_{\mathcal{I}_{0}}$ there is a closed, oriented $2$-surface $\Sigma^\sigma_0\hookrightarrow M^{3}$ with constant spacetime mean curvature $\mathcal{H} (\Sigma^\sigma_0)  \equiv \sfrac{2}{\sigma}$ with respect to the initial data set~$\mathcal{I}_0$. Furthermore, the proof of this result  in \cite{NerzCMC} shows that there are constants $b_{\mathcal{I}_{0}}\geq 0$ and $1\geq\eta_{\mathcal{I}_{0}}>0$ such that $\Sigma^\sigma_0 \in \mathcal{A}(0,b_{\mathcal{I}_{0}},\eta_{\mathcal{I}_{0}})$ for all $\sigma>\sigma_{\mathcal{I}_{0}}$. We recall from \cite{NerzCMC} that $b_{\mathcal{I}_{0}}$ and $\eta_{\mathcal{I}_{0}}$ only depend on $\varepsilon$, $C_{\mathcal{I}_{0}}$, and $E$ which can be restated as saying that they only depend on $\varepsilon$, $C_{\mathcal{I}_{1}}$, and $E$ by our construction. Set $b_{\mathcal{I}_{1}}\definedas 4b_{\mathcal{I}_{0}}>b_{\mathcal{I}_{0}}$ and $\eta_{\mathcal{I}_{0}}>\eta_{\mathcal{I}_{1}}\definedas\tfrac{\eta_{\mathcal{I}_{0}}}{4}>0$.  From Section \ref{secLinearization} and by the definition of $b_{\mathcal{I}_{1}}$ and $\eta_{\mathcal{I}_{1}}$, we know that there are constants $C$ and $\overline{\sigma}$ depending only on $\varepsilon$, $C_{\mathcal{I}_{1}}$, and $E$ such that the operator $\mathcal{L}\colon W^{2,2}(\Sigma) \to L^2(\Sigma)$ is invertible whenever $\Sigma\in \mathcal{A}(0,b_{\mathcal{I}_{1}},\eta_{\mathcal{I}_{1}})$ is a surface of constant spacetime mean curvature $\mathcal{H}(\Sigma) \equiv \sfrac{2}{\sigma}$ with respect to the initial data set $\mathcal{I}_1$ for $\sigma\geq \overline{\sigma}$, and whenever in addition the estimates of Proposition \ref{propInvertibility} and Corollary \ref{corEllipticRegularity} are available on $\Sigma$. Without loss of generality, we may also assume that $C$ and $\overline{\sigma}$ are such that the regularity result in Proposition \ref{propRegularity} as well as a supplementary result stated in Lemma \ref{lemConstEvolution} (see Section \ref{secSupplementary} below) apply with $a=\overline{a}=0$, $b=\tfrac{b_{\mathcal{I}_{1}}}{2}$, $\overline{b}=b_{\mathcal{I}_{1}}$, $\eta=2\eta_{\mathcal{I}_{1}}$, $\overline{\eta}=\eta_{\mathcal{I}_{1}}$.  We set $\sigma_{\mathcal{I}_1}\definedas\max\{\overline{\sigma},4\sigma_{\mathcal{I}_0}\}$, and note that by their definition $\sigma_{\mathcal{I}_{1}}$, $b_{\mathcal{I}_{1}}$, and $\eta_{\mathcal{I}_{1}}$ only depend on $\varepsilon$, $C_{\mathcal{I}_{1}}$, and $E$.

Now fix $\sigma_{*}>\sigma_{\mathcal{I}_{1}}$ for the rest of the argument until we start discussing the foliation property when applying Lemma~\ref{lemFoliation}. Let $Y^{\sigma_{*}} \subseteq [0,1]$ be the \emph{maximal} subset such that there is a $C^1$-map 
\begin{align*}
\mathcal{F}^{\sigma_{*}}\colon  Y^{\sigma_{*}} \times \mathbb{S}^2 \to M^{3}
\end{align*} 
with the following properties for every $\tau \in Y^{\sigma_{*}}$: 
\begin{enumerate}[(i)]
\item \label{cond STCMC} The surface $\Sigma^{\sigma_{*}}_\tau \definedas \mathcal{F}^{\sigma_{*}}(\tau, \mathbb{S}^2)$  has constant spacetime mean curvature $\mathcal{H} (\Sigma^{\sigma_{*}}_\tau)  \equiv \sfrac{2}{\sigma_{*}}$ with respect to the initial data set $\mathcal{I}_\tau$.\\[-1ex]
\item\label{cond ortho} $\partial_\tau \mathcal{F}^{\sigma_{*}}(\tau, q)$ is orthogonal to $\Sigma^{\sigma_{*}}_{\tau}$ for every $q\in \mathbb{S}^2$.\\[-1ex]
\item\label{cond centered} The surface $\Sigma^{\sigma_{*}}_{\tau}$ is asymptotically centered in the sense that $\Sigma^{\sigma_{*}}_\tau \in \mathcal{A}(0,b_{\mathcal{I}_{1}},\eta_{\mathcal{I}_{1}})$.\\[-1ex]
\end{enumerate}
\emph{Maximality} of $Y^{\sigma_{*}}$ is understood here as follows: if the above conditions are satisfied for some $\widetilde{Y}^{\sigma_{*}}\subseteq [0,1]$ and a map $\widetilde{\mathcal{F}}^{\sigma_{*}}\colon \widetilde{Y}^{\sigma_{*}} \times \mathbb{S}^2 \to M^3$, then $\widetilde{Y}^{\sigma_{*}} \subseteq Y^{\sigma_{*}}$ as well as $\mathcal{F}^{\sigma_{*}}\vert_{\widetilde{Y}^{\sigma^{*}}}\equiv \widetilde{\mathcal{F}}^{\sigma_{*}}$. 

Note that for $\tau=0$, Condition \eqref{cond STCMC} is ensured by the assumptions in Theorem~\ref{thExistenceCMC}. The same is true for Condition \eqref{cond centered} once one takes into account that $\mathcal{A}(0,b_{\mathcal{I}_{0}},\eta_{\mathcal{I}_{0}})\subseteq \mathcal{A}(0,b_{\mathcal{I}_{1}},\eta_{\mathcal{I}_{1}})$. However, Condition \eqref{cond ortho} is not automatically satisfied for $\tau=0$ as we do not even know whether the map $\mathcal{F}^{\sigma_{*}}$ exists. The following lemma ensures that $Y^{\sigma_{*}}$ contains an interval $[0,\tau_0)$ for some $\tau_0>0$. In particular, Condition \eqref{cond ortho} is satisfied a posteriori for $\tau=0$. More generally, this result shows that $Y^{\sigma_{*}}$ is open around any $\tau_{*}\in Y^{\sigma_{*}}$ such that $\Sigma^{\sigma_{*}}_{\tau_{*}}\in \mathcal{A}(0,b,\eta)$ for $0\leq b<b_{\mathcal{I}_{1}}$ and $\eta_{\mathcal{I}_{1}}<\eta \leq 1$. 

\begin{lemma}\label{lemStatement}
Let $0\leq b<\overline{b}\leq b_{\mathcal{I}_1}$ and $\eta_{\mathcal{I}_1}\leq\overline{\eta}<\eta\leq 1$. For any $\tau_{*}\in[0,1]$ for which there exists a smooth surface $\Sigma^{\sigma_{*}}_{\tau_{*}}\in\mathcal{A}(0,b,\eta)$ satisfying $\mathcal{H}(\Sigma^{\sigma_{*}}_{\tau_{*}})\equiv \sfrac{2}{\sigma_{*}}$, there exists an open, connected neighborhood $U_{\tau_{*}}$ of $\tau_{*}$ inside $[0,1]$ and a unique $C^{1}$-map $\mathcal{F}^{\sigma_{*}}\colon U_{\tau_{*}}\times\mathbb{S}^{2}\to M^3$ with $\Sigma^{\sigma_{*}}_{\tau_{*}}=\mathcal{F}^{\sigma_{*}}(\tau_{*},\cdot)$ such that \eqref{cond STCMC} and \eqref{cond ortho} are satisfied for $\Sigma^{\sigma_{*}}_{\tau}\definedas \mathcal{F}^{\sigma_{*}}(\tau,\cdot)$, and such that $\Sigma^{\sigma_{*}}_{\tau}\in\mathcal{A}(0,\overline{b},\overline{\eta})$ for all $\tau\in U_{\tau_{*}}$. 
\end{lemma}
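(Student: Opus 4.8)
The plan is to prove Lemma~\ref{lemStatement} by an application of the implicit function theorem on Banach spaces, with the invertibility of the linearised spacetime mean curvature operator from Proposition~\ref{propInvertibility} as the crucial input, followed by a reparametrisation step to enforce \eqref{cond ortho} and a continuity argument to propagate membership in the a priori class. First I would fix the reference surface $\Sigma_{*}\definedas\Sigma^{\sigma_{*}}_{\tau_{*}}$, which by hypothesis is smooth, lies in $\mathcal{A}(0,b,\eta)$, and has $\mathcal{H}_{\tau_{*}}(\Sigma_{*})\equiv\sfrac{2}{\sigma_{*}}$ with respect to $\mathcal{I}_{\tau_{*}}$. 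Introducing normal geodesic coordinates $y\colon\Sigma_{*}\times(-\xi,\xi)\to M^{3}$ around $\Sigma_{*}$ as in Section~\ref{subsecStab} and fixing an exponent $p>2$, I would consider the map
\[
\mathcal{G}(\tau,f)\definedas\mathcal{H}_{\tau}(\graph f)-\tfrac{2}{\sigma_{*}}
\]
defined on a neighbourhood of $(\tau_{*},0)$ in $[0,1]\times W^{2,p}(\Sigma_{*})$ with values in $L^{p}(\Sigma_{*})$, where $\mathcal{H}_{\tau}$ denotes spacetime mean curvature computed with respect to $\mathcal{I}_{\tau}=(M^{3},g,\tau K,\mu_{\tau},\tau J)$. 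Since $\tau K$ depends polynomially on $\tau$ and the Riemannian mean curvature of $\graph f$ is a quasilinear second-order elliptic expression in $f$, the map $\mathcal{G}$ is $C^{\infty}$ between these Banach spaces, and $\mathcal{G}(\tau_{*},0)=0$ because $\Sigma_{*}$ is STCMC with value $\sfrac{2}{\sigma_{*}}$ in $\mathcal{I}_{\tau_{*}}$.

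Next I would verify that $D_{f}\mathcal{G}(\tau_{*},0)$ is an isomorphism. By Lemma~\ref{lemLinearizationCMC} and the reformulation following it, this partial differential equals $L^{\mathcal{H}}=\mathcal{L}/\sqrt{1-(P/H)^{2}}$, and the denominator is bounded and bounded away from zero by \eqref{eqFallOff}. Proposition~\ref{propInvertibility} applies to $\Sigma_{*}$ --- since $\Sigma_{*}\in\mathcal{A}(0,b,\eta)\subseteq\mathcal{A}(0,b_{\mathcal{I}_{1}},\eta_{\mathcal{I}_{1}})$, the energy $E\neq0$ is independent of $\tau$, and $\sigma_{*}>\sigma_{\mathcal{I}_{1}}\geq\overline{\sigma}$ --- and gives invertibility of $\mathcal{L}\colon W^{2,2}(\Sigma_{*})\to L^{2}(\Sigma_{*})$; combined with Corollary~\ref{corEllipticRegularity} and standard elliptic regularity for the elliptic operator $\mathcal{L}$ this upgrades to invertibility $W^{2,p}(\Sigma_{*})\to L^{p}(\Sigma_{*})$. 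The implicit function theorem then produces $\delta>0$ and a $C^{1}$ curve $\tau\mapsto f_{\tau}\in W^{2,p}(\Sigma_{*})$ for $|\tau-\tau_{*}|<\delta$, with $f_{\tau_{*}}=0$, $\mathcal{G}(\tau,f_{\tau})\equiv0$, and local uniqueness of $f_{\tau}$ among small $W^{2,p}$-solutions. As each $\mathcal{I}_{\tau}$ is smooth, elliptic regularity bootstraps $f_{\tau}$ to a smooth function, so $\Sigma_{\tau}\definedas\graph f_{\tau}$ is a smooth surface with $\mathcal{H}_{\tau}(\Sigma_{\tau})\equiv\sfrac{2}{\sigma_{*}}$, which is \eqref{cond STCMC}.

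To obtain the orthogonal parametrisation required by \eqref{cond ortho}, I would let $\widetilde{\mathcal{F}}\colon(\tau_{*}-\delta,\tau_{*}+\delta)\times\mathbb{S}^{2}\to M^{3}$ be the $C^{1}$ map obtained by flowing the prescribed parametrisation of $\Sigma_{*}=\Sigma^{\sigma_{*}}_{\tau_{*}}$ along the graphs $\graph f_{\tau}$ in the coordinates $y$, so that $\widetilde{\mathcal{F}}(\tau_{*},\cdot)$ is the given parametrisation of $\Sigma_{*}$ and $\widetilde{\mathcal{F}}(\tau,\cdot)\colon\mathbb{S}^{2}\to\Sigma_{\tau}$ is a diffeomorphism. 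Its $\tau$-derivative need not be normal; decomposing $\partial_{\tau}\widetilde{\mathcal{F}}(\tau,\cdot)=\phi_{\tau}\,\nu^{\Sigma_{\tau}}+X_{\tau}$ with $X_{\tau}$ tangent to $\Sigma_{\tau}$, I would pull $X_{\tau}$ back to a vector field $\widehat{X}_{\tau}$ on $\mathbb{S}^{2}$ and solve $\partial_{\tau}\Theta_{\tau}=-\widehat{X}_{\tau}\circ\Theta_{\tau}$ with $\Theta_{\tau_{*}}=\id_{\mathbb{S}^{2}}$. Shrinking $\delta$ if necessary, this yields a $C^{1}$ family of diffeomorphisms $\Theta_{\tau}$ of $\mathbb{S}^{2}$, and $\mathcal{F}^{\sigma_{*}}(\tau,q)\definedas\widetilde{\mathcal{F}}(\tau,\Theta_{\tau}(q))$ cancels the tangential component, so $\partial_{\tau}\mathcal{F}^{\sigma_{*}}$ is normal to $\Sigma_{\tau}=\mathcal{F}^{\sigma_{*}}(\tau,\mathbb{S}^{2})$, while $\mathcal{F}^{\sigma_{*}}(\tau_{*},\cdot)$ coincides with the prescribed parametrisation of $\Sigma_{*}$. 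Uniqueness of $\mathcal{F}^{\sigma_{*}}$ follows from the uniqueness clause of the implicit function theorem (two families of surfaces satisfying \eqref{cond STCMC} and agreeing at $\tau_{*}$ must agree near $\tau_{*}$, being small graphs over $\Sigma_{*}$) together with the fact that \eqref{cond ortho} and the value at $\tau_{*}$ determine the parametrisation as the unique solution of the above flow ODE.

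It remains to arrange $\Sigma^{\sigma_{*}}_{\tau}\in\mathcal{A}(0,\overline{b},\overline{\eta})$ near $\tau_{*}$. The quantities entering Definition~\ref{defConcentric} for $\Sigma_{\tau}$ --- the area radius $r(\Sigma_{\tau})$, the quantity $\int_{\Sigma_{\tau}}H^{2}\,d\mu$, the Euclidean coordinate centre $\vec{z}(\Sigma_{\tau})$, and $\min_{\Sigma_{\tau}}|\vec{x}|$ --- depend only on $\Sigma_{\tau}\subset M^{3}$ and on the \emph{fixed} metric $g$ and chart $\vec{x}$, so they vary continuously in $\tau$ because $\tau\mapsto f_{\tau}$ is continuous into $W^{2,p}\hookrightarrow C^{1}$; the genus of $\Sigma_{\tau}$ is $0$ by Proposition~\ref{propRegularity}, and the spacetime mean curvature radius equals $\sigma_{*}$ for every $\tau$. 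At $\tau=\tau_{*}$ the three inequalities in \eqref{eqDefConcentric} hold with $a=0$ and constants $b<\overline{b}$, $\overline{\eta}<\eta$; since $r\approx\sigma_{*}$ is large, one has $b\,r^{1-\eta}\leq\overline{b}\,r^{1-\overline{\eta}}$, $r^{2+\overline{\eta}}\leq r^{2+\eta}\leq\min_{\Sigma_{\tau_{*}}}|\vec{x}|^{\frac{5}{2}+\varepsilon}$, and $b/r^{\eta}\leq\overline{b}/r^{\overline{\eta}}$, each with strict room to spare, so the defining inequalities of $\mathcal{A}(0,\overline{b},\overline{\eta})$ hold strictly at $\tau_{*}$ and, by continuity, persist on a neighbourhood of $\tau_{*}$. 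Intersecting this neighbourhood with $(\tau_{*}-\delta,\tau_{*}+\delta)\cap[0,1]$ and passing to the connected component of $\tau_{*}$ gives the desired open connected set $U_{\tau_{*}}$. The substantive difficulty is not any single estimate --- the analytic heart, namely the invertibility of $\mathcal{L}$ and the a priori control, is already supplied by Propositions~\ref{propInvertibility} and~\ref{propRegularity} --- but the bookkeeping of running the implicit function theorem in a function space simultaneously compatible with those estimates and with the smoothness of the geometric objects, and then carrying out the tangential reparametrisation with enough regularity that $\mathcal{F}^{\sigma_{*}}$ is genuinely $C^{1}$ on $U_{\tau_{*}}\times\mathbb{S}^{2}$.
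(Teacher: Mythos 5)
Your proposal is correct and follows essentially the same route as the paper: an Implicit Function Theorem argument for the graphical spacetime mean curvature map in normal geodesic coordinates, with invertibility of the linearization $L^{\mathcal{H}}$ supplied by Proposition \ref{propInvertibility}, followed by a tangential reparametrization to enforce \eqref{cond ortho} and a continuity/strict-inequality argument (using $b<\overline{b}$, $\overline{\eta}<\eta$) to keep the deformed surfaces in $\mathcal{A}(0,\overline{b},\overline{\eta})$. The only differences are cosmetic: the paper runs the IFT directly in $W^{2,2}\to L^{2}$ rather than $W^{2,p}\to L^{p}$, and it states the orthogonal reparametrization and the persistence of the a priori class more tersely than your explicit ODE-flow and room-to-spare arguments.
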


\begin{proof} In order to prove this lemma, suppose that $\tau_* \in [0,1]$, and that $b$ and $\eta$ are such as in the statement. As discussed in Section \ref{subsecStab}, in a neighborhood of each $\Sigma^{\sigma_{*}}_{\tau_*}$, we may introduce normal geodesic coordinates $y\colon\Sigma^{\sigma_{*}}_{\tau_*} \times (-\xi,\xi) \to M^{3}$ near $\Sigma^{\sigma_{*}}_{\tau_*}$. Now let $U^{2,2}_\xi (\Sigma^{\sigma_{*}}_{\tau_*})\subseteq W^{2,2}(\Sigma^{\sigma_{*}}_{\tau_*})$ be an open neighborhood of $0\in W^{2,2}(\Sigma^{\sigma_{*}}_{\tau_*})$ such that $f \in U^{2,2}_\xi(\Sigma^{\sigma_{*}}_{\tau_{*}})$ implies $\vert f\vert <\xi$; such a neighborhood exists by Lemma~\ref{lemSobolevEmbed}.

Next, we consider the graphical spacetime mean curvature map 
\begin{align*}
h^{\sigma_{*}}\colon [0,1] \times U_\xi^{2,2} (\Sigma^{\sigma_{*}}_{\tau_*}) \to L^2(\Sigma^{\sigma_{*}}_{\tau_*})
\end{align*}
which assigns, to every $\tau\in [0,1]$ and every $f\in U_\xi^{2,2} (\Sigma^{\sigma_{*}}_{\tau_*})$, the spacetime mean curvature $\mathcal{H}(\graph f)$ of the geodesic graph, $\graph f= \{y(q,f(q)) : q \in \Sigma^{\sigma_{*}}_{\tau_*}\}$, with respect to the initial data set $\mathcal{I}_\tau$. The Fr\'echet derivative of the map $h^{\sigma_{*}}$ with respect to the second argument $f$ at the point $(\tau_*,0)$ is the operator $L^{\mathcal{H}}\colon W^{2,2} (\Sigma^{\sigma_{*}}_{\tau_*}) \to L^2(\Sigma^{\sigma_{*}}_{\tau_*})$ given by Lemma \ref{lemLinearizationCMC}, where all geometric quantities are computed with respect to the initial data set $\mathcal{I}_{\tau_*}$.
As shown in Section \ref{secLinearization}, the linearized operator $L^{\mathcal{H}}\colon W^{2,2} (\Sigma^{\sigma_{*}}_{\tau_*}) \to L^2(\Sigma^{\sigma_{*}}_{\tau_*})$ is continuously invertible, because $\sigma_{*}>\sigma_{\mathcal{I}_{1}}$.

By the Implicit Function Theorem, there thus exists a relatively open neighborhood $\widetilde{U}_{\tau_*}\subseteq[0,1]$ of $\tau_*$ and a unique $C^1$-map $\gamma^{\sigma_{*}}\colon \widetilde{U}_{\tau_*} \to U_\xi^{2,2} (\Sigma^{\sigma_{*}}_{\tau_*})$ such that $\gamma^{\sigma_{*}}(\tau_*)= 0$ and 
\begin{align*}
h^{\sigma_{*}}(\tau,\gamma^{\sigma_{*}}(\tau)) = h^{\sigma_{*}}(\tau_*, \gamma^{\sigma_{*}}(\tau_*))\equiv\sfrac{2}{{\sigma_{*}}} \quad \text{ for all } \tau \in \widetilde{U}_{\tau_*}.
\end{align*}
Thus, by construction, for every $\tau\in \widetilde{U}_{\tau_*}$, the surface $\Sigma^{\sigma_{*}}_\tau = \graph \gamma^{\sigma_{*}}(\tau)$ has constant spacetime mean curvature $\sfrac{2}{{\sigma_{*}}}$ with respect to the initial data set $\mathcal{I}_\tau$.

Recall that the surface $\Sigma^{\sigma_{*}}_{\tau_{*}}$ is a graph over some round sphere by our assumptions and by Proposition~\ref{propRegularity}, recalling again the a priori bounds on $\sigma_{*}$, $b$, and $\eta$. As $\Sigma^{\sigma_{*}}_{\tau}$ was defined as a graph over $\Sigma^{\sigma_{*}}_{\tau_{*}}$ for every $\tau\in \widetilde{U}_{\tau_{*}}$, composition of these two graphical representations gives us that $\Sigma^{\sigma_{*}}_{\tau}$ is parametrized over a round sphere.

Thus, we may now define the map $\mathcal{F}^{\sigma_{*}}\colon \widetilde{U}_{\tau_*} \times \mathbb{S}^2 \to M^3$ by setting $\mathcal{F}^{\sigma_{*}} (\tau,\mathbb{S}^2)\definedas\Sigma^{\sigma_{*}}_\tau$, and defining the parametrization of $\Sigma^{\sigma_{*}}_\tau$ by requesting that $\partial_\tau \mathcal{F}^{\sigma_{*}}$ be orthogonal to $\Sigma^\tau_{\sigma_{*}}$ for all $\tau\in \widetilde{U}_{\tau_*}$. 

By continuity of $\mathcal{F}^{\sigma_{*}}$ and because $0\leq b<\overline{b}$, $0<\overline{\eta}<\eta\leq 1$, and $\Sigma^{\sigma_{*}}_{\tau_{*}}\in\mathcal{A}(0,b,\eta)$, there exists an open neighborhood $U_{\tau_{*}}\subseteq\widetilde{U}_{\tau_{*}}$ of $\tau_{*}$ such that
\begin{align}\label{eqContMethod}
\Sigma^{\sigma_{*}}_\tau \in \mathcal{A} \left(0, \overline{b}, \overline{\eta} \right)
\end{align}
holds for all $\tau\in U_{\tau_{*}}$ as desired. This proves Lemma \ref{lemStatement}.
\end{proof}

Choosing $b=b_{\mathcal{I}_{0}}$, $\overline{b}=2b_{\mathcal{I}_{0}}=\tfrac{b_{\mathcal{I}_{1}}}{2}$, and $\eta=\eta_{\mathcal{I}_{0}}$, $\overline{\eta}=\tfrac{\eta_{\mathcal{I}_{0}}}{2}=2 \eta_{\mathcal{I}_{1}}$, Lemma \ref{lemStatement} shows directly via Theorem \ref{thExistenceCMC} that $0\in Y^{\sigma_*}$ and that $Y^{\sigma_*}$ is relatively open near $0$. Now we let $X^{\sigma_*}$ be the \emph{maximal} connected subinterval of $Y^{\sigma_*}$ containing $\tau=0$. As we have just seen by Lemma \ref{lemStatement}, $X^{\sigma_*}$ is relatively open near $\tau=0$. Set $\tau^* \definedas \sup X^{\sigma_*}$. In Lemma~\ref{lemClosedness} below we will show that $\tau^* \in X^{\sigma_*}$, so that $X^{\sigma_*} = [0,\tau^*]$ is closed, where $0<\tau^*\leq 1$.

\begin{lemma}\label{lemClosedness}
The interval $Y^{\sigma_*} \subseteq [0,1]$ is closed.
\end{lemma}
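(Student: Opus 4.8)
The plan is to prove closedness of $Y^{\sigma_*}$ by a standard compactness-plus-regularity argument, exploiting the uniform a priori estimates established in Section~\ref{secHypersurfaces}. First I would take a sequence $\tau_k \in Y^{\sigma_*}$ with $\tau_k \to \tau_\infty \in [0,1]$ and consider the associated STCMC-surfaces $\Sigma^{\sigma_*}_{\tau_k} = \mathcal{F}^{\sigma_*}(\tau_k,\cdot)$, each satisfying $\mathcal{H}(\Sigma^{\sigma_*}_{\tau_k}) \equiv \sfrac{2}{\sigma_*}$ with respect to $\mathcal{I}_{\tau_k}$ and each lying in $\mathcal{A}(0,b_{\mathcal{I}_1},\eta_{\mathcal{I}_1})$ by Condition~\eqref{cond centered}. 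Since $\sigma_* > \sigma_{\mathcal{I}_1} \geq \overline{\sigma}$, Proposition~\ref{propRegularity} applies uniformly along the sequence: each $\Sigma^{\sigma_*}_{\tau_k}$ is a topological sphere, is a graph $f_k$ over a fixed-radius Euclidean sphere $\mathbb{S}^2_r(\vec{z}_k)$ with $\Vert f_k\Vert_{W^{2,\infty}} \leq C r^{\frac12-\varepsilon}$, and the coordinate centers $\vec{z}_k$ are uniformly bounded (indeed $|\vec{z}_k|\leq b_{\mathcal{I}_1} r^{1-\eta_{\mathcal{I}_1}}$) while the area radii $r(\Sigma^{\sigma_*}_{\tau_k})$ are pinned to $\sigma_*$ up to an error $Cr^{\frac12-\varepsilon}$ via \eqref{eqMCArea}. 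Hence after passing to a subsequence, $\vec{z}_k \to \vec{z}_\infty$ and $f_k$ converges in $C^{2,\alpha}$ (after upgrading the graph estimate by Schauder estimates applied to the quasilinear STCMC-equation with $C^1$-coefficients depending continuously on $\tau$) to a limit graph $f_\infty$ over $\mathbb{S}^2_r(\vec{z}_\infty)$, defining a smooth surface $\Sigma^{\sigma_*}_{\tau_\infty}$.

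Next I would check that $\Sigma^{\sigma_*}_{\tau_\infty}$ has the required properties. The spacetime mean curvature map $\mathcal{H}$ depends continuously on both the surface (in $C^2$) and the initial data set $\mathcal{I}_\tau$ (whose tensors $\tau K$, $\mu_\tau$, $\tau J$ depend continuously — in fact real-analytically — on $\tau$ by \eqref{CEtau}); therefore passing to the limit in $\mathcal{H}(\Sigma^{\sigma_*}_{\tau_k}) \equiv \sfrac{2}{\sigma_*}$ with respect to $\mathcal{I}_{\tau_k}$ gives $\mathcal{H}(\Sigma^{\sigma_*}_{\tau_\infty}) \equiv \sfrac{2}{\sigma_*}$ with respect to $\mathcal{I}_{\tau_\infty}$, which is Condition~\eqref{cond STCMC}. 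The a priori class membership is closed under $C^0$-convergence of the defining data: the three inequalities in \eqref{eqDefConcentric} are non-strict and involve only $\vec{z}$, $r$, $\min_\Sigma|\vec{x}|$, and $\int_\Sigma H^2\,d\mu$, all of which converge; so $\Sigma^{\sigma_*}_{\tau_\infty} \in \mathcal{A}(0,b_{\mathcal{I}_1},\eta_{\mathcal{I}_1})$, giving Condition~\eqref{cond centered}. This shows $\tau_\infty \in Y^{\sigma_*}$ provided the parametrizing map extends appropriately: having established that $\Sigma^{\sigma_*}_{\tau_\infty}$ is a smooth STCMC-sphere in $\mathcal{A}(0,b,\eta)$ with $0 \leq b < b_{\mathcal{I}_1}$ and $\eta_{\mathcal{I}_1} < \eta \leq 1$ (which we may arrange by re-examining the limiting values of the a priori constants, shrinking them strictly if necessary, or more simply by noting the limit inherits the better constants $\mathcal{A}(0, b_{\mathcal{I}_1}/2, 2\eta_{\mathcal{I}_1})$ from a uniform version of Proposition~\ref{propRegularity}), we apply Lemma~\ref{lemStatement} at $\tau_* = \tau_\infty$ to obtain a $C^1$-map $\mathcal{F}^{\sigma_*}$ on an open neighborhood $U_{\tau_\infty}$ of $\tau_\infty$ satisfying \eqref{cond STCMC}, \eqref{cond ortho}, \eqref{cond centered}. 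By the uniqueness clause of Lemma~\ref{lemStatement} (and the Implicit Function Theorem), this new local map agrees with the given $\mathcal{F}^{\sigma_*}$ on the overlap $U_{\tau_\infty} \cap Y^{\sigma_*}$, so the two glue to a $C^1$-map on $Y^{\sigma_*} \cup U_{\tau_\infty}$; maximality of $Y^{\sigma_*}$ then forces $U_{\tau_\infty} \subseteq Y^{\sigma_*}$, in particular $\tau_\infty \in Y^{\sigma_*}$, proving closedness.

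The main obstacle is the compactness step: one must extract a $C^2$-convergent subsequence of the graph functions $f_k$, and this requires more than the $W^{2,\infty}$ bound of Proposition~\ref{propRegularity} — one needs uniform control at the level of $C^{2,\alpha}$ (or $W^{3,p}$) so that Arzelà–Ascoli yields $C^2$-convergence of the surfaces and hence of their mean curvatures. This is obtained by bootstrapping: writing the STCMC-condition $H^2 - P^2 = \sfrac{4}{\sigma_*^2}$ as a quasilinear elliptic PDE for $f_k$ over the fixed sphere $\mathbb{S}^2_r(\vec{z}_k)$ — note the coefficients involve $g$, $K$, and $\tau_k$ continuously and are uniformly bounded in the relevant Hölder norms by the $C^2_{1/2+\varepsilon}$-asymptotic decay with uniform constants — and invoking interior Schauder estimates to upgrade $W^{2,\infty} \hookrightarrow C^{1,\alpha}$ bounds to uniform $C^{2,\alpha}$ bounds. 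One subtlety worth a sentence is that the centers $\vec{z}_k$ move, so one should first translate all surfaces to graphs over a common sphere $\mathbb{S}^2_r(\vec{0})$ (using that the Euclidean ambient geometry is translation-invariant and the perturbation of the centers is lower-order by \eqref{eqDistArea1}), or equivalently pass to the convergent subsequence $\vec{z}_k \to \vec{z}_\infty$ first and then work over $\mathbb{S}^2_r(\vec{z}_\infty)$. I expect all of this to be routine given the machinery already assembled; the write-up should cite Proposition~\ref{propRegularity} and Lemma~\ref{lemStatement} and indicate that the elliptic bootstrap is standard, rather than reproducing it in detail.
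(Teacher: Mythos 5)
Your argument is correct and follows essentially the same route as the paper's proof: uniform bounds from Proposition~\ref{propRegularity}, rewriting the surfaces as graphs over a common sphere after extracting convergent subsequences of the radii and centers, Schauder estimates for the elliptic STCMC-equation (Appendix~\ref{apFermi}) to upgrade to uniform $C^{2,\beta}$ bounds, $C^{2,\alpha}$-subconvergence, and passing to the limit both in the STCMC-condition and in the non-strict inequalities of \eqref{eqDefConcentric}. The only caveat concerns your optional final step: the improved membership $\mathcal{A}(0,\tfrac{b_{\mathcal{I}_1}}{2},2\eta_{\mathcal{I}_1})$ needed to invoke Lemma~\ref{lemStatement} at the limit parameter does not come from a ``uniform version'' of Proposition~\ref{propRegularity} but from Lemma~\ref{lemConstEvolution}, which is exactly how the paper proceeds right after the closedness lemma in the proof of Theorem~\ref{thExistence}.
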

\begin{proof}
Closedness of $Y^{\sigma_*}$ can be addressed by following the arguments given in \cite[Lemma~5.6]{NerzCMC} and \cite[Lemma 3.14]{NerzCE}, as the necessary preliminaries are available in the form of Lemma \ref{lemLapse} and Lemma \ref{lemEstimates} below. Alternatively, one may rely on a more standard method used in \cite[Proof of Proposition 6.1]{MetzgerCMC}, which we describe below. The Sobolev spaces we use throughout the paper are weighted, however, for a given closed, oriented $2$-surface, the weighted Sobolev norms are equivalent to the traditional unweighted ones; we will thus switch to the usual unweighted ones for this proof in order to allow us to use standard results on Sobolev spaces on $2$-surfaces.

Let $\lbrace\tau_n\rbrace_{n=1}^{\infty} \subset Y^{\sigma_*}$ be a sequence such that $\lim_{n\to \infty} \tau_{n}=\tau\in[0,1]$ and let $\Sigma^{\sigma_*}_{\tau_n} \in \mathcal{A}(0,b_{\mathcal{I}_1},\eta_{\mathcal{I}_1})$ be a surface with constant spacetime mean curvature $\mathcal{H} (\Sigma^{\sigma_{*}}_{\tau_n})  \equiv \sfrac{2}{\sigma_{*}}$ with respect to the initial data set $\mathcal{I}_{\tau_n}$. By Proposition \ref{propRegularity} we know that there are functions $f_n\colon\mathbb{S}
_{r_n}(\vec{z}_n) \to \mathbb{R}$ such that $\Sigma^{\sigma_*}_{\tau_n}=\graph f_n$ where $r_n$ and $\vec{z}_n$ are the area radius and the coordinate center of $\Sigma^{\sigma_{*}}_{\tau_n}$. By the first inequality of \eqref{eqDefConcentric} and by \eqref{eqMCArea}, we know that the sequences $\{r_n\}_{n=1}^\infty$ and $\{\vec{z}_n\}_{n=1}^\infty$ are uniformly bounded, so we may assume (up to passing to a subsequence) that $\lim_{n\to\infty}r_n = r$ and $\lim_{n\to\infty} \vec{z}_n = \vec z$. Consequently, in view of \eqref{eqGraphFunction}, we may assume that there is a  sequence $\{\tilde{f}_n\}_{n=1}^\infty$, such that $\tilde{f}_n\colon\mathbb{S}
_{r}(\vec{z}\,) \to \mathbb{R}$ and $\Sigma^{\sigma_*}_{\tau_n}=\graph \tilde{f}_n$. Again in the view of \eqref{eqGraphFunction}, we may assume that this sequence is uniformly bounded in $W^{2,\infty}(\mathbb{S}^2_r(\vec{z}\,))$ and hence in $C^{1,\beta}(\mathbb{S}^2_r(\vec{z}\,))$ for any $0<\beta<1$. Recalling that $\Sigma^{\sigma_*}_{\tau_n}$ are surfaces of constant spacetime mean curvature, we see that the functions $\tilde{f}_n$ satisfy a linear elliptic PDE of the form
\begin{align}\label{eqPrescribedSTCMC}
\sum_{\beta,\gamma=1}^2 a_n^{\beta\gamma}\partial_\beta \partial_\gamma \tilde{f}_n + \sum_{\beta=1}^2 b_n^\beta \partial_\beta \tilde{f}_n= F_n,
\end{align}
with uniformly bounded coefficients $a_n^{\beta\gamma}, b_n^\beta, F_n \in C^{0,\beta}(\mathbb{S}^2_r(\vec{z}\,))$, see Appendix \ref{apFermi} for details. A standard argument using Schauder estimates (see e.g.~\cite[Theorem 9.19]{GilbargTrudinger} and \cite[Theorem 10.2.1]{JostPDE}) allows us to conclude that the functions $\tilde{f}_n \in C^{2,\beta}(\mathbb{S}^2_r(\vec{z}\,))$ are uniformly bounded in $C^{2,\beta}(\mathbb{S}^2_r(\vec{z}\,))$, and consequently, up to passing to a subsequence, we may assume that $\{\tilde{f}_n\}_{n=1}^\infty$ converges in $C^{2,\alpha}(\mathbb{S}^2_r(\vec{z}\,))$  to a limit $f\in C^{2,\alpha}(\mathbb{S}^2_r(\vec{z}\,))$ for some fixed $0<\alpha<1$. As a consequence of \eqref{eqPrescribedSTCMC} and $C^{2,\alpha}$-convergence, we see that $\Sigma^{\sigma_{*}}_{\tau} \definedas \graph f$ has constant spacetime mean curvature $\mathcal{H} (\Sigma^{\sigma_{*}}_{\tau})\equiv \sfrac{2}{\sigma^*}$.

Finally, we confirm that $\Sigma^{\sigma_{*}}_\tau = \graph f \in \mathcal{A}(0,b_{\mathcal{I}_{1}},\eta_{\mathcal{I}_{1}})$ by passing to the limit in the respective inequalities of \eqref{eqDefConcentric} for $\Sigma^{\sigma_*}_{\tau_n}=\graph \tilde{f}_n\in \mathcal{A}(0,b_{\mathcal{I}_{1}},\eta_{\mathcal{I}_{1}})$. Again, this is possible in the view of the $C^{2,\alpha}$-convergence of the graph functions.
\end{proof}

Continuing the proof of Theorem \ref{thExistence}, we will now use Lemma \ref{lemConstEvolution} below to show that $\tau^*=1$, arguing by contradiction. Suppose instead that $\tau^* < 1$. Then $\Sigma^{\sigma_{*}}_{\tau} \in \mathcal{A}(0,b_{\mathcal{I}_1},\eta_{\mathcal{I}_1})$ for all $\tau\in[0,\tau^*]$, whereas $\Sigma^{\sigma_{*}}_{0} \in \mathcal{A}(0,b_{\mathcal{I}_0},\eta_{\mathcal{I}_0})$. Applying Lemma \ref {lemConstEvolution}, we see that in fact $\Sigma^{\sigma_{*}}_{\tau} \in \mathcal{A}(0,\tfrac{b_{\mathcal{I}_1}}{2}, 2\eta_{\mathcal{I}_1})$. As a consequence, we may apply Lemma~\ref{lemStatement}
 with $b=\tfrac{b_{\mathcal{I}_1}}{2}$, $\overline{b}=b_{\mathcal{I}_1}$, $\eta=2\eta_{\mathcal{I}_1}$, and $\overline{\eta}=\eta_{\mathcal{I}_1}$ to show that $[0,\tau^*+\rho)\subseteq Y^{\sigma_*}$ for some $\rho>0$. This contradicts the maximality of the intervals $X^{\sigma_*}$, hence $\tau^* =1$ and $Y^{\sigma_*}=[0,1]$.

Summing up, we have shown that for each $\sigma>\sigma_{\mathcal{I}_{1}}$ there is a surface $\Sigma^{\sigma}_1 = \mathcal{F}^{\sigma} (1,\mathbb{S}^2)$ such that its spacetime mean curvature in the initial data set $\mathcal{I}_1$ is $\mathcal{H}(\Sigma^{\sigma}_{1})\equiv\sfrac{2}{\sigma}$. We may now define $\Psi_1\colon (\sigma_{\mathcal{I}_{1}}, \infty) \times \mathbb{S}^2 \to M^{3}$ by setting 
\begin{align}\label{eqDefPsi}
\Psi_1(\sigma,\cdot)\definedas\mathcal{F}^\sigma (1,\cdot).
\end{align}
The only remaining thing to check is that the family $\{\Sigma^\sigma_1\}_{\sigma>\sigma_{\mathcal{I}_{1}}}$ is a foliation, which will be the case if $\Psi_1$ is a bijective $C^{1}$-map onto the exterior region $M^3\setminus\mathcal{K}_{1}$ of a suitably large compact set $\mathcal{B}\subseteq\mathcal{K}_{1}\subset M^{3}$. This is proven in Lemma~\ref{lemFoliation}. Note that in this step, we may need to increase $\sigma_{\mathcal{I}_1}$, albeit without introducing new dependencies.
\end{proof}

\subsection{Supplementary lemmas}\label{secSupplementary}
We will now prove the supplementary lemmas that were used in the proof of Theorem \ref{thExistence} above.

\begin{lemma}\label{lemLapse}
 Let $\mathcal{I}=\IDS$ be a $C^{2}_{\sfrac{1}{2}+\varepsilon}$-asymptotically Euclidean initial data set with non-vanishing energy $E\neq0$, with $\vec{x}\colon M^{3}\setminus\mathcal{B}\to\R^{3}\setminus\overline{B_{R}(0)}$ denoting the asymptotic coordinate chart. Assume in addition that $K$ satisfies the potentially stronger decay assumptions $\vert K\vert\leq C_{\mathcal{I}}\,\vert \vec{x}\,\vert^{-\delta-\varepsilon}$ for  some $\delta\geq\tfrac{3}{2}$ and all $\vec{x}\in\R^{3}\setminus\overline{B_{R}(0)}$.
 
Let $\emptyset\neq U\subseteq[0,1]$ be an open subset of $[0,1]$ and define $\mathcal{I}_{\tau}$ as in the proof of Theorem \ref{thExistence} for each $\tau\in U$. Let $a\in[0,1)$, $b\geq0$, $\eta\in(0,1]$ be fixed. Then there exist constants $\overline{\sigma}>0$ and $C>0$, depending only on $\varepsilon$, $\delta$, $a$, $b$, $\eta$, $C_{\mathcal{I}}$, and $E $ such that the following holds for any $\sigma>\overline{\sigma}$: Assume there exists a $C^{1}$-map $\mathcal{F}^{\sigma}\colon U \times \mathbb{S}^2\to M^3$ such that for every $\tau \in U$ the surface $\Sigma^{\sigma}_\tau \definedas \mathcal{F}^{\sigma} (\tau, \mathbb{S}^2)$ is in the a priori class $\mathcal{A}(a,b,\eta)$ and has constant spacetime mean curvature $\mathcal{H}(\Sigma^{\sigma}_\tau)\equiv\sfrac{2}{\sigma}$ with respect to the initial data set $\mathcal{I}_\tau$. Assume further that $\mathcal{F}^{\sigma}$ is a \emph{normal variation map} in the sense that there exists a continuous lapse function $u=u_\tau^{\sigma}\colon\Sigma^{\sigma}_{\tau}\to\R$ such that $\partial_\tau \mathcal{F}^{\sigma} = u\, \nu$, where $\nu = \nu^{\sigma}_\tau$ is the unit normal to $\Sigma^{\sigma}_\tau$ in $(M^3,g)$. Then we have
\begin{align}\label{eqLapse}
 \Vert u\Vert _{W^{2,2}(\Sigma_\tau^{\sigma})} \leq C\sigma^{5-2\delta- 2\varepsilon} , \qquad \Vert u^d\Vert _{W^{2,2}(\Sigma_\tau^{\sigma})} \leq C\sigma^{\frac{9}{2}-2\delta - 3\varepsilon},
\end{align}
and $\mathcal{L} u = O(\sigma^{1-2\delta-2\varepsilon})$.
\end{lemma}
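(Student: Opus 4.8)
The plan is to differentiate the STCMC-condition $\mathcal{H}(\Sigma^\sigma_\tau)\equiv\sfrac{2}{\sigma}$ along the parameter $\tau$ and extract an equation for the lapse $u$. Since $\mathcal{F}^\sigma$ is a normal variation with velocity $u\,\nu$, differentiating the identity $\mathcal{H}(\mathcal{F}^\sigma(\tau,\cdot))\equiv\sfrac{2}{\sigma}$ in $\tau$ produces, by Lemma \ref{lemLinearizationCMC} (equivalently, by the reformulation $L^\mathcal{H}u = \mathcal{L}u / \sqrt{1-(P/H)^2}$), a relation of the schematic form
\begin{align*}
L^\mathcal{H}_\tau u + \left(\partial_\tau \mathcal{H}\right)\big|_{\text{fixed surface}} = 0,
\end{align*}
where the second term is the $\tau$-derivative of the spacetime mean curvature \emph{at fixed surface} coming only from the explicit $\tau$-dependence of the initial data $\mathcal{I}_\tau = (M^3, g, \tau K, \mu_\tau, \tau J)$. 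First I would compute this explicit derivative: only the $K$-dependent pieces of $\mathcal{H} = \sqrt{H^2 - P^2}$ vary, namely $P = \tr_\Sigma(\tau K) = \tau\, \tr_\Sigma K$ scales linearly in $\tau$, while $H$ is $\tau$-independent (it depends only on $g$). Hence $\partial_\tau \mathcal{H} = \partial_\tau \sqrt{H^2 - \tau^2 P_1^2} = -\tau P_1^2 / \sqrt{H^2 - \tau^2 P_1^2}$, where $P_1 \definedas \tr_\Sigma K$. By \eqref{eqFallOff} applied to $\mathcal{I}_\tau$ (valid with uniform constants since all $\mathcal{I}_\tau$ are $C^2_{\sfrac{1}{2}+\varepsilon}$-asymptotically Euclidean with comparable constants), together with the improved decay $|K| \leq C_\mathcal{I}|\vec{x}\,|^{-\delta-\varepsilon}$ and the radius comparison \eqref{eqDistArea}, \eqref{eqMCArea}, one gets $P_1 = O_1(\sigma^{-\delta-\varepsilon})$ pointwise on $\Sigma^\sigma_\tau$, hence $\partial_\tau\mathcal{H} = O_1(\sigma^{-2\delta-2\varepsilon}\cdot\sigma) = O_1(\sigma^{1-2\delta-2\varepsilon})$ since $H^{-1} = O(\sigma)$; this is precisely the claimed bound $\mathcal{L}u = O(\sigma^{1-2\delta-2\varepsilon})$ once we absorb the bounded factor $\sqrt{1-(P/H)^2}$.

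Next I would solve for $u$ using the invertibility of $\mathcal{L}$. Rearranging, $\mathcal{L}_\tau u = -\sqrt{1-(P/H)^2}\,\partial_\tau\mathcal{H}$, so $\|\mathcal{L}_\tau u\|_{L^2(\Sigma^\sigma_\tau)} \leq C\|P_1^2 H^{-1}\|_{L^2(\Sigma^\sigma_\tau)} \leq C\sigma^{1-2\delta-2\varepsilon}$ (the $L^2$-norm over a surface of area $\asymp\sigma^2$ contributes an extra $\sigma$, but one checks the pointwise $O(\sigma^{1-2\delta-2\varepsilon})$ bound already accounts for this if we are careful; in fact $\|\mathcal{L}u\|_{L^2} \le C\sigma^{2-2\delta-2\varepsilon}$ after integrating, and then Corollary \ref{corEllipticRegularity} gives $\|u\|_{W^{2,2}} \le C\sigma^3\|\mathcal{L}u\|_{L^2} \le C\sigma^{5-2\delta-2\varepsilon}$). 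Here I invoke Proposition \ref{propInvertibility} and Corollary \ref{corEllipticRegularity}: $\mathcal{L}\colon W^{2,2}(\Sigma^\sigma_\tau)\to L^2(\Sigma^\sigma_\tau)$ is invertible with $\|u\|_{W^{2,2}} \leq C\sigma^3\|\mathcal{L}u\|_{L^2}$, yielding the first estimate in \eqref{eqLapse}. For the second estimate on $u^d$, I would project the equation onto $\Span(f_4, f_5, \dots)$: by \eqref{eqDeformational} and the $W^{2,2}$ bound of Corollary \ref{corEllipticRegularity} for the difference part, $\|u^d\|_{W^{2,2}} \leq C\sigma^2\|\mathcal{L}u^d\|_{L^2}$. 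One then estimates $\|\mathcal{L}u^d\|_{L^2} \leq \|\mathcal{L}u\|_{L^2} + \|\mathcal{L}u^t\|_{L^2}$; using \eqref{eqTranslational0} to bound $\|\mathcal{L}u^t\|_{L^2} \leq C\sigma^{-\sfrac{5}{2}-\varepsilon}\|u^t\|_{L^2} \leq C\sigma^{-\sfrac{5}{2}-\varepsilon}\|u\|_{L^2} \leq C\sigma^{-\sfrac{5}{2}-\varepsilon}\cdot\sigma^{5-2\delta-2\varepsilon} = C\sigma^{\sfrac{5}{2}-2\delta-3\varepsilon}$, which dominates the $\|\mathcal{L}u\|_{L^2} = O(\sigma^{2-2\delta-2\varepsilon})$ term when $\varepsilon \le \sfrac12$, so $\|u^d\|_{W^{2,2}} \leq C\sigma^2\cdot\sigma^{\sfrac{5}{2}-2\delta-3\varepsilon} = C\sigma^{\sfrac{9}{2}-2\delta-3\varepsilon}$ as claimed.

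The main obstacle I anticipate is bookkeeping of the precise powers of $\sigma$ and verifying that the error terms in the linearization genuinely fall off faster than the leading term. In particular, $L^\mathcal{H}_\tau$ versus $\mathcal{L}_\tau$ differ by the factor $(1-(P/H)^2)^{-1/2} = 1 + O(\sigma^{-1-2\varepsilon})$ by \eqref{eqFallOff}, so this conversion is harmless, but one must make sure that the geometric quantities entering $\mathcal{L}_\tau$ (curvatures, second fundamental form of $\Sigma^\sigma_\tau$ in $(M^3, g)$) are controlled uniformly in $\tau$ — this follows from Proposition \ref{propRegularity} applied to each $\mathcal{I}_\tau$ with the uniform a priori class membership $\Sigma^\sigma_\tau \in \mathcal{A}(a,b,\eta)$ and comparable decay constants. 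A secondary subtlety is that the $\tau$-derivative of $\mathcal{H}$ at fixed surface must be computed correctly: one should observe that along the normal variation map $\mathcal{F}^\sigma$, the total $\tau$-derivative of $\mathcal{H}(\mathcal{F}^\sigma(\tau,\cdot))$ splits as the variational term $L^\mathcal{H}_\tau u$ (from moving the surface) plus the partial term (from changing $\mathcal{I}_\tau$), and only the latter needs the explicit computation above; keeping these two contributions cleanly separated is the key structural point of the argument.
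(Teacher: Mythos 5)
Your proposal is correct, and its core is the same as the paper's: differentiate the identity $\mathcal{H}(\Sigma^{\sigma}_{\tau},\mathcal{I}_{\tau})\equiv\sfrac{2}{\sigma}$ in $\tau$, split the total derivative into the variational term $L^{\mathcal{H}}u$ and the explicit term coming from $K\mapsto\tau K$, which yields exactly the paper's equation \eqref{eqPDEu}, $\mathcal{L}u=\tau(\tr_{\Sigma}K)^{2}/H=O(\sigma^{1-2\delta-2\varepsilon})$, and then the first estimate in \eqref{eqLapse} via Corollary \ref{corEllipticRegularity}. Where you genuinely deviate is the $u^{d}$-estimate: the paper does \emph{not} bound $\Vert u^{t}\Vert_{L^{2}}$ by $\Vert u\Vert_{L^{2}}$; instead it derives $\Vert u^{t}\Vert_{L^{2}}\leq C\sigma^{5-2\delta-2\varepsilon}+C\sigma^{\frac12-\varepsilon}\Vert u^{d}\Vert_{L^{2}}$ through the Hawking-mass pairing \eqref{eqTranslational} together with \eqref{eqforLemmaLapse} and integration by parts, and then back-substitutes into \eqref{eqUD}. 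Your shortcut — Bessel's inequality $\Vert u^{t}\Vert_{L^{2}}\leq\Vert u\Vert_{L^{2}}\leq\Vert u\Vert_{W^{2,2}}\leq C\sigma^{3}\Vert\mathcal{L}u\Vert_{L^{2}}\leq C\sigma^{5-2\delta-2\varepsilon}$, fed into \eqref{eqTranslational0} and the $u^{d}$-inequality of Corollary \ref{corEllipticRegularity} — does close with the right exponent, since $\sigma^{2}\bigl(\sigma^{2-2\delta-2\varepsilon}+\sigma^{\frac52-2\delta-3\varepsilon}\bigr)\leq C\sigma^{\frac92-2\delta-3\varepsilon}$ for $\varepsilon\leq\tfrac12$; so for this lemma your route is shorter and equally valid. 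What the paper's finer scheme buys is a template that still works when the source term is not uniformly small in the needed sense: the same back-substitution structure is reused in Lemma \ref{lemFoliation} and Lemma \ref{lemSTCMC-CoM}, where one must exploit cancellations in $\int f_{i}\,\mathcal{L}u\,d\mu$ (e.g.\ via \cite[Lemma A.3]{NerzCMC}) and the crude Bessel bound would lose too much. One small point you should make explicit before invoking the elliptic estimates: the lapse is a priori only continuous, so you should note (as the paper does) that \eqref{eqPDEu} together with Proposition \ref{propInvertibility} identifies $u$ with the unique $W^{2,2}$-solution, so that $u\in W^{2,2}(\Sigma^{\sigma}_{\tau})$ and the decomposition $u=u^{t}+u^{d}$ and Corollary \ref{corEllipticRegularity} legitimately apply.
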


\begin{proof}     
In this proof, $C>0$ and $\overline{\sigma}>0$ denote generic constants that may vary from line to line, but depend only on $\varepsilon$, $\delta$, $a$, $b$, $\eta$, $C_{\mathcal{I}}$, and $E $. The surfaces $\Sigma_\tau^\sigma$ have constant spacetime mean curvature $\mathcal{H}(\Sigma^\sigma_{\tau}) \equiv \sfrac{2}{\sigma}$ in the initial data set $\mathcal{I}_\tau$. For clarity, we will write this constant spacetime mean curvature with an explicit reference to the initial data set $\mathcal{I}_{\tau}$ as $\mathcal{H}(\Sigma^{\sigma}_{\tau},\mathcal{I}_{\tau})\equiv\sfrac{2}{\sigma}$ for all $\tau\in U$. Hence
\begin{align*}
\partial_\tau \mathcal{H}(\Sigma_\tau^\sigma,\mathcal{I}_\tau) = 0,
\end{align*}
which gives us the following linear elliptic PDE on the closed surface $\Sigma^{\sigma}_{\tau}$ for the a priori only continuous lapse function $u=u^{\sigma}_{\tau}\colon\Sigma^{\sigma}_{\tau}\to\R$: 
\begin{align}\label{eqPDEu}
\mathcal{L}u&= \frac{\tau\,(\tr_{\Sigma_\tau^\sigma} K)^2}{H(\Sigma^\tau_\sigma)},
\end{align}
where the elliptic operator $\mathcal{L}$ is (up to a certain factor) the linearization of the spacetime mean curvature operator for the surface $\Sigma^\sigma_\tau$ in the initial data set $\mathcal{I}_\tau$, defined in~\eqref{eqLinearization}. Then Proposition \ref{propInvertibility} implies that $u\in W^{2,2}(\Sigma^{\sigma}_{\tau})$, and that such a $u$ is unique. Together with \eqref{eqFallOff} and $P=O(\sigma^{-\delta-\varepsilon})$, \eqref{eqPDEu} implies that
\begin{align}\label{eqLinearizationSTCMC}
\begin{split}
\mathcal{L} u  = O(\sigma^{1-2\delta-2\varepsilon}).
\end{split}
\end{align}
As a consequence, by Corollary \ref{corEllipticRegularity} and \eqref{eqTranslational0}, we get 
\begin{align}\label{eqUD}
\begin{split}
\Vert u^d\Vert _{W^{2,2}(\Sigma_\tau^\sigma)} 
& \leq C\sigma^2 \Vert \mathcal{L} u^d\Vert _{L^2(\Sigma_\tau^\sigma)} \\
& \leq C\sigma^2 \left(\Vert \mathcal{L} u\Vert _{L^2(\Sigma_\tau^\sigma)} + \Vert \mathcal{L} u^t\Vert _{L^2(\Sigma_\tau^\sigma)}\right) \\
& \leq C\sigma^2 \left(\sigma^{2-2\delta-2\varepsilon} + \Vert \mathcal{L} u^t\Vert _{L^2(\Sigma_\tau^\sigma)}\right) \\
& \leq C\sigma^{4-2\delta-2\varepsilon} + \sigma^{-\frac{1}{2}-\varepsilon}\Vert u^t\Vert _{L^2(\Sigma_\tau^\sigma)}.                   
\end{split}
\end{align}
In order to estimate $\Vert u^t\Vert _{L^2(\Sigma_\tau^\sigma)}$, note that by \eqref{eqTranslational} we have for $i=1,2,3$
\begin{align*}
\begin{split}
&\left\vert \int_{\Sigma_\tau^\sigma} u f_i \, d\mu - \frac{\sigma^3}{6m_\mathcal{H}}\int_{\Sigma_\tau^\sigma} u \mathcal{L}f_i \, d\mu \right\vert  \\
& \leq \left\vert \int_{\Sigma_\tau^\sigma} u^t f_i \, d\mu - \frac{\sigma^3}{6m_\mathcal{H}}\int_{\Sigma_\tau^\sigma} u^t \mathcal{L}f_i \, d\mu \right\vert 
+ \frac{\sigma^3}{6\vert m_\mathcal{H}\vert } \left\vert \int_{\Sigma_\tau^\sigma}  u^d  \mathcal{L} f_i  \, d\mu \right\vert \\
& \leq C \sigma^{-\varepsilon} \Vert u^t\Vert _{L^2(\Sigma_\tau^\sigma)} + \frac{\sigma^3}{6\vert m_\mathcal{H}\vert } \left\vert \int_{\Sigma_\tau^\sigma}  u^d  \mathcal{L} f_i \, d\mu \right\vert ,
\end{split}
\end{align*}
where we again use $m_{\mathcal{H}}=m_{\mathcal{H}}(\Sigma^{\sigma}_{\tau})$ as an abbreviation for the Hawking mass and the fact that $m_{\mathcal{H}}\neq0$ as $E\neq0$ and $\vert m_{\mathcal{H}}-E\vert\leq C\sigma^{-\varepsilon}$ by Proposition \ref{propInvertibility}. By \eqref{eqforLemmaLapse}, we have 
\begin{align*}
\left\vert \int_{\Sigma} u^d \mathcal{L} f_i \, d\mu \right\vert  &\leq \frac{C}{\sigma^{\frac{5}{2}+\varepsilon}}\Vert u^{d}\Vert_{L^{2}(\Sigma)}.
\end{align*}
Using the Cauchy--Schwarz Inequality, integration by parts, and \eqref{eqEigenEstimates2}, together with $f_{i}\approx \sqrt{\tfrac{3}{4\pi r^{4}}}\,x^{i}$, \eqref{eqMCArea}, and \eqref{eqLinearizationSTCMC} we obtain
\begin{align*}
\begin{split}
\left\vert  \int_{\Sigma_\tau^\sigma} u \mathcal{L} f_i \, d\mu \right\vert  & \leq \left\vert  \int_{\Sigma_\tau^\sigma} f_i \mathcal{L} u \, d\mu \right\vert + 2\left\vert  \int_{\Sigma_\tau^\sigma} \tfrac{P}{H} K \left(u \nabla^{\Sigma_\tau^\sigma} f_i - f_i \nabla^{\Sigma_\tau^\sigma} u, \nu \right)\, d\mu \right\vert   \\ &\leq  C\sigma^{2-2\delta-2\varepsilon} + C\sigma^{-2\delta-2\varepsilon}\Vert u\Vert _{L^2(\Sigma_\tau^\sigma)}.
\end{split}
\end{align*}
Combining the last three estimates with a Triangle Inequality, it follows that
\begin{align*}
\Vert u^t\Vert _{L^2(\Sigma_\tau^\sigma)} \leq&\, \frac{\sigma^3}{6\vert m_\mathcal{H}\vert}\left\vert \int_{\Sigma_\tau^\sigma} u \mathcal{L}f_i \, d\mu \right\vert + C \sigma^{-\varepsilon} \Vert u^t\Vert _{L^2(\Sigma_\tau^\sigma)} + \frac{\sigma^3}{6\vert m_\mathcal{H}\vert } \left\vert \int_{\Sigma_\tau^\sigma}  u^d  \mathcal{L} f_i \, d\mu \right\vert \\
\leq& \,C \sigma^{5-2\delta-2\varepsilon} + C\sigma^{-\varepsilon}\Vert u^{t}\Vert _{L^2(\Sigma_\tau^\sigma)} +C \sigma^{\frac{1}{2}-\varepsilon} \Vert u^d\Vert _{L^2(\Sigma_\tau^\sigma)},
\end{align*}
so that
\begin{align*}
\Vert u^t\Vert _{L^2(\Sigma_\tau^\sigma)} &\leq C \sigma^{5-2\delta-2\varepsilon} +C \sigma^{\frac{1}{2}-\varepsilon} \Vert u^d\Vert _{L^2(\Sigma_\tau^\sigma)}.
\end{align*}
Recalling \eqref{eqUD}, we now get a $W^{2,2}$-estimate for $u^{d}$, namely
\begin{align*}
\Vert u^d\Vert _{W^{2,2}(\Sigma_\tau^\sigma)} \leq C \sigma^{\frac{9}{2}-2\delta-3\varepsilon}.
\end{align*}
From this, as a consequence of \eqref{eqLinearizationSTCMC} and  Corollary \ref{corEllipticRegularity}, we also have 
\begin{align*}
\Vert u^t\Vert _{W^{2,2}(\Sigma_\tau^\sigma)} &\leq \Vert u^d\Vert _{W^{2,2}(\Sigma_\tau^\sigma)} + \Vert u\Vert _{W^{2,2}(\Sigma_\tau^\sigma)}\\
& \leq \Vert u^d\Vert _{W^{2,2}(\Sigma_\tau^\sigma)} + C\sigma^3 \Vert \mathcal{L} u\Vert _{L^{2}(\Sigma_\tau^\sigma)}\leq C \sigma^{5-2\delta-2\varepsilon}.
\end{align*}
\end{proof}

\bigskip

Lemma \ref{lemLapse} enables us to prove the following result.

\begin{lemma}\label{lemEstimates}
Let $\mathcal{I}=\IDS$ be a $C^{2}_{\sfrac{1}{2}+\varepsilon}$-asymptotically Euclidean initial data set with non-vanishing energy $E\neq0$, with $\vec{x}$ denoting the asymptotic coordinate chart. Let $\emptyset\neq U\subseteq[0,1]$ be an open subset of $[0,1]$ and define $\mathcal{I}_{\tau}$ as in the proof of Theorem~\ref{thExistence} for each $\tau\in U$. Let $a\in[0,1)$, $b\geq0$, $\eta\in(0,1]$ be fixed. Then there exist constants $\overline{\sigma}>0$ and $C>0$, depending only on $\varepsilon$, $a$, $b$, $\eta$, $C_{\mathcal{I}}$, and $E $ such that the following holds for any $\sigma>\overline{\sigma}$: Assume there exists a $C^{1}$-map $\mathcal{F}^{\sigma}\colon U \times \mathbb{S}^2\to M^3$ such that for every $\tau \in U$ the surface $\Sigma^{\sigma}_\tau \definedas \mathcal{F}^{\sigma} (\tau, \mathbb{S}^2)$ is in the a priori class $\mathcal{A}(a,b,\eta)$ and has constant spacetime mean curvature $\mathcal{H}(\Sigma^{\sigma}_\tau)\equiv\sfrac{2}{\sigma}$ with respect to the initial data set $\mathcal{I}_\tau$. Assume further that $\mathcal{F}^{\sigma}$ is a normal variation map in the sense explained in Lemma \ref{lemLapse}. Then
\begin{align}
\left\vert \partial_\tau \left(\vert\vec{x}\,\vert  \circ \mathcal{F}^\sigma \right)\right\vert  \leq C \sigma^{1-\varepsilon},\label{eqDerCoord}  \\ \left\vert \partial_\tau \vert \Sigma^ \sigma_\tau\vert \right\vert  \leq C\sigma^{\frac{3}{2} - \varepsilon}, \label{eqDerArea} \\  |\partial_\tau (\vec{z} \circ \mathcal{F}^\sigma) | = O(\sigma^{1-2\varepsilon}).\label{eqDerCenter}
\end{align}
\end{lemma}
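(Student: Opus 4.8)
The plan is to start from the hypothesis that $\mathcal{F}^\sigma$ is a normal variation map, so that $\partial_\tau\mathcal{F}^\sigma=u\,\nu$ with $\nu=\nu^\sigma_\tau$ the outward $g$-unit normal of $\Sigma^\sigma_\tau$ and $u=u^\sigma_\tau\colon\Sigma^\sigma_\tau\to\R$ the lapse, and then to feed in the estimates on $u$ provided by Lemma~\ref{lemLapse}. Since a $C^2_{\sfrac12+\varepsilon}$-asymptotically Euclidean initial data set satisfies $|K|\le C_\mathcal{I}|\vec x\,|^{-\sfrac32-\varepsilon}$ by \eqref{eqOptDecay2}, Lemma~\ref{lemLapse} applies with $\delta=\tfrac32$ and gives $\Vert u\Vert_{W^{2,2}(\Sigma^\sigma_\tau)}\le C\sigma^{2-2\varepsilon}$, $\Vert u^d\Vert_{W^{2,2}(\Sigma^\sigma_\tau)}\le C\sigma^{\sfrac32-3\varepsilon}$, and $\mathcal{L}u=O(\sigma^{-2-2\varepsilon})$; these inputs, together with Proposition~\ref{propRegularity}, the fall-off \eqref{eqFallOff}, and Lemma~\ref{lemEigenLaplace2}, will drive all three estimates.

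For \eqref{eqDerCoord} I would compute $\partial_\tau(|\vec x\,|\circ\mathcal{F}^\sigma)=u\cdot\nu(|\vec x\,|)$ and note $|\nu(|\vec x\,|)|\le C$ by asymptotic Euclideanness, so that the scale-invariant Sobolev embedding $\Vert u\Vert_{L^\infty(\Sigma^\sigma_\tau)}\le C\sigma^{-1}\Vert u\Vert_{W^{2,2}(\Sigma^\sigma_\tau)}$ (Lemma~\ref{lemSobolevEmbed} together with \eqref{eqMCArea}) yields $|\partial_\tau(|\vec x\,|\circ\mathcal{F}^\sigma)|\le C\sigma^{1-2\varepsilon}\le C\sigma^{1-\varepsilon}$. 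For \eqref{eqDerArea} I would use the first variation of area $\partial_\tau|\Sigma^\sigma_\tau|=\int_{\Sigma^\sigma_\tau}H u\,d\mu$ and, writing $H=\tfrac2\sigma+O(\sigma^{-2-2\varepsilon})$ by \eqref{eqFallOff}, reduce to controlling $\int_{\Sigma^\sigma_\tau}u\,d\mu$. The crude bound $|\int u\,d\mu|\le|\Sigma^\sigma_\tau|^{\sfrac12}\Vert u\Vert_{L^2}$ is too weak, so instead I would integrate the elliptic equation \eqref{eqPDEu} for the lapse over $\Sigma^\sigma_\tau$: its right-hand side is $O(\sigma^{-2\varepsilon})$ because $(\tr_{\Sigma^\sigma_\tau}K)^2=O(\sigma^{-3-2\varepsilon})$, $H^{-1}=O(\sigma)$, and $|\Sigma^\sigma_\tau|=O(\sigma^2)$, while its left-hand side equals $-\tfrac2{\sigma^2}\int_{\Sigma^\sigma_\tau}u\,d\mu$ up to strictly lower-order terms, by $|A|^2+\ric(\nu,\nu)=\tfrac2{\sigma^2}+O(\sigma^{-\sfrac52-\varepsilon})$ (from \eqref{eq2FFtraceless} and Definition~\ref{defAE}), $\int_{\Sigma^\sigma_\tau}\Delta^\Sigma u\,d\mu=0$, and an integration by parts of the $\tfrac PH K(\nabla^\Sigma u,\nu)$-term using $\tfrac PH K(\cdot,\nu)=O_1(\sigma^{-2-2\varepsilon})$. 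Solving for the mean value gives $|\int_{\Sigma^\sigma_\tau}u\,d\mu|\le C\sigma^{\sfrac52-3\varepsilon}$, hence $|\partial_\tau|\Sigma^\sigma_\tau||\le C\sigma^{\sfrac32-3\varepsilon}\le C\sigma^{\sfrac32-\varepsilon}$.

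The estimate \eqref{eqDerCenter} is the one I expect to be the main obstacle. I would differentiate $z^i=|\Sigma^\sigma_\tau|_\delta^{-1}\int_{\Sigma^\sigma_\tau}x^i\,d\mu^\delta$ in both numerator and denominator and, in the numerator, decompose the variation field $X=u\nu$ into its $\delta$-normal part $\phi\,\nu^\delta$ and its $\delta$-tangential part $X^{\top_\delta}$. Because $g$ and $\delta$ are $C^0$-close on $\Sigma^\sigma_\tau$, one has $\phi=u(1+O(\sigma^{-\sfrac12-\varepsilon}))$ and $|X^{\top_\delta}|_\delta\le C|u|\,\sigma^{-\sfrac12-\varepsilon}$, so that the standard first-variation formulae together with an integration by parts (rendering the tangential contribution lower order) reduce $\partial_\tau z^i$ to $\tfrac3{|\Sigma^\sigma_\tau|_\delta}\int_{\Sigma^\sigma_\tau}u\,\omega^i\,d\mu^\delta$ modulo errors of order $\sigma^{\sfrac12-\varepsilon}$, where $\omega^i=(x^i-z^i)/r$. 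The decisive point is that $\omega^i$ is, after the conformal parametrization of Proposition~\ref{propRegularity} and a rescaling by a factor comparable to $r$, $L^2$-close to the translational eigenfunction $f_i$ of $-\Delta^\Sigma$ --- this is precisely the content of \eqref{eqGraphFunction}, \eqref{eqConfPar}, and Lemma~\ref{lemEigenLaplace2} --- so that $\int_{\Sigma^\sigma_\tau}u\,\omega^i\,d\mu^\delta\approx r\sqrt{\tfrac{4\pi}3}\,\langle u,f_i\rangle_{L^2(\Sigma^\sigma_\tau)}=r\sqrt{\tfrac{4\pi}3}\,\langle u^t,f_i\rangle_{L^2(\Sigma^\sigma_\tau)}$ by $L^2$-orthogonality of the decomposition $u=u^0+u^t+(u^d-u^0)$; in other words, the dilational part $u^0$ and the higher-frequency part $u^d-u^0$ of the lapse do not move the Euclidean center to leading order. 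Bounding $|\langle u^t,f_i\rangle|\le\Vert u^t\Vert_{L^2(\Sigma^\sigma_\tau)}\le\Vert u\Vert_{L^2(\Sigma^\sigma_\tau)}\le C\sigma^{2-2\varepsilon}$ then gives $|\partial_\tau(\vec z\circ\mathcal{F}^\sigma)|\le C\sigma^{-2}\cdot\sigma\cdot\sigma^{2-2\varepsilon}=O(\sigma^{1-2\varepsilon})$. The hard part will be organizing this last computation so that the cancellation of the non-translational modes is transparent, and checking that each of the many $g$-versus-$\delta$ and graph-deviation error terms is indeed $O(\sigma^{1-2\varepsilon})$ or better.
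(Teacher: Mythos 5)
Your proposal is correct, and it rests on the same inputs as the paper's proof --- Lemma \ref{lemLapse} with $\delta=\tfrac{3}{2}$, the Sobolev embedding of Lemma \ref{lemSobolevEmbed}, Proposition \ref{propRegularity}, and the first variation formulas --- but it closes \eqref{eqDerArea} and \eqref{eqDerCenter} differently. For \eqref{eqDerCoord} your argument is the paper's. For \eqref{eqDerArea} the paper splits $u=u^t+u^d$ and uses $\int_{\Sigma^\sigma_\tau}u^t\,d\mu=0$, so that $\partial_\tau\vert\Sigma^\sigma_\tau\vert=\int_{\Sigma^\sigma_\tau} Hu^d\,d\mu+\int_{\Sigma^\sigma_\tau}\bigl(H-\tfrac{2}{\sigma}\bigr)u^t\,d\mu$ is bounded by $C\Vert u^d\Vert_{L^2}+C\sigma^{-1-\varepsilon}\Vert u^t\Vert_{L^2}\leq C\sigma^{\frac{3}{2}-\varepsilon}$; your alternative --- integrating the lapse equation \eqref{eqPDEu} over $\Sigma^\sigma_\tau$ --- is valid and reproduces $\vert\int u\,d\mu\vert\leq C\sigma^{\frac{5}{2}-3\varepsilon}$, but it is a detour, since $\int u\,d\mu=\int u^d\,d\mu$ and the same bound follows at once from $\Vert u^d\Vert_{L^2}\leq C\sigma^{\frac{3}{2}-3\varepsilon}$ of Lemma \ref{lemLapse}. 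For \eqref{eqDerCenter} you anticipate that exhibiting the cancellation of the non-translational modes is the main obstacle, but no cancellation is needed at this stage: the paper simply differentiates $z^i$, applies the variation of area formula, and estimates every term by Cauchy--Schwarz, e.g.\ $\tfrac{1}{\vert\Sigma^\sigma_\tau\vert_\delta}\vert\int u\,\nu^i\,d\mu^\delta\vert\leq C\sigma^{-1}\Vert u\Vert_{L^2}\leq C\sigma^{1-2\varepsilon}$, with the $\int x^iuH\,d\mu^\delta$-term and the term containing $\partial_\tau\vert\Sigma^\sigma_\tau\vert_\delta$ handled in the same crude way via \eqref{eqDistArea} and \eqref{eqMCArea}. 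Indeed, your own final inequality only invokes $\Vert u^t\Vert_{L^2}\leq\Vert u\Vert_{L^2}$, so the whole eigenfunction comparison (closeness of $(x^i-z^i)/r$ to $f_i$, Lemma \ref{lemEigenLaplace2}, orthogonality of $u^0$ and $u^d-u^0$) drops out of the logic and can be deleted; that spectral structure is exactly what is exploited later, in Lemma \ref{lemSTCMC-CoM} and Theorem \ref{thEvolution}, where finer control of the center motion is needed, but for the $O(\sigma^{1-2\varepsilon})$ bound asserted here the direct estimate suffices and keeps the $g$-versus-$\delta$ and graph-deviation bookkeeping to what Lemma \ref{lemComparison} and Proposition \ref{propRegularity} already provide.
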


\begin{proof}
In this proof, $C>0$ and $\overline{\sigma}>0$ denote generic constants that may vary from line to line, but depend only on $\varepsilon$, $a$, $b$, $\eta$, and $C_{\mathcal{I}}$, and $E $. Let $u\colon\Sigma^{\sigma}_{\tau}\to\R$ denote the lapse function as in Lemma~\ref{lemLapse}. Then Lemma \ref{lemLapse} applied with $\delta=\tfrac{3}{2}$ and the Sobolev Embedding Theorem in the form of Lemma \ref{lemSobolevEmbed} imply that $\vert \partial_\tau \mathcal{F}^\sigma\vert =\vert u\vert  \leq C \sigma^{1-\varepsilon} $. Then
 the elementary estimate
\begin{align*}
 \left\vert \partial_\tau \left(\vert\vec{x}\,\vert  \circ \mathcal{F}^\sigma\right)\right\vert  =  \frac{\left\vert \sum_{i=1}^3 (x^i\circ\mathcal{F}^\sigma) (\partial_\tau \mathcal{F}^\sigma)\right\vert }{\vert\vec{x}\,\vert  \circ \mathcal{F}^\sigma} \leq C \sigma^{1-\varepsilon}
\end{align*}
proves \eqref{eqDerCoord}.

In order to prove \eqref{eqDerArea}, we first recall that the mean curvature of $\Sigma^\sigma_\tau$ satisfies $H = \frac{2}{\sigma} + O(\sigma^{-2-\varepsilon})$, see \eqref{eqFallOff}. The first variation of area formula, the fact that the eigenfunctions used to span $L^{2}(\Sigma^{\sigma}_{\tau})$ are $L^{2}(\Sigma^{\sigma}_{\tau})$-orthogonal so that in particular $\int_{\Sigma^\sigma_\tau} u^t \, d\mu=0$, combined with Lemma~\ref{lemLapse} for $\delta=\tfrac{3}{2}$ lead to
\begin{align*}
\begin{split}
\left\vert \partial_\tau \vert \Sigma^ \sigma_\tau\vert \right\vert 
& = \left\vert  \int_{\Sigma^\sigma_\tau} H u \, d\mu \right\vert  \\
& \leq \left\vert  \int_{\Sigma^\sigma_\tau} H u^d \, d\mu \right\vert  + \left\vert  \int_{\Sigma^\sigma_\tau} (H - \tfrac{2}{\sigma})  u^t \, d\mu \right\vert  \\
& \leq C\Vert u^d\Vert _{L^2(\Sigma^\sigma_\tau)} + C \sigma^{-1-\varepsilon} \Vert u^t\Vert _{L^2(\Sigma^\sigma_\tau)} \\
& \leq C \sigma^{\frac{3}{2}-\varepsilon},
\end{split}
\end{align*}
where we also used the Cauchy--Schwarz Inequality.

A very similar analysis demonstrates $\left\vert \partial_\tau \vert \Sigma^ \sigma_\tau\vert_{\delta} \right\vert \leq C\sigma^{\frac{3}{2} - \varepsilon}$. Finally, we prove \eqref{eqDerCenter}: By definition,
\begin{align*}
z^i \circ \mathcal{F}^\sigma = \frac{1}{\vert \Sigma^\sigma_\tau \vert_{\delta}} \int_{\Sigma^\sigma_\tau} x^i \, d\mu^\delta.
\end{align*}
Using the variation of area formula, the Cauchy--Schwarz Inequality, \eqref{eqDistArea}, \eqref{eqMCArea}, and Lemma \ref{lemLapse} with $\delta=\tfrac{3}{2}$ we compute
 \begin{align*}
 \begin{split}
\partial_\tau (z^i \circ \mathcal{F}^\sigma)
& = \frac{1}{\vert \Sigma^\sigma_\tau \vert_\delta} \left(\int_{\Sigma^\sigma_\tau} u\nu^i \,d\mu^\delta + \int_{\Sigma^\sigma_\tau} x^i u H\,d\mu^\delta\right) - \frac{1}{\vert \Sigma^\sigma_\tau \vert_{\delta}^{2}}  \partial_\tau \vert \Sigma^ \sigma_\tau\vert_{\delta} \\ & = O(\sigma^{1-2\varepsilon}).
\end{split}
\end{align*}
This proves \eqref{eqDerCenter}.
\end{proof}

\begin{lemma}\label{lemConstEvolution}
Let $\mathcal{I}=\IDS$ be a $C^{2}_{\sfrac{1}{2}+\varepsilon}$-asymptotically Euclidean initial data set with non-vanishing energy $E\neq0$. Let $\emptyset\neq U\subseteq[0,1]$ be an open, connected subset of $[0,1]$ and define $\mathcal{I}_{\tau}$ as in the proof of Theorem~\ref{thExistence} for each $\tau\in U$. 

Let $a,\overline{a} \in [0,1)$, $b,\overline{b}\in [0,\infty)$, $\eta \in (0,2\varepsilon)$, and $\overline{\eta} \in (0,1]$. Then there exists a constant $\overline{\sigma}>0$, depending only on $\varepsilon$, $a$, $\overline{a}$, $b$, $\overline{b}$, $\eta$, $\overline{\eta}$, $C_{\mathcal{I}}$, and $E$ such that the following holds for any $\sigma>\overline{\sigma}$: Assume there exists a $C^{1}$-map $\mathcal{F}^{\sigma}\colon U \times \mathbb{S}^2\to M^3$ such that for every $\tau \in U$ the surface $\Sigma^{\sigma}_\tau \definedas \mathcal{F}^{\sigma} (\tau, \mathbb{S}^2)$ is in the a priori class $\mathcal{A}(\overline{a},\overline{b},\overline{\eta})$ and has constant spacetime mean curvature $\mathcal{H}(\Sigma^{\sigma}_\tau)\equiv\sfrac{2}{\sigma}$ with respect to the initial data set $\mathcal{I}_\tau$. Assume further that $\mathcal{F}^{\sigma}$ is a normal variation map in the sense explained in Lemma \ref{lemLapse}. Now suppose in addition that $\Sigma^\sigma_{\tau_0} \in \mathcal{A} (a,b,\eta)$ for some $\tau_0 \in U$.  Then in fact $\Sigma^\sigma_\tau \in \mathcal{A}(a, b_\tau, \eta_\tau)$ with $b_\tau = b + O(\sigma^{-\min\{2\varepsilon-\eta,\varepsilon\}})$ and $\eta_\tau = \eta + O(\sigma^{-\varepsilon})$ for any $\tau\in U$. Here, the constants in the $O$-notation depend only on $\varepsilon$, $a$, $\overline{a}$, $b$, $\overline{b}$, $\eta$, $\overline{\eta}$, $C_{\mathcal{I}}$, and $E $.
\end{lemma}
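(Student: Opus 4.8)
The plan is to propagate each of the three defining inequalities of the a priori class in \eqref{eqDefConcentric} from $\tau_{0}$ to an arbitrary $\tau\in U$ by integrating the derivative bounds of Lemma~\ref{lemEstimates} along $U$. Since $U$ is a connected subset of $\R$ it is an interval of length at most $1$, so any two of its points are joined by a segment inside $U$ of length $\le 1$. We take $\eta_{\tau}\definedas\eta$ (which is of the claimed form $\eta+O(\sigma^{-\varepsilon})$ with vanishing error) and $b_{\tau}\definedas b+K\,\sigma^{-\min\{2\varepsilon-\eta,\,\varepsilon\}}$ for a constant $K$ depending only on $\varepsilon$, $a$, $\overline{a}$, $b$, $\overline{b}$, $\eta$, $\overline{\eta}$, $C_{\mathcal{I}}$, $E$, to be fixed at the end. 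Because every $\Sigma^{\sigma}_{\tau}$, $\tau\in U$, lies in $\mathcal{A}(\overline{a},\overline{b},\overline{\eta})$ and satisfies $\mathcal{H}\equiv\tfrac{2}{\sigma}$, Proposition~\ref{propRegularity} applies uniformly in $\tau$: for $\sigma>\overline{\sigma}$ each $\Sigma^{\sigma}_{\tau}$ is a topological sphere, $\tfrac{\sigma}{2}\le r_{\tau}\le 2\sigma$ by \eqref{eqMCArea}, and $\min_{\Sigma^{\sigma}_{\tau}}\vert\vec{x}\,\vert\ge\tfrac{1-\overline{a}}{2}\,r_{\tau}$ by \eqref{eqDistArea}; the same bounds hold for $\Sigma^{\sigma}_{\tau_{0}}$ upon applying Proposition~\ref{propRegularity} with parameters $(a,b,\eta)$.

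First I would integrate the pointwise estimates of Lemma~\ref{lemEstimates} (applied to the family $\{\Sigma^{\sigma}_{\tau}\}\subset\mathcal{A}(\overline{a},\overline{b},\overline{\eta})$) over the segment from $\tau_{0}$ to $\tau$. From \eqref{eqDerCoord}, the map $\tau\mapsto\min_{\Sigma^{\sigma}_{\tau}}\vert\vec{x}\,\vert$ is Lipschitz with constant $C\sigma^{1-\varepsilon}$, hence differs from $\min_{\Sigma^{\sigma}_{\tau_{0}}}\vert\vec{x}\,\vert$ by $O(\sigma^{1-\varepsilon})$. From \eqref{eqDerArea}, $\vert\Sigma^{\sigma}_{\tau}\vert=\vert\Sigma^{\sigma}_{\tau_{0}}\vert+O(\sigma^{3/2-\varepsilon})$, so dividing $r_{\tau}^{2}-r_{\tau_{0}}^{2}$ by $r_{\tau}+r_{\tau_{0}}\ge\sigma$ gives $r_{\tau}=r_{\tau_{0}}+O(\sigma^{1/2-\varepsilon})$. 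From \eqref{eqDerCenter}, $\vec{z}\,(\Sigma^{\sigma}_{\tau})=\vec{z}\,(\Sigma^{\sigma}_{\tau_{0}})+O(\sigma^{1-2\varepsilon})$.

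With these in hand, the middle inequality of \eqref{eqDefConcentric} requires no reference to $\tau_{0}$: since $\min_{\Sigma^{\sigma}_{\tau}}\vert\vec{x}\,\vert\ge\tfrac{1-\overline{a}}{2}r_{\tau}$ and $2+\eta<\tfrac{5}{2}+\varepsilon$ (because $\eta<2\varepsilon\le\tfrac{1}{2}+\varepsilon$, using $\varepsilon\le\tfrac{1}{2}$), the bound $r_{\tau}^{2+\eta}\le\min_{\Sigma^{\sigma}_{\tau}}\vert\vec{x}\,\vert^{5/2+\varepsilon}$ reduces to $r_{\tau}^{\,\eta-1/2-\varepsilon}\le(\tfrac{1-\overline{a}}{2})^{5/2+\varepsilon}$, true for $\sigma$ large. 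For the first inequality, $\vert\vec{z}\,(\Sigma^{\sigma}_{\tau})\vert\le\vert\vec{z}\,(\Sigma^{\sigma}_{\tau_{0}})\vert+O(\sigma^{1-2\varepsilon})\le a\,r_{\tau_{0}}+b\,r_{\tau_{0}}^{1-\eta}+O(\sigma^{1-2\varepsilon})$; substituting $r_{\tau_{0}}=r_{\tau}+O(\sigma^{1/2-\varepsilon})$ and expanding (with $0\le1-\eta\le1$) yields $\vert\vec{z}\,(\Sigma^{\sigma}_{\tau})\vert\le a\,r_{\tau}+b\,r_{\tau}^{1-\eta}+O(\sigma^{1-2\varepsilon})$, and since $\sigma\le2r_{\tau}$ the error is $O(\sigma^{-(2\varepsilon-\eta)})\,r_{\tau}^{1-\eta}$, absorbed into $b_{\tau}r_{\tau}^{1-\eta}$. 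For the last inequality, $\mathcal{H}\equiv\tfrac{2}{\sigma}$ together with $P=\tau\tr_{\Sigma^{\sigma}_{\tau}}K=O(\sigma^{-3/2-\varepsilon})$ (by \eqref{eqOptDecay2} and $\min\vert\vec{x}\,\vert\ge c\sigma$) gives $\int_{\Sigma^{\sigma}_{\tau}}H^{2}\,d\mu-16\pi=16\pi(r_{\tau}^{2}-\sigma^{2})/\sigma^{2}+O(\sigma^{-1-2\varepsilon})$, and likewise at $\tau_{0}$; combined with $r_{\tau}^{2}=r_{\tau_{0}}^{2}+O(\sigma^{3/2-\varepsilon})$, the hypothesis $\int_{\Sigma^{\sigma}_{\tau_{0}}}H^{2}\,d\mu-16\pi\le b/r_{\tau_{0}}^{\eta}$ (recall $\gamma=0$), and $b/r_{\tau_{0}}^{\eta}=b/r_{\tau}^{\eta}+O(\sigma^{-1/2-\varepsilon-\eta})$, one gets $\int_{\Sigma^{\sigma}_{\tau}}H^{2}\,d\mu-16\pi\le b/r_{\tau}^{\eta}+O(\sigma^{-1/2-\varepsilon})$, and $\sigma^{-1/2-\varepsilon}\le 2^{\eta}\sigma^{-1/2-\varepsilon+\eta}/r_{\tau}^{\eta}$ with $\sigma^{-1/2-\varepsilon+\eta}\le\sigma^{-\min\{2\varepsilon-\eta,\varepsilon\}}$, so this is $\le b_{\tau}/r_{\tau}^{\eta}$ for $K$ large. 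Choosing $K$ to dominate the constants from the first and last inequalities and enlarging $\overline{\sigma}$ finitely many times then yields $\Sigma^{\sigma}_{\tau}\in\mathcal{A}(a,b_{\tau},\eta_{\tau})$.

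The only real difficulty is the bookkeeping: one must verify that every error term --- produced either by moving along $U$ via Lemma~\ref{lemEstimates} or by trading $r_{\tau_{0}}$ for $r_{\tau}$ --- has size $O(\sigma^{-\min\{2\varepsilon-\eta,\varepsilon\}})$ relative to the scales $r_{\tau}^{1-\eta}$ and $r_{\tau}^{-\eta}$ appearing in \eqref{eqDefConcentric}. The two spots where this could fail, the $\sigma^{1-2\varepsilon}$ in the center estimate and the $\sigma^{-1/2-\varepsilon}$ in the $H^{2}$--estimate, are exactly controlled by the hypotheses $\eta<2\varepsilon$ and $\varepsilon\le\tfrac{1}{2}$ (the case split $\eta\le\varepsilon$ versus $\varepsilon<\eta<2\varepsilon$ reducing to $\eta\le\tfrac12$ and to $\varepsilon\le\tfrac12$, respectively). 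No analytic input is required beyond Lemma~\ref{lemEstimates}, Proposition~\ref{propRegularity}, and the identity $H^{2}=\mathcal{H}^{2}+P^{2}$.
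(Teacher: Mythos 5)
Your proposal is correct, and its overall strategy coincides with the paper's: propagate the three inequalities of \eqref{eqDefConcentric} from $\tau_{0}$ to $\tau$ by integrating the derivative bounds \eqref{eqDerCoord}, \eqref{eqDerArea}, \eqref{eqDerCenter} of Lemma~\ref{lemEstimates} over $U$, and absorb the resulting errors into the constants. The two places where you deviate are worth noting. First, you observe that under the hypothesis $\eta<2\varepsilon\le\tfrac12+\varepsilon$ the middle condition $r_\tau^{2+\eta}\le\min_{\Sigma_\tau}\vert\vec{x}\,\vert^{\sfrac{5}{2}+\varepsilon}$ is automatic for any STCMC-surface in $\mathcal{A}(\overline{a},\overline{b},\overline{\eta})$ once Proposition~\ref{propRegularity} gives $\min\vert\vec{x}\,\vert\gtrsim r_\tau$, so you can keep $\eta_\tau=\eta$ exactly; the paper instead propagates this condition from $\tau_0$ and compensates with the logarithmic correction $\eta_\tau=\eta+\log_{r_\tau}(1-C\sigma^{-\varepsilon})$, which then has to be re-substituted into the other two inequalities. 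Your variant is slightly cleaner and yields a formally stronger conclusion (no perturbation of $\eta$), at the cost of leaning on the assumption $\eta<2\varepsilon$ for this step, which the paper's remark anyway justifies as harmless. Second, for the $\int H^2$-condition you use the pointwise identity $H^2=\tfrac{4}{\sigma^2}+\tau^2 P^2$ with $P=O(\sigma^{-\sfrac{3}{2}-\varepsilon})$ together with the area propagation $r_\tau^2=r_{\tau_0}^2+O(\sigma^{\sfrac{3}{2}-\varepsilon})$, whereas the paper differentiates $\int_{\Sigma_\tau}H_\tau^2\,d\mu$ directly via the first variation of area and the lapse estimate; since your area propagation itself comes from \eqref{eqDerArea}, this is a reorganization rather than new input, but it does make the exponent bookkeeping (your case split $\eta\le\varepsilon$ versus $\varepsilon<\eta<2\varepsilon$, which is exactly the check needed) more transparent. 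All constant dependences you invoke are among the allowed parameters, so the argument goes through as written.
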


\begin{remark}
Note that the assumption $\eta \in (0,2\varepsilon)$ of the lemma is not restrictive as the inclusion $\mathcal{A}(a,b,\eta_1) \subseteq \mathcal{A}(a,b,\eta_2)$ for $0 < \eta_2 < \eta_1 \leq 1$ implies that we may without loss of generality decrease $\eta \in (0,1]$ to achieve $\eta \in (0,2\varepsilon)$. 
\end{remark}

\begin{proof}
We drop the explicit reference to $\sigma$ for notational convenience, as $\sigma$ will not be modified in this proof. Let $r_\tau$ and $\vec{z}_\tau$ denote the area radius and the coordinate center of $\Sigma_\tau$, respectively, and let (slightly abusing notation) $\vec{x}_\tau$ denote the restriction of the coordinate vector $\vec{x}$ to $\Sigma_\tau$, where $\vec{x}$ denotes the asymptotic coordinate chart. The mean curvature of $\Sigma_\tau$ is denoted by $H_\tau$. In this proof $\overline{\sigma}>0$, $C>0$ and constants involved in the $O$-notation may vary from line to line but depend only on $\varepsilon$, $a$, $\overline{a}$, $b$, $\overline{b}$, $\eta$, $\overline{\eta}$, $C_{\mathcal{I}}$, and $E$.

We first show that there exists $\eta_\tau=\eta + O(\sigma^{-\varepsilon})$ such that the second inequality describing the fact that $\Sigma_\tau \in \mathcal{A}(a, b_\tau, \eta_\tau)$ in \eqref{eqDefConcentric} holds, namely
\begin{equation}\label{eqEvolutionEta}
(r_{\tau})^{2+\eta_\tau}\leq |\vec{x}_{\tau}|^{\frac{5}{2}+\varepsilon}.
\end{equation}
Since $\Sigma_\tau \in \mathcal{A} (\overline{a},\overline{b},\overline{\eta})$ for all $\tau \in U$, by the Mean Value Theorem combined with \eqref{eqDerArea}  and  \eqref{eqMCArea} we have 
\[
4 \pi (r_\tau)^2 = |\Sigma_\tau| = |\Sigma_{\tau_0}| + O(\sigma^{\frac{3}{2}- \varepsilon})
 = 4 \pi (r_{\tau_0})^2 (1+ O(\sigma^{-\frac{1}{2}-\varepsilon})),
\]
hence 
\begin{equation}\label{eqMVTr}
r_\tau = r_{\tau_0} (1+O(\sigma^{-\frac{1}{2}-\varepsilon})). 
\end{equation}
Similarly, combining \eqref{eqDerCoord} with \eqref{eqDistArea} and \eqref{eqMCArea} we conclude that 
\begin{equation}\label{eqMVTx}
|\vec{x}_\tau| = |\vec{x}_{\tau_0}|(1+O(\sigma^{-\varepsilon})).
\end{equation}
Since $\Sigma_{\tau_0} \in \mathcal{A} (a,b,\eta)$ we have 
\[
(r_{\tau_0})^{2+\eta} \leq |\vec{x}_{\tau_0}|^{\frac{5}{2}+\varepsilon}, 
\]
which in the view of \eqref{eqMVTr} and \eqref{eqMVTx} can be written as  
\[
(r_{\tau}(1+O(\sigma^{-\sfrac{1}{2}-\varepsilon})))^{2+\eta} \leq \left(|\vec{x}_{\tau}|(1+O(\sigma^{-\varepsilon}))\right)^{\frac{5}{2}+\varepsilon}.
\]
Consequently, we have
 \[
(r_{\tau})^{2+\eta}(1-C\sigma^{-\varepsilon}) \leq |\vec{x}_{\tau}|^{\frac{5}{2}+\varepsilon}.
\] 
Choosing 
\begin{equation}\label{eqEtaTau}
\eta_\tau \definedas \eta + \log_{r_\tau} (1-C\sigma^{-\varepsilon}),
\end{equation}
\eqref{eqEvolutionEta} follows. Note that by \eqref{eqDistArea} we have
\begin{equation*}
\eta_\tau = \eta + \frac{\ln(1-C\sigma^{-\varepsilon})}{\ln r_\tau} =  \eta + O(\sigma^{-\varepsilon} (\ln \sigma)^{-1}) = \eta + O(\sigma^{-\varepsilon}).
\end{equation*}
%provided that $\overline{\sigma}$ depending only on $\varepsilon$, %$a$, $\overline{a}$, $b$, $\overline{b}$, $\eta$, $\overline{\eta}$, and $C_{\mathcal{I}}$ is chosen to be sufficiently large.

We will now apply a similar method and adjust the value of the constant $b_\tau$ so that the first and the third  inequality in \eqref{eqDefConcentric} describing the fact that $\Sigma_\tau \in \mathcal{A}(a, b_\tau, \eta_\tau)$, that is
\begin{equation}\label{eqEvolutionBeta}
\vert \vec{z}_\tau\vert  \leq a r_\tau + b_\tau (r_\tau)^{1-\eta_\tau}
\end{equation}
and 
\begin{equation}\label{eqEvolLast}
\int_{\Sigma_\tau} H_\tau^2 \, d\mu - 16 \pi \leq \frac{b_\tau}{(r_\tau)^{\eta_\tau}},
\end{equation}
hold with $\eta_\tau$ as defined in \eqref{eqEtaTau}. 

First, we deal with \eqref{eqEvolutionBeta}. Since $\Sigma_{\tau_0} \in \mathcal{A} (a,b,\eta)$ we have 
\[
\vert \vec{z}_{\tau_0}\vert  \leq a r_{\tau_0} + b (r_{\tau_0})^{1-\eta}.
\]
Combining this with \eqref{eqMVTr} and \eqref{eqDerCenter} we obtain
\[
\vert \vec{z}_{\tau}\vert  + O(\sigma^{1-2\varepsilon})  \leq a r_{\tau}+ O(\sigma^{\frac{1}{2}-\varepsilon})+ b (r_{\tau}+ O(\sigma^{\frac{1}{2}-\varepsilon}))^{1-\eta},
\]
which in the view of \eqref{eqMCArea} may be further rewritten as 
\[
\vert \vec{z}_{\tau}\vert \leq a r_{\tau} + (b + O(\sigma^{\eta-2\varepsilon}) + O(\sigma^{\eta - \frac{1}{2} - \varepsilon}))(r_{\tau})^{1-\eta}.
\]
Since $\varepsilon \leq \tfrac{1}{2}$, we conclude that
\[
\vert \vec{z}_{\tau}\vert \leq a r_{\tau} +  (r_{\tau})^{1-\eta}(b+O(\sigma^{\eta-2\varepsilon})).
\]
Recall that by our definition \eqref{eqEtaTau}  of $\eta_\tau$ we have  $\eta = \eta_\tau - \log_{r_\tau} (1-C\sigma^{-\varepsilon})$. Hence
\[
\vert \vec{z}_{\tau}\vert \leq a r_{\tau} + r_{\tau}^{1-\eta_\tau}(1-C\sigma^{-\varepsilon})(b+O(\sigma^{\eta-2\varepsilon})).
\]
Consequently, we have 
\begin{equation}\label{eqAlmostEvolutionBeta}
\vert \vec{z}_{\tau}\vert \leq a r_{\tau} + (r_{\tau})^{1-\eta_\tau}(b+ C\sigma^{-\min\{2\varepsilon-\eta,\varepsilon\}}).
\end{equation}

Next, we address \eqref{eqEvolLast}. We recall that $H_\tau^2 = \mathcal{H}^2 - \tau^2 P^2 = \frac{4}{\sigma^2}  - \tau^2 P^2$  which implies that $\partial_\tau H_\tau^2 = O(\sigma^{-3-2\varepsilon})$. Consequently, we may compute using the variation of area formula and \eqref{eqLapse} with $\delta=\tfrac{3}{2}$ that
\begin{equation}\label{eqEvolInt}
\partial_\tau \int_{\Sigma_\tau} H_\tau^2 \, d\mu = \int_{\Sigma_\tau} \partial_\tau H_\tau^2 \, d\mu + \int_{\Sigma_\tau} u H_\tau^3 \, d\mu = O(\sigma^{-2\varepsilon}).
\end{equation}
Again, since $\Sigma_{\tau_0} \in \mathcal{A} (a,b,\eta)$ we have
\[
\int_{\Sigma_{\tau_0}} H_{\tau_0}^2 \, d\mu - 16 \pi \leq \frac{b}{(r_{\tau_0})^{\eta}}.
\]
As before, we use the Mean Value Theorem, \eqref{eqEvolInt}, and \eqref{eqMVTr} to rewrite this as
\[
\int_{\Sigma_{\tau}} H_{\tau}^2 \, d\mu - 16 \pi + O(\sigma^{-2\varepsilon}) \leq b (r_{\tau}+O(\sigma^{\frac{1}{2}-\varepsilon}))^{-\eta}
\]
which in the view of \eqref{eqMCArea} may be further rewritten as 
\[
\int_{\Sigma_{\tau}} H_{\tau}^2 \, d\mu - 16 \pi \leq (r_{\tau})^{-\eta}(b+O(\sigma^{-\frac{1}{2}-\varepsilon})+O(\sigma^{\eta-2\varepsilon})).
\]
Substituting $\eta = \eta_\tau - \log_{r_\tau} (1-C\sigma^{-\varepsilon})$ in the view of $\varepsilon\leq \tfrac{1}{2}$ gives
\[
\int_{\Sigma_{\tau}} H_{\tau}^2 \, d\mu - 16 \pi \leq (r_{\tau})^{-\eta_\tau}(1-C\sigma^{-\varepsilon})(b+O(\sigma^{\eta-2\varepsilon})).
\]
Consequently, we have 
\begin{equation}\label{eqAlmostEvolLast}
\int_{\Sigma_{\tau}} H_{\tau}^2 \, d\mu - 16 \pi \leq (r_{\tau})^{-\eta_\tau}(b+ C\sigma^{-\min\{2\varepsilon-\eta,\varepsilon\}}).
\end{equation}

Together, the inequalities \eqref{eqAlmostEvolutionBeta} and \eqref{eqAlmostEvolLast} imply the existence of the constant $b_\tau = b + O(\sigma^{-\min\{2\varepsilon-\eta,\varepsilon\}})$  such that \eqref{eqEvolutionBeta} and \eqref{eqEvolLast} hold. This concludes the proof as we can choose $a_{\tau}=a$ as the above computations show.
\end{proof}

\begin{lemma}\label{lemFoliation}
Under the assumptions of Theorem \ref{thExistence}, there exists a constant $\overline{\sigma}>0$, depending only on $\varepsilon$, $a$, $b$, $\eta$, $C_{\mathcal{I}_{1}}$, and $E$, and a compact set $\mathcal{K}\subset M^{3}$ such that the map $\Psi_{1}\colon (\overline{\sigma},\infty)\times\mathbb{S}^{2}\to M^{3}\setminus \mathcal{K}$ defined by \eqref{eqDefPsi} is a bijective $C^{1}$-map.
\end{lemma}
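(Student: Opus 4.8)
The plan is to establish the three properties of $\Psi_1$ separately: that it is a local $C^1$-diffeomorphism (so in particular an open map), that it is injective, and that its image is the complement of a compact set. The $C^1$-regularity is already built into the construction in the proof of Theorem~\ref{thExistence}: for each fixed $\sigma$ the map $\mathcal{F}^\sigma(1,\cdot)$ is $C^1$ by the Implicit Function Theorem, and the joint $C^1$-dependence on $(\sigma,q)$ follows from applying the Implicit Function Theorem with $\sigma$ as an additional parameter, exactly as for $\tau$. So the substantive content is: (a) the leaves $\Sigma_1^\sigma$ are pairwise disjoint and sweep out $M^3\setminus\mathcal{K}$, and (b) $\Psi_1$ is a local diffeomorphism, i.e. the radial vector field $\partial_\sigma\Psi_1$ is transverse to each leaf.

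First I would prove the transversality/local-diffeomorphism claim by computing the linearization in $\sigma$. Differentiating the STCMC-equation $\mathcal{H}(\Sigma_1^\sigma)\equiv\sfrac{2}{\sigma}$ with respect to $\sigma$, writing $\partial_\sigma\Psi_1 = \nu\,\nu_\sigma + (\text{tangential})$ and reparametrizing to make $\partial_\sigma\Psi_1$ normal (as in Condition~\eqref{cond ortho}), one gets $L^{\mathcal{H}}\nu_\sigma = \partial_\sigma(\sfrac{2}{\sigma}) = -\sfrac{2}{\sigma^2}$. Since $L^{\mathcal{H}} = \mathcal{L}/\sqrt{1-(P/H)^2}$ and $\mathcal{L}$ is invertible with $\mathcal{L}1 = -(2\sigma^{-2}+O(\sigma^{-\sfrac{5}{2}-\varepsilon}))$ by \eqref{eqLh0}, the solution is $\nu_\sigma = 1 + O(\sigma^{-\sfrac{1}{2}-\varepsilon})$ in an appropriate norm; in particular $\nu_\sigma$ is strictly positive on $\Sigma_1^\sigma$ for $\sigma$ large, via the Sobolev embedding of Lemma~\ref{lemSobolevEmbed}. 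Positivity of the lapse $\nu_\sigma$ means $\partial_\sigma\Psi_1$ is nowhere tangent to the leaves and points strictly outward, so $d\Psi_1$ is an isomorphism everywhere and $\Psi_1$ is a local diffeomorphism; it also immediately gives that distinct leaves $\Sigma_1^{\sigma}$, $\Sigma_1^{\sigma'}$ with $\sigma<\sigma'$ are disjoint (the leaves are monotonically nested), hence $\Psi_1$ is injective. This is the step I expect to be the main obstacle: one must be careful that the positivity of $\nu_\sigma$ holds uniformly in $\sigma$, which is why one needs the quantitative estimates $\mathcal{L}1 = -(2\sigma^{-2}+O(\sigma^{-\sfrac{5}{2}-\varepsilon}))$ and the $W^{2,2}\hookrightarrow C^0$ Sobolev inequality with $\sigma$-uniform constants from Appendix~\ref{sesApp}.

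Next I would pin down the image. Each leaf satisfies $\Sigma_1^\sigma\in\mathcal{A}(0,b_{\mathcal{I}_1},\eta_{\mathcal{I}_1})$, so by \eqref{eqDistArea} and \eqref{eqMCArea} every point of $\Sigma_1^\sigma$ has $|\vec{x}\,| = \sigma + O(\sigma^{\max\{\sfrac{1}{2}-\varepsilon,1-\eta_{\mathcal{I}_1}\}})$; hence as $\sigma\to\infty$ the leaves escape to infinity, and as $\sigma\downarrow\sigma_{\mathcal{I}_1}$ they stay in a region $\{|\vec{x}\,|\geq \sigma_{\mathcal{I}_1} - C\sigma_{\mathcal{I}_1}^{\max\{\sfrac{1}{2}-\varepsilon,1-\eta_{\mathcal{I}_1}\}}\}$. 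Define $\mathcal{K}$ to be the closure of the bounded component of $M^3$ complementary to the union $\bigcup_{\sigma>\sigma_{\mathcal{I}_1}}\Sigma_1^\sigma$, enlarging $\sigma_{\mathcal{I}_1}$ if necessary so that $\mathcal{B}\subseteq\mathcal{K}$ and $\mathcal{K}$ is compact (the decay estimates guarantee $\mathcal{K}$ is contained in a coordinate ball). Then openness of $\Psi_1$ shows $\Psi_1\big((\sigma_{\mathcal{I}_1},\infty)\times\mathbb{S}^2\big)$ is open in $M^3\setminus\mathcal{K}$; the monotone nesting of the leaves together with the closedness obtained from the convergence argument in Lemma~\ref{lemClosedness} (a sequence $\sigma_n\to\sigma$ of leaf-parameters with $\Sigma_1^{\sigma_n}$ converging in $C^{2,\alpha}$ to an STCMC-sphere of radius parameter $\sigma$, still in the a priori class) shows the image is also closed in $M^3\setminus\mathcal{K}$; since $M^3\setminus\mathcal{K}$ is connected, the image is all of $M^3\setminus\mathcal{K}$. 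This establishes bijectivity, and combined with the local-diffeomorphism property and joint $C^1$-dependence we conclude $\Psi_1\colon(\sigma_{\mathcal{I}_1},\infty)\times\mathbb{S}^2\to M^3\setminus\mathcal{K}$ is the asserted bijective $C^1$-map, after one final enlargement of $\sigma_{\mathcal{I}_1}$, introducing no new dependencies beyond $\varepsilon$, $a$, $b$, $\eta$, $C_{\mathcal{I}_1}$, and $E$.
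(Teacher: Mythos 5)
Your overall architecture (positivity of the $\sigma$-lapse $\Rightarrow$ transversality and nesting $\Rightarrow$ injectivity; radius comparability plus a topological argument $\Rightarrow$ surjectivity; IFT in $\sigma$ $\Rightarrow$ $C^1$) matches the paper's, and the surjectivity and regularity parts are fine up to routine details. But the step you yourself flag as the main obstacle contains a genuine gap. You claim that from $L^{\mathcal{H}}\nu_\sigma=-\sfrac{2}{\sigma^2}$, the identity $\mathcal{L}1=-\bigl(2\sigma^{-2}+O(\sigma^{-\frac{5}{2}-\varepsilon})\bigr)$ of \eqref{eqLh0}, invertibility of $\mathcal{L}$, and the Sobolev embedding one gets $\nu_\sigma=1+O(\sigma^{-\frac{1}{2}-\varepsilon})$. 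This does not follow. What these ingredients give is $\mathcal{L}(\nu_\sigma-1)=\ric(\nu,\nu)+O(\sigma^{-3-\varepsilon})=O(\sigma^{-\frac{5}{2}-\varepsilon})$ pointwise, hence $\Vert\mathcal{L}(\nu_\sigma-1)\Vert_{L^2}\leq C\sigma^{-\frac{3}{2}-\varepsilon}$; the inversion estimate of Corollary \ref{corEllipticRegularity}, $\Vert h\Vert_{W^{2,2}}\leq C\sigma^{3}\Vert\mathcal{L}h\Vert_{L^2}$ (the $\sigma^3$ is forced by \eqref{eqInvertible}, since $\mathcal{L}$ is nearly degenerate of size $\sfrac{6m_{\mathcal{H}}}{\sigma^3}$ on the translational modes), then only yields $\Vert \nu_\sigma-1\Vert_{W^{2,2}}\leq C\sigma^{\frac{3}{2}-\varepsilon}$, and Lemma \ref{lemSobolevEmbed} gives $\nu_\sigma=1+O(\sigma^{\frac{1}{2}-\varepsilon})$ in $L^\infty$ — which diverges and says nothing about positivity. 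One would need $\mathcal{L}(\nu_\sigma-1)=O(\sigma^{-3-\varepsilon})$ for your route to close, and the Ricci term prevents this.

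The missing idea is a refined treatment of the translational projection of $v\definedas \nu_\sigma-1$, exactly as in Lemma \ref{lemLapse}: split $v=v^t+v^d$, use the $\sigma^2$-estimate of Corollary \ref{corEllipticRegularity} together with \eqref{eqTranslational0} to bound $v^d$, and estimate $\int_\Sigma v\,\mathcal{L}f_i\,d\mu$ via \eqref{eqTranslational} and \eqref{eqforLemmaLapse}; the decisive input is the cancellation $\int_{\Sigma}\ric(\nu,\nu)\,x^i\,d\mu=O(\sigma^{-\varepsilon})$ (Lemma A.3 of \cite{NerzCMC}), which shows that the borderline source term $\ric(\nu,\nu)$ has almost no component along the translational eigenfunctions, even though its $L^2$-norm is too large for a crude Cauchy--Schwarz bound. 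This yields $\Vert v^t\Vert_{L^2}\leq C\sigma^{1-\varepsilon}$, $\Vert v\Vert_{W^{2,2}}\leq C\sigma^{1-\varepsilon}$, and hence $\nu_\sigma=1+O(\sigma^{-\varepsilon})>0$ uniformly for $\sigma>\overline{\sigma}$, which is what your injectivity and local-diffeomorphism claims actually require. Without this extra cancellation your argument, as written, would fail at precisely the decay order permitted by the hypotheses of Theorem \ref{thExistence}.
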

\begin{proof} 
To prove the claim, we need to show that $\Psi_{1}$ is $C^{1}$, injective, and surjective onto a suitably chosen exterior region of $M^3$. We already proved in Theorem \ref{thExistence} that $\mathcal{F}^{\sigma}$ and thus $\Psi_{1}$ is $C^{1}$ with respect to the $\mathbb{S}^{2}$-component. The differentiability with respect to $\sigma$ can be proven following the Implicit Function Theorem argument of Lemma \ref{lemStatement}, where the graphical spacetime mean curvature map is to be interpreted as a function of $\sigma\in(\overline{\sigma},\infty)$ instead of as a function of $\tau\in[0,1]$. This is to be viewed in light of the uniqueness results in Section \ref{subsecUnique}.\\

\paragraph{\emph{Injectivity.}} In order to show injectivity of $\Psi_{1}$, we need to assert that  $\Sigma^{\sigma_{1}}_{1}\cap\Sigma^{\sigma_{2}}_{1}=\emptyset$ for any choice of $\sigma_{1}\neq\sigma_{2}$, $\sigma_{1},\sigma_{2}>\overline{\sigma}$. This can be done by analyzing the lapse function of the variation $\Psi_{1}\colon (\overline{\sigma},\infty) \times \mathbb{S}^{2}\to M^3$ with respect to $\sigma$, namely $u=u^{\sigma}_{1} \definedas g(\partial_\sigma \Psi_1,\nu)$, where $\nu=\nu^{\sigma}_{1}$ denotes the outward unit normal of $\Sigma^{\sigma}_{1}\hookrightarrow(M^3,g)$. If $u>0$ on $(\overline{\sigma},\infty)\times\mathbb{S}^{2}$, we can conclude that $\Psi_{1}$ is injective.

We will in fact show that $u=1+O(\sigma^{-\frac{1}{2}-\varepsilon})$. Again, $C>0$ and $\overline{\sigma}>0$ denote generic constants that may vary from line to line, but depend only on $\varepsilon$, $a$, $b$, $\eta$, $C_{\mathcal{I}_{1}}$, and $E$. First, we note that since the spacetime mean curvature of $ \Sigma^\sigma_1$ in the initial data set $\mathcal{I}_{1}$ is constant, $\mathcal{H}(\Sigma^{\sigma}_{1})\equiv\sfrac{2}{\sigma}$, we have
\begin{align}\label{eqLHu}
\begin{split}\mathcal{L}u&=\sqrt{1-\left(\tfrac{P}{H}\right)^{2}}\,\partial_{\sigma}\mathcal{H}(\Sigma^{\sigma}_{1})\\
&=(1+O(\sigma^{-1-\varepsilon}))\,\partial_{\sigma}\left(\tfrac{2}{\sigma}\right)\\
&=-\tfrac{2}{\sigma^{2}}+O(\sigma^{-3-\varepsilon}),
\end{split}
\end{align}
which uniquely determines $u\in W^{2,2}(\Sigma^{\sigma}_{1})$ by Proposition \ref{propInvertibility}. Furthermore, by \eqref{eqLHu}, \eqref{eqFallOff}, \eqref{eq2FFtraceless}, and the asymptotic decay assumptions on $\mathcal{I}_{1}$, we have
\begin{align}\label{equ1}
\begin{split}
\mathcal{L}(u-1) & = \mathcal{L}u + \vert A\vert ^2 + \ric (\nu, \nu)
+ \tfrac{P}{H}(\nabla_\nu \tr_g K - \nabla_\nu K(\nu, \nu)) \\
&= -\tfrac{2}{\sigma^{2}}+O(\sigma^{-3-\varepsilon})+ \tfrac{H^{2}}{2}+\vert \mathring{A}\vert ^2 + \ric (\nu, \nu)
+ \tfrac{P}{H}(\nabla_\nu \tr_g K - \nabla_\nu K(\nu, \nu)) \\
& = \ric (\nu, \nu) + O(\sigma^{-3-\varepsilon}),\\[-3ex]
\end{split}
\end{align}
where $H=H(\Sigma^{\sigma}_{1})$, $P=P(\Sigma^{\sigma}_{1})$. 
This shows that $\mathcal{L}(u-1) = O(\sigma^{-\frac{5}{2}-\varepsilon})$ which is not sufficient for concluding that $u = 1 + O(\sigma^{-\varepsilon})$ and thus $u>0$ via Corollary \ref{corEllipticRegularity} and Lemma \ref{lemSobolevEmbed}: such an argument would require $\mathcal{L}(u-1) = O(\sigma^{-3-\varepsilon})$ which we obviously do not have. 

Instead, we argue as in the proof of Lemma \ref{lemLapse}. For $v\definedas u-1$, the above computation shows that $\mathcal{L}v=O(\sigma^{-\frac{5}{2}-\varepsilon})$, which in combination with Corollary \ref{corEllipticRegularity} and \eqref{eqTranslational0} gives
\begin{align}\label{eqVD}
\begin{split}
\Vert v^d\Vert _{W^{2,2}(\Sigma_1^\sigma)} 
& \leq C\sigma^2 \Vert \mathcal{L} v^d\Vert _{L^2(\Sigma^\sigma_1)} \\
& \leq C\sigma^2 \left(\Vert \mathcal{L} v \Vert _{L^2(\Sigma^\sigma_1)} + \Vert \mathcal{L} v^t\Vert _{L^2(\Sigma^\sigma_1)}\right) \\
& \leq C\sigma^2 \left(\sigma^{-\frac{3}{2} - \varepsilon} + \Vert \mathcal{L} v^t\Vert _{L^2(\Sigma^\sigma_1)}\right) \\
& \leq C\sigma^{\frac{1}{2}-\varepsilon} + \sigma^{-\frac{1}{2}-\varepsilon}\Vert v^t\Vert _{L^2(\Sigma^\sigma_1)}.                   
\end{split}
\end{align}
In addition, for $i=1,2,3$, by adding a rich zero and using the orthogonality of $v^d$ and $f_i$, we have
\begin{align*}
\begin{split}
\left\vert \int_{\Sigma^\sigma_1} v f_i \, d\mu\right\vert  \leq&\, \frac{\sigma^3}{6 \vert m_\mathcal{H}\vert }\left\vert  \int_{\Sigma_1^\sigma} v \mathcal{L}f_i \, d\mu \right\vert  + \left\vert \int_{\Sigma^\sigma_1} v^t f_i \, d\mu - \frac{\sigma^3}{6m_\mathcal{H}}\int_{\Sigma^\sigma_1} v^t \mathcal{L}f_i \, d\mu \right\vert \\
&+ \frac{\sigma^3}{6\vert m_\mathcal{H}\vert } \left\vert \int_{\Sigma^\sigma_1}  v^d  \mathcal{L} f_i  \, d\mu \right\vert ,
\end{split}
\end{align*}
where 
\begin{align*}
\left\vert \int_{\Sigma^\sigma_1} v^t f_i \, d\mu - \frac{\sigma^3}{6m_\mathcal{H}}\int_{\Sigma^\sigma_1} v^t \mathcal{L}f_i \, d\mu \right\vert \leq \frac{C}{\sigma^{\varepsilon}}\, \Vert v^t\Vert _{L^2(\Sigma^\sigma_1)}
\end{align*}
by \eqref{eqTranslational}, and 
\begin{align*}
\left\vert \int_{\Sigma^\sigma_1} v^d \mathcal{L} f_i \, d\mu \right\vert  \leq \frac{C}{\sigma^{\frac{5}{2} + \varepsilon}}\, \Vert v^d\Vert _{L^2(\Sigma^\sigma_1)}
\end{align*}
by \eqref{eqforLemmaLapse}. Using the fact that $\mathcal{L} v = \ric(\nu, \nu) + O(\sigma^{-3-\varepsilon})$ by \eqref{equ1}, Lemma A.3 from \cite{NerzCMC} (a result showing that $\int_{\Sigma^\sigma_1} \ric(\nu,\nu) x^i d\mu = O(\sigma^{-\varepsilon})$ which is fully Riemannian and thus directly carries over to our spacetime context), and integration by parts, we obtain as in Lemma~\ref{lemLapse} that
\begin{align*}
\begin{split}
\left\vert  \int_{\Sigma^\sigma_1} v \mathcal{L} f_i \, d\mu \right\vert  & \leq \left\vert  \int_{\Sigma^\sigma_1} f_i \mathcal{L} v \, d\mu \right\vert + 2\left\vert  \int_{\Sigma^\sigma_1} \tfrac{P}{H} K \left(v \nabla^{\Sigma^\sigma_1} f_i - f_i \nabla^{\Sigma^\sigma_1} v, \nu  \right)\, d\mu \right\vert   \\ 
&\leq C \sigma^{-2-\varepsilon} + C\sigma^{-3-\varepsilon}\Vert v\Vert _{L^2(\Sigma_1^\sigma)}.
\end{split}
\end{align*}
Combining these estimates, we get, again grouping terms as in Lemma~\ref{lemLapse}, that
\begin{align*}
\Vert v^t\Vert _{L^2(\Sigma^\sigma_1)} \leq C \sigma^{\frac{1}{2}-\varepsilon} \Vert v^d\Vert _{L^2(\Sigma_1^\sigma)} + C \sigma^{1-\varepsilon},
\end{align*}
which, together with \eqref{eqVD}, gives
\begin{align*}
\Vert v^d\Vert _{W^{2,2}(\Sigma_1^\sigma)} \leq C \sigma^{\frac{1}{2}-\varepsilon},\quad\quad \Vert v^t\Vert _{L^2(\Sigma^\sigma_1)} \leq C \sigma^{1-\varepsilon}.
\end{align*}
Finally, Corollary \ref{corEllipticRegularity} gives us
 \begin{align*}
 \begin{split}
 \Vert v\Vert _{W^{2,2}(\Sigma^\sigma_1)} \leq C \left(\sigma^2\Vert \mathcal{L} v\Vert _{L^2(\Sigma^\sigma_1)} + \Vert v\Vert _{L^2(\Sigma^\sigma_1)} \right) \leq C \sigma^{1-\varepsilon}.
 \end{split}
 \end{align*}
Then $\Vert v\Vert _{W^{2,2}(\Sigma^\sigma)} \leq C \sigma^{1-\varepsilon}$, so by Lemma \ref{lemSobolevEmbed} we get that $u=1+v=1+O(\sigma^{-\varepsilon})$ is strictly positive for all $\sigma > \overline{\sigma}$. This shows that $\Psi_{1}$ is indeed injective.\\[-1ex]

\paragraph{\emph{Surjectivity.}} By construction, the STCMC-surfaces $\Sigma^{\sigma}=\Psi_1 (\sigma,\mathbb{S}^2)$ for $\sigma>\overline{\sigma}$ are in the class of asymptotically centered surfaces $\mathcal{A}(0,b,\eta)$ for some $b > 0$ and $\eta\in(0,1]$. In particular, recalling Proposition \ref{propRegularity}, each $\Sigma^\sigma$ can be written as a graph over a sphere enclosing the interior region of $M^3$. Suppose $p\in M^3$ is in the exterior region of some $\Sigma^\sigma$ with $\sigma>\overline{\sigma}$. By comparability of the coordinate and mean curvature radii for surfaces in the class $\mathcal{A}(0,b,\eta)$ (see \eqref{eqDistArea} and \eqref{eqMCArea}), we can find $\widetilde{\sigma} > \sigma$ such that $p$ lies in the region enclosed by $\Sigma^{\widetilde{\sigma}}$, and hence in the annulus $A_{\sigma,\widetilde{\sigma}}$ between $\Sigma^{\sigma}=\Psi_1(\sigma, \mathbb{S}^2)$ and $\Sigma^{\widetilde{\sigma}}=\Psi_1(\widetilde{\sigma}, \mathbb{S}^2)$. Since $\Psi_1\colon[\sigma,\infty) \times \mathbb{S}^2 \to M^3$ is continuous it follows that $A_{\sigma,\widetilde{\sigma}}=\Psi_1([\sigma,\widetilde{\sigma}]\times \mathbb{S}^2)$ hence $p=\Psi_1(\hat{\sigma}, q)$ for some $\hat{\sigma} \in [\sigma,\widetilde{\sigma}]$ and $q\in\mathbb{S}^2$. As $\sigma>\overline{\sigma}$ was arbitrary, this proves surjectivity.
\end{proof}

\subsection{Uniqueness of the STCMC surfaces.}\label{subsecUnique} We close this section by proving that the constant spacetime mean curvature surfaces are unique in the a priori class of asymptotically centered surfaces $\mathcal{A}(a,b,\eta)$. As Brendle and Eichmair \cite{BrendleEichmair} constructed examples of asymptotically Euclidean Riemannian manifolds with ``off-center'' (i.e.~not included in the a priori class) CMC-surfaces provided $\scal\geq0$ is violated, we will restrict our uniqueness statements to the  a priori class --- at least when not assuming the dominant energy condition $\mu\geq\vert J\vert_{g}$. To the best knowledge of the authors, it is not known whether such examples can also be constructed if the dominant energy condition or its Riemannian analog $\scal\geq0$ are satisfied.

The uniqueness result is proven in a similar way as the existence result of Section~\ref{subsecExistence}, namely by the method of continuity. The ``starting point" of the method of continuity is the following result from \cite{NerzCMC}, again adapted to our notation.

\begin{theorem}[Nerz 2015]\label{thUniquenessCMC}
Let $a\in [0,1)$, $b \geq 0$, and $\eta\in (0,1]$ be constants and let $(M^{3},g)$ be a $C^2_{\sfrac{1}{2} + \varepsilon}$-asymptotically Euclidean manifold viewed as a $C^2_{\sfrac{1}{2} + \varepsilon}$-asymptotically Euclidean initial data set $\mathcal{I}_0=(M^{3},g,K\equiv 0,\mu =\tfrac{1}{2}\scal,J\equiv 0)$ with
non-vanishing energy $E \neq 0$. Then there is a constant $\sigma_{\mathcal{I}_{0}}$ depending only on $\varepsilon$, $a$, $b$, $\eta$, $C_{\mathcal{I}_0}$, and $E$, such that for all $\sigma > \sigma_{\mathcal{I}_{0}}$, there is a unique surface $\Sigma^{\sigma}_{0} \in \mathcal{A}(a,b,\eta)$  with constant mean curvature $H(\Sigma^{\sigma}_{0}) \equiv \sfrac{2}{\sigma}$ with respect to $\mathcal{I}_{0}$.
\end{theorem}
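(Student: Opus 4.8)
The plan is to run the argument entirely in the time-symmetric specialization $K\equiv0$, $\mu=\tfrac12\scal$ of the machinery developed in Sections \ref{secHypersurfaces}--\ref{secExistence}, where the spacetime mean curvature is the ordinary mean curvature and the operator $\mathcal L$ of \eqref{eqLinearization} reduces to the classical CMC stability operator $L^H f=-\Delta^\Sigma f-(|A|^2+\ric(\nu,\nu))f$. First I would record that, for $\sigma$ above a threshold depending only on $\varepsilon,a,b,\eta,C_{\mathcal I_0},E$, Proposition \ref{propRegularity} read with $P\equiv0$ shows that \emph{every} closed $\Sigma\in\mathcal{A}(a,b,\eta)$ with $H(\Sigma)\equiv\sfrac{2}{\sigma}$ is a topological sphere that is a normal graph of a function $f$ over a Euclidean round sphere $\mathbb S^2_r(\vec z)$, with $r\asymp\sigma\asymp\min_\Sigma|\vec x|$ and $\|f\|_{W^{2,\infty}(\mathbb S^2_r(\vec z))}\le Cr^{\sfrac{1}{2}-\varepsilon}$; and that Lemma \ref{lemEigenLaplace2}, Lemma \ref{lemLaplace}, Proposition \ref{propInvertibility} and Corollary \ref{corEllipticRegularity} all hold verbatim with $P\equiv0$, so that $L^H\colon W^{2,2}(\Sigma)\to L^2(\Sigma)$ is invertible with $\|(L^H)^{-1}\|\le C\sigma^3/|m_H(\Sigma)|$ and $|m_H(\Sigma)-E|\le C\sigma^{-\varepsilon}$; hence $L^H$ is \emph{uniformly} invertible for large $\sigma$, precisely because $E\neq0$. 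Since the CMC hypothesis enters here only through Proposition \ref{propRegularity}, these invertibility estimates in fact persist for any surface that is a sufficiently small $W^{2,2}$-graph over a round sphere of comparable radius, whether or not it is CMC.

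The heart of the matter --- and the step I expect to be the main obstacle --- is a \emph{position estimate}: every such CMC sphere has Euclidean coordinate center $\vec z(\Sigma)$ within $O(r^{\sfrac{1}{2}-\varepsilon})$ of a single vector $\vec z_\ast=\vec z_\ast(g,\sigma)$ that does not depend on the particular surface. I would obtain this by projecting the equation $H(\graph f)=\sfrac{2}{\sigma}$, written over $\mathbb S^2_r(\vec z)$, onto the translational eigenspace $\Span(f_1,f_2,f_3)$ of $-\Delta^{\mathbb S^2_r(\vec z)}$. Inserting the graphical expansion $H(\graph f)=H_0+L^H_0 f+(\text{quadratic in }f)$, the Gauss equation, the $C^2_{\sfrac{1}{2}+\varepsilon}$-decay of $g$, and the characterization of the translational eigenvalue as $\tfrac{6m_H}{\sigma^3}+O(\sigma^{-3-\varepsilon})$ from Lemma \ref{lemLaplace}, the $\ell=1$ part of the equation becomes, schematically, $\tfrac{6m_H}{\sigma^3}\bigl(\vec z(\Sigma)-\vec z_\ast\bigr)=O(\sigma^{-\sfrac{7}{2}-\varepsilon})$, with $\vec z_\ast$ identified through an $\ell=1$ flux integral of $g$ and a uniformly controlled remainder fed by the deformational modes and the quadratic terms; dividing by $\tfrac{6|m_H|}{\sigma^3}\gtrsim|E|\,\sigma^{-3}$ yields $|\vec z(\Sigma)-\vec z_\ast|=O(r^{\sfrac{1}{2}-\varepsilon})$. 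This is exactly where the restriction to the a priori class $\mathcal{A}(a,b,\eta)$ is indispensable: without $\scal\ge0$ the Brendle--Eichmair examples \cite{BrendleEichmair} furnish CMC spheres with coordinate center of order $r$ that are not unique, so the statement is false if the a priori class is dropped. Carrying this estimate out rigorously is delicate, and is the place where one must follow \cite{NerzCMC} in detail.

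With these two ingredients the uniqueness follows. Let $\Sigma_1,\Sigma_2\in\mathcal{A}(a,b,\eta)$ both satisfy $H\equiv\sfrac{2}{\sigma}$; applying the position estimate to each, their coordinate centers lie within $O(r^{\sfrac{1}{2}-\varepsilon})$ of one another, and by Proposition \ref{propRegularity} both are $W^{2,\infty}$-graphs of functions of size $O(r^{\sfrac{1}{2}-\varepsilon})$ over round spheres of radius $\asymp r$. Hence $\Sigma_2$ can be written as a normal geodesic graph $\Sigma_2=\graph_{\Sigma_1}w$ over $\Sigma_1$ with $\|w\|_{W^{2,\infty}(\Sigma_1)}\le Cr^{\sfrac{1}{2}-\varepsilon}$, and the identity $H(\graph_{\Sigma_1}w)-H(\Sigma_1)\equiv0$ reads $\overline L\,w=0$ with $\overline L\definedas\int_0^1 L^{H}_{\graph_{\Sigma_1}(sw)}\,ds$; every interpolating surface $\graph_{\Sigma_1}(sw)$, $s\in[0,1]$, is again a small near-round graph over a round sphere of radius $\asymp r$ with Geroch mass $\approx E\neq0$, so by the remark closing the first paragraph each $L^H_{\graph_{\Sigma_1}(sw)}$ obeys the uniform invertibility bounds, hence so does their average $\overline L$, forcing $w\equiv0$ and $\Sigma_1=\Sigma_2$. (Alternatively, the position estimate also shows $\Sigma_1$ is $C^0$-close to the leaf $\Sigma^\sigma_0$ of the CMC foliation of Theorem \ref{thExistenceCMC}, after which a classical touching-leaf comparison via the strong maximum principle and the strict monotonicity of $H\equiv\sfrac{2}{\sigma'}$ along $\{\Sigma^{\sigma'}_0\}$ gives $\Sigma_1=\Sigma^\sigma_0$ directly.)
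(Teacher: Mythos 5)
You should first be aware that the paper does not prove Theorem~\ref{thUniquenessCMC} at all: it is imported verbatim from \cite{NerzCMC} (hence the attribution ``Nerz 2015'') and serves only as the $\tau=0$ anchor for the method-of-continuity proof of the genuinely new statement, Theorem~\ref{thUniqueness}. So there is no in-paper proof to compare against; what you have written is an outline of how one would prove the cited Riemannian result directly. Your architecture --- a priori regularity (Proposition~\ref{propRegularity} with $P\equiv0$), uniform invertibility of the stability operator from Proposition~\ref{propInvertibility} and $|m_H(\Sigma)-E|\le C\sigma^{-\varepsilon}$, a position estimate pinning the coordinate center, and then a graph-over-graph interpolation argument --- is the standard and correct strategy for this kind of theorem and is essentially the one Nerz (and before him Metzger and Huisken--Yau, under stronger hypotheses) carries out. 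Your remark that the a priori class cannot be dropped because of the Brendle--Eichmair examples \cite{BrendleEichmair} correctly locates where the hypothesis $\Sigma\in\A$ is used.

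That said, as a proof the proposal has a genuine gap exactly where you flag one: the position estimate is the entire content of the theorem, and you do not prove it --- you write a schematic $\ell=1$ balance $\tfrac{6m_H}{\sigma^3}(\vec z(\Sigma)-\vec z_\ast)=O(\sigma^{-\sfrac{7}{2}-\varepsilon})$ and then defer to \cite{NerzCMC}. Note also that your setup is slightly off: if $\Sigma=\graph f$ over $\mathbb S^2_r(\vec z)$ with $\vec z=\vec z(\Sigma)$ its own Euclidean center, the translational part of $f$ is essentially zero by construction, so the displacement $\vec z(\Sigma)-\vec z_\ast$ cannot be read off as the $\ell=1$ component of $f$; one must instead extract it from the $\ell=1$ flux integrals of $g$ and of $\ric(\nu,\nu)$ appearing in Lemma~\ref{lemLaplace}-type identities, which is precisely the delicate bookkeeping in \cite{NerzCMC}. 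A second, smaller issue is the claim that the average $\overline L=\int_0^1 L^H_{\graph(sw)}\,ds$ of invertible operators is invertible: a family of operators each satisfying only a norm lower bound like \eqref{eqInvertible} need not have an invertible average. Here it does, but because each member satisfies the \emph{signed} quadratic-form estimates \eqref{eqTranslational} and \eqref{eqDeformational} with the same sign of $m_H\approx E$ on the translational modes and uniform positivity on the deformational modes, so the proof of Proposition~\ref{propInvertibility} must be rerun for $\overline L$ rather than each operator cited as a black box. With those two points repaired the argument closes; your alternative ending via the foliation of Theorem~\ref{thExistenceCMC} and the strong maximum principle is also viable, but it too requires the position estimate to guarantee the two surfaces are close enough to touch tangentially.
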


Our uniqueness result generalizes this to STCMC-surfaces in the spacetime context.

\begin{theorem}[Uniqueness of STCMC-foliation]\label{thUniqueness}
Let $a \in [0,1)$, $b \geq 0$, and $\eta\in (0,1]$ be constants and let $\mathcal{I}_{1}=\IDS$ be a
$C^2_{\sfrac{1}{2} + \varepsilon}$-asymptotically Euclidean initial data set with
non-vani\-shing energy $E \neq 0$. Then there is a constant $\sigma_{\mathcal{I}_{1}}$  depending only on $\varepsilon$, $a$, $b$, $\eta$, $C_{\mathcal{I}_{1}}$, and $E$, such that for all $\sigma > \sigma_{\mathcal{I}_{1}}$, there is a unique surface $\Sigma^{\sigma}_{1} \in \mathcal{A}(a,b,\eta)$
with constant spacetime mean curvature $\mathcal{H}(\Sigma^{\sigma}_{1}) \equiv\sfrac{2}{\sigma_{1}}$ with respect to~$\mathcal{I}_{1}$.
\end{theorem}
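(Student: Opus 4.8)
The plan is to prove Theorem~\ref{thUniqueness} by the same method of continuity used for existence in Section~\ref{subsecExistence}, but now tracking uniqueness rather than existence along the deformation $\{\mathcal{I}_\tau\}_{\tau\in[0,1]}$, $\mathcal{I}_\tau=(M^3,g,\tau K,\mu_\tau,\tau J)$. Fix the constants $a\in[0,1)$, $b\geq0$, $\eta\in(0,1]$ from the statement, and let $\sigma_{\mathcal{I}_1}$ be large enough (depending only on $\varepsilon$, $a$, $b$, $\eta$, $C_{\mathcal{I}_1}$, $E$) that all of Proposition~\ref{propRegularity}, Lemma~\ref{lemEigenLaplace2}, Lemma~\ref{lemLaplace}, Proposition~\ref{propInvertibility}, Corollary~\ref{corEllipticRegularity}, Lemma~\ref{lemLapse}, Lemma~\ref{lemEstimates}, and Lemma~\ref{lemConstEvolution} apply with these parameters (enlarging $b$ and shrinking $\eta$ by fixed factors along the way, exactly as in the proof of Theorem~\ref{thExistence}, to leave room for the a priori class to drift). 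Fix $\sigma_*>\sigma_{\mathcal{I}_1}$. The target statement for the continuity argument is: the set $Z^{\sigma_*}\subseteq[0,1]$ of those $\tau$ for which there is exactly one surface $\Sigma\in\mathcal{A}(a,b,\eta)$ with $\mathcal{H}(\Sigma,\mathcal{I}_\tau)\equiv\sfrac{2}{\sigma_*}$ is all of $[0,1]$; evaluating at $\tau=1$ then gives the theorem.

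First I would establish $0\in Z^{\sigma_*}$: this is precisely Theorem~\ref{thUniquenessCMC} applied to $\mathcal{I}_0=(M^3,g,K\equiv0,\mu=\tfrac12\scal,J\equiv0)$, since for $K\equiv0$ the STCMC-condition reduces to the CMC-condition $H\equiv\sfrac{2}{\sigma_*}$. Existence of at least one STCMC-surface in $\mathcal{A}(a,b,\eta)$ for every $\tau$ and every $\sigma_*>\sigma_{\mathcal{I}_1}$ is already guaranteed by Theorem~\ref{thExistence} (whose leaves lie in $\mathcal{A}(0,b_{\mathcal{I}_1},\eta_{\mathcal{I}_1})\subseteq\mathcal{A}(a,b,\eta)$ after adjusting constants), so the only content is the ``at most one'' clause. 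For openness, suppose $\tau_0\in Z^{\sigma_*}$ with unique surface $\Sigma^{\sigma_*}_{\tau_0}\in\mathcal{A}(a,b,\eta)$. By Lemma~\ref{lemStatement} (Implicit Function Theorem, using that $\mathcal{L}$, hence $L^{\mathcal{H}}$, is invertible by Proposition~\ref{propInvertibility} because $E\neq0$), there is a neighborhood $U_{\tau_0}$ and a unique $C^1$ family of STCMC-surfaces through $\Sigma^{\sigma_*}_{\tau_0}$ staying in a slightly larger class. One must rule out a \emph{second}, distinct STCMC-surface in $\mathcal{A}(a,b,\eta)$ appearing for $\tau$ near $\tau_0$: if $\tau_n\to\tau_0$ admitted distinct $\Sigma^{(1)}_{\tau_n}\neq\Sigma^{(2)}_{\tau_n}$ in $\mathcal{A}(a,b,\eta)$, the compactness argument of Lemma~\ref{lemClosedness} (uniform $C^{2,\alpha}$ graph bounds over a fixed round sphere via Proposition~\ref{propRegularity} and Schauder estimates) lets both subsequences converge to STCMC-surfaces in $\mathcal{A}(a,b,\eta)$ at $\tau_0$, which by uniqueness at $\tau_0$ must both equal $\Sigma^{\sigma_*}_{\tau_0}$; but then for large $n$ both $\Sigma^{(j)}_{\tau_n}$ are $W^{2,2}$-close graphs over $\Sigma^{\sigma_*}_{\tau_0}$, contradicting the local uniqueness from the Implicit Function Theorem. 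Closedness is handled the same way: a sequence $\tau_n\in Z^{\sigma_*}$, $\tau_n\to\tau$, with the (unique) surfaces $\Sigma^{\sigma_*}_{\tau_n}$ converges in $C^{2,\alpha}$ to some $\Sigma^{\sigma_*}_\tau\in\mathcal{A}(a,b,\eta)$ with $\mathcal{H}(\Sigma^{\sigma_*}_\tau,\mathcal{I}_\tau)\equiv\sfrac{2}{\sigma_*}$; any other such surface at $\tau$ would, via Lemma~\ref{lemStatement} run backwards from $\tau$, produce a competitor at nearby $\tau_n$, contradicting $\tau_n\in Z^{\sigma_*}$. Hence $Z^{\sigma_*}$ is nonempty, open, and closed in $[0,1]$, so $Z^{\sigma_*}=[0,1]$, and in particular $1\in Z^{\sigma_*}$: the surface $\Sigma^{\sigma_*}_1\in\mathcal{A}(a,b,\eta)$ with $\mathcal{H}(\Sigma^{\sigma_*}_1,\mathcal{I}_1)\equiv\sfrac{2}{\sigma_*}$ is unique. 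Letting $\sigma_*$ vary over $(\sigma_{\mathcal{I}_1},\infty)$ yields the theorem.

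The one technical point requiring care, and which I expect to be the main obstacle, is the a priori control keeping the surfaces inside the \emph{same} class $\mathcal{A}(a,b,\eta)$ along the whole interval: when we push a surface from $\mathcal{I}_{\tau_0}$ to $\mathcal{I}_\tau$ via the Implicit Function Theorem, the a priori data (coordinate center $\vec z$, the integral $\int H^2$, and the radius comparison) drift. This is exactly what Lemma~\ref{lemConstEvolution} controls, giving $\Sigma^{\sigma_*}_\tau\in\mathcal{A}(a,b_\tau,\eta_\tau)$ with $b_\tau=b+O(\sigma_*^{-\min\{2\varepsilon-\eta,\varepsilon\}})$ and $\eta_\tau=\eta+O(\sigma_*^{-\varepsilon})$; so provided $\sigma_*$ is large (and after shrinking $\eta$ to have $\eta\in(0,2\varepsilon)$, harmless by the inclusion $\mathcal{A}(a,b,\eta_1)\subseteq\mathcal{A}(a,b,\eta_2)$ for $\eta_1>\eta_2$), the drift is absorbed and the surfaces genuinely stay in a fixed compactly-contained enlargement of $\mathcal{A}(a,b,\eta)$ on which all estimates hold uniformly. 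The interplay between ``the unique surface we are tracking stays in the class'' and ``no second surface in the class can branch off'' has to be organized so that local uniqueness from the Implicit Function Theorem is applied in a neighborhood where \emph{every} element of $\mathcal{A}(a,b,\eta)$ with the right spacetime mean curvature is already known (by Proposition~\ref{propRegularity}) to be a small $W^{2,2}$-graph over the tracked surface; this is the step where one must be careful not to argue in a circle.
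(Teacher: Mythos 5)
Your overall framework (the deformation $\mathcal{I}_\tau=(M^3,g,\tau K,\mu_\tau,\tau J)$, Nerz's Theorem~\ref{thUniquenessCMC} as base case at $\tau=0$, and local uniqueness from the Implicit Function Theorem via Lemma~\ref{lemStatement}) is the right toolkit, but the way you organize the continuity argument — propagating the \emph{property} ``there is exactly one STCMC-surface in $\mathcal{A}(a,b,\eta)$'' as a subset $Z^{\sigma_*}\subseteq[0,1]$ of parameters and proving it open and closed — has a genuine gap in the closedness step, and it is precisely the class-drift issue you flag at the end but do not resolve. If uniqueness held at all $\tau_n\to\tau$ and there were two distinct STCMC-surfaces $\Sigma\neq\Sigma'$ in $\mathcal{A}(a,b,\eta)$ at $\tau$, running Lemma~\ref{lemStatement} backwards from $\Sigma'$ produces competitors at $\tau_n$ that are only guaranteed to lie in the strictly enlarged class $\mathcal{A}(a,\overline{b},\overline{\eta})$ (Lemma~\ref{lemStatement} requires $b<\overline{b}$, $\overline{\eta}<\eta$, and Lemma~\ref{lemConstEvolution} only controls the drift by $O(\sigma^{-\varepsilon})$-type terms that do \emph{not} shrink as $\tau_n\to\tau$ at fixed $\sigma_*$); since membership in $\mathcal{A}(a,b,\eta)$ is defined by exact inequalities which $\Sigma'$ may satisfy with equality, the continued surfaces need not lie in $\mathcal{A}(a,b,\eta)$, so they do not contradict ``at most one in $\mathcal{A}(a,b,\eta)$'' at $\tau_n$. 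The same drift also undermines your ``exactly one'' formulation on the existence side: Theorem~\ref{thExistence} produces leaves in $\mathcal{A}(0,b_{\mathcal{I}_1},\eta_{\mathcal{I}_1})$, which for an arbitrary prescribed $(a,b,\eta)$ need not be contained in $\mathcal{A}(a,b,\eta)$, so $Z^{\sigma_*}$ could be empty at some $\tau$ for reasons having nothing to do with uniqueness; the theorem is really an ``at most one'' statement and the paper treats it as such. (Your openness step, by contrast, is sound, because there the hypothetical competitors are \emph{assumed} to lie in the fixed class and only their limits and the IFT neighborhood are used.)

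The paper avoids this entirely by never propagating uniqueness at intermediate $\tau$: it fixes an \emph{arbitrary} surface $\Sigma_1\in\mathcal{A}(a,b,\eta)$ with $\mathcal{H}\equiv\sfrac{2}{\sigma}$ in $\mathcal{I}_1$ and runs the method of continuity starting at $\tau=1$ (openness via Lemma~\ref{lemStatement}, closedness via the compactness argument of Lemma~\ref{lemClosedness}, class control via Lemma~\ref{lemConstEvolution}), obtaining a path $\Phi(\tau,\cdot)$ of STCMC-surfaces in an enlarged class $\mathcal{A}(a_{\mathcal{I}_1},b_{\mathcal{I}_1},\eta_{\mathcal{I}_1})$ down to $\tau=0$. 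There it invokes Theorem~\ref{thUniquenessCMC} \emph{in that enlarged class} (whose constants depend only on $\varepsilon,a,b,\eta,C_{\mathcal{I}_1},E$, so $\sigma_{\mathcal{I}_1}$ can be chosen accordingly), so the endpoint $\Phi(0,\mathbb{S}^2)$ is the same for every competitor; local uniqueness in the Implicit Function Theorem then forces the whole path, and in particular $\Sigma_1=\Phi(1,\mathbb{S}^2)$, to be determined by that common starting value. If you want to salvage your argument, the fix is essentially to abandon the open–closed set $Z^{\sigma_*}$ and argue as the paper does: flow each competitor itself back to $\tau=0$, where uniqueness is applied once, in a class large enough to absorb the drift.
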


\begin{proof}
We rely on the same type of argument as in the proof of Theorem \ref{thExistence}. Fix a surface $\Sigma^{\sigma}_{1}$ as in the assumptions, with $\sigma>\sigma_{\mathcal{I}_{1}}$. We now drop the explicit reference to $\sigma$ for notational convenience, as $\sigma$ will not be modified in this proof. Let $Z \subseteq [0,1]$ be a \emph{maximal} subset such that there is a $C^1$-map $\Phi\colon Z\times \mathbb{S}^2 \to M^3$ with the following properties for all $\tau\in Z$:
\begin{enumerate}[(i)]
\item $\Phi(1,\mathbb{S}^{2})=\Sigma_{1}$,\\[-2ex]
\item $\Sigma_\tau\definedas\Phi(\tau,\mathbb{S}^2)$ has constant spacetime mean curvature $\mathcal{H}(\Sigma_\tau)\equiv\sfrac{2}{\sigma}$ with respect to the initial data set $\mathcal{I}_\tau$, where $\mathcal{I}_{\tau}$ is defined as in the proof of Theorem~\ref{thExistence},\\[-2ex]
\item $\partial_\tau \Phi$ is orthogonal to $\Sigma_\tau$.
\end{enumerate} 
\emph{Maximality} is understood as in the proof of Theorem \ref{thExistence}. Arguing as in the proof of Theorem \ref{thExistence}, we conclude that $Z=[0,1]$ and that there are  constants $a_{\mathcal{I}_{1}} \in [0,1)$, $b_{\mathcal{I}_{1}} \geq 0$ and $\eta_{\mathcal{I}_{1}}\in (0,1]$ such that $\Sigma_\tau \in \mathcal{A} (a_{\mathcal{I}_{1}}, b_{\mathcal{I}_{1}},\eta_{\mathcal{I}_{1}})$ for every $\tau\in[0,1]$, if $\sigma_{\mathcal{I}_{1}}$ suitably large, depending only on $\varepsilon$, $a$, $b$, $\eta$, $C_{\mathcal{I}_1}$, and $E$. In particular, we see that $\Sigma_0 \in \mathcal{A} (a_{\mathcal{I}_{1}}, b_{\mathcal{I}_{1}},\eta_{\mathcal{I}_{1}})$ is a surface with constant  mean curvature $H(\Sigma_{0})\equiv\sfrac{2}{\sigma}$ with respect to $\mathcal{I}_{0}$\footnote{Note that although it was assumed throughout the proof of Theorem \ref{thExistence} that $a=0$, this proof extends straightforwardly to deal with the general case $a\in [0,1)$. In fact, in the view of Lemma \ref{lemConstEvolution}, we may set $a_{\mathcal{I}_{1}}=a$.}. By Theorem \ref{thUniquenessCMC}, such a surface is unique in this class. By the method of continuity approach and the local uniqueness in the Implicit Function Theorem, the map $\Phi$ is uniquely determined also by its start value $\Phi(0,\mathbb{S}^2) = \Sigma_0$. It follows directly that $\Sigma_{1} = \Phi(1,\mathbb{S}^2)$ is uniquely determined by its spacetime mean curvature in $\mathcal{I}_{1}$.
\end{proof}

%%%%%%%%%%%%%%%%%%%%%%%%%%%%%%%%%%%%%%%%%%%%%%%%%%%%%%%%%%%%%%%%%%%%%%%%%%

\section{The coordinate center of the STCMC-foliation}\label{secADMstyleexpr}
% !TEX root = main.tex
Let $\{\Sigma^\sigma\}_{\sigma>\overline{\sigma}}$ be a foliation of a $C^2_{\sfrac{1}{2}+\varepsilon}$-asymptotically Euclidean initial data set $\mathcal{I}=\IDS$ for which $\Sigma^{\sigma}$ grows to the round sphere at infinity as $\sigma\to\infty$. Then we may define the \emph{coordinate center} of this foliation as the limit $\lim_{\sigma\to \infty} \vec{z}^{\,\sigma}$, where $\vec{z}^{\,\sigma}=\vec{z}\,(\Sigma^{\sigma})$ is the coordinate center of $\Sigma^\sigma$ as defined in Definition \ref{defConcentric}, provided that this limit exists (in $\R^{3}$). We would like to draw the attention of the reader to the fact that, while the foliations considered here do not depend on the choice of asymptotic coordinates $\vec{x}$, the coordinate centers $\vec{z}^{\,\sigma}$ and as a consequence also their limit, do depend on $\vec{x}$. We will discuss the subtle consequences of this within this section, too.

Let us first consider the case of the CMC-foliation: In this case, $\{\Sigma^\sigma\}_{\sigma>\sigma_{\mathcal{I}_0}}$ is the unique foliation of a given $C^2_{\sfrac{1}{2}+\varepsilon}$-asymptotically Euclidean manifold $(M^3,g)$ or initial data set $\mathcal{I}_0=(M^{3},g,K\equiv0,\scal,J\equiv0)$ by surfaces of constant mean curvature constructed in \cite{HY,MetzgerCMC,NerzCMC} and discussed in Section \ref{secExistence} above. Under the additional assumption that $(M^{3},g)$ satisfies the Riemannian $C^2_{1+\varepsilon}$-Regge--Teitelboim condition (see Definition \ref{defRT}), the coordinate center of this foliation is well-defined if and only if the Beig--\'O Murchadha center of mass $\vec{C}_\text{B\'OM}$ given by \eqref{BOMRT} is well-defined as was shown by \cite{HuangstableCMC,NerzCMC}. They also show that in this case, one has
\begin{align}\label{eqLimitCMC}
\vec{C}_{\text{CMC}}\definedas\lim_{\sigma\to \infty} \vec{z}^{\,\sigma} = \vec{C}_\text{B\'OM}.
\end{align}

Now suppose that $\{\Sigma^\sigma\}_{\sigma>\sigma_{\mathcal{I}_1}}$ is the unique foliation of a $C^2_{\sfrac{1}{2}+\varepsilon}$-asymptotically Euclidean initial data set $\mathcal{I}=\IDS$ by surfaces of constant spacetime mean curvature as constructed in Theorem \ref{thExistence}. One cannot in general expect that \eqref{eqLimitCMC} also holds for the STCMC-foliation because the foliation is defined in terms of $K$, whereas the Beig--\'O Murchadha center of mass is a purely Riemannian quantity, i.e.~independent of $K$. One can only expect that \eqref{eqLimitCMC} will hold if $K$ falls off very fast, in particular faster than the optimal decay assumed in this paper. In this section we will confirm that this is indeed the case. 

\subsection{A variational formula for STCMC-surfaces}
The following proposition generalizes \cite[Proposition 6.5]{NerzCMC} to the spacetime case. 
                                                                               
\begin{proposition}\label{propCenterMotion}
For $b \geq 0$ and $\eta \in (0,1]$, let $\Sigma\in\mathcal{A}(0,b,\eta)$ be a closed, oriented constant spacetime mean curvature $2$-surface with spacetime mean curvature $\mathcal{H}\equiv\sfrac{2}{\sigma}$, with outer unit normal denoted by $\nu$, in a $C^2_{\sfrac{1}{2}+\varepsilon}$-asymptotically Euclidean initial data set $\mathcal{I}=\IDS$.  Consider a $C^1$-map $\mathcal{F}\colon (-s_0, s_0) \times \Sigma \rightarrow M^3$ such that $\mathcal{F}(0,\cdot) = \id_\Sigma$. For each $s\in (-s_0,s_0)$, we let $\vec{z}_s = (z_s^1,z_s^2,z_s^3)$ denote the coordinate center of the surface $\Sigma_s=\mathcal{F}(s,\Sigma)$ and consider the \emph{lapse} $u \definedas g((\partial_s \mathcal{F})\left\vert\right._{s=0}, \nu)$ of the foliation.

Then there are constants $C>0$ and $\overline{\sigma}>0$ depending only on $\varepsilon$, $b$, $\eta$, $C_{\mathcal{I}}$ such that 
\begin{align}\label{eqVarCenter}
\left\vert \left.(\partial_s z_s^i)\right\vert_{s=0} - \frac{3}{\vert \Sigma\vert } \int_\Sigma  u \nu^i \, d\mu \right\vert  \leq \frac{C}{\sigma^{\frac{3}{2} +\varepsilon}} \Vert u\Vert _{L^2(\Sigma)}, \qquad i=1,2,3,
\end{align}
provided that $\sigma>\overline{\sigma}$.
\end{proposition}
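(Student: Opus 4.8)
The claimed identity is a perturbation of the purely Euclidean formula for the first variation of the (Euclidean) coordinate center of a surface under a normal deformation. The plan is to compute $\partial_s z_s^i$ directly from the definition $z_s^i = |\Sigma_s|_\delta^{-1} \int_{\Sigma_s} x^i \, d\mu^\delta$ using the Euclidean first variation of area formula, and then to control all the error terms coming from the difference between the actual geometry of $\Sigma$ and the round sphere $\mathbb{S}^2_r(\vec{z}\,)$ using the a priori estimates collected in Proposition \ref{propRegularity}. First I would note that since $\Sigma \in \mathcal{A}(0,b,\eta)$ has constant spacetime mean curvature $\mathcal{H}\equiv\sfrac{2}{\sigma}$, Proposition \ref{propRegularity} applies, giving that $\Sigma$ is a graph of a function $f$ with $\|f\|_{W^{2,\infty}(\mathbb{S}^2_r(\vec{z}\,))} \leq C r^{\sfrac{1}{2}-\varepsilon}$ over the round sphere, the radius comparison \eqref{eqDistArea} and \eqref{eqMCArea}, and the estimate \eqref{eq2FFtraceless} on the tracefree second fundamental form; these are the inputs that make the Euclidean geometry of $\Sigma$ close to that of $\mathbb{S}^2_r(\vec{z}\,)$.

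The main computation goes as follows. Writing $\mathcal{F}$ in terms of its normal component (the lapse $u$) and a tangential component, and using that tangential reparametrizations do not change $z_s^i$ to first order, I would reduce to the case where $\partial_s\mathcal{F}|_{s=0} = u\nu$. Then by the Euclidean first variation of area formula applied to $\Sigma_s \hookrightarrow (\R^3,\delta)$, one has $\partial_s(d\mu^\delta)|_{s=0} = (\text{velocity}\cdot H^\delta \nu^\delta_{\text{-component}}) \, d\mu^\delta$, and $\partial_s(x^i)|_{s=0}$ equals the $i$-th component of the velocity field. Collecting these, $\partial_s z_s^i|_{s=0}$ equals $|\Sigma|_\delta^{-1}\int_\Sigma (u^\delta\nu^{\delta,i} + x^i u^\delta H^\delta) \, d\mu^\delta$ minus the term coming from differentiating $|\Sigma_s|_\delta^{-1}$, where $u^\delta$ and $\nu^\delta$ denote the Euclidean versions of the lapse and normal. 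On a round sphere $\mathbb{S}^2_r(\vec{z}\,)$ one has $H^\delta = 2/r$, $\nu^\delta = (\vec{x}-\vec{z}\,)/r$, and $\int_{\mathbb{S}^2_r(\vec{z}\,)} x^i H^\delta \,d\mu^\delta$ combines with the area-derivative term to leave precisely $\tfrac{3}{|\Sigma|}\int u\nu^i$; this is the exact Euclidean identity one finds in \cite[Proposition 6.5]{NerzCMC} in the Riemannian case, for which the same argument works verbatim, and the factor $3$ arises from the $2/r$ mean curvature together with the relation $\int_{\mathbb{S}^2_r} (\nu^i)^2 \, d\mu^\delta = \tfrac{1}{3}|\mathbb{S}^2_r|$. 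The point is that the STCMC-condition enters \emph{only} through Proposition \ref{propRegularity}: once one knows $\Sigma$ satisfies \eqref{eq2FFtraceless}, \eqref{eqGraphFunction}, \eqref{eqDistArea}, \eqref{eqMCArea}, the computation is identical to the Riemannian CMC-case, because the first variation of the Euclidean coordinate center is a purely Euclidean notion and the deformation $\mathcal{F}$ is arbitrary.

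Next I would estimate the errors. There are three sources: (a) the difference between $g$ and $\delta$ on $\Sigma$, which by \eqref{eqOptDecay1} and the radius comparison is $O(r^{-\sfrac{1}{2}-\varepsilon})$ pointwise, affecting the relation between $u$, $\nu$, $d\mu$ and their Euclidean counterparts $u^\delta$, $\nu^\delta$, $d\mu^\delta$; (b) the difference between $H^\delta$ on $\Sigma$ and $2/r$, controlled by $|H^\delta - 2/r| \leq C\|\mathring{A}^\delta\|$ type estimates from De Lellis--Müller as used in Proposition \ref{propRegularity}, giving $O(r^{-\sfrac{3}{2}-\varepsilon})$ in an appropriate norm; (c) the difference between $\nu^\delta$ on $\Sigma$ and the exact radial field $(\vec{x}-\vec{z}\,)/r$, controlled by $\|f\|_{W^{2,\infty}}$. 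Each of these contributes, after multiplying by the relevant powers of $r$ (which is comparable to $\sigma$ by \eqref{eqMCArea}) and applying the Cauchy--Schwarz Inequality with $\|u\|_{L^2(\Sigma)}$ on the right, a term bounded by $C\sigma^{-\sfrac{3}{2}-\varepsilon}\|u\|_{L^2(\Sigma)}$, which is exactly the claimed bound. The \textbf{main obstacle} I anticipate is bookkeeping: carefully tracking that every error term, including the one from $\partial_s|\Sigma_s|_\delta^{-1}$ and those hidden in the passage from $g$-quantities to $\delta$-quantities (using Lemma \ref{lemComparison}), genuinely decays at least as fast as $\sigma^{-\sfrac{3}{2}-\varepsilon}$ relative to $\|u\|_{L^2(\Sigma)}$ — in particular making sure the borderline $O(r^{-\sfrac{1}{2}-\varepsilon})$ metric perturbation only enters error terms already carrying a spare factor of $r^{-1}$ or more, so that the net decay is $r^{-\sfrac{3}{2}-\varepsilon}$ and not slower. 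Since Proposition \ref{propRegularity} and Lemma \ref{lemComparison} are precisely tuned to this setting, and since the corresponding Riemannian statement \cite[Proposition 6.5]{NerzCMC} uses the same decay rates, this should go through without any new analytic difficulty, and one can essentially cite the Riemannian argument, replacing ``CMC'' by ``STCMC'' and ``Proposition 4.4 of \cite{NerzCMC}'' by our Proposition \ref{propRegularity}.
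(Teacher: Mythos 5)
Your proposal is correct and follows essentially the same route as the paper: reduce to a normal variation, differentiate $z_s^i$ via the Euclidean first variation of area, extract the main term $\tfrac{3}{\vert\Sigma\vert}\int_\Sigma u\nu^i\,d\mu$ using the round-sphere approximation supplied by Proposition \ref{propRegularity} (i.e. $H^\delta\approx\tfrac{2}{r}$ and $\nu^\delta\approx\tfrac{\vec{x}-\vec{z}}{r}$), and absorb the $g$-versus-$\delta$ discrepancies via Lemma \ref{lemComparison} and Cauchy--Schwarz into the $C\sigma^{-\frac{3}{2}-\varepsilon}\Vert u\Vert_{L^2(\Sigma)}$ error. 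The only cosmetic difference is that the paper produces the factor $3$ by adding rich zeroes ($1+2$ from the normal term and the mean-curvature term), whereas your $\int(\nu^i)^2=\tfrac{1}{3}\vert\mathbb{S}^2_r\vert$ remark is a consistency check rather than the mechanism, which does not affect correctness.
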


\begin{proof}
Since the coordinate center of a surface is invariant under tangential diffeomorphisms (along $\Sigma$), we may without loss of generality assume that $\mathcal{F}$ is a normal variation of $\Sigma$, such that in particular $\left.(\partial_s \mathcal{F})\right\vert_{s=0} = u\nu$ holds on $\Sigma$. By definition,
\begin{align*}
z_s^i= \frac{1}{\vert \Sigma_s\vert_{\delta} } \int_{\Sigma_s} x^i_s \, d\mu^\delta.
\end{align*}
Using the variation of area formula and adding rich zeroes in the third and the forth lines, we compute, dropping the explicit reference to $\delta$ in the denominator,
 \begin{align*}
\begin{split}
&\left.(\partial_s z^i_s)\right\vert_{s=0} \\
&= \frac{1}{\vert \Sigma\vert } \left(\int_\Sigma u\nu^i \,d\mu^\delta + \int_\Sigma x^i u H^\delta \,d\mu^\delta - \frac{1}{\vert \Sigma\vert } \int_\Sigma x^i \,d\mu^\delta \int_\Sigma u H^\delta \,d\mu^\delta \right)\\
& = \frac{1}{\vert \Sigma\vert } \left(\int_\Sigma u\nu^i \,d\mu^\delta + \int_\Sigma (x^i-z^i) u H^\delta \,d\mu^\delta \right)\\
& = \frac{1}{\vert \Sigma\vert } \left(\int_\Sigma u\nu^i \,d\mu^\delta + 2\int_\Sigma \frac{x^i-z^i}{r} u  \,d\mu^\delta + \int_\Sigma (x^i-z^i) u \left(H^\delta-\frac{2}{r}\right) \,d\mu^\delta \right)\\
& = \frac{1}{\vert \Sigma\vert }\left(3\int_\Sigma u\nu^i \,d\mu^\delta + 2\int_\Sigma \left(\frac{x^i-z^i}{r} -\nu^i\right)u  \,d\mu^\delta + \int_\Sigma (x^i-z^i) u \left(H^\delta-\frac{2}{r}\right) \,d\mu^\delta \right).
\end{split}
\end{align*}
Now subtract and add the component $(\nu^{\delta})^{i}$ of the $\delta$-outward unit normal $\nu^{\delta}$ to $\Sigma$ in the bracket of the second term and recall the fact that $\Sigma$ can be written as a graph over $\mathbb{S}^{2}_{r}(\vec{z}\,)$ with graph function $f$ satisfying $\| f\|_{W^{2,\infty}}=O(r^{\frac{1}{2}-\varepsilon})$ by Proposition~\ref{propRegularity}. Then, by comparability of $\vert \vec{x}\,\vert$ and $r$ as established in Proposition \ref{propRegularity}, we find, recalling $a=0$ in our case,
\begin{align}\label{eqnudelta}
\vec{\nu}^{\,\delta}=\frac{\vec{x}-\vec{z}}{\vert \vec{x}-\vec{z}\vert}+O(\sigma^{-\frac{1}{2}-\varepsilon})=\frac{\vec{x}-\vec{z}}{r}+O(\sigma^{-\frac{1}{2}-\varepsilon}).
\end{align}
Applying the Cauchy--Schwarz Inequality to the above identity for $(\partial_s z^i_s)\left\vert\right._{s=0} $ and using \eqref{eqnudelta}, Lemma \ref{lemComparison}, and \eqref{eqFallOff} to estimate the individual terms, respectively, we obtain \eqref{eqVarCenter}.
\end{proof}

\subsection{STCMC-center of mass}
In Section \ref{secExistence}, we constructed the unique STCMC-foliation of a $C^2_{\sfrac{1}{2}+\varepsilon}$-asymptotically Euclidean initial data set $\mathcal{I}=\IDS$, i.e.~the unique foliation by surfaces $\{\Sigma_1^\sigma\}_{\sigma>\sigma_1}$ of constant spacetime mean curvature $\mathcal{H}(\Sigma_1^\sigma) \equiv\sfrac{2}{\sigma}$, provided it has non-vanishing energy $E\neq0$. This was achieved by deforming the constant mean curvature foliation $\{\Sigma_0^\sigma\}_{\sigma>\sigma_0}$ of the $C^2_{\sfrac{1}{2}+\varepsilon}$-asymptotically Euclidean manifold $(M^{3},g)$ from Theorem \ref{thExistenceCMC} along the curve of initial data sets $\{\mathcal{I}_\tau\}_{\tau \in [0,1]}$, where $\mathcal{I}_\tau = (M^{3},g,\tau K, \mu_\tau, \tau J)$ is as described in Section \ref{ssecStrategy}. We will now apply Proposition \ref{propCenterMotion} to find how the coordinate center of a leaf changes under this particular deformation. As a result, we prove Lemma \ref{lemSTCMC-CoM} relating the respective coordinate centers $\vec{z}^{\,\sigma}_0$ and $\vec{z}^{\,\sigma}_1$ of the surfaces $\Sigma^\sigma_0$ and $\Sigma^\sigma_1$. Note that for the proof of this result, it is necessary to assume that the fall-off rate of $K$ is $K=O(\vert \vec{x}\,\vert ^{-2})$, which is faster than we originally assumed in Definition \ref{defAEdata} and in particular faster than one needs for existence and uniqueness of the foliation. See also Conjecture \ref{conj:convergence} below.
                                                                   
\begin{lemma}\label{lemSTCMC-CoM}
Let $\mathcal{I}=\IDS$ be an STCMC-foliated $C^2_{\sfrac{1}{2}+\varepsilon}$-asymptotically Euclidean initial data set with non-vanishing energy $E \neq 0$. Assume in addition that 
\begin{align}\label{eqKExtra}
\vert K\vert \leq C_{\mathcal{I}} \vert \vec{x}\,\vert ^{-2}
\end{align}
for $\vec{x}\in\R^{3}\setminus\overline{B_{R}(0)}$, with $\vec{x}$ the asymptotic chart. Then there exist constants $C>0$, $\overline{\sigma}>0$ depending only on $\varepsilon$, $C_{\mathcal{I}}$, and $E$ such that for $i=1,2,3$ we have for all $\sigma>\overline{\sigma}$
\begin{align}\label{eqDeformationCoM}
\left\vert (z_1^\sigma)^i - (z_0^\sigma)^i - \frac{1}{32\pi E}\int_{\mathbb{S}^{2}_{\sigma}} \frac{x^i \left(\sum_{k,l}\pi_{kl}x^k x^l\right)^2}{\sigma^3} \, d\mu^\delta \right\vert  \leq \frac{C}{\sigma^{\varepsilon}},
\end{align}
where $(z_0^\sigma)^i$ and $(z_1^\sigma)^i$ denote the components of the coordinate centers $\vec{z}^{\,\sigma}_0$ and $\vec{z}^{\,\sigma}_1$ of the STCMC-surfaces $\Sigma^{\sigma}_{0}$ and $\Sigma^{\sigma}_{1}$ with respect to $\mathcal{I}_{0}$ and $\mathcal{I}_{1}$ as defined above, respectively.
\end{lemma}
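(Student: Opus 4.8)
The plan is to integrate the variational formula of Proposition~\ref{propCenterMotion} along the deformation $\tau\mapsto\mathcal{I}_\tau$, $\tau\in[0,1]$, using the lapse function $u=u^\sigma_\tau$ of the normal variation map $\mathcal{F}^\sigma$ from Theorem~\ref{thExistence}. Fixing $\sigma>\overline{\sigma}$, applying Proposition~\ref{propCenterMotion} to the family $\{\Sigma^\sigma_\tau\}_{\tau\in[0,1]}$ gives, for each $i=1,2,3$,
\begin{align*}
(z_1^\sigma)^i-(z_0^\sigma)^i=\int_0^1 \frac{3}{|\Sigma^\sigma_\tau|}\int_{\Sigma^\sigma_\tau} u^\sigma_\tau (\nu_\tau)^i\,d\mu\,d\tau + \text{(error)},
\end{align*}
where the error is controlled by $\int_0^1 C\sigma^{-\frac32-\varepsilon}\|u^\sigma_\tau\|_{L^2(\Sigma^\sigma_\tau)}\,d\tau$. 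By Lemma~\ref{lemLapse} applied with $\delta=2$ (this is exactly where the extra decay assumption~\eqref{eqKExtra} is used, since it ensures $P=\tr_{\Sigma^\sigma_\tau}K=O(\sigma^{-2-\varepsilon})$, hence $\delta\ge\frac32$ can be taken to be $2$), we have $\|u^\sigma_\tau\|_{W^{2,2}}\le C\sigma^{5-2\delta-2\varepsilon}=C\sigma^{1-2\varepsilon}$, so the error term is $O(\sigma^{-\frac12-3\varepsilon})$, hence negligible against the claimed $O(\sigma^{-\varepsilon})$.

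Next I would compute the leading term $\int_{\Sigma^\sigma_\tau} u^\sigma_\tau(\nu_\tau)^i\,d\mu$. From the PDE~\eqref{eqPDEu}, $\mathcal{L}u^\sigma_\tau=\tau(\tr_{\Sigma^\sigma_\tau}K)^2/H$, and the structure of $\mathcal{L}$ together with the eigenvalue analysis (Lemma~\ref{lemLaplace}, Proposition~\ref{propInvertibility}) shows that the translational part $u^t$ of $u^\sigma_\tau$ is what contributes to $\int u(\nu_\tau)^i\,d\mu$ to leading order, since $\nu^i$ is (up to lower-order corrections, by~\eqref{eqnudelta}) the $i$-th coordinate eigenfunction. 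Testing the equation $\mathcal{L}u^\sigma_\tau=\tau P^2/H$ against $f_i$ and using~\eqref{eqTranslational} to invert $\mathcal{L}$ on the translational part — precisely, $\int_\Sigma u^t f_i\,d\mu\approx\frac{\sigma^3}{6m_{\mathcal H}}\int_\Sigma (\mathcal{L}u)f_i\,d\mu=\frac{\sigma^3}{6m_{\mathcal H}}\int_\Sigma \frac{\tau P^2}{H}f_i\,d\mu$ — and replacing $m_{\mathcal H}$ by $E$ via~\eqref{eqEmH}, $H$ by $\frac2\sigma$, $f_i$ by $\sqrt{\tfrac{3}{4\pi r^4}}x^i$, $r$ by $\sigma$, and (in the relevant integral) $\Sigma^\sigma_\tau$ by $\mathbb{S}^2_\sigma$ and $P$ by its leading expression in terms of $\pi_{kl}$, one obtains
\begin{align*}
\frac{3}{|\Sigma^\sigma_\tau|}\int_{\Sigma^\sigma_\tau} u^\sigma_\tau(\nu_\tau)^i\,d\mu = \frac{\tau}{16\pi E}\int_{\mathbb{S}^2_\sigma}\frac{x^i\left(\sum_{k,l}\pi_{kl}x^kx^l\right)^2}{\sigma^3}\,d\mu^\delta + O(\sigma^{-\varepsilon}).
\end{align*}
Integrating over $\tau\in[0,1]$ produces the factor $\int_0^1\tau\,d\tau=\tfrac12$, which accounts for the $\tfrac{1}{32\pi E}$ in~\eqref{eqDeformationCoM} versus the $\tfrac{1}{16\pi E}$ appearing inside, and yields the claim.

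The main obstacle will be the bookkeeping in the second step: one must carefully show that all the substitutions (replacing the geometric quantities on $\Sigma^\sigma_\tau$ by their Euclidean counterparts on the coordinate sphere $\mathbb{S}^2_\sigma$, replacing $P$ and $\pi_{kl}$ consistently, controlling $\nu^i$ vs.\ the coordinate eigenfunction via~\eqref{eqnudelta}, and handling the $\tau$-dependence of $r_\tau$, $\vec z_\tau$ via Lemma~\ref{lemEstimates}) each introduce only errors of size $O(\sigma^{-\varepsilon})$ or smaller. The delicate point is that $P^2/H=O(\sigma^{-3-2\varepsilon})$ is integrated against a quantity that, after multiplying by $\sigma^3$, is $O(1)$ on the sphere; the various corrections to the integrand are each suppressed by at least one extra power $\sigma^{-\varepsilon}$ relative to this leading term (coming from the asymptotic decay, the graph-function estimate $\|f\|_{W^{2,\infty}}=O(\sigma^{\frac12-\varepsilon})$, and~\eqref{eqEmH}), so the accumulated error is indeed $O(\sigma^{-\varepsilon})$, but verifying this term by term is where the real work lies. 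Everything else is a direct assembly of Proposition~\ref{propCenterMotion}, Lemma~\ref{lemLapse}, and the invertibility estimates of Proposition~\ref{propInvertibility}.
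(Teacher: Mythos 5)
Your proposal follows essentially the same route as the paper's proof: integrate the variational formula of Proposition~\ref{propCenterMotion} in $\tau$, control the lapse via Lemma~\ref{lemLapse} using the improved decay \eqref{eqKExtra}, pass from $\nu^i$ to $f_i$, invert $\mathcal{L}$ on the translational part via \eqref{eqTranslational} and substitute the PDE \eqref{eqPDEu}, then replace all quantities by their Euclidean counterparts and pick up the factor $\int_0^1\tau\,d\tau=\tfrac12$. One small correction: since \eqref{eqKExtra} only gives $\vert K\vert=O(\vert\vec{x}\,\vert^{-2})$, Lemma~\ref{lemLapse} should be applied with $\delta=2-\varepsilon$ (as the paper does), not $\delta=2$, yielding $\Vert u\Vert_{W^{2,2}}\leq C\sigma$ and $\mathcal{L}u=O(\sigma^{-3})$ rather than your slightly stronger exponents; this does not affect the conclusion, as the resulting error $O(\sigma^{-\frac12-\varepsilon})$ is still negligible against $O(\sigma^{-\varepsilon})$.
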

\begin{remark}
Instead of assuming $K=O(\vert\vec{x}\,\vert^{-2})$, one could also assume Regge--Teitelboim conditions on $K$ by carefully tracking all even and odd parts or identify other sufficient decay conditions, as necessary for what one wants to do. For our purposes, it is enough to assume $K=O(\vert \vec{x}\,\vert^{-2})$.
\end{remark}
\begin{remark}
Assumption \eqref{eqKExtra}, Equation \eqref{eqDifferent} in the proof below, and the Mean Value Theorem imply that there exists a constant $C>0$ depending only on $\varepsilon$, $C_{\mathcal{I}}$, and $E$ such that 
\begin{align*}
\vert \vec{z}^{\,\sigma}_1 - \vec{z}^{\,\sigma}_0\vert  \leq C \quad \text{ for any } \sigma>\overline{\sigma}.
\end{align*}
However, either or both of the limits \begin{align*}\vec{C}_\text{CMC} = \lim_{\sigma\to\infty} \vec{z}^{\,\sigma}_0 \quad \text{ and } \quad \vec{C}_\text{STCMC} \definedas \lim_{\sigma\to\infty} \vec{z}^{\,\sigma}_1,\end{align*} may fail to exist. On the other hand, $\vec{C}_\text{STCMC} = \lim_{\sigma\to\infty} \vec{z}^{\,\sigma}_1$ converges if and only if $\lim_{\sigma\to\infty}\left\lbrace\vec{z}^{\,\sigma}_0 - \frac{1}{32\pi E}\int_{\mathbb{S}^{2}_{\sigma}} \frac{ \left(\sum_{k,l}\pi_{kl}x^k x^l\right)^2\vec{x}}{\sigma^3} \, d\mu^\delta\right\rbrace$ converges. This in particular shows that $K$ can in a sense ``compensate'' for the diverging coordinate center of the CMC-foliation. See Section \ref{secHYCN} for more details on this.
\end{remark}

\begin{proof}   
First, note that the constants $C_{\mathcal{I}_\tau}$ are uniformly bounded by the constant $C_{\mathcal{I}}$. Second, pick constants $b\geq 0$ and $\eta\in(0,1]$ that will remain fixed in this argument and always use the class $\mathcal{A}(0,b,\eta)$ in what follows. Also, $C>0$ and $\overline{\sigma}>0$ denote generic constants that may vary from line to line, but depend only on $\varepsilon$, $C_{\mathcal{I}}$, and $\vert E\vert $ (as well as on our global choice of $b$ and $\eta$). From now on we assume that $\sigma>\overline{\sigma}$ is fixed. For our choice of $\sigma$ and for $\tau\in [0,1]$, we let $z_\tau^i = (z_\tau^{\sigma})^i$, $i=1,2,3$, denote the components of the coordinate center $\vec{z}\,(\Sigma^{\sigma}_{\tau})$ of the unique surface $\Sigma^{\sigma}_\tau$ of constant spacetime mean curvature $\mathcal{H}(\Sigma^{\sigma}_\tau)\equiv\sfrac{2}{\sigma}$ in the initial data set $\mathcal{I}_\tau$ (see Section \ref{secExistence} for details). Since the index $\sigma$ is assumed to be fixed, it will be suppressed in the remainder of this proof. 

According to Proposition \ref{propCenterMotion}, the variation of the coordinate center with respect to $\tau$ is given by the formula 
\begin{align}\label{eqCenterMotion}
\left\vert  \partial_\tau z_\tau^i  - \frac{3}{\vert \Sigma_\tau\vert } \int_{\Sigma_\tau}   u_\tau \nu_\tau^i \, d\mu \right\vert  \leq \frac{C}{\sigma^{\frac{3}{2} +\varepsilon}} \Vert u_\tau\Vert _{L^2(\Sigma_\tau)}, 
\end{align}
where $u_\tau$ is the respective lapse function for an arbitrary $\tau\in[0,1]$ and $\nu_\tau$ is the outward pointing unit normal to $\Sigma_\tau \hookrightarrow (M^3,g)$. In order to pass from \eqref{eqCenterMotion} to \eqref{eqDeformationCoM}, we will apply Lemma \ref{lemLapse} with $\delta=2-\varepsilon\geq\tfrac{3}{2}$. By this, we have that $\mathcal{L} u_\tau = O(\sigma^{-3})$, $\Vert (u_\tau)^t\Vert _{W^{2,2}(\Sigma_\tau)} \leq C\sigma$ and $\Vert (u_\tau)^d\Vert _{W^{2,2}(\Sigma_\tau)} \leq C\sigma^{\frac{1}{2}-\varepsilon}$.

Next, by \eqref{eqEigenEstimates1b} and Lemma \ref{lemComparison} we have  
\begin{align}\label{eqNormal2}
\left\Vert \sqrt{\frac{3}{\vert\Sigma_\tau\vert}} \,\nu_\tau^i - f_\tau^i\right\Vert_{L^2(\Sigma_\tau)}\leq \frac{C}{ \sigma^{\frac{1}{2}+\varepsilon}},
\end{align}
where $f_\tau^i$ denotes the $i$-th eigenfunction of the operator $-\Delta^{\Sigma_\tau}$, see Section \ref{secEigenLaplace}. Then we may rewrite \eqref{eqCenterMotion} by a Cauchy--Schwarz Inequality and Lemma \ref{lemLapse} with $\delta=2-\varepsilon$ as
\begin{align*}
\left\vert  \partial_\tau z_\tau^i  - \sqrt{\frac{3}{\vert \Sigma_\tau\vert }} \int_{\Sigma_\tau} u_\tau f_\tau^i  \, d\mu \right\vert  \leq \frac{C}{\sigma^{\frac{3}{2} +\varepsilon}}\Vert u_\tau\Vert _{L^2(\Sigma_\tau)}\leq \frac{C}{\sigma^{\frac{1}{2} +\varepsilon}}.
\end{align*}
At the same time, Proposition \ref{propInvertibility} implies that
\begin{align*}
\left\vert \sqrt{\frac{3}{\vert \Sigma_\tau\vert }}\int_{\Sigma_\tau} u_\tau f_\tau^i \, d\mu - \frac{\sigma^3}{2\sqrt{3} m_{\mathcal{H}}\sqrt{\vert \Sigma_\tau\vert }}\int_{\Sigma_\tau} f_\tau^i \, \mathcal{L} (u_\tau)^t \, d\mu \right\vert  \leq \frac{C}{\sigma^{\varepsilon}}\frac{\Vert (u_{\tau})^{t}\Vert_{L^{2}(\Sigma_{\tau})}}{\sqrt{\vert\Sigma_\tau\vert}}\leq\frac{C}{\sigma^{\varepsilon}},
\end{align*}
where $m_{\mathcal{H}}$ is the Hawking mass of $\Sigma_\tau$ with respect to $\mathcal{I}_{\tau}$. Recall that $\vert m_{\mathcal{H}}-E\vert\leq C\sigma^{-\varepsilon}$ by Proposition \ref{propInvertibility} so that $m_{\mathcal{H}}\neq0$ follows from $E\neq0$. Thus,
\begin{align}\label{eqCoMIntermediate}
\left\vert \partial_\tau z_\tau^i - \frac{\sigma^3}{2\sqrt{3} m_{\mathcal{H}}\sqrt{\vert \Sigma_\tau\vert }}\int_{\Sigma_\tau} f_\tau^i \, \mathcal{L}(u_\tau)^t \, d\mu \right\vert  \leq \frac{C}{\sigma^{\varepsilon}}.
\end{align}
Note that a computation in the proof of Lemma \ref{lemLapse} shows that
\begin{align*}
\int_{\Sigma_\tau} f_\tau^i \, \mathcal{L} (u_\tau)^d \, d\mu = \int_{\Sigma_\tau}   (u_\tau)^d \mathcal{L} f_\tau^i \,  d\mu + C \sigma^{-4} \|u^d\|_{L^2(\Sigma_\tau)}
\leq \frac{C}{\sigma^{\tfrac{5}{2}+\varepsilon}}\|u^d\|_{L^2(\Sigma_\tau)}
\leq C \sigma^{-2-2\varepsilon},
\end{align*}
hence $\int_{\Sigma_\tau} f_\tau^i \, \mathcal{L} (u_\tau)^t \, d\mu = \int_{\Sigma_\tau} f_\tau^i \, \mathcal{L} u_\tau \, d\mu+ O(\sigma^{-2-2\varepsilon})$. Consequently, in view of \eqref{eqPDEu} and \eqref{eqNormal2}, and the Cauchy--Schwarz Inequality, \eqref{eqCoMIntermediate}  is equivalent to
\begin{align}\label{eqSame}
\left\vert \partial_\tau z_\tau^i - \frac{\sigma}{8\pi m_{\mathcal{H}}}\int_{\Sigma_\tau}  \frac{\tau(\tr_{\Sigma_\tau} K)^2}{H_{\Sigma_\tau}} \nu_\tau^i \, d\mu \right\vert  \leq \frac{C}{\sigma^{\varepsilon}}.
\end{align}
Since the expression in the integral is of order $O(\sigma^{-3})$ as mentioned above, and since
\begin{align*}
\tr_{\Sigma_\tau} K = \tr K - K(\nu_\tau, \nu_\tau) = \pi(\nu_\tau,\nu_\tau),
\end{align*}
using as before the fact that $m_{\mathcal{H}} = E + O(\sigma^{-\varepsilon})$ along the STCMC-foliation, we conclude by Lemma~\ref{lemComparison} that
\begin{align}\label{eqDifferent}
\left\vert \partial_\tau z_\tau^i - \frac{\tau}{16\pi E}\int_{\mathbb{S}^{2}_{\sigma}} \frac{x^i \left(\sum_{k,l}\pi_{kl}x^k x^l\right)^2}{\sigma^3} \, d\mu^\delta \right\vert  \leq \frac{C}{\sigma^{\varepsilon}}.
\end{align}
Integrating this with respect to $\tau$ over $[0,1]$, we obtain \eqref{eqDeformationCoM}. 
\end{proof}

\begin{theorem}[STCMC-coordinate expression]\label{thCoordinateExpression}
Let $\mathcal{I}=\IDS$ be a $C^2_{\sfrac{1}{2}+\varepsilon}$-asymptotically Euclidean initial data set with respect to an asymptotic coordinate chart $\vec{x}\colon M^{3}\setminus \mathcal{B}\to\R^{3}\setminus\overline{B_{R}(0)}$  and decay constant $C_{\mathcal{I}}$, with non-vanishing energy $E \neq 0$. Assume in addition that
\begin{align}
\vert K\vert \leq C_{\mathcal{I}} \vert \vec{x}\,\vert ^{-2}
\end{align}
for all $\vec{x}\in\R^{3}\setminus\overline{B_{R}(0)}$ and that $g$ satisfies the Riemannian $C^2_{\sfrac{3}{2}+\varepsilon}$-Regge--Teitelboim condition. Then the coordinate center $\vec{C}_\text{STCMC}$ of the unique foliation by surfaces of constant spacetime mean curvature is well-defined if and only if the \emph{correction term}
\begin{align*}
Z^i \definedas \frac{1}{32\pi E} \lim_{r\to\infty}\int_{\mathbb{S}^{2}_{r}} \frac{x^i \left(\sum_{k,l}\pi_{kl}x^k x^l\right)^2}{r^3} \, d\mu^\delta
\end{align*}
limits exist for $i=1,2,3$. In this case, we have 
\begin{align}\label{eqCenterSTCMC}
\vec{C}_\text{STCMC} = \vec{C}_\text{B\'OM} +  \vec{Z},
\end{align}
where $\vec{C}_\text{B\'OM}$ is the Beig--\'O~Murchadha center of mass and $\vec{Z}=(Z^1,Z^2,Z^3)$, or equivalently
\begin{align*}
\begin{split}
C^i_\text{STCMC} = \frac{1}{16\pi E}\lim_{r\to \infty} \left[\int_{\mathbb{S}^{2}_{r}}   \left(x^i \sum_{k,l}(\partial_k g_{kl} - \partial_l g_{kk})\frac{x^l}{r} - \sum_k \left(g_{ki}\frac{x^k}{r}-g_{kk}\frac{x^i}{r}\right) \right) d\mu^\delta \right.\\ + \left.\int_{\mathbb{S}^{2}_{r}} \frac{x^i \left(\sum_{k,l}\pi_{kl}x^k x^l\right)^2}{2r^3} \, d\mu^\delta\right], \quad  i=1,2,3.
\end{split}
\end{align*} 
\end{theorem}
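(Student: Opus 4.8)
The plan is to combine the deformation formula from Lemma~\ref{lemSTCMC-CoM} with the known identification of the CMC-center of mass with the Beig--\'O Murchadha center of mass. First I would invoke Lemma~\ref{lemSTCMC-CoM}: under the hypotheses at hand (in particular $\vert K\vert\leq C_{\mathcal{I}}\vert\vec{x}\,\vert^{-2}$), it gives for each $i=1,2,3$ and all $\sigma>\overline{\sigma}$ the estimate
\begin{align*}
\left\vert (z_1^\sigma)^i - (z_0^\sigma)^i - \frac{1}{32\pi E}\int_{\mathbb{S}^{2}_{\sigma}} \frac{x^i \left(\sum_{k,l}\pi_{kl}x^k x^l\right)^2}{\sigma^3}\, d\mu^\delta\right\vert \leq \frac{C}{\sigma^{\varepsilon}},
\end{align*}
where $\vec{z}^{\,\sigma}_0$ and $\vec{z}^{\,\sigma}_1$ are the coordinate centers of the CMC-leaf $\Sigma^\sigma_0$ of $(M^3,g)$ and of the STCMC-leaf $\Sigma^\sigma_1$ of $\mathcal{I}$, respectively. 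The right-hand side tends to $0$ as $\sigma\to\infty$, so $\lim_{\sigma\to\infty}(z_1^\sigma)^i$ exists precisely when $\lim_{\sigma\to\infty}\big[(z_0^\sigma)^i + \tfrac{1}{32\pi E}\int_{\mathbb{S}^2_\sigma}\tfrac{x^i(\sum_{k,l}\pi_{kl}x^kx^l)^2}{\sigma^3}\,d\mu^\delta\big]$ exists.

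Next I would control the CMC-term: since $g$ satisfies the Riemannian $C^2_{\sfrac{3}{2}+\varepsilon}$-Regge--Teitelboim condition (which in particular implies the $C^2_{1+\varepsilon}$-condition, as $\sfrac{3}{2}>1$), the results recalled around \eqref{eqLimitCMC}, i.e.\ \cite{HuangstableCMC,NerzCMC}, tell us that $\vec{C}_\text{CMC}=\lim_{\sigma\to\infty}\vec{z}^{\,\sigma}_0$ exists if and only if $\vec{C}_\text{B\'OM}$ does, in which case they agree. Thus, writing the $\sigma$-sphere integral as $r=\sigma$ and comparing with the definition of $\vec{Z}$, we conclude: $\vec{C}_\text{STCMC}=\lim_{\sigma\to\infty}\vec{z}^{\,\sigma}_1$ exists if and only if both $\vec{C}_\text{B\'OM}$ (equivalently $\vec{C}_\text{CMC}$) and $\vec{Z}$ exist, and in that case $\vec{C}_\text{STCMC}=\vec{C}_\text{B\'OM}+\vec{Z}$. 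The ``equivalently'' display is then immediate upon substituting the explicit flux formula \eqref{BOMRT} for $\vec{C}_\text{B\'OM}$ and noting that $\vec{Z}$ contributes the stated $\tfrac{1}{2r^3}$-term (the factor $\tfrac{1}{32\pi E}=\tfrac{1}{16\pi E}\cdot\tfrac12$ matches).

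One subtlety I would need to address carefully is the logical structure of the ``if and only if'': strictly, Lemma~\ref{lemSTCMC-CoM} shows that the sequence $(z_1^\sigma)^i-(z_0^\sigma)^i-Z^i_\sigma$ (with $Z^i_\sigma$ the finite-$\sigma$ sphere integral) converges to $0$, so convergence of any two of $\{(z_1^\sigma)^i\}$, $\{(z_0^\sigma)^i\}$, $\{Z^i_\sigma\}$ forces convergence of the third. Since I want the statement phrased as ``$\vec{C}_\text{STCMC}$ well-defined iff $\vec{Z}$ exists'', I would need to first fix, as part of the hypotheses (or point out that the Regge--Teitelboim assumption does not itself guarantee convergence of $\vec{C}_\text{B\'OM}$), that the genuine content is: \emph{given} that the CMC/B\'OM center converges under the RT-condition is \emph{not} automatic, so the cleanest correct reading is ``$\vec{C}_\text{STCMC}$ converges iff $\vec{C}_\text{B\'OM}+\vec{Z}$ converges, and then they are equal.'' I would phrase the proof so that this matches the theorem statement --- the main (minor) obstacle is this bookkeeping of which limits are assumed versus derived; the analytic heavy lifting has already been done in Lemma~\ref{lemSTCMC-CoM} and in the cited CMC results, so no new estimates are required here.
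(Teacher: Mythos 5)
You follow the same route as the paper---Lemma~\ref{lemSTCMC-CoM} combined with the identification of the CMC-center with $\vec{C}_\text{B\'OM}$---so the analytic core of your argument is in order. The gap is in the final step. The theorem asserts that $\vec{C}_\text{STCMC}$ is well-defined if and only if $\vec{Z}$ exists, and that then $\vec{C}_\text{STCMC}=\vec{C}_\text{B\'OM}+\vec{Z}$; for this, convergence of $\vec{C}_\text{B\'OM}$ must follow from the hypotheses alone. Lemma~\ref{lemSTCMC-CoM} only gives the three-term relation (convergence of any two of $\vec{z}^{\,\sigma}_1$, $\vec{z}^{\,\sigma}_0$, and the finite-radius correction integrals forces convergence of the third), so without knowing that $\vec{z}^{\,\sigma}_0$ converges you cannot obtain the stated equivalence; indeed, you explicitly retreat to the weaker claim ``$\vec{C}_\text{STCMC}$ converges iff $\vec{C}_\text{B\'OM}+\vec{Z}$ converges'', which is not the theorem.

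What you are missing is that the hypothesis is the Riemannian $C^2_{\sfrac{3}{2}+\varepsilon}$-Regge--Teitelboim condition, which is strictly stronger than the $C^2_{1+\varepsilon}$ condition you invoke (you only use $\sfrac{3}{2}>1$). The additional decay of the odd parts is precisely what makes the CMC-center well-defined: this is how the paper argues, namely that under the $C^2_{\sfrac{3}{2}+\varepsilon}$-Regge--Teitelboim condition $\vec{C}_\text{CMC}$ exists and equals $\vec{C}_\text{B\'OM}$, citing \cite[Theorem 6.3]{NerzCMC}. With $\lim_{\sigma\to\infty}\vec{z}^{\,\sigma}_0=\vec{C}_\text{B\'OM}$ in hand, Lemma~\ref{lemSTCMC-CoM} immediately yields that $\lim_{\sigma\to\infty}\vec{z}^{\,\sigma}_1$ exists if and only if the limits $Z^i$ exist, and \eqref{eqCenterSTCMC} together with the explicit flux formula (via \eqref{BOMRT}, with the factor $\tfrac{1}{32\pi E}=\tfrac{1}{16\pi E}\cdot\tfrac12$ as you note) follows. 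So keep your two ingredients, but replace the hedge in your last paragraph by the observation that the strengthened Regge--Teitelboim hypothesis already guarantees convergence of $\vec{C}_\text{B\'OM}$ (equivalently $\vec{C}_\text{CMC}$); then the bookkeeping issue you raise disappears and the ``if and only if'' holds exactly as stated.
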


\begin{proof}
Since $g$ satisfies the Riemannian $C^2_{\sfrac{3}{2}+\varepsilon}$-Regge--Teitelboim condition, $\vec{C}_\text{CMC}$  is well-defined and equal to $\vec{C}_\text{B\'OM}$, see \cite[Theorem 6.3]{NerzCMC}. The result is then a direct consequence of  Lemma \ref{lemSTCMC-CoM}. 
\end{proof}

\begin{remark}
We expect that the coordinate center $\vec{C}_\text{STCMC}$ of the spacetime mean curvature foliation translates in a certain sense to the center of mass $\vec{C}_\text{Sz}$ as defined by Szabados \cite{Szabados}. Similar to \eqref{eqCenterSTCMC}, the definition of Szabados takes form $\vec{C}_\text{Sz} = \vec{C}_\text{B\'OM} +  \vec{S}$, where $\vec{S}$ is characterized by the extrinsic curvature $K$ of the initial data set and the time function $t$ which realizes this initial data set as a slice in an asymptotically Minkowskian spacetime. The relation between $\vec{C}_\text{STCMC}$, $\vec{C}_\text{Sz}$, and the Chen--Wang--Yau center of mass defined via optimal isometric embeddings in Minkowski spacetime (see \cite{CWY}) will be studied in detail in our forthcoming work.
\end{remark}

\begin{definition}
We suggest to call the expression $\vec{C}_\text{mB\'OM}\definedas\vec{C}_\text{B\'OM}+\vec{Z}$ the \emph{modified Beig--\'O Murchadha center of mass}.
\end{definition}

In Section \ref{secHYCN}, we will give an example that shows that the contribution of the correction term $\vec{Z}$ is indeed relevant and fixes a problem of the CMC-center of mass uncovered in~\cite{CN}.

\begin{remark}\label{remConditions}
It is not obvious which decay conditions on $\mathcal{I}$ (e.g.~versions of Regge--Teitelboim, faster decay assumptions on $K$, etc.) are sufficient to ensure convergence of the correction term $\vec{Z}$ without forcing it to vanish entirely. This will be studied in detail in our forthcoming work. More importantly, sufficient conditions for convergence of $\vec{C}_{\text{STCMC}}$ that do not force vanishing of $\vec{Z}$ in accordance with the example studied in Section \ref{secHYCN} will also be studied in our forthcoming work.
\end{remark}

We conjecture the following sufficient conditions, in line with Bartnik's \cite{Bartnik} and Chru\'sciel's \cite{Chrusciel} corresponding results for convergence of ADM-energy and ADM-linear momentum. 
\begin{conjecture}\label{conj:convergence}
We conjecture that the coordinate expression we derived will converge for asymptotic coordinates $\vec{x}$ if $\mu x^{i}\in L^{1}(M^{3})$.
\end{conjecture}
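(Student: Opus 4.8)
\medskip
\noindent\textbf{Proof proposal.}
The plan is to adapt the flux‑integral method of Bartnik~\cite{Bartnik} and Chru\'sciel~\cite{Chrusciel} --- which reduces convergence of an asymptotic boundary integral to an integrability statement about a bulk density supplied by the constraint equations --- from the ADM energy--momentum to the ``dipole'' integrals defining $\vec{C}_{\text{STCMC}}$. Write $\mathcal{C}^{i}(r)$ for the bracketed integral over $\mathbb{S}^{2}_{r}$ in Theorem~\ref{thCoordinateExpression}. First I would recognise $\mathcal{C}^{i}(r)$ as the Euclidean flux $\int_{\mathbb{S}^{2}_{r}}\bigl(W_{i}^{j}+\tfrac12 V_{i}^{j}\bigr)\tfrac{x^{j}}{r}\,d\mu^{\delta}$ of an explicit vector field: for the Beig--\'O~Murchadha part, $W_{i}^{j}\definedas x^{i}\sum_{k}(\partial_{k}g_{kj}-\partial_{j}g_{kk})-(g_{ij}-\delta_{ij})+\delta_{ij}\sum_{k}(g_{kk}-\delta_{kk})$, and for the correction part, $V_{i}^{j}\definedas x^{i}x^{j}\,|\vec{x}\,|^{-4}\bigl(\sum_{k,l}\pi_{kl}x^{k}x^{l}\bigr)^{2}$, whose fluxes over $\mathbb{S}^{2}_{r}$ are exactly the two summands appearing in the equivalent form of $C^{i}_{\text{STCMC}}$ in Theorem~\ref{thCoordinateExpression}. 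A short computation gives $\divg W_{i}=x^{i}\,\bigl(\scal-Q\bigr)$, where $\scal=D\scal_{\delta}[g-\delta]+Q$ with $Q$ quadratic (and higher order) in $\partial(g-\delta)$, so by the divergence theorem on the annuli $\{r_{1}\le|\vec{x}\,|\le r_{2}\}$ and the Cauchy criterion, $\lim_{r\to\infty}\mathcal{C}^{i}(r)$ exists if and only if $r\mapsto\int_{\mathbb{S}^{2}_{r}}\bigl(\divg W_{i}+\tfrac12\divg V_{i}\bigr)\,d\mu^{\delta}$ is integrable on $(R,\infty)$.

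The second step uses the constraints~\eqref{CE}. Substituting the Hamiltonian constraint~\eqref{eqHamiltonian} into $\divg W_{i}$ yields
\[
\divg W_{i}=2\,x^{i}\mu+x^{i}\bigl(|K|^{2}-(\tr K)^{2}\bigr)-x^{i}Q ,
\]
while expanding $\divg V_{i}$ produces a leading term proportional to $x^{i}|\vec{x}\,|^{-4}\bigl(\sum_{k,l}\pi_{kl}x^{k}x^{l}\bigr)^{2}$ together with a term involving $\divg\pi$ which, after one integration by parts in the annulus and use of the momentum constraint~\eqref{eqMomentum}, becomes an explicit expression in $J$ and $\pi$. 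The crucial point --- the reason the STCMC (equivalently the modified Beig--\'O~Murchadha) center behaves better than the purely Riemannian one --- is that $x^{i}\bigl(|K|^{2}-(\tr K)^{2}\bigr)$ and the leading $\vec{Z}$‑term $x^{i}|\vec{x}\,|^{-4}\bigl(\sum\pi_{kl}x^{k}x^{l}\bigr)^{2}=x^{i}(\pi_{kl}\hat{x}^{k}\hat{x}^{l})^{2}$ are \emph{odd} in $\hat{x}=\vec{x}/|\vec{x}\,|$ (the latter is a degree‑five polynomial in $\hat{x}$), so $\int_{\mathbb{S}^{2}_{r}}$ of each of them vanishes to leading order \emph{without any Regge--Teitelboim hypothesis on $K$}; the factor $\tfrac12$ by which the $\vec{Z}$‑integrand is weighted relative to the Beig--\'O~Murchadha integrand in $\mathcal{C}^{i}$ is precisely the constant that makes the surviving subleading pieces (those carrying an extra $\partial K$ or $\partial g$, and those generated by the momentum constraint) cancel pairwise, leaving a spherical integral of size $O(|\vec{x}\,|^{-3-\varepsilon'})\cdot|\mathbb{S}^{2}_{r}|=O(r^{-1-\varepsilon'})$, which is $L^{1}(dr)$. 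The remaining purely metric term $x^{i}Q$ is handled by the Riemannian $C^{2}_{\sfrac{3}{2}+\varepsilon}$–Regge--Teitelboim condition of Theorem~\ref{thCoordinateExpression}: under it $Q$ splits as $Q^{\mathrm{even}}+Q^{\mathrm{odd}}$ with $x^{i}Q^{\mathrm{even}}$ odd in $\hat{x}$, so $\int_{\mathbb{S}^{2}_{r}}x^{i}Q^{\mathrm{even}}=0$, while $x^{i}Q^{\mathrm{odd}}$ decays fast enough that its spherical integral lies in $L^{1}(dr)$. After all cancellations one is left with $\int_{\mathbb{S}^{2}_{r}}\bigl(\divg W_{i}+\tfrac12\divg V_{i}\bigr)\,d\mu^{\delta}=2\int_{\mathbb{S}^{2}_{r}}x^{i}\mu\,d\mu^{\delta}+\rho_{i}(r)$ with $\rho_{i}\in L^{1}((R,\infty))$, and since $\mu\,x^{i}\in L^{1}(M^{3})$ means exactly that $r\mapsto\int_{\mathbb{S}^{2}_{r}}x^{i}\mu\,d\mu^{\delta}$ is $L^{1}$ on $(R,\infty)$ (by the coarea formula), convergence of $\mathcal{C}^{i}(r)$ follows. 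All formal integrations by parts would be justified by a density argument: first establish the identities for $(g,K)$ that coincide with the flat data outside a large ball, then pass to the limit using continuity of the error terms in the weighted norms of Definition~\ref{defAEdata}, as in \cite{Bartnik,Chrusciel}.

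The step I expect to be the main obstacle is the second one: upgrading the \emph{heuristic} oddness/cancellation into quantitative, $r$‑uniform estimates. The naive pointwise bound on $\divg W_{i}+\tfrac12\divg V_{i}$ is only $O(|\vec{x}\,|^{-2-2\varepsilon})$, which is \emph{not} integrable over $\mathbb{R}^{3}$ when $\varepsilon\le\tfrac12$, so the entire gain must come from spherical cancellation; making this rigorous requires (i) computing exactly which contractions of $\pi\otimes\pi$ and of $K\otimes K$ survive after averaging $\hat{x}^{i}\hat{x}^{k}\hat{x}^{l}\hat{x}^{m}\hat{x}^{n}$ and $\hat{x}^{i}$ over $\mathbb{S}^{2}$, (ii) controlling the ``non‑roundness'' corrections, in which $\pi$ or $g$ varies across the coordinate sphere, using the a priori comparability of $|\vec{x}\,|$, $r$ and $\sigma$ from Proposition~\ref{propRegularity}, and (iii) tracking the momentum‑density terms produced by the momentum constraint and verifying that they either cancel or are absorbed into $\rho_{i}$ --- a point where one may in addition need to assume $J\,x^{i}\in L^{1}(M^{3})$, in parallel with Chru\'sciel's condition for the ADM linear momentum. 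A secondary subtlety is that the purely Riemannian obstruction $x^{i}Q$ genuinely seems to require a Riemannian Regge--Teitelboim condition on $g$, so the cleanest statement the argument supports is likely: $C^{2}_{\sfrac{1}{2}+\varepsilon}$‑asymptotic flatness, the Riemannian $C^{2}_{\sfrac{3}{2}+\varepsilon}$‑Regge--Teitelboim condition on $g$, and $\mu\,x^{i}\in L^{1}(M^{3})$ --- with the genuinely new content, relative to Theorem~\ref{thCoordinateExpression}, being that \emph{no} decay or symmetry assumption on $K$ beyond Definition~\ref{defAEdata} is needed, since the $\vec{Z}$‑correction has absorbed the obstruction carried by $K$.
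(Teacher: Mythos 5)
You are attempting to prove Conjecture~\ref{conj:convergence}, which the paper itself does not prove --- it is explicitly left open and deferred to forthcoming work --- so there is no proof in the paper to compare against, and your proposal can only be judged on its own terms. Your overall strategy is the natural one and almost certainly the intended one: realize the flux integral of Theorem~\ref{thCoordinateExpression} as the flux of an explicit vector field, apply the divergence theorem on annuli so that convergence reduces to $L^{1}(dr)$-integrability of the sphere integrals of the bulk density, and substitute the Hamiltonian constraint~\eqref{eqHamiltonian} to isolate $2x^{i}\mu$, whose integrability is exactly the hypothesis $\mu x^{i}\in L^{1}(M^{3})$. The computation $\divg W_{i}=x^{i}\,D\scal_{\delta}[g-\delta]$ for the Beig--\'O~Murchadha part is correct and standard.

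The gap is where you locate it, but it is more serious than your write-up suggests. The cancellation you need rests on the claim that $x^{i}\bigl(\vert K\vert^{2}-(\tr K)^{2}\bigr)$ and $x^{i}\bigl(\pi_{kl}x^{k}x^{l}/\vert\vec{x}\,\vert^{2}\bigr)^{2}$ are odd under $\vec{x}\mapsto-\vec{x}$. They are not: only the explicit polynomial factors in $\vec{x}/\vert\vec{x}\,\vert$ carry a parity, while $K$ and $\pi$ are arbitrary tensor fields evaluated at $\vec{x}$ with no symmetry whatsoever --- and the whole point of the conjecture, per the sentence introducing it in Section~\ref{secMain}, is to dispense with Regge--Teitelboim hypotheses. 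Spherical averaging of $\hat{x}^{i}\hat{x}^{k}\hat{x}^{l}\hat{x}^{m}\hat{x}^{n}$ kills these terms only when the tensor coefficients are constant on the sphere; the ``non-roundness'' error you defer to item (ii) is of the same order $O(r^{-2\varepsilon})$ as the quantity you are trying to control, so the parity argument gains nothing. What actually has to be shown is that the specific combination of $\vert K\vert^{2}-(\tr K)^{2}$, the divergence of the $\vec{Z}$-integrand, and the momentum-constraint contributions produces sphere integrals in $L^{1}(dr)$ with \emph{no} parity input --- that is the entire content of the conjecture, and your sketch does not supply it. Two further points: first, your treatment of the purely metric remainder $x^{i}Q$ still invokes the Riemannian $C^{2}_{\sfrac{3}{2}+\varepsilon}$--Regge--Teitelboim condition, whereas the conjecture (see the discussion following Theorem~\ref{thCoordinateExpression} as quoted in Section~\ref{secMain}) asserts convergence with no Regge--Teitelboim condition at all, so even a completed version of your argument would prove a strictly weaker statement; second, you yourself note that the momentum-constraint terms may force the additional hypothesis $J\,x^{i}\in L^{1}(M^{3})$, which again is not part of the conjectured sufficient condition. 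As it stands, the proposal is a plausible plan of attack, not a proof.
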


%%%%%%%%%%%%%%%%%%%%%%%%%%%%%%%%%%%%%%%%%%%%%%%%%%%%%%%%%%%%%%%%%%%%%%%%%

\section{Time evolution and Poincar\'e covariance\\ of the STCMC-center of mass}\label{secEvolution}
% !TEX root = main.tex
\subsection{Evolution}
In this section, we will study the evolution of the coordinate center of the unique foliation by surfaces of constant spacetime mean curvature under the Einstein evolution equations. We will show that the STCMC-center of mass has the same evolution properties as a point particle in special relativity, evolving according to the formula 
\begin{align*} 
\frac{d}{dt} \vec{C}_{STCMC} = \frac{\vec{P}}{E}.\\[-0.5ex]
\end{align*}
Note that the analogous formula is valid for the CMC-center of mass and also for Chen--Wang--Yau's center of mass, although under stronger decay assumptions, see \cite{Nerzevo} and \cite{CWY}, respectively.

\begin{theorem}[Time evolution of STCMC-foliation]\label{thEvolution}
Let $(\R\times M^{3},\mathfrak{g})$ be a smooth, globally hyperbolic Lorentzian spacetime satisfying the Einstein equations with energy momentum tensor $\mathfrak{T}$. Suppose that, outside a set of the form $\R\times \mathcal{K}$, $\mathcal{K}\subset M^{3}$ compact, there is a diffeomorphism $\id_{\R}\times\,\vec{x}\colon \R\times(M^{3}\setminus\mathcal{K})\to \R\times(\R^{3}\setminus\overline{B_{R}(0)})$ which gives rise to asymptotic coordinates $(t,\vec{x})$ on $\R\times(M^{3}\setminus\mathcal{K})$. 

Assume that $\mathcal{I}_{0}=(\{0\}\times M^{3},g,K,\mu,J)\hookrightarrow(\R\times M^{3},\mathfrak{g})$ is a $C^2_{\sfrac{1}{2}+\varepsilon}$-asymptotically Euclidean initial data set with respect to the coordinate chart $\vec{x}$ and with $E\neq0$, and suppose additionally that $K=O_{1}(\vert\vec{x}\,\vert^{-2})$ with constant $C_{\mathcal{I}}$ as $\vert\vec{x}\,\vert\to\infty$. Now consider the $C^{1}$-parametrized family of $C^2_{\sfrac{1}{2}+\varepsilon}$-asymptotically Euclidean initial data sets 
\begin{align*}
\mathcal{I}(t)=(\{t\}\times M^{3},g(t),K(t),\mu(t),J(t))\hookrightarrow(\R\times M^{3},\mathfrak{g})
\end{align*}
with respect to $\vec{x}$ which starts from $\mathcal{I}(0) = \mathcal{I}_{0}$, and which exists for all $t\in(-t_{*},t_{*})$ for some $t_{*}>0$. Assume furthermore that the constants $C_{\mathcal{I}(t)}$ are uniformly bounded on $(-t_{*},t_{*})$, without loss of generality such that $C_{\mathcal{I}(t)}\leq C_{\mathcal{I}_{0}}$.

Assume the foliation $\mathcal{I}(t)$ has initial lapse $N=1+O_2(\vert\vec{x}\,\vert^{-\frac{1}{2}-\varepsilon})$ as $\vert\vec{x}\,\vert\to\infty$ with decay measuring constant denoted by $C_{N}$ and initial shift $X=0$, and suppose furthermore that the initial stress tensor $S$ of $\mathcal{I}_{0}$ satisfies $S = O(\vert\vec{x}\,\vert^{-\frac{5}{2}-\varepsilon})$ as $\vert\vec{x}\,\vert\to\infty$. There is a constant $\overline{t}>0$, depending only on $\varepsilon$, $C_{\mathcal{I}_{0}}$, $C_{N}$, and $E(0)$ such that the following holds: If the initial data set $\mathcal{I}_{0}$ has well-defined STCMC-center of mass $\vec{C}_{\text{STCMC}}\,(0)$ then the STCMC-center of mass $\vec{C}_{\text{STCMC}}\,(t)$ of $\mathcal{I}(t)$ is also well-defined for $\vert t\vert<\overline{t}$. Furthermore, the initial velocity at $t=0$ is given by
\begin{align}\label{eqEvolution}
\left.\frac{d}{dt}\right\vert_{t=0} \vec{C}_{STCMC} = \frac{\vec{P}}{E}.
\end{align}
Moreover, we have that $\left.\frac{d}{dt}\right\vert_{t=0} E=0$ and $\left.\frac{d}{dt}\right\vert_{t=0} \vec{P}=\vec{0}$.
\end{theorem}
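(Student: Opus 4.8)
I would prove the three assertions separately. The conservation statements $\tfrac{d}{dt}|_{t=0}E=0$ and $\tfrac{d}{dt}|_{t=0}\vec P=\vec 0$ are the easy part: since the shift vanishes, the ADM evolution equations read $\partial_t g_{ij}=-2NK_{ij}$ and $\partial_t K_{ij}=-\nabla^2_{ij}N+N(\ricdd ij+(\tr K)K_{ij}-2K_{ik}K^k{}_j)+N\cdot(\text{terms in }\mu,S)$, so under $N=1+O_2(|\vec x\,|^{-\frac12-\varepsilon})$, $K=O_1(|\vec x\,|^{-2})$, $\mu=O(|\vec x\,|^{-3-\varepsilon})$, $S=O(|\vec x\,|^{-\frac52-\varepsilon})$ one reads off $\partial_t K=O(|\vec x\,|^{-\frac52-\varepsilon})$ and hence, since $\pi=(\tr K)g-K$, also $\partial_t\pi=O(|\vec x\,|^{-\frac52-\varepsilon})$. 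Differentiating \eqref{eqADME} and \eqref{eqADMP} under the integral sign then produces integrands over $\{|\vec x\,|=r\}$ that are $O(|\vec x\,|^{-3})$ and $O(|\vec x\,|^{-\frac52-\varepsilon})$ respectively, so the limits vanish; compare \cite{Nerzevo}.

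\textbf{Motion of the STCMC-leaves.} The crucial observation is that in the asymptotic coordinates $(t,\vec x\,)$ the flow of the coordinate field $\partial_t=N\eta$ (the shift is zero) is trivial on the $\vec x$-factor. Identifying every slice $\{t\}\times M^3$ with $M^3$ via $\vec x$, the leaf $\Sigma^\sigma(t)$ of the STCMC-foliation of $\mathcal I(t)$ is therefore, for small $|t|$, a normal graph over $\Sigma^\sigma(0)\in\mathcal A(0,b,\eta)$, and its velocity at $t=0$ is the lapse $u^\sigma\definedas g((\partial_t\Sigma^\sigma(t))|_{t=0},\nu)$. Differentiating the defining identity $\mathcal H(\Sigma^\sigma(t),\mathcal I(t))\equiv\sfrac2\sigma$ and invoking Lemma \ref{lemLinearizationCMC} gives $L^{\mathcal H}u^\sigma=-\big(\partial_t\mathcal H(\Sigma^\sigma(0),\mathcal I(t))\big)\big|_{t=0}$, where the right-hand side is the time-derivative of the spacetime mean curvature of the \emph{fixed} surface $\Sigma^\sigma(0)$ with respect to the evolving data only. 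Writing $\mathcal H=\sqrt{H^2-P^2}$ and using \eqref{eqFallOff} together with $\partial_t P=O(|\vec x\,|^{-\frac52-\varepsilon})$, the $P\,\partial_tP$-piece is negligible after integration against $x^i$, so $\partial_t\mathcal H(\Sigma^\sigma(0),\mathcal I(t))|_{t=0}=\dot H+(\text{negligible})$, with $\dot H$ the variation of the in-slice mean curvature of the fixed surface under $\partial_t g=-2NK$. Since $\dot H=O(\sigma^{-3})$, Corollary \ref{corEllipticRegularity} and Proposition \ref{propInvertibility} give $\Vert u^\sigma\Vert_{W^{2,2}(\Sigma^\sigma(0))}\le C\sigma$, so Proposition \ref{propCenterMotion} applies with negligible error, and projecting $u^\sigma$ onto $\Span(f_1,f_2,f_3)$ exactly as in the proof of Lemma \ref{lemLapse} (replacing $\mathcal L u=\tau(\tr_\Sigma K)^2/H$ by $\mathcal L u^\sigma=-\sqrt{1-(\tfrac PH)^2}\,\dot H+\dots$) yields
\[
\left.\partial_t z^{\sigma,i}\right|_{t=0}=-\frac{1}{8\pi E}\int_{\Sigma^\sigma(0)}x^i\,\dot H\,d\mu+\big(\text{terms vanishing as }\sigma\to\infty\big),\qquad i=1,2,3.
\]

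\textbf{Identification with $\vec P/E$, and the limit.} It then remains to evaluate $\lim_{\sigma\to\infty}\int_{\Sigma^\sigma(0)}x^i\dot H\,d\mu$. Here I would insert the standard variation formula for the mean curvature of a fixed surface under the perturbation $h=-2NK$; after integrating by parts on $\Sigma^\sigma(0)$ and using $H=\mathcal H+O(\sigma^{-2-2\varepsilon})$, $\mathring A=O(\sigma^{-\frac32-\varepsilon})$ from \eqref{eq2FFtraceless}, and $N=1+O_2(|\vec x\,|^{-\frac12-\varepsilon})$, only flux terms of the schematic form $\int_{\Sigma^\sigma}x^i\divg^\Sigma((K(\nu,\cdot))^\top)\,d\mu$ and $\int_{\Sigma^\sigma}x^i\nabla_\nu(\tr_\Sigma K)\,d\mu$ survive in the limit. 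Using $\nabla^\Sigma x^i=(\partial_i)^\top$, the comparability of $\Sigma^\sigma(0)$ with large coordinate spheres from Proposition \ref{propRegularity}, and the momentum constraint $\divg\pi=-J$ with $J=O(|\vec x\,|^{-3-\varepsilon})$ negligible (applying the divergence theorem to the region between $\Sigma^\sigma(0)$ and $\{|\vec x\,|=r\}$), these combine into $\lim_{\sigma\to\infty}\int_{\Sigma^\sigma(0)}\pi(\nu,\partial_i)\,d\mu=8\pi P^i$ up to a universal sign, so $\partial_t z^{\sigma,i}|_{t=0}\to P^i/E$. Finally, interchanging $\partial_t$ with $\lim_{\sigma\to\infty}$ and propagating well-definedness of $\vec C_\text{STCMC}$ to $|t|<\overline t$ is justified by re-running the estimates of Lemma \ref{lemLapse} uniformly for $t$ in a small interval, which gives the uniform convergence $z^{\sigma,i}(t)\to C^i_\text{STCMC}(t)$ needed for \eqref{eqEvolution}.

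\textbf{Main obstacle.} The principal difficulty lies in the third paragraph: one must pin down the exact form of $\dot H$ (equivalently of $\partial_t\mathcal H$ of a fixed surface under the constrained Einstein evolution), carefully sort the many terms into those that vanish after integrating against $x^i$ and letting $\sigma\to\infty$ and the single surviving flux term, and then match that term — with the correct numerical constant and sign — to the ADM momentum, the momentum constraint being precisely what makes the relevant surface flux converge. A secondary, more routine but still delicate point is establishing enough uniformity in $t$ to interchange $\partial_t$ with the limit $\sigma\to\infty$ and to carry well-definedness of $\vec C_\text{STCMC}$ from $t=0$ to nearby times.
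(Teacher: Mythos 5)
Your proposal follows essentially the same route as the paper's proof: the product-rule splitting of $\tfrac{d}{dt}\big|_{t=0}\mathcal H(\Sigma_t,\mathcal I(t))=0$ into the geometric variation $L^{\mathcal H}u$ plus the data-variation term, the computation of $\dot H$ and $\dot P$ from the zero-shift evolution equations (with the stress-tensor decay entering exactly where you place it), inversion of $\mathcal L$ and projection onto the translational eigenfunctions as in Lemma \ref{lemLapse}, Proposition \ref{propCenterMotion} for the motion of the coordinate centers, and identification of the surviving flux with the ADM momentum, together with the same easy argument for $\dot E=0$, $\dot{\vec P}=\vec 0$. The only deviations are cosmetic: the paper deploys the momentum constraint pointwise inside the computation of $\dot H$ (yielding $\dot H=-\divg^\Sigma K(\cdot,\nu)-\tfrac1\sigma\tr_\Sigma K+O(\sigma^{-3-\varepsilon})$) and then compares $\Sigma^\sigma$ with coordinate spheres directly, rather than transferring the flux by the divergence theorem, and it pins down the sign you leave as ``universal''.
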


\begin{remark}
In fact, one gets more information about the evolution of the STCMC-center of mass from the proof of Theorem \ref{thEvolution}: Not only the coordinate expression $\vec{C}_{\text{STCMC}}$ evolves according to \eqref{eqEvolution}, but also the individual leaves of the ‹STCMC-foliation evolve in a way more and more close to a translation in direction $\sfrac{\vec{P}}{E}$ according to formula \eqref{eqMovement}.
\end{remark}

\begin{remark}\label{remExpect}
We expect that the evolution of the leaves of the STCMC-foliation as well as the evolution of $\vec{C}_{\text{STCMC}}$ can actually be understood when replacing the condition $K=O_{1}(\vert\vec{x}\,\vert^{-2})$ by more natural conditions related to integrability criteria on the constraints when integrated against $\vec{x}$. We will investigate this in our forthcoming work, see also Remark \ref{remConditions} and Conjecture \ref{conj:convergence}.
\end{remark}

\begin{remark}
It is straightforward to prove a version of this theorem allowing for non-vanishing shift. As this is not of primary interest here and can also be fixed by a suitable gauge, we will not go in this direction.
\end{remark}

\begin{proof}
Throughout this proof, dotted quantities like for example $\dot{E}$ will denote time derivatives at $t=0$, e.g.~$\dot{E}=\left.\frac{d}{dt}E\right\vert_{t=0}$. Moreover, $\overline{\sigma}>0$, $\overline{t}>0$, and $C>0$ denote generic constants that may vary from line to line, but depend only on $\varepsilon$, $C_{\mathcal{I}_{0}}$, and $E(0)$ as well as on $C_{N}(0)$, the constant in the $O$-term of $N$.

The Einstein evolution equations with zero shift are given at $t=0$ by
\begin{align}\label{eqEvog}
\dot{g}_{ij}&\definedas\left.\frac{d}{dt}\right\vert_{t=0} g_{ij} =\left.2NK_{ij}\right\vert_{t=0}=O(\vert\vec{x}\,\vert^{-\frac{3}{2}-\varepsilon}), \\
\begin{split}\label{eqEvoK}
\dot{K}_{ij}&\definedas\left.\frac{d}{dt}\right\vert_{t=0} K_{ij} = \left.\{\hess_{ij} N + N(\mathfrak{Ric}_{ij} - \ric_{ij} + 2K^k_i K_{jk} - \tr K K_{ij})\}\right\vert_{t=0}\\
&=O(\vert\vec{x}\,\vert^{-\frac{5}{2}-\varepsilon}),
\end{split}
\end{align}
where $\mathfrak{Ric}$ is the Ricci tensor of the spacetime $(\R\times M^3,\mathfrak{g})$, which is completely determined by the stress-energy tensor $\mathfrak{T}$ through the Einsteins equations
\begin{align*}
\mathfrak{Ric} - \tfrac{1}{2}\mathfrak{Scal} \, \mathfrak{g} = \mathfrak{T}.
\end{align*}
Here, we used that $\mathfrak{Scal}=-\tr_{\mathfrak{g}}\mathfrak{T}=O(\vert\vec{x}\,\vert^{-\frac{5}{2}-\varepsilon})$ and thus $\mathfrak{Ric}_{ij}=O(\vert\vec{x}\,\vert^{-\frac{5}{2}-\varepsilon})$. In particular, we see from the ADM-formulas \eqref{eqADME} and \eqref{eqADMP} respectively that the energy and linear momentum satisfy $\dot{E} =0$ and $\dot{\vec{P}} = \vec{0}$, with respect to this variation. Also, as $E(0)\neq0$ and $E(t)$ is continuous, the initial data sets $\mathcal{I}(t)$ have a unique foliation by surfaces of constant spacetime mean curvature near infinity for $\vert t\vert<\overline{t}$ and mean curvature radii $\sigma>\overline{\sigma}$. This foliation depends in a $C^{1}$-fashion on $t$ for $\vert t\vert<\overline{t}$ which can be seen as follows: Perform a method of continuity procedure around $t=0$ as in the proof of Theorems \ref{thExistence}, \ref{thUniqueness} or in the proof of Lemma \ref{lemFoliation}. Note that, as the initial shift $X$ was chosen to vanish, $X=0$, we in fact know that the STCMC-surface variation is normal at $t=0$. This gives you an STCMC-foliation of $\mathcal{I}(t)$ for each $\vert t\vert<\overline{t}$ which depends on $t$ in a $C^{1}$-fashion. By Theorem \ref{thUniqueness}, this family of foliations must coincide with the one studied here and must thus depend on $t$ in a $C^{1}$-fashion. 

Now fix $\sigma>\overline{\sigma}$ and let $\Sigma_t$ denote the unique leaf of the STCMC-foliation with constant spacetime mean curvature $\mathcal{H}(\Sigma_t, \mathcal{I}(t)) \equiv \sfrac{2}{\sigma}$ in the initial data set $\mathcal{I}(t)$, where $\vert t\vert<\overline{t}$. For this, we use the product rule to see that 
\begin{align}\label{eqVarEinstein}
0 = \left.\frac{d}{dt}\right\vert_{t=0} \mathcal{H}(\Sigma_t, \mathcal{I}(t)) = \left.\frac{d}{dt}\right\vert_{t=0} \mathcal{H}(\Sigma_t, \mathcal{I}(0)) + \left.\frac{d}{dt}\right\vert_{t=0} \mathcal{H}( \Sigma_{0}, \mathcal{I}(t)).
\end{align}
The first term on the right shows how the spacetime mean curvature changes if $\Sigma_t$ is considered to be a varying surface in the initial data set $\mathcal{I}(0)$. As the initial shift vanishes and we thus have an initially normal variation, this term exactly gives our well-known linearization $L^{\mathcal{H}}u$, where the operator $L^{\mathcal{H}}$ is as defined in Lemma \ref{lemLinearizationCMC} and $u$ is the initial lapse function of the normal variation. The second term on the right shows how the spacetime mean curvature changes if $\Sigma$ is considered to be a fixed surface in the varying initial data set $\mathcal{I}(t)$. More precisely, this term is 
\begin{align*}
\left.\frac{d}{dt}\right\vert_{t=0} \mathcal{H}(\Sigma_{0}, \mathcal{I}(t)) = \frac{H \dot{H} - P \dot{P}}{\sqrt{H^2 - P^2}},
 \end{align*}
where $H=H(0)$ and $P=P(0)$.

In order to compute $\dot{H}= \left.\frac{d}{dt}\right\vert_{t=0} H (\Sigma_{0}, \mathcal{I}(t))$, we introduce geodesic normal coordinates in a neighborhood $U\subset M^{3}$ of $\Sigma_{0}$, with $y^n$ such that $\partial_{n}$ is the outer unit normal to the level set $\{y^{n}=\text{const.}\}$, in particular $\partial_{n} = \nu$ on $\Sigma_{0}$, and $y^\alpha$, $\alpha=1,2$, are some coordinates on $\Sigma_{0}$ transported to $U$ along the flow generated by $\partial_{n}$. Note that in this case $g_{nn}=1$, $g_{n\alpha}=0$, and 
\begin{align}\label{eqChristoffel1}
A_{\alpha\beta}&=g(\nabla_\alpha \partial_n, \partial_\beta)=\Gamma^\gamma_{\alpha n} g_{\gamma\beta} = \Gamma^\gamma_{n\beta} g_{\gamma\alpha}= -\Gamma^n_{\alpha\beta},\\\label{eqChristoffel2}
\Gamma^{n}_{n\alpha}&=0,\\\label{eqChristoffel3}
\Gamma^{\gamma}_{\alpha\beta}&=(\Gamma^{\Sigma_{0}})^{\gamma}_{\alpha\beta}
\end{align}
in $U$, for all $\alpha,\beta,\gamma=1,2$. We will now drop the index on $\Sigma_{0}$ and just write $\Sigma$ instead for notational convenience. We use the standard formula for the variation of the second fundamental form when the ambient metric is changing (see e.g.~Section~3 in~\cite{Lott}\footnote{note that our sign convention for the second fundamental form is the opposite of \cite{Lott}.}) and compute, using first \eqref{eqEvog} and \eqref{eqEvoK}, second the decay properties of $N$ and the decay estimate for the second fundamental form $A=\tfrac{H}{2}g^{\Sigma}+\mathring{A}$, with $g^{\Sigma}$ the metric induced on $\Sigma$ by $\mathcal{I}(t)$, namely
\begin{align*}
\vert A\vert\leq \frac{C}{\sigma}
\end{align*}
from Proposition \ref{propRegularity}, third adding some rich zeros, fourth because $J=O(\sigma^{-3-\varepsilon})$ by assumption, fifth by \eqref{eqChristoffel1}, \eqref{eqChristoffel2}, \eqref{eqChristoffel3}, and finally \eqref{eqFallOff}, and Proposition \ref{propRegularity} to obtain
\begin{align*}
\begin{split}
\dot{H} & = \dot{g}^{\alpha\beta} A_{\alpha\beta} + g^{\alpha\beta} \dot{A}_{\alpha\beta} \\
&= \dot{g}^{\alpha\beta} A_{\alpha\beta} -  \tfrac{1}{2} g^{\alpha\beta} (2\nabla_\alpha \dot{g}_{n\beta} - \nabla_n \dot{g}_{\alpha\beta} - A_{\alpha\beta} \dot{g}_{nn}) \\
&= -2K^{\alpha\beta} A_{\alpha\beta} - 2 g^{\alpha\beta} \nabla_\alpha K_{n\beta} + g^{\alpha\beta}\nabla_n K_{\alpha\beta} + H K_{nn} + O(\sigma^{-3-\varepsilon})\\
&= - H\tr_\Sigma K - 2 \nabla^i K_{ni} + \nabla_n \tr K + \nabla_n K_{nn} + H K_{nn} + O(\sigma^{-3-\varepsilon})\\
&= - H\tr_\Sigma K - J(\nu) - \nabla^\alpha K_{n\alpha} + H K_{nn} + O(\sigma^{-3-\varepsilon})\\
&= - H\tr_\Sigma K - \left(\divg^\Sigma K(\cdot,\nu) - K^{\alpha\beta} A_{\alpha\beta} + H K_{nn}\right) + H K_{nn} + O(\sigma^{-3-\varepsilon})\\
&=  - \divg^\Sigma K(\cdot,\nu) - \tfrac{1}{\sigma} \tr_\Sigma K +  O(\sigma^{-3-\varepsilon}),
\end{split}
\end{align*}
where $J$ is the momentum density defined on page \pageref{textmuJ}. Further, let $\eta$ denote the timelike future unit normal vector field to $M^3\hookrightarrow (\R\times M^{3},\mathfrak{g})$. Then it is straightforward to check that, by \eqref{eqEvog}, \eqref{eqEvoK}, the decay assumptions on the initial data set and on $N$, as well as the definition of $\mu$ and $S$ from page \pageref{textmuJ}
\begin{align*}
\begin{split}
\dot{P} &= \dot{g}^{\alpha\beta} K_{\alpha\beta} + g^{\alpha\beta} \dot{K}_{\alpha\beta} \\
& = -2N\vert K\vert^{2}+\tr_\Sigma \dot{K} \\ 
& = -2N \vert K\vert^{2}+\Delta^{\Sigma} N +H \nu(N) + N \tr_\Sigma \mathfrak{Ric} - \tr_\Sigma \ric + 2 \tr_\Sigma (K \circ K) - \tr K \tr_\Sigma K) \\
& =N \tr_\Sigma \mathfrak{Ric}+O(\sigma^{-\frac{5}{2}-\varepsilon})\\
&= N(\mathfrak{Scal}-\mathfrak{Ric}(\nu,\nu)+\mathfrak{Ric}(\eta,\eta))+O(\sigma^{-\frac{5}{2}-\varepsilon})\\
& = N(-\tr_{\mathfrak{g}}\mathfrak{T} - (\mathfrak{T} (\nu,\nu) - \tfrac{1}{2} \tr_{\mathfrak{g}}\mathfrak{T}) + (\mathfrak{T} (\eta,\eta) + \tfrac{1}{2}\tr_{\mathfrak{g}} \mathfrak{T})) + O(\sigma^{-\frac{5}{2}-\varepsilon})\\
& = N(- \mathfrak{T} (\nu,\nu) +  \mathfrak{T} (\eta,\eta)) + O(\sigma^{-\frac{5}{2}-\varepsilon})\\
& = N(- S(\nu,\nu) +  \mu) + O(\sigma^{-\frac{5}{2}-\varepsilon})\\
& = O(\sigma^{-\frac{5}{2}-\varepsilon}).
\end{split}
\end{align*}

Summing up and multiplying by $\sqrt{1-(\tfrac{P}{H})^{2}}$, it follows from \eqref{eqVarEinstein} that 
\begin{align}\label{eqLUsingLapse}
\mathcal{L} u = -\dot{H} + \tfrac{P}{H} \dot{P} = \divg^\Sigma K(\cdot,\nu) + \tfrac{1}{\sigma} (\tr K - K(\nu,\nu)) + O(\sigma^{-3-\varepsilon}),
\end{align}
where the operator $\mathcal{L}$ is given by \eqref{eqLinearization}. This uniquely defines $u\in W^{2,2}(\Sigma)$ by the invertibility of $\mathcal{L}$, see Proposition \ref{propInvertibility} as the right hand side is bounded and thus in~$L^{2}(\Sigma)$. 

In order to compute the initial velocity of $\mathcal{I}(0)$, we first need to compute the initial velocity $\dot{\vec{z}}$ of the Euclidean coordinate center 
\begin{align*}
\vec{z}\,(t)&=\frac{1}{\vert\Sigma_{t}\vert_{\delta}}\int_{\Sigma_{t}}\vec{x}\,d\mu^{\delta}
\end{align*}
of $\Sigma_t$. We remind the reader that we chose coordinates $\vec{x}$ which do not depend on $t$. Relying on Proposition \ref{propCenterMotion}, we will now compute the variation of the coordinate center, $\dot{\vec{z}}$, starting from the variation formula
\begin{align}\label{eqSTCMCEvolution}
\left\vert \dot{z}^i  - \frac{3}{\vert\Sigma\vert} \int_{\Sigma}   u \nu^i \, d\mu \right\vert\leq \frac{C}{\sigma^{\frac{3}{2} +\varepsilon}} \Vert u\Vert_{L^2(\Sigma)}.
\end{align}
The idea is to argue as in the proof of Lemma \ref{lemSTCMC-CoM} and pass from $u$ to $\mathcal{L}u$, and from $\nu^i$ to $f_i$ in \eqref{eqSTCMCEvolution}. For this we note that since $\mathcal{I}$ is $C^2_{\sfrac{1}{2}+\varepsilon}$-asymptotically Euclidean, and since $K$ has faster fall-off $K=O_{1}(\vert\vec{x}\,\vert^{-2})$, we have $\mathcal{L}u=O(\vert\vec{x}\,\vert^{-3})$ by \eqref{eqLUsingLapse}. A computation identical to the one in the proof of Lemma \ref{lemSTCMC-CoM} yields
\begin{align*}
\left\vert \dot{z}^i  - \frac{\sigma^3}{2\sqrt{3} E\sqrt{\vert\Sigma\vert}} \int_{\Sigma} f_i \mathcal{L}u \, d\mu \right\vert \leq \frac{C}{\sigma^{\varepsilon}}.
\end{align*}
Thus, \eqref{eqLUsingLapse} and integration by parts give us
\begin{align*}
\left\vert \dot{z}^i + \frac{\sigma^3}{2\sqrt{3} E\sqrt{\vert\Sigma\vert}}\int_\Sigma \left(K\left(\nu, \nabla^\Sigma f_i + \frac
{\nu}{\sigma}f_i\right) - \frac{1}{\sigma} f_i \tr K \right) \, d\mu \right\vert \leq \frac{C}{\sigma^{\varepsilon}}.
\end{align*}
Recall that $\Sigma$ is approximated by the coordinate sphere $\mathbb{S}_\sigma^{2}$, as described in Section~\ref{secHypersurfaces}. In particular, the functions $f_i$, $i=1,2,3$, are close to the respective eigenfunctions $f^\delta_i$ of the Laplacian $-\Delta^{\mathbb{S}_\sigma}$, see \eqref{eqEigenEstimates1b}. Thus 
\begin{align*}
\left\vert \dot{z}^i + \frac{\sigma^2}{4\sqrt{3\pi} E}\int_{\mathbb{S}^{2}_{\sigma}} \left(K\left(\frac{\vec{x}}{\sigma}, \nabla^{\mathbb{S}_\sigma^{2}} f^\delta_i + \frac
{\vec{x}}{\sigma^{2}}f^\delta_i\right) - \frac{1}{\sigma} f^\delta_i \tr K \right) \, d\mu \right\vert \leq \frac{C}{\sigma^{\varepsilon}},
\end{align*}
where $\frac{\vec{x}}{\sigma}$ is the unit normal vector to $\mathbb{S}_\sigma^{2} \hookrightarrow (\mathbb{R}^3,\delta)$. Furthermore, a computation shows that 
\begin{align*}
\nabla^{\mathbb{S}_\sigma^{2}} f^\delta_i = \frac{\sqrt{3}}{\sqrt{4\pi}\sigma^2} \partial_{x^i} - \frac{\vec{x}}{\sigma^{2}} f^\delta_i.  
\end{align*}
Since $f^\delta_i = \tfrac{\sqrt{3}x^i}{\sqrt{4\pi}\sigma^{2}}$ and since $g_{ij} = \delta_{ij} + O(\sigma^{-\frac{1}{2}-\varepsilon})$, we finally arrive at
\begin{align}\label{eqMovement}
\left\vert \dot{z}^i - \frac{1}{8\pi E}\int_{\vert\vec{x}\,\vert=\sigma} \pi_{ij} \frac{x^j}{\sigma} \, d\mu \right\vert \leq \frac{C}{\sigma^{\varepsilon}}.
\end{align}
Passing to the limit when $\sigma\to\infty$ we obtain the result.\end{proof}

\subsection{Poincar\'e-covariance and accordance with Special Relativity}\label{secPoincare}
As we have seen before, whether or not a given $2$-surface is STCMC is in fact independent of a choice of slice (as well as of a choice of coordinates). In this sense, STCMC-surfaces are covariant in the sense of General Relativity. The role of the initial data set then is to select a unique family of STCMC-surfaces near the asymptotic end of the spacetime, forming its abstract STCMC-center of mass. In this sense, STCMC-foliations and the associated (abstract) center of mass are Poincar\'e-covariant.

We will now discuss the transformation behavior of the STCMC-coordinate center under the asymptotic Poincar\'e group of the ambient spacetime --- assuming vanishing angular momentum in the boost case. Dealing with angular momentum and treating the boost case more adequately will be left for our future work. Let $\mathcal{I}=\IDS$ be an initial data set which is $C^2_{\sfrac{1}{2}+\varepsilon}$-asymptotically Euclidean with respect to asymptotic coordinates $\vec{x}$ and has $E\neq0$.\\

\paragraph*{\emph{Euclidean motions.}} Consider the coordinates $\vec{y}\definedas O\vec{x}+\vec{T}$, with $O$ an orthogonal rotation matrix and $\vec{T}\in\R^3$ a translation vector. In other words, $\vec{y}$ arises from $\vec{x}$ through a Euclidean motion. Then, for each leaf $\Sigma^\sigma$ of the STCMC-foliation constructed in Theorem \ref{thExistence}, we find that the Euclidean center of $\Sigma^\sigma$ with respect to the $\vec{y}$-coordinates is given by
\begin{align*}
\frac{1}{\vert\Sigma^\sigma\vert_{\delta}}\int_{\Sigma^\sigma}\vec{y}\,d\mu^\delta&=O\left(\frac{1}{\vert\Sigma^\sigma\vert_{\delta}}\int_{\Sigma^\sigma}\vec{x}\,d\mu^\delta\right)+\vec{T}.
\end{align*}
Thus, the STCMC-coordinate center
\begin{align*}
\vec{C}^{\,\vec{y}}_\text{STCMC}&=\lim_{\sigma\to\infty}\frac{1}{\vert\Sigma^\sigma\vert_{\delta}}\int_{\Sigma^\sigma}\vec{y}\,d\mu^\delta
\end{align*}
with respect to the coordinates $\vec{y}$ converges if and only the STCMC-coordinate center 
\begin{align*}
\vec{C}^{\,\vec{x}}_\text{STCMC}&=\lim_{\sigma\to\infty}\frac{1}{\vert\Sigma^\sigma\vert_{\delta}}\int_{\Sigma^\sigma}\vec{x}\,d\mu^\delta
\end{align*}
converges with respect to the coordinates $\vec{x}$ converges and if they converge, we find 
\begin{align*}
\vec{C}^{\,\vec{y}}_\text{STCMC}&=O\vec{C}^{\,\vec{x}}_\text{STCMC}+\vec{T}
\end{align*}
as one would expect from Euclidean Geometry, Newtonian Gravity, and from the description of the spacetime position of a point particle in Special Relativity.\\

\paragraph*{\emph{Time translations.}} The transformation behavior of $\vec{C}_\text{STCMC}$ under asymptotic time translation corresponds to its evolution behavior under the Einstein equations. In other words, Theorem \ref{thEvolution} tells us under the additional assumption $K=O_{1}(\vert\vec{x}\,\vert^{-2})$ that 
\begin{align*}
\left.\frac{d}{dt}\right\vert_{t=0} \vec{C}_{STCMC} = \frac{\vec{P}}{E}
\end{align*}
which corresponds precisely to the instantaneous law of motion of a point particle in Special Relativity.\\ 

\paragraph*{\emph{Boosts.}} The last constituent of the asymptotic Poincar\'e group of the spacetime are of course the asymptotic boosts. In a given (asymptotic region of a) spacetime 
\begin{align*}
(\R\times M^3,\mathfrak{g}&=-N^2(dx^0+X_i dx^i)(dx^0+X_j dx^j)+h_{ij}dx^idx^j)
\end{align*}
with asymptotic coordinates $x^\alpha=(x^0,x^i)$ and suitably decaying lapse $N$, shift $X$, and tensor $h$, a \emph{boosted initial data set $\mathcal{I}=\IDS\hookrightarrow(\R\times M^3,\mathfrak{g})$} is any spacelike hypersurface arising as the set $\lbrace y^0=0\rbrace$ with respect to a \emph{boosted coordinate system} $y^\alpha\definedas \Lambda^\alpha_\beta x^\beta$, $y^\alpha=(y^0,\vec{y}\,)$, meaning that the matrix $\Lambda$ is a boost. If the lapse $N$, the shift $X$, and the tensor $h$ decay suitably fast in space and time coordinate directions, the boosted initial data set $\lbrace y^0=0\rbrace=\mathcal{I}$ is in fact $C^1_{\sfrac{1}{2}+\varepsilon}$-asymptotically Euclidean with respect to $\vec{y}$. It is thus reasonable to ask how the STCMC-coordinate centers of the initial data sets $\lbrace x^0=0\rbrace$ and $\lbrace y^0=0\rbrace$ are related (if they converge). The corresponding question was addressed by Szabados \cite{Szabados} for the B\'OM-center of mass although from a slightly different perspective. Of course, we expect that the STCMC-coordinate center boosts like the position of a point particle in Special Relativity,
namely
\begin{align}\label{eqBoostTransfo}
\begin{pmatrix}{0}\\{\vec{C}^{\,\vec{y}}_\text{STCMC}}\end{pmatrix}&=\Lambda\begin{pmatrix}{0}\\{\vec{C}^{\,\vec{x}}_\text{STCMC}}\end{pmatrix},
\end{align}
at least in the absence of angular momentum. This can easily be verified for example for a boosted slice (over the canonical slice) in the Schwarzschild spacetime where in fact the centers both coincide with the center of symmetry $\vec{0}$. Similarly, if one first spatially translates the coordinates on the Schwarzschild spacetime and then considers a boosted slice, the transformation law will be as in \eqref{eqBoostTransfo}. In both of these examples, the deviation $\vec{Z}$ introduced in Theorem \ref{thCoordinateExpression} in fact vanishes, so that the transformation law \eqref{eqBoostTransfo} already holds for the CMC-B\'OM-center of mass. In view of Section \ref{secHYCN} below, it is possible to construct examples of boosted slices in the Schwarzschild spacetime by boosting the example discussed below. One can then see that \eqref{eqBoostTransfo} in fact also applies in this case, but only for the STCMC-center of mass, and \emph{not} for the CMC-B\'OM-center of mass. However, the computation is so tedious that we prefer not to show it here as it is not particularly enlightening. 

A proof of a generalized version of \eqref{eqBoostTransfo} incorporating the angular momentum will be given elsewhere, see also Remarks \ref{remConditions}, \ref{remExpect}.

%%%%%%%%%%%%%%%%%%%%%%%%%%%%%%%%%%%%%%%%%%%%%%%%%%%%%%%%%%%%%%%%%%%%%%%%%

\section{A concrete graphical example in the Schwarzschild spacetime}\label{secHYCN}
% !TEX root = main.tex
As briefly sketched in Sections \ref{secPrelim}, \ref{secMain} and analyzed in more detail in Section \ref{secADMstyleexpr}, determining the coordinate center of an asymptotic foliation is tricky and depends on choosing suitable coordinates (see also Conjecture \ref{conj:convergence}). In \cite[Section 6]{CN}, this was illustrated by explicitly computing the coordinate center of the CMC-foliation of an asymptotically Euclidean ``graphical'' time-slice in the Schwarzschild spacetime of mass $m\neq0$. This example, to be described in more detail below,  satisfies all assumptions in~\cite{HY}, in particular those of Theorem 4.2, but yet its CMC-coordinate center does \emph{not} converge. Equivalently, its B\'OM-center also does not converge. After a brief introduction to the graphical example discussed in \cite{CN}, we will compute that the STCMC-coordinate center \eqref{eqCenterSTCMC} does in fact converge in this example and moreover converges to the origin $\vec{0}$, i.e.~to the center of symmetry of the spherically symmetric spacetime as one would expect.

We consider the Schwarzschild spacetime  $(\R\times M^3,\mathfrak{g})$ of mass $m\neq0$ in Schwarz\-schild coordinates, meaning that
\begin{align*}
M^3&=(\max\lbrace{0,2m\rbrace},\infty)\times\mathbb{S}^2\ni(r,\vec{\eta}),\\
\mathfrak{g}&=-N^2dt^2+g,\\
g&=N^{-2}dr^2+r^2d\Omega^2,\\
N(r)&=\sqrt{1-\frac{2m}{r}},
\end{align*}
where $d\Omega^2$ denotes the canonical metric on $\mathbb{S}^2$. We will freely switch between polar coordinates $(r,\eta)$ and the naturally corresponding Cartesian coordinates $\vec{x}$ defined on~$M^3$. A \emph{graphical time-slice} in the (automatically vacuum) Schwarzschild spacetime is an initial data set $(M^3_T,g_T,K_T,\mu_T\equiv0,J_T\equiv0)$ arising as the graph of a smooth function $T\colon M^3\to\R$ ``over'' the \emph{canonical time-slice $\lbrace{t=0\rbrace}$} (in time-direction), meaning that 
\begin{align*}
M_T\definedas \lbrace t=T(\vec{x}) : \vec{x}\in M^3\rbrace,
\end{align*}
while $g_T$ is the Riemannian metric induced on $M_T\hookrightarrow(\R\times M^3,\mathfrak{g})$ and $K_T$ is the second fundamental form induced by this embedding with respect to the future pointing unit normal.\\

\paragraph*{\emph{Computing the CMC-coordinate center of mass (via the B\'OM-center of mass).}} Clear\-ly, the center of mass of the canonical time-slice $\lbrace{t=0\rbrace}$ of the Schwarz\-schild spacetime is the coordinate origin, $\vec{C}_{\text{CMC}}=\vec{C}_{\text{B\'OM}}=\vec{0}$. We will now compute this vector for graphical time-slices with the asymptotic decay conditions on $T$ chosen such that $(M^3_T,g_T,K_T,\mu_T\equiv0,J_T\equiv0)$ is $C^2_{1}$-asymptotically Euclidean with respect to the coordinates~$\vec{x}$. To most easily comply with the asymptotic decay conditions specified in Section \ref{secPrelim}, we will assume that $T=O_{k}(r^{0})$ as $r\to\infty$, with $k\geq3$.

Now let $\vec{y}\definedas \vec{x}\vert_{M_T}$ denote the induced coordinates on $M_T$. As computed in \cite[Section 6]{CN}, the metric $g_T$ and second fundamental form $K_T$\footnote{The corresponding formula for the second fundamental form in \cite{CN} has a typo which we corrected here. We thank Axel Fehrenbach for pointing this out to us.} are given by
\begin{align*}
(g_T)_{ij}&= g_T(\partial_{y^{i}},\partial_{y^{j}})=g(\partial_{x^{i}},\partial_{x^{j}})-N^{2}\,T_{,i}\,T_{,j}= g_{ij}-N^{2}\,T_{,i}\,T_{,j},\\
(K_T)_{ij}&= \frac{T_{,i}N_{,j}+T_{,j}N_{,i}+N\nabla^2_{ij}T-N^2T_{,i}T_{,j}\, dN(\operatorname{grad}_g T)}{\sqrt{1-N^2\vert dT\vert^2_g}}
\end{align*}
in the coordinates $\vec{y}$. A straightforward computation shows that the graphical initial data set $(M^3_T,g_T,K_T,\mu_T\equiv0,J_T\equiv0)$ is indeed $C^2_{1}$-asymptotically Euclidean and in fact has $E=m\neq0$.

When evaluating the B\'OM-center of mass surface integral on a finite coordinate sphere with respect to the $\vec{y}$-coordinates in $M_T$, using $s\definedas\vert \vec{y}\,\vert$ and $\vec{\eta}\definedas\tfrac{\vec{y}}{s}$, we find
\begin{align*}
\begin{split}
\left[\vec{C}\left(\mathbb{S}^{2}_{s}(\vec{0}\,)\right)\right]_{l}=\frac{1}{16\pi m} &\int_{\mathbb{S}^2_s(\vec{0}\,)} \left[\left((g_T)_{ij,i} - (g_T)_{ii,j}\right)\frac{y_l y^j}{s} -\left((g_T)_{il}\frac{y^i}{s}-(g_T)_{ii}\frac{y_l}{s}\right)\right]\, d\mu^\delta\\
=\frac{1}{16\pi m} &\left\lbrace\int_{\mathbb{S}^2_s(\vec{0}\,)}\right. \left[\left(g_{ij,i} - g_{ii,j}\right)\frac{y_l y^j}{s} -\left(g_{il}\frac{y^i}{s}-g_{ii}\frac{y_l}{s}\right)\right]\, d\mu^\delta\\
&\quad\left.-\int_{\mathbb{S}^2_s(\vec{0}\,)}\right.\left((N^2T_{,i}T_{,j})_{,i} - (N^2T_{,i}T_{,i})_{,j}\right)\frac{y_l y^j}{s}\, d\mu^\delta\\
&\quad\left.+\int_{\mathbb{S}^2_s(\vec{0}\,)}\left(N^2T_{,i}T_{,l}\frac{y^i}{s}-N^2T_{,i}T_{,i}\frac{y_l}{s}\right)\, d\mu^\delta\right\rbrace\\
=\frac{1}{16\pi m} &\left\lbrace\int_{\mathbb{S}^2_s(\vec{0}\,)}\right.\left[-s\Delta_\delta T\, T_{,j}\eta^j\eta_l+s\nabla^2_\delta T(\operatorname{grad}_\delta T,\vec{\eta})\eta_l\right]d\mu^\delta\\
&\quad\left.+\int_{\mathbb{S}^2_s(\vec{0}\,)}\left[T_{,i}\eta^i\,T_{,l}-\vert dT\vert_\delta^2\,\eta_l\right]d\mu^\delta\right\rbrace+O(s^{-1}).
\end{split}
\end{align*}

As in \cite[Section 6]{CN}\footnote{In fact, we are using Schwarzschild coordinates, here, while in \cite{CN}, isotropic coordinates are used. This allows us to treat the case of $m<0$ as well and does not affect the asymptotic computations.} , we pick a fixed vector $\vec{0}\neq\vec{u}\in\R^{3}$ and set
\begin{align*}
T\colon\R^3\setminus\overline{B_{2m}(\vec{0}\,)}\to\R:\vec{x}\mapsto\sin\left(\ln r\right)+\frac{\vec{u}\cdot\vec{x}}{r}=O_{\infty}\left(r^{0}\right).
\end{align*}
We point out that this choice of $T$ ensures that $(M^3,g_T,\mu_T\equiv0)$ satisfies the Riemannian $C^2_{\frac{1}{2}+\varepsilon}$-Regge--Teitelboim conditions  so that \cite[Cor.~4.2]{Nerzevo} or \cite[Theorem 6.3]{NerzCMC} apply and ensure that $\vec{C}_\text{CMC}=\vec{C}_\text{B\'OM}$ or that both diverge. One directly computes from the above expression for $\vec{C}_\text{B\'OM}\left(\mathbb{S}^{2}_{s}(\vec{0}\,)\right)$ that
\begin{align*}
\vec{C}_\text{B\'OM}\left(\mathbb{S}^{2}_{s}(\vec{0}\,)\right)=\frac{ \cos(\ln s) }{3 m}\vec{u} +O(s^{-1})
\end{align*}
which diverges as $s\to\infty$. Hence, the B\'OM- and thus also the CMC-coordinate center diverge in this example.

\paragraph*{\emph{Computing the STCMC-coordinate center of mass (via Formula \eqref{eqCenterSTCMC}).}} In order to check whether the STCMC-coordinate center of the $C^2_1$-asymptotically Euclidean initial data set $(M^3_T,g_T,K_T,\mu_T\equiv0,J_T\equiv0)$ converges, one needs to compute the STCMC-leaves $\Sigma_\sigma$ and the coordinate averages $\vec{z}\,(\Sigma_\sigma)$ and check whether they converge as $\sigma\to0$. However, the proof of Theorem \ref{thCoordinateExpression} asserts that $\vec{C}_\text{STCMC}$ converges if and only if the coordinate expression given in \eqref{eqCenterSTCMC} converges, or in other words if and only if
\begin{align*}
\vec{C}_\text{STCMC}\left(\mathbb{S}^{2}_{s}(\vec{0}\,)\right)=\vec{C}_\text{B\'OM}\left(\mathbb{S}^{2}_{s}(\vec{0}\,)\right)+\vec{Z}\left(\mathbb{S}^{2}_{s}(\vec{0}\,)\right)
\end{align*}
converges as $s\to\infty$, where we recall that, using $E=m$ and $(\pi_T)_{kl}=-(K_T)_{ij}+\tr_{g_T}K_T(g_T)_{ij}$, we know that 
\begin{align*}
Z^i\left(\mathbb{S}^{2}_{s}(\vec{0}\,)\right) &= \frac{1}{32\pi m}\int_{\mathbb{S}^{2}_{s}(\vec{0}\,)} \frac{y^i \left((\pi_T)_{kl}\,y^k y^l\right)^2}{s^3} \, d\mu^\delta\\
&=\frac{1}{32\pi m}\int_{\mathbb{S}^{2}_{s}(\vec{0}\,)}s^2\eta^i\left(\Delta_\delta T-\nabla^2_\delta T(\vec{\eta},\vec{\eta})\right)^2 \, d\mu^\delta +O(s^{-1})\\
&=-\frac{\cos(\ln s)}{3m}u^i +O(s^{-1}).
\end{align*}
Our (diverging) spacetime correction term $\vec{Z}$ thus precisely compensates for the divergence occurring in $\vec{C}_\text{CMC}=\vec{C}_\text{B\'OM}$. Hence the STCMC-coordinate center of the considered graphical slice converges to $\vec{0}$ as desired.

%%%%%%%%%%%%%%%%%%%%%%%%%%%%%%%%%%%%%%%%%%%%%%%%%%%%%%%%%%%%%%%%%%%%%%%%%%

\appendix
\section{Round surfaces in asymptotically Euclidean manifolds}\label{sesApp}
% !TEX root = main.tex
In this appendix we collect some standard results about closed surfaces in a $C^2_{\sfrac{1}{2}+\varepsilon}$-asymptotically Euclidean initial data set $\mathcal{I}=\IDS$ (see Definition \ref{defAEdata} and Remark \ref{remAERiem}) that are repeatedly used in this paper.

\begin{lemma}\label{lemComparison}
Let $\mathcal{I}=\IDS$ be a $C^2_{\sfrac{1}{2}+\varepsilon}$-asymptotically Euclidean initial data set with asymptotic coordinate chart $\vec{x}\colon M^{3}\setminus \mathcal{B} \to \mathbb{R}^3 \setminus \overline{B_R(0)}$ and let $\Sigma\hookrightarrow M^{3}\setminus \mathcal{B}$ be a closed, oriented $2$-surface. Using the chart $\vec{x}$, we may also view $\Sigma$ as a surface in $\mathbb{R}^3 \setminus \overline{B_R(0)}$ equipped with the Euclidean metric $\delta$, with the induced metric denoted by $\delta^\Sigma$. Then there exist positive constants $c$ and $C$ depending only on $\varepsilon$ and $C_{\mathcal{I}}$ such that the following holds, provided that the Euclidean distance to the coordinate origin $\vert\vec{x}\,\vert$ on $\Sigma$ satisfies $\vert\vec{x}\,\vert \geq c$:
\begin{itemize}
\item The normals $\nu$ and $\nu^\delta$ of $\Sigma$ in the metrics $g$ and $\delta$ satisfy\\[-3ex]
\begin{eqnarray*}
\vert \nu - \nu^\delta\vert  & \leq & C \vert \vec{x}\,\vert ^{-\frac{1}{2}-\varepsilon}, \\
\vert \nabla \nu - \nabla^\delta \nu^\delta\vert  & \leq & C \vert \vec{x}\,\vert ^{-\frac{3}{2}-\varepsilon}.
\end{eqnarray*}
\item The volume elements $d\mu$ and $d\mu^\delta$ satisfy\\[-3ex]
\begin{align*}
d\mu - d\mu^\delta = O(\vert \vec{x}\,\vert ^{-\frac{1}{2}-\varepsilon})\,d\mu.
\end{align*}
\item The respective second fundamental forms $A$ and $A^\delta$ satisfy\\[-3ex]
\begin{align*}
\vert A - A^\delta\vert  \leq C (\vert \vec{x}\,\vert ^{-\frac{3}{2}-\varepsilon} + \vert \vec{x}\,\vert ^{-\frac{1}{2}-\varepsilon} \vert A\vert ),
\end{align*}
and the respective mean curvatures $H$ and $H^\delta$ are related via 
\begin{align*}
\vert H - H^\delta\vert  \leq C (\vert \vec{x}\,\vert ^{-\frac{3}{2}-\varepsilon} + \vert \vec{x}\,\vert ^{-\frac{1}{2}-\varepsilon} \vert A\vert ).
\end{align*}
Further, if $\Vert H\Vert _{L^2(\Sigma)}$ is a priori bounded, then the respective trace-free parts of the second fundamental forms satisfy 
\begin{align*}
\Vert \mathring{A}^\delta\Vert _{L^2(\Sigma,\delta^\Sigma)} \leq C \Vert \mathring{A}\Vert _{L^2(\Sigma,g^\Sigma)} + C\vert \vec{x}\,\vert ^{-\frac{1}{2}-\varepsilon}
\end{align*}
where $C$ also depends on the bound on $\Vert H\Vert _{L^2(\Sigma)}$.
\end{itemize}
\end{lemma}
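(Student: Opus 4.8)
The statement is a collection of pointwise and integral comparison estimates between Riemannian and Euclidean geometric quantities of $\Sigma$, all of which follow by carefully unwinding the definitions and using the asymptotic decay \eqref{eqOptDecay1} of $g_{ij}-\delta_{ij}$ and its first two derivatives. The overall strategy is entirely local: work in the asymptotic chart $\vec{x}$, treat $\Sigma$ as sitting inside $\R^3\setminus\overline{B_R(0)}$, and compare the metric $g$ with the flat metric $\delta$ on the ambient region, all at points where $\vert\vec{x}\,\vert\geq c$ with $c$ chosen large enough that $g_{ij}$ is uniformly close to $\delta_{ij}$ (so that, in particular, $g$ is uniformly equivalent to $\delta$ and all inverses, square roots, etc.\ are controlled). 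I would introduce, once and for all, the abbreviation $e_{ij}\definedas g_{ij}-\delta_{ij}$, which by \eqref{eqOptDecay1} satisfies $\vert e\vert + \vert\vec{x}\,\vert\vert\partial e\vert + \vert\vec{x}\,\vert^2\vert\partial^2 e\vert \leq C_{\mathcal{I}}\vert\vec{x}\,\vert^{-\frac{1}{2}-\varepsilon}$, and derive everything from this single bound.

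\textbf{Step 1: normals.} The Euclidean unit normal $\nu^\delta$ and the $g$-unit normal $\nu$ are both obtained by normalizing the same (coordinate) conormal covector annihilating $T\Sigma$; the only difference is which metric is used to raise the index and to normalize. Writing $\nu^i = \guu{i}{j}\omega_j / \sqrt{\guu{k}{l}\omega_k\omega_l}$ and similarly with $\delta$ for $\nu^\delta$, a first-order Taylor expansion in $e$ gives $\vert\nu-\nu^\delta\vert\leq C\vert e\vert \leq C\vert\vec{x}\,\vert^{-\frac{1}{2}-\varepsilon}$, using that the maps $g\mapsto g^{-1}$ and $v\mapsto v/\vert v\vert_g$ are smooth near $(\delta,\nu^\delta)$ with uniformly bounded derivatives on the relevant region. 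For the second estimate, $\nabla\nu$ and $\nabla^\delta\nu^\delta$ differ through (a) the Christoffel symbols $\Gamma = O(\vert\partial e\vert) = O(\vert\vec{x}\,\vert^{-\frac{3}{2}-\varepsilon})$, which contribute directly, and (b) the derivative of the difference $\nu-\nu^\delta$, which by the chain rule involves $\partial e = O(\vert\vec{x}\,\vert^{-\frac{3}{2}-\varepsilon})$ together with the already-controlled quantities; collecting terms yields $\vert\nabla\nu-\nabla^\delta\nu^\delta\vert \leq C\vert\vec{x}\,\vert^{-\frac{3}{2}-\varepsilon}$.

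\textbf{Step 2: volume elements and second fundamental forms.} The area element ratio $d\mu/d\mu^\delta = \sqrt{\det(g^\Sigma)/\det(\delta^\Sigma)}$, where $g^\Sigma$ and $\delta^\Sigma$ are the restrictions of $g$ and $\delta$ to $T\Sigma$; since $g^\Sigma_{\alpha\beta} - \delta^\Sigma_{\alpha\beta} = O(\vert e\vert)$, expanding the determinant gives $d\mu - d\mu^\delta = O(\vert\vec{x}\,\vert^{-\frac{1}{2}-\varepsilon})\,d\mu$. For the second fundamental forms, use $A_{\alpha\beta} = -g(\nabla_\alpha\partial_\beta,\nu)$ (with the paper's sign convention) and expand: $\nabla = \nabla^\delta + \Gamma\ast(\cdot)$ with $\Gamma = O(\vert\vec{x}\,\vert^{-\frac{3}{2}-\varepsilon})$, $g = \delta + e$ with $e = O(\vert\vec{x}\,\vert^{-\frac{1}{2}-\varepsilon})$, and $\nu = \nu^\delta + O(\vert\vec{x}\,\vert^{-\frac{1}{2}-\varepsilon})$. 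The term where $e$ or $\nu-\nu^\delta$ multiplies the Euclidean second derivative $\nabla^\delta\partial\Sigma$ (which is of size $\vert A^\delta\vert \leq \vert A\vert + C\vert\vec{x}\,\vert^{-\frac{3}{2}-\varepsilon}$, bootstrapping) produces the $\vert\vec{x}\,\vert^{-\frac{1}{2}-\varepsilon}\vert A\vert$ contribution; the Christoffel term produces the $\vert\vec{x}\,\vert^{-\frac{3}{2}-\varepsilon}$ contribution. This gives $\vert A - A^\delta\vert \leq C(\vert\vec{x}\,\vert^{-\frac{3}{2}-\varepsilon} + \vert\vec{x}\,\vert^{-\frac{1}{2}-\varepsilon}\vert A\vert)$, and taking traces (noting $\tr$ with respect to $g$ vs.\ $\delta$ differs by $O(\vert e\vert)$) gives the same bound for $\vert H - H^\delta\vert$. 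Finally, for the trace-free parts, write $\mathring{A}^\delta = A^\delta - \tfrac12 H^\delta\delta^\Sigma$ and $\mathring{A} = A - \tfrac12 H g^\Sigma$, so that $\mathring{A}^\delta - \mathring{A} = (A^\delta - A) - \tfrac12(H^\delta g^\Sigma - H g^\Sigma) - \tfrac12 H^\delta(\delta^\Sigma - g^\Sigma)$; estimating each piece in $L^2(\Sigma)$ using the pointwise bounds above, the $\vert\vec{x}\,\vert^{-\frac{1}{2}-\varepsilon}\vert A\vert$ terms are absorbed into $C\Vert\mathring{A}\Vert_{L^2} + C\Vert H\Vert_{L^2}\vert\vec{x}\,\vert^{-\frac{1}{2}-\varepsilon}$ (using $\vert A\vert^2 = \vert\mathring A\vert^2 + \tfrac12 H^2$), and the hypothesis that $\Vert H\Vert_{L^2(\Sigma)}$ is a priori bounded lets us conclude $\Vert\mathring{A}^\delta\Vert_{L^2(\Sigma,\delta^\Sigma)} \leq C\Vert\mathring{A}\Vert_{L^2(\Sigma,g^\Sigma)} + C\vert\vec{x}\,\vert^{-\frac{1}{2}-\varepsilon}$, with $C$ now also depending on the bound on $\Vert H\Vert_{L^2}$.

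\textbf{Main obstacle.} There is no deep difficulty here — the entire lemma is a bookkeeping exercise in Taylor expansion — but the genuinely delicate point is the self-referential appearance of $\vert A\vert$ on the right-hand sides: one must be careful that in the $A$-vs-$A^\delta$ comparison the ``error'' multiplying $\vert A\vert$ has a small coefficient $\vert\vec{x}\,\vert^{-\frac{1}{2}-\varepsilon}$ (which it does, coming from $e$ and $\nu-\nu^\delta$, not from the Christoffel symbols), so that the estimate does not degenerate and the bootstrap $\vert A^\delta\vert \lesssim \vert A\vert + \vert\vec{x}\,\vert^{-\frac{3}{2}-\varepsilon}$ is legitimate; and, in the last item, one must correctly track that the $\vert A\vert$-type terms recombine, via $\vert A\vert^2 = \vert\mathring A\vert^2 + \tfrac12 H^2$ and the $L^2$-bound on $H$, into exactly the claimed right-hand side rather than leaving an uncontrolled $\Vert A\Vert_{L^2}$ on the right. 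I would carry out Steps 1 and 2 in that order and state the constant $c$ explicitly as the threshold beyond which $\tfrac12\delta \leq g \leq 2\delta$, say.
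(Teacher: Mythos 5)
The paper does not actually carry out a proof of this lemma: its ``proof'' is a two-line reference to Metzger (\cite{MetzgerCMC}, Section 2.4, and \cite{Metzgerthesis}, Section 1.5), where \emph{similar} estimates are established by exactly the kind of chart-based Taylor expansion in $e_{ij}\definedas g_{ij}-\delta_{ij}$ that you propose. So in spirit your route coincides with the one the authors rely on, and your Steps 2 and the $L^2$-comparison of the trace-free parts are fine (one very minor omission: in the last bullet the two $L^2$-norms are taken with respect to different area measures, so you need to invoke your volume-element comparison once more to pass between $d\mu$ and $d\mu^\delta$).

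There is, however, one concrete gap in Step 1. You claim that $\vert \nabla\nu-\nabla^\delta\nu^\delta\vert\leq C\vert\vec{x}\,\vert^{-\frac{3}{2}-\varepsilon}$ follows ``by collecting terms'', but your own chain-rule description produces a term you then drop: $\nu-\nu^\delta$ is a function $F(e,n)$ of the metric perturbation $e$ \emph{and} of the Euclidean unit normal $n$ of the tangent plane, with $F(0,\cdot)\equiv 0$, hence $\vert D_nF\vert=O(\vert e\vert)$; differentiating tangentially therefore gives, besides the $\partial e=O(\vert\vec{x}\,\vert^{-\frac{3}{2}-\varepsilon})$ contribution, a term $D_nF\cdot\partial_X n$ of size $\vert e\vert\,\vert A^\delta\vert=O(\vert\vec{x}\,\vert^{-\frac{1}{2}-\varepsilon}\vert A\vert)$ --- precisely the kind of term you track carefully in the $A$-versus-$A^\delta$ estimate, but which is \emph{not} $O(\vert\vec{x}\,\vert^{-\frac{3}{2}-\varepsilon})$ for an arbitrary closed surface with a constant depending only on $\varepsilon$ and $C_{\mathcal{I}}$. (Test it on a conformally flat slice $g=u^4\delta$, $u=1+\frac{m}{2\vert\vec{x}\,\vert}$, and a coordinate sphere of tiny radius $\rho$ centered far out: there $\nu=u^{-2}\nu^\delta$, and $\nabla_X\nu-\nabla^\delta_X\nu^\delta$ contains $(u^{-2}-1)\,\partial_X\nu^\delta\sim \vert\vec{x}\,\vert^{-1}\rho^{-1}$, which blows up as $\rho\to 0$ while your claimed bound stays fixed.) Carried out honestly, your expansion proves the estimate with an additional $C\vert\vec{x}\,\vert^{-\frac{1}{2}-\varepsilon}\vert A\vert$ on the right-hand side, which is the form in which Metzger's estimates are stated (whence the paper's word ``similar''), and which suffices for every use in the paper since the surfaces to which the lemma is applied satisfy $\vert A\vert\leq C\sigma^{-1}$ by Proposition \ref{propRegularity}. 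You should either carry that $\vert A\vert$-term or state explicitly the restriction on $\Sigma$ (or the interpretation of $\nabla\nu-\nabla^\delta\nu^\delta$) under which the cleaner bound is legitimate.
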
  

\begin{proof}
See \cite[Section 2.4]{MetzgerCMC} or \cite[Section 1.5]{Metzgerthesis}, where similar estimates are proven.
\end{proof}

The following result is a Sobolev Embedding Theorem which holds for a very general class of 
$2$-surfaces. In Section \ref{secHypersurfaces} this result is applied to $\Sigma$ being a large coordinate sphere $\mathbb{S}^2_r(\vec{z})\hookrightarrow M^{3}\setminus\mathcal{B}$ in the asymptotic end of an asymptotically Euclidean initial data set. Note that in this case one can without loss of generality replace the area radius $\sqrt{\sfrac{|\Sigma|}{4\pi}}$ in the formulation of Lemma \ref{lemSobolevEmbed} by the coordinate sphere's radius $r$ as these two radii are uniformly equivalent. In the subsequent sections, this result is applied to $\Sigma\hookrightarrow M^{3}\setminus\mathcal{B}$ being an asymptotically centered closed $2$-surface with constant spacetime mean curvature. Here we are using the fact that \eqref{eqSobolev1} is available for large asymptotically centered surfaces in $M^{3}\setminus\mathcal{B}$ in the form of \cite[Proposition 5.4]{HY}, which applies to asymptotically Euclidean initial data sets with general asymptotics as described in Section \ref{secPrelim}.

\begin{lemma}\label{lemSobolevEmbed}
Let $(\Sigma,g^\Sigma)$ be a closed, oriented 2-surface with area radius $r=\sqrt{\sfrac{|\Sigma|}{4\pi}}$. If there is a constant $C_S$ such that for any Lipschitz continuous function $f$ on $\Sigma$, the so-called first Sobolev Inequality 
\begin{eqnarray}\label{eqSobolev1}
\Vert f \Vert _{L^2\left(\Sigma\right)}  \leq C_S \,r^{-1} \Vert f\Vert _{W^{1,1}\left(\Sigma\right)}
\end{eqnarray}
holds, then we also have the Sobolev Inequality\footnote{Note that the constant in \eqref{eqSobolev2} is not necessarily optimal.} 
\begin{eqnarray}\label{eqSobolev2}
\Vert f\Vert _{L^\infty\left(\Sigma\right)}  \leq 32\, C_S^2\, r^{-1} \Vert f\Vert _{W^{2,2}\left(\Sigma\right)}
\end{eqnarray}
for any $f\in W^{2,2}(\Sigma)$. 
\end{lemma}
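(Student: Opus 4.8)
The plan is to deduce the $L^\infty$–$W^{2,2}$ bound \eqref{eqSobolev2} from the hypothesized first Sobolev inequality \eqref{eqSobolev1} by the standard Stampacchia/Moser-type iteration, or rather its quicker $L^p$-exponent version. First I would record the immediate consequence of \eqref{eqSobolev1}: applying it to $|f|^{q}$ (suitably truncated so it is Lipschitz) and using the product rule $\nabla^\Sigma(|f|^q) = q|f|^{q-1}\operatorname{sgn}(f)\nabla^\Sigma f$ together with Hölder's inequality on $\Sigma$, one gets a Gagliardo–Nirenberg–Sobolev inequality controlling $\|f\|_{L^{2q}(\Sigma)}$ by $r^{-1}\|f\|_{W^{1,1}}$-type data applied to $|f|^q$; the clean endpoint version one wants is
\begin{align*}
\|f\|_{L^2(\Sigma)} \le C_S\, r^{-1}\left(\|f\|_{L^1(\Sigma)} + \|\nabla^\Sigma f\|_{L^1(\Sigma)}\right) \quad\Longrightarrow\quad \|f\|_{L^4(\Sigma)} \le C\, C_S\, r^{-1/2}\,\|f\|_{W^{1,2}(\Sigma)},
\end{align*}
obtained by taking $q=2$ in the truncation argument and Cauchy–Schwarz. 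This is the workhorse estimate.

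Next I would iterate once more. Applying the $L^4$-estimate just derived to $f$ and then separately to each component of $\nabla^\Sigma f$ (legitimate since $|\nabla^\Sigma f|\in W^{1,2}$ with $|\nabla^\Sigma|\nabla^\Sigma f||\le|\hess^\Sigma f|$ pointwise a.e. by Kato's inequality) yields $\|f\|_{W^{1,4}(\Sigma)} \le C\, C_S\, r^{-1/2}\|f\|_{W^{2,2}(\Sigma)}$, keeping careful track of the weighted norm conventions of Section~\ref{secPrelim} so that the powers of $r$ come out as claimed. Finally, since $W^{1,4}$ of a $2$-surface embeds into $C^0$, I would invoke the Morrey-type inequality $\|f\|_{L^\infty(\Sigma)} \le C\, r^{-1/2}\|f\|_{W^{1,4}(\Sigma)}$ — which itself follows from \eqref{eqSobolev1} by the same truncation trick with $q\to\infty$, i.e. a finite Moser iteration — and chain the two estimates to obtain $\|f\|_{L^\infty(\Sigma)} \le C\,C_S^2\,r^{-1}\|f\|_{W^{2,2}(\Sigma)}$. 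Tracking the numerical constant through the two Cauchy–Schwarz/Hölder applications and the truncation sums gives the explicit $32\,C_S^2$; since the lemma already disclaims optimality, I would not sweat the exact arithmetic but would make sure the $C_S$-dependence is exactly quadratic and the $r$-scaling exactly $r^{-1}$, which is forced by homogeneity (rescale $\Sigma$ to unit area radius and check both sides scale identically).

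The main obstacle is not conceptual but bookkeeping: one must run the truncation argument with Lipschitz approximations $f_k = \min\{|f|,k\}\operatorname{sgn}(f)$ (or $(|f|-k)^+$) to justify plugging non-smooth powers into \eqref{eqSobolev1}, pass to the limit by monotone/dominated convergence, and — crucially — handle the gradient of $|\nabla^\Sigma f|$ for the second iteration via Kato's inequality rather than a naive chain rule, since $|\nabla^\Sigma f|$ need not be smooth where the gradient vanishes. The weighted Sobolev norm convention (with the explicit factors of $r$ in the definition of $W^{k,p}(\Sigma)$) also demands care so that no stray power of $r$ is lost; the safest route is to prove everything first for $|\Sigma|=4\pi$ (so $r=1$) with unweighted norms, where \eqref{eqSobolev1} becomes the classical Michael–Simon-type inequality, and then restore general $r$ by the scaling $g^\Sigma\mapsto \lambda^2 g^\Sigma$, under which \eqref{eqSobolev1}, \eqref{eqSobolev2} and the weighted norms are all invariant. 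I expect the entire argument to be short once the truncation lemma is in place, and I would cite \cite[Section 2.3]{MetzgerCMC} or a standard reference (e.g. the Michael–Simon Sobolev inequality literature) for the $q$-truncation step if a fully self-contained write-up is not desired.
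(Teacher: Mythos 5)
Your proposal follows essentially the same route as the paper: from \eqref{eqSobolev1} one obtains the two intermediate inequalities $\Vert f\Vert_{L^p(\Sigma)} \le \tfrac{C_S}{2}\,p\, r^{2/p-1}\Vert f\Vert_{W^{1,2}(\Sigma)}$ and $\Vert f\Vert_{L^\infty(\Sigma)} \le 2^{2(p-1)/(p-2)}\, C_S\, r^{-2/p}\Vert f\Vert_{W^{1,p}(\Sigma)}$ for $p>2$ (the paper simply cites Nerz's thesis, Proposition II.1.3, for precisely the truncation/Moser derivation you sketch), and then chains them at $p=4$, applying the first to $f$ and to $\nabla^\Sigma f$ to control $\Vert f\Vert_{W^{1,4}(\Sigma)}$ exactly as you propose. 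The only substantive difference is that the paper tracks the explicit constants through this chain, which at $p=4$ yield precisely $2^{3}\cdot 4 = 32\,C_S^2$, whereas you leave the numerical factor unverified; since the lemma asserts the constant $32$, you would still need to check that the constants produced by your truncation argument at $p=4$ do not exceed those quoted above.
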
  

\begin{proof}
With the first Sobolev Inequality \eqref{eqSobolev1} at hand, the following Sobolev Inequalities can be derived for $p>2$:
\begin{eqnarray*}
\|f\|_{L^p(\Sigma)} \leq \frac{C_S }{2} p\, r^{\tfrac{2}{p}-1} \| f\|_{W^{1,2}(\Sigma)}  \qquad \text{ for any }  f\in W^{1,2}(\Sigma), \\
\|f\|_{L^\infty(\Sigma)}\leq 2^{\tfrac{2(p-1)}{p-2}}C_S\, r^{-\tfrac{2}{p}} \| f\|_{W^{1,p}(\Sigma)} \qquad \text{ for any } f\in W^{1,p}(\Sigma), 
\end{eqnarray*}
see e.g. \cite[Proposition II.1.3]{NerzThesis} for details. As a consequence, for any $f\in W^{2,2}(\Sigma)$ we have
\begin{eqnarray*}
\|f\|_{L^\infty(\Sigma)}&\leq& 2^{\tfrac{2(p-1)}{p-2}}C_S\, r^{-\tfrac{2}{p}} \| f\|_{W^{1,p}(\Sigma)} \\
&=& 2^{\tfrac{2(p-1)}{p-2}}C_S\, r^{-\tfrac{2}{p}} \left(\| f\|_{L^{p}(\Sigma)} + r \| \nabla^\Sigma f\|_{L^{p}(\Sigma)}\right) \\
& \leq& 2^{\tfrac{2(p-1)}{p-2}-1}p\,C_S^2\, r^{-1} \left(\| f\|_{W^{1,2}(\Sigma)} + r \| \nabla^\Sigma f\|_{W^{1,2}(\Sigma)}\right) \\
& \leq & 2^{\tfrac{2(p-1)}{p-2}}p\,C_S^2\, r^{-1} \| f\|_{W^{2,2}(\Sigma)}.
\end{eqnarray*}
for any $p>2$. In particular, for $p=4$ we have \eqref{eqSobolev2}.
\end{proof}

The following result is well-known, see e.g. \cite[Corollary 2.10]{Sauter} (adapted from \cite[Chapter 2]{CK}).

\begin{lemma}\label{lemCalderon}
Let $(\Sigma,g^\Sigma)$ be a $2$-surface of spherical topology with Gaussian curvature $K$ satisfying 
\begin{align*}
\tfrac{1}{2}\leq r^2 K \leq 2,
\end{align*} 
where $r=\sqrt{\sfrac{|\Sigma|}{4\pi}}$ is the area radius of $\Sigma$. Then there is a universal constant $C$
such that for any $f \in W^{2,2}(\Sigma)$ we have
\begin{align*}
\Vert f - \overline{f}\Vert_{W^{2,2}(\Sigma)} \leq C r^2 \Vert \Delta^\Sigma f \Vert _{L^2(\Sigma)}, 
\end{align*} 
where $\overline{f}$ denotes the mean value of $f$ on $\Sigma$.
\end{lemma}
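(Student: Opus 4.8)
\textbf{Proof proposal for Lemma \ref{lemCalderon}.} Since the inequality is insensitive to adding constants, I would first reduce to the case $\overline{f}=0$, so that it suffices to bound $\|f\|_{L^2(\Sigma)}$, $r\|\nabla^\Sigma f\|_{L^2(\Sigma)}$, and $r^2\|(\nabla^\Sigma)^2 f\|_{L^2(\Sigma)}$ separately by a universal multiple of $r^2\|\Delta^\Sigma f\|_{L^2(\Sigma)}$; these then combine to the claimed estimate via the (weighted) definition of the $W^{2,2}$-norm. The qualitative fact that $\Delta^\Sigma f\in L^2(\Sigma)$ already forces $f\in W^{2,2}(\Sigma)$ follows from standard elliptic regularity on the closed Riemannian surface $\Sigma$, so throughout one may argue by density with $f$ smooth and all integrations by parts justified.

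The two lower-order terms are controlled by the spectral gap of $-\Delta^\Sigma$. In dimension two one has $\mathrm{Ric}^\Sigma = K\,g^\Sigma$, and the hypothesis gives $\mathrm{Ric}^\Sigma \geq \tfrac{1}{2r^2}\,g^\Sigma$ with $\tfrac{1}{2r^2}>0$; the Lichnerowicz–Obata estimate then yields a lower bound for the first nonzero eigenvalue,
\begin{align*}
\lambda_1(-\Delta^\Sigma)\ \geq\ 2\cdot\frac{1}{2r^2}\ =\ \frac{1}{r^2}.
\end{align*}
Hence the Poincaré inequality $\|f\|_{L^2(\Sigma)}^2 \leq \lambda_1^{-1}\|\nabla^\Sigma f\|_{L^2(\Sigma)}^2 \leq r^2\|\nabla^\Sigma f\|_{L^2(\Sigma)}^2$ holds, and combining it with $\|\nabla^\Sigma f\|_{L^2(\Sigma)}^2 = -\int_\Sigma f\,\Delta^\Sigma f\,d\mu \leq \|f\|_{L^2(\Sigma)}\|\Delta^\Sigma f\|_{L^2(\Sigma)}$ gives first $\|\nabla^\Sigma f\|_{L^2(\Sigma)} \leq r\|\Delta^\Sigma f\|_{L^2(\Sigma)}$ and then $\|f\|_{L^2(\Sigma)} \leq r^2\|\Delta^\Sigma f\|_{L^2(\Sigma)}$.

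For the Hessian term I would integrate the Bochner identity on the closed surface $\Sigma$: in dimension two,
\begin{align*}
\tfrac12\Delta^\Sigma|\nabla^\Sigma f|^2 = \big|(\nabla^\Sigma)^2 f\big|^2 + \big\langle \nabla^\Sigma f,\nabla^\Sigma \Delta^\Sigma f\big\rangle + K\,|\nabla^\Sigma f|^2 ,
\end{align*}
so that, after integrating (the left side has zero integral) and integrating the middle term by parts,
\begin{align*}
\int_\Sigma \big|(\nabla^\Sigma)^2 f\big|^2\,d\mu = \int_\Sigma (\Delta^\Sigma f)^2\,d\mu - \int_\Sigma K\,|\nabla^\Sigma f|^2\,d\mu \ \leq\ \int_\Sigma (\Delta^\Sigma f)^2\,d\mu ,
\end{align*}
using $K>0$. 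Thus $\|(\nabla^\Sigma)^2 f\|_{L^2(\Sigma)} \leq \|\Delta^\Sigma f\|_{L^2(\Sigma)}$, and assembling the three bounds gives $\|f\|_{W^{2,2}(\Sigma)} \leq C\,r^2\|\Delta^\Sigma f\|_{L^2(\Sigma)}$ with a universal constant (indeed $C=3$ with the normalization used in the paper).

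I do not expect a serious obstacle here: the estimate is classical, and as indicated one may simply invoke \cite[Corollary 2.10]{Sauter}. The only points that need care are (i) the justification that $\Delta^\Sigma f\in L^2$ already places $f$ in $W^{2,2}$ so that the Bochner computation is legitimate, and (ii) tracking that every constant that enters — the Lichnerowicz eigenvalue bound and the coefficient $1$ in the integrated Bochner identity — is genuinely universal once the curvature pinching is imposed. I would also remark that in this Bochner-based argument only the lower curvature bound $r^2K\geq\tfrac12$ is used; the upper bound $r^2K\leq 2$ enters in the conformal-comparison proof of the cited references (where one uniformizes $\Sigma$, controls the conformal factor by the pinching, and transfers the Calderón–Zygmund estimate from the round sphere), and I would keep it in the statement for consistency with that literature.
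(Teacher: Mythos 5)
Your argument is correct, but it is genuinely different from what the paper does: the paper offers no proof at all for this lemma and simply cites \cite[Corollary 2.10]{Sauter} (adapted from \cite[Chapter 2]{CK}), where the estimate is obtained by comparison with the round sphere, whereas you give a short self-contained proof. Your three ingredients all check out against the paper's conventions: with the weighted norm $\Vert f\Vert_{W^{2,2}(\Sigma)}=\Vert f\Vert_{L^2}+r\Vert\nabla^\Sigma f\Vert_{L^2}+r^2\Vert(\nabla^\Sigma)^2f\Vert_{L^2}$, the Lichnerowicz bound $\lambda_1\geq 1/r^2$ (from $\mathrm{Ric}^\Sigma=K\,g^\Sigma\geq \tfrac{1}{2r^2}g^\Sigma$) plus $\Vert\nabla^\Sigma f\Vert_{L^2}^2\leq\Vert f-\overline{f}\Vert_{L^2}\Vert\Delta^\Sigma f\Vert_{L^2}$ gives the two lower-order bounds, and the integrated Bochner identity with $K>0$ gives $\Vert(\nabla^\Sigma)^2f\Vert_{L^2}\leq\Vert\Delta^\Sigma f\Vert_{L^2}$, so indeed $C=3$ works and the constant is universal. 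Your observation that only the lower bound $r^2K\geq\tfrac12$ enters is also accurate (spherical topology is then automatic by Gauss--Bonnet); the upper bound is needed only in the conformal-comparison proof of the cited references, so keeping it in the statement is a matter of matching the literature rather than of necessity for your argument. The trade-off is the usual one: the citation route is shorter in the paper and matches the form in which the estimate is used elsewhere in the CMC literature, while your Bochner--Lichnerowicz proof is elementary, makes the constant explicit, and slightly weakens the hypotheses.
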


%%%%%%%%%%%%%%%%%%%%%%%%%%%%%%%%%%%%%%%%%%%%%%%%%%%%%%%%%%%%%%%%%%%%%%%%%

\section{The STCMC-condition in normal geodesic coordinates}\label{apFermi}
% !TEX root = main.tex
Let $\mathcal{I}=(M^{3},g,K,\mu, J)$  be a $C^{2}_{\sfrac{1}{2}+\varepsilon}$-asympto\-tically Euclidean initial data set and let $\Sigma\hookrightarrow M^{3}$ be a closed, oriented $2$-surface. Let $(u^\alpha)$ be coordinates on $\Sigma$ and let $\partial_\alpha$ denote the respective tangent vectors to $\Sigma$, for $\alpha\in\lbrace{1,2\rbrace}$. Here and in the rest of this appendix, we use the convention that Greek indices $\alpha,\beta,\gamma \in\{1,2\}$ refer to coordinate vector fields tangential to $\Sigma$.

In a neighborhood of $\Sigma$, the normal geodesic coordinates $y\colon \Sigma \times (-\xi,\xi) \to M^{3}$ are defined for some $\xi>0$, see Section \ref{subsecStab} for details. In this neighborhood we may write $g=dt^2 + g_t$ where $g_t$ is the induced metric on $\Sigma_t\definedas y(\Sigma, t)$. 

Consider a $2$-surface $S$ given as the graph of a function $f$ with $\vert f\vert  < \xi $ over $\Sigma$, i.e.
\begin{align}
S = \graph f =  \{y(q,f(q)) : q \in \Sigma\}.
\end{align}
Since the vector $-\partial_t+\nabla^{g_t} f$ is normal to $S$ at the point $(q,t)=(q, f(q))$, the vectors $\partial_\alpha+(\partial_\alpha f)\partial_t$ are tangent to $S$ at this point. As a consequence, the induced metric on $S$ has components given by 
\begin{equation*}
(g_S)_{\alpha\beta} = g (\partial_\alpha+(\partial_\alpha f)\partial_t, \partial_\beta+(\partial_\beta f)\partial_t)=(g_t)_{\alpha\beta}+\partial_\alpha f \,\partial_\beta f,
\end{equation*}  
with the components of the inverse given by 
\begin{equation*}
(g_S)^{\alpha\beta}= (g_t)^{\alpha\beta}-\frac{f^\alpha f^\beta}{1+|df|^2_{g_t}}.
\end{equation*}
Here and in what follows, $2$-dimensional indices $\alpha$, $\beta$, $\dots$ are raised with respect to $g_t$, and all quantities are computed at the point $(q,t)=(q, f(q))$, unless stated otherwise. A straightforward computation then shows that the mean curvature of $S=\graph f$ is given~by
\begin{equation*}
\begin{split}
H(S) = \left((g_t)^{\alpha\beta}-\frac{f^\alpha f^\beta}{1+|df|^2_{g_t}}\right)\frac{\hess^{g_t}_{\alpha\beta} f + (A_t)_{\alpha\beta}+2 (A_t)_{\phantom{\alpha} \alpha}^\gamma 
                          (\partial_\beta f) ( \partial_\gamma f)}{\sqrt{1+|df|^2_{g_t}}},
\end{split} 
\end{equation*}
where $A_t$ is the second fundamental form of $\Sigma_t$. We also have
\begin{equation}\label{eqPNormal}
                   P(S)=\tr_S K  =  \left( (g_t)^{\alpha\beta} - \frac{f^\alpha f^\beta}{1+|df|_{g_t}^2} \right) (K_{\alpha\beta}+2(\partial_\alpha f)K_{t \beta}+(\partial_\alpha f)(\partial                                     _\beta f) K_{t t}).
\end{equation}
\begin{proposition}
If $S=\graph f$ is a surface of constant spacetime mean curvature $\mathcal{H}(S)\equiv\sfrac{2}{\sigma}$ then $f$ satisfies the equation
\begin{equation}\label{eqJangHeight}
a^{\alpha\beta}\partial_\alpha \partial_\beta f + b^\alpha \partial_\alpha f = F,
\end{equation}
where, with $P$ given by \eqref{eqPNormal}, we use the shorthands
\begin{eqnarray*}
a^{\alpha\beta} & \definedas & \left((g_t)^{\alpha\beta}-\frac{f^\alpha f^\beta}{1+|df|_{g_t}^2}\right)\frac{1}{\sqrt{1+|df|_{g_t}^2}},\\
  b^\gamma  & \definedas & - \left((g_t)^{\alpha\beta}-\frac{f^\alpha f^\beta}{1+|df|_{g_t}^2}\right)\frac{(\Gamma^{g_t})^\gamma_{\alpha\beta}}{\sqrt{1+|df|_{g_t}^2}},\\  
	   F & \definedas & - \left((g_t)^{\alpha\beta}-\frac{f^\alpha f^\beta}{1+|df|_{g_t}^2}\right)\frac{(A_t)_{\alpha\beta}+2 (A_t)_{\phantom{\alpha} \alpha}^\gamma (\partial_\beta f) ( \partial_\gamma f)}
		         {\sqrt{1+|df|^2_{g_t}}} + \sqrt{P^2 + \frac{4}{ \sigma^2}}.
\end{eqnarray*}
\end{proposition}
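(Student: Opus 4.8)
The plan is to unwind the definition of constant spacetime mean curvature into the quasilinear form claimed, using only identities already recorded in this appendix. By definition, $S=\graph f$ has constant spacetime mean curvature $\mathcal{H}(S)\equiv\sfrac{2}{\sigma}$ precisely when $\sqrt{H(S)^2-P(S)^2}=\sfrac{2}{\sigma}$, equivalently $H(S)^2=P(S)^2+\sfrac{4}{\sigma^2}$. The surfaces to which this appendix is applied are graphs over large, nearly round coordinate spheres, for which the mean curvature with respect to the outward normal is positive by the sign convention fixed in Section~\ref{secMain} and by Proposition~\ref{propRegularity}; hence the relevant branch of the square root is
\[
H(S)=\sqrt{P(S)^2+\tfrac{4}{\sigma^2}}.
\]
(Without this positivity one gets the same conclusion after replacing the last summand of $F$ by its negative.)

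Next I would substitute into this relation the coordinate expression for $H(S)$ recorded just above the proposition, namely
\[
H(S)=\left((g_t)^{\alpha\beta}-\frac{f^\alpha f^\beta}{1+|df|^2_{g_t}}\right)\frac{\hess^{g_t}_{\alpha\beta}f+(A_t)_{\alpha\beta}+2(A_t)^\gamma_{\phantom{\gamma}\alpha}(\partial_\beta f)(\partial_\gamma f)}{\sqrt{1+|df|^2_{g_t}}},
\]
and expand the Hessian in the $(u^\alpha)$-coordinates as $\hess^{g_t}_{\alpha\beta}f=\partial_\alpha\partial_\beta f-(\Gamma^{g_t})^\gamma_{\alpha\beta}\partial_\gamma f$. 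The second-order term then carries the coefficient $a^{\alpha\beta}$ of the statement, and the Christoffel term contributes exactly $b^\gamma\partial_\gamma f$, since by definition $b^\gamma=-a^{\alpha\beta}(\Gamma^{g_t})^\gamma_{\alpha\beta}$. Moving all the remaining terms — those built from $(A_t)_{\alpha\beta}$ and from first-order quantities — to the other side and replacing $H(S)$ by $\sqrt{P(S)^2+\sfrac{4}{\sigma^2}}$ gives precisely $a^{\alpha\beta}\partial_\alpha\partial_\beta f+b^\gamma\partial_\gamma f=F$ with $F$ as stated; here one uses that, by \eqref{eqPNormal}, the quantity $P=P(S)$ depends on $f$ and $\partial f$ only, so $F$ is a genuine lower-order term.

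Finally I would record the two structural observations that make \eqref{eqJangHeight} useful later (for instance in the proof of Lemma~\ref{lemClosedness}): first, $a^{\alpha\beta}$ is positive definite because $(g_S)^{\alpha\beta}=(g_t)^{\alpha\beta}-\frac{f^\alpha f^\beta}{1+|df|^2_{g_t}}$ is the inverse of the Riemannian metric $g_S$, so \eqref{eqJangHeight} is a second-order equation that is uniformly elliptic once $\|f\|_{C^1}$ is controlled; and second, none of $a^{\alpha\beta}$, $b^\gamma$, $F$ involves second derivatives of $f$. I do not expect a genuine obstacle: the argument is a direct algebraic rearrangement of the identities already derived for $H(S)$ and $P(S)$ in this appendix, and the only point deserving a sentence of care is the choice of branch for the square root discussed above.
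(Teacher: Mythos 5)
Your argument is correct and is exactly the computation the paper intends: the proposition follows by writing $\mathcal{H}(S)\equiv\sfrac{2}{\sigma}$ as $H(S)=\sqrt{P(S)^2+\sfrac{4}{\sigma^2}}$ (using positivity of $H$ for the nearly round graphs under consideration), expanding $\hess^{g_t}_{\alpha\beta}f=\partial_\alpha\partial_\beta f-(\Gamma^{g_t})^\gamma_{\alpha\beta}\partial_\gamma f$ in the displayed formula for $H(S)$, and rearranging, which reproduces $a^{\alpha\beta}$, $b^\gamma$, and $F$ as defined. Your remarks on the branch of the square root and on ellipticity of $a^{\alpha\beta}$ are appropriate and consistent with how the equation is used in Lemma \ref{lemClosedness}.
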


%%%%%%%%%%%%%%%%%%%%%%%%%%%%%%%%%%%%%%%%%%%%%%%%%%%%%%%%%%%%%%%%%%%%%%%%%
\bibliographystyle{amsplain}
\bibliography{biblio}

%%%%%%%%%%%%%%%%%%%%%%%%%%%%%%%%%%%%%%%%%%%%%%%%%%%%%%%%%%%%%%%%%%%%%%%%%
\end{document}